\renewcommand\a{\alpha}
\renewcommand\b{\beta}
\newcommand\g{\gamma}
\renewcommand\d{\delta}
\newcommand\la{\lambda}
\newcommand\z{\zeta}
\renewcommand\th{\theta}
\newcommand\s{\sigma}
\newcommand\x{\chi}
\newcommand\f{\phi}
\newcommand\vf{\varphi}
\renewcommand\t{\tau}
\renewcommand\r{\rho}
\newcommand\Om{\Omega}
\newcommand\w{\omega}
\newcommand\vS{\varSigma}
\newcommand\vD{\varDelta}
\newcommand\vL{\varLambda}
\newcommand\ve{\varepsilon}
\newcommand\Ql{\bar{\mathbf Q}_l}
\newcommand\BA{\mathbf A}
\newcommand\BP{\mathbf P}
\newcommand\BQ{\mathbf Q}
\newcommand\BF{\mathbf F}
\newcommand\BR{\mathbf R}
\newcommand\BZ{\mathbf Z}
\newcommand\BG{\mathbf G}
\newcommand\Bk{\mathbf k}
\newcommand\BSigma{\boldsymbol\Sigma}
\newcommand\CQ{\mathcal{Q}}
\newcommand\SB{\mathscr{B}}
\newcommand\SE{\mathscr{E}}
\newcommand\SF{\mathscr{F}}
\newcommand\SG{\mathscr{G}}
\newcommand\SM{\mathscr{M}}
\newcommand\SN{\mathscr{N}} 
\newcommand\SP{\mathscr{P}}
\newcommand\SH{\mathscr{H}}
\newcommand\SW{\mathscr{W}}
\newcommand\Fc{\mathfrak c}
\newcommand\Fg{\mathfrak g}
\newcommand\Fn{\mathfrak n}
\newcommand\iv{^{-1}}
\newcommand\wh{\widehat}
\newcommand\wt{\widetilde}
\newcommand\wg{^{\wedge}}
\newcommand\ol{\overline}
\newcommand\hra{\hookrightarrow}
\newcommand\lra{\leftrightarrow}
\newcommand\IC{\operatorname{IC}}
\newcommand\Ker{\operatorname{Ker}}
\newcommand\Hom{\operatorname{Hom}}
\newcommand\End{\operatorname{End}}
\newcommand\Ind{\operatorname{Ind}}
\newcommand\supp{\operatorname{supp}\,}
\newcommand\Lie{\operatorname{Lie}}
\newcommand\Tr{\operatorname{Tr}\,}
\newcommand\ch{\operatorname{ch}}
\newcommand\Ad{\operatorname{Ad}}
\newcommand\ad{\operatorname{ad}}
\newcommand\ex{_{\operatorname{ex}}}
\newcommand\uni{_{\operatorname{uni}}}
\newcommand\lp{\operatorname{\!\langle\!}}
\newcommand\rp{\operatorname{\!\rangle\!}}
\renewcommand\Im{\operatorname{Im}}
\newcommand\odd{\operatorname{odd}}
\newcommand\Spin{\operatorname{Spin}}
\newcommand\da{\dot a}
\newcommand\dc{\dot c}
\newcommand{\isom}{\,\raise2pt\hbox{$\underrightarrow{\sim}$}\,}
\numberwithin{equation}{section}
\newtheorem{thm}{Theorem}[section]
\newtheorem{lem}[thm]{Lemma}
\newtheorem{prop}[thm]{Proposition}
\def \para#1{\par\medskip\textbf{#1}
              \addtocounter{thm}{1}}
\def \remark#1{\par\medskip\noindent
                \textbf{Remark #1}
                \addtocounter{thm}{1}}
\def \remarks#1{\par\medskip\noindent
                \textbf{Remarks #1}
                \addtocounter{thm}{1}}
\begin{document}
\setlength{\baselineskip}{4.9mm}
\setlength{\abovedisplayskip}{4.5mm}
\setlength{\belowdisplayskip}{4.5mm}
%%%
%%%
\renewcommand{\theenumi}{\roman{enumi}}
\renewcommand{\labelenumi}{(\theenumi)}
\renewcommand{\thefootnote}{\fnsymbol{footnote}}
%%%
\renewcommand{\thefootnote}{\fnsymbol{footnote}}
%\NoBlackBoxes
\parindent=20pt
%%%%%%%%%%%%%%%%%%%%
%%%%%%%%%%%%%%%%%%%%%%%%%%%%%%%%%%%
\medskip
\begin{center}
  {\bf Generalized Green functions and unipotent classes  \\
    for finite reductive groups, IV }
\par
\vspace{1cm}
Frank L\"ubeck and Toshiaki Shoji
\\
\vspace{0.7cm}
\title{}
\end{center}

\begin{abstract}
In this paper, we formulate the notion of split elements of
a unipotent class in a connected reductive group $G$.
Generalized Green functions of $G$ can
be computed by using Lusztig's algorithm, if split elements
exist for any unipotent class. The existence of split elements
is reduced to the case where $G$ is a simply connected, almost simple group.
We show, in the case of classical groups, that there exist split elements,
which is a refinement of previous results. In the case of exceptional groups,
we show the existence of split elements, possibly except one class 
for $G$ of type $E_7$. 
\end{abstract}

\maketitle
\pagestyle{myheadings}
\markboth{L\"UBECK AND SHOJI}{GENERALIZED GREEN FUNCTIONS}

%%%%%%%%%%%%%%%%%%%%
%%%%%%%%%%%%%%%%%%%%%%%%%%%%%%%%%%%
\bigskip
\medskip

%\begin{center}
%{\sc Introduction}
%\end{center}
\par
\medskip\noindent
\addtocounter{section}{-1}
\section{Introduction} 
This paper is a continuation of a series of papers 
\cite[I, II, III]{Sh4}.  
Let $G$ be a connected reductive group defined over a finite field 
$\BF_q$ of characteristic $p$, with Frobenius map $F : G \to G$, 
and $G\uni$ the unipotent variety of $G$. 
The aim of these papers is to determine the generalized Green functions 
of $G$. This problem is reduced, by Lusztig's algorithm (\cite[IV]{L3}),
to the computation of certain functions $Y_j$ on the space of
$G^F$-invariant functions on $G\uni^F$.  Since $Y_j$ is written as 
$Y_j = \g_j Y_j^0$ for a certain scalar $\g_j$, where $Y_j^0$ is a function 
explicitly computable, the above problem is reduced to the determination 
of those scalars $\g_j$. On the other hand, the determination of generalized Green functions 
is reduced to the case where $G$ is a simply connected, almost simple group.    
In \cite[II,III]{Sh4}, $\g_j$ was determined in the case of classical groups, 
by an inductive formula. 
In this paper, we give a closed formula for $\g_j$ in the case of classical groups. 
Also we determine $\g_j$ in the case of exceptional groups (except
one class in $G = E_7$). 
Thus the generalized Green functions are computable for all the cases, modulo
the above exception.  
\par
Here we give some historical backgrounds.  The Green functions $Q^G_T$ of $G^F$, where
$T$ is an $F$-stable maximal torus of $G$, were 
introduced by Deligne-Lusztig \cite{DL} in 1976, 
in connection with the representation theory of finite reductive groups $G^F$ over $\Ql$.  
Springer \cite{Spr} defined certain functions, by making use of the Springer 
representations of Weyl groups (also constructed in that paper), and conjectured 
that those functions coincide with Deligne-Lusztig's Green functions.  This conjecture
was proved by Kazhdan \cite{K} under the assumption that $p$ and $q$ are large enough. 
The algorithm of computing Springer's Green functions was given by
\cite{Sh2} for exceptional groups of type $F_4$, and by \cite{Sh-cl} for classical groups.
Based on this algorithm, Green functions for exceptional groups were computed, 
by \cite{Sh2} for type $F_4$, and by Beynon-Spaltenstein \cite{BS} for types $E_6, E_7$ and $E_8$, 
under the assumption that $p$ is a good prime (no restriction on $q$). The case $G_2$ was 
already computed by Springer \cite{Spr}.    
\par
The generalized Green functions $Q_{L, C_0,\SE_0, \vf_0}^G$ were introduced  
by Lusztig \cite{L3} in the study of character sheaves (see 1.2 for the definition), 
based on the theory of generalized Springer correspondence (\cite{L2}), where 
$L$ is an $F$-stable Levi subgroup of a parabolic subgroup of $G$. 
The special case where $L = T$ is a maximal torus is nothing but Springer's Green functions,
which corresponds to the case of Springer correspondence (see (1.1.4)).
The coincidence of those Green functions with Deligne-Lusztig's Green functions 
(and its generalization to the case of generalized Green functions) 
was proved in \cite{L4} in 1990 for a large enough $q$ (for any $p$). 
This restriction on $q$ was removed in \cite{Sh3} in the case where 
$G$ has a connected center, by applying the theory of Shintani descent. 
\par
\par
In 2020, Green functions for exceptional groups in bad characteristic 
were computed by Geck \cite{G2}, except for type $E_8$. He reduced in \cite{G1}
the computation for the case of $\BF_q$ to the case of the prime field $\BF_p$,
and computed the Green functions in that case by using CHEVIE,
by making use of the representation theory of $G^F$ (note that the computation
of Green functions is reduced to the case of adjoint groups, and by the previous results
those Green functions coincide with Deligne-Lusztig's Green functions).
Recently, by extending the method of Geck, one of the authors computed in \cite{Lu}
the Green functions of type $E_8$ in bad characteristic. 
Hence all the Green functions were computed.
However, as far as the generalized Green functions are
concerned, no results in the case of exceptional groups are known.  
\par
\par
As mentioned above, the theory of generalized Green functions is based on the 
theory of generalized Springer correspondence.  In the case of exceptional groups, 
the generalized Springer correspondence was determined by Spaltenstein \cite{Sp5} in 1985, 
except for a few cases. The remaining cases are one unipotent class in $G$, of type $E_6$ 
with $p \ne 3$, and of type $E_8$ with $p = 3$. In 2019, Lusztig \cite{L5} solved 
this problem in the case of $E_6$, by using a geometric method. 
Finally in 2023, Hetz \cite{H} solved it in the case of $E_8$, by computing values
of certain unipotent characters of $G^F$ at unipotent elements by the aide of CHEVIE
(here Deligne-Lusztig's Green functions and their generalization were used).
Hence the generalized Springer correspondence is now determined completely.
\par
In this paper, we compute the generalized Green functions for exceptional groups.
The main ingredient for the proof is the Frobenius version of the restriction 
formula, as in the case of \cite[II, III]{Sh4}.
Also the notion of split elements plays a crucial role.  
This notion already appeared in the previous papers, in ad hoc way.
In this paper, we give a notion of split elements, in a uniform way, which
is applicable for reductive groups in general.
If split elements exist for any unipotent class in $G$, then
$\g_j$ is determined explicitly, thus generalized Green functions are computable.
In  the case where $F$ is non-split,
our result gives a more precise description for $\g_j$. In particular,  in the case where $G$
is a special linear group with $F$ non-split, we obtain a closed formula
for $\g_j$ which improves the results in \cite[I, II]{Sh4}.  
In the case of other classical groups, in particular, of spin groups,
\cite[III]{Sh4} contains some errors.  In this paper we have corrected them. 
In the case of exceptional groups, we show that split elements exist in
any unipotent class $C$ in $G$, possibly except one class in $E_7$.
This determines $\g_j$ except for a few cases. 
\par
In the computation of generalized Green functions for exceptional groups,
we have made use of the results on the computation of Green functions.

\par\bigskip\noindent
{\bf Contents}
\par\medskip\noindent
0. \ Introduction
\\
1. \ Split elements in reductive groups
\\
2. \ Special linear groups
\\
3. \ Classical groups
\\
4. \ Preliminaries on exceptional groups
\\
5. \ The case $E_7$
\\
6. \ The case $E_8$
\\
7. \ The case $F_4$
\\
8. \ The case $G_2$
\\
9. \ The case $E_6$ 
\\
10. \ The case ${}^3D_4$
\par\bigskip
\section{Split elements in reductive groups}

\para{1.1}
This paper is a continuation of \cite [I,II,III]{Sh4}.
First we review the definition of generalized Springer correspondence
and generalized Green functions. 
Let $G$ be a connected reductive group defined over a finite field $\BF_q$ 
with Frobenius map $F$.  Let $\Bk$ be an algebraic closure of $\BF_q$ with 
$\ch \Bk = p$.  
Let $\SM_G$ be the set of triples $(L, C_0, \SE_0)$, up to $G$-conjugacy, 
where $L$ is a Levi subgroup of a parabolic subgroup $P$ of $G$, and 
$\SE_0$ is a cuspidal local system on a unipotent class $C_0$ of $L$.  
As in (1.2.2) in \cite[I]{Sh4}, one can define a semisimple perverse sheaf $K$ on $G$ 
associated to the 
triple $(L, C_0, \SE_0) \in \SM_G$.  Let $\SW_L = N_G(L)/L$. Then 
$\SW_L$ is a Coxeter group, and $\End K \simeq \Ql[\SW_L]$. Hence $K$ is decomposed as
\begin{equation*}
\tag{1.1.1}
K \simeq \bigoplus_{E \in \SW_L\wg}E\otimes K_E,
\end{equation*} 
where $K_E \simeq \Hom (E, K)$ is a simple perverse sheaf on $G$ 
associated to $E \in \SW_L\wg$. 
\par
Let $G\uni$ be the unipotent variety of $G$, and let $\SN_G$ be the set of $(C, \SE)$, 
where $C$ is a unipotent class of $G$, and $\SE$ is a $G$-equivariant
simple local system on $C$. 
Let $Z_L$ be the center of $L$, and put $d = \dim Z_L$. It is known that 
$K[-d]|_{G\uni}$ is a semisimple perverse sheaf on $G\uni$, equipped with $\SW_L$-action, 
and is decomposed as 
\begin{equation*}
\tag{1.1.2}
K[-d]|_{G\uni} \simeq \bigoplus_{(C,\SE) \in \SN_G}V_{(C,\SE)}\otimes \IC(\ol C, \SE)[\dim C],
\end{equation*}
where the multiplicity space $V_{(C,\SE)}$ has a structure of
an irreducible $\SW_L$-module if it is non-zero. 
It follows from (1.1.1) and (1.1.2) that $K_E|_{G\uni}$ is expressed as 
\begin{equation*}
\tag{1.1.3}
K_E|_{G\uni} \simeq \IC(\ol C, \SE)[\dim C + \dim Z_L]
\end{equation*}
for some $(C,\SE) \in \SN_G$, and we have a map $\SW_L\wg \to \SN_G, E \mapsto (C,\SE)$
such that $E \simeq V_{(C,\SE)}$.  
The generalized Springer correspondence (\cite[Thm. 6.5]{L2}) 
asserts that this map gives a bijection 
\begin{equation*}
\tag{1.1.4}
\bigsqcup_{(L,C_0, \SE_0) \in \SM_G}\SW_L\wg  \isom \SN_G.
\end{equation*}
In the case where $L = T$ is a maximal torus, then 
$(T, \{ 1\}, 1) \in \SM_G$, where $\{1 \}$ is the trivial class in $T$, and 
$1 = \Ql$ is the constant sheaf on $\{ 1\}$.  $\SW_L$ corresponding to
$(T,\{1\}, 1)$ is the Weyl group $W = N_G(T)/T$ of $G$,
and the injection $W\wg \to \SN_G$ gives the Springer correspondence. 

\par
If we choose $u \in C$, then $\SE$ corresponds bijectively to $\r \in A_G(u)\wg$, 
in such a way that the stalk $\SE_u$ has a natural structure of $A_G(u)$-module 
isomorphic to $\r$. (In \cite[III]{Sh4}, $\r^*$ was used instead of $\r$.
This is just a problem of the parametrization.) In this way, the pair 
$(C,\SE)$ is in bijection with the $G$-orbit of the pair $(u,\r)$.
We write it as $(u,\r) \lra (C,\SE) \in \SN_G$, and  
denote the representation as $V_{(u,\r)} = V_{(C,\SE)}$. 

\para{1.2.}
Let $Z_G$ be the center of $G$.  The generalized
Springer correspondence (1.1.4) has a partition in terms of
$Z_G\wg$ as follows (see \cite[14.2]{L2}, \cite[I, 1.9]{Sh4}).
For $\x \in Z_G\wg$, let $\SN_{\x}$ be the subset of $\SN_{\x}$
consisting of $(u,\r) \lra (C,\SE)$ such that $Z_G$ acts on
$\SE$ according to $\x$.  Let $\SM_{\x}$ be the subset of
$\SM_G$ consisting of $(L,C_0, \SE_0)$ such that $Z_G$ acts on
$\SE_0$ according to $\x$.  Then we have partitions
$\SN_G = \bigsqcup_{\x \in Z_G\wg}\SN_{\x}$, $\SM_G = \bigsqcup_{\x \in Z_G\wg}\SM_{\x}$,
and (1.1.4) restricts to a bijection, for each $\x \in Z_G\wg$,
\begin{equation*}
\tag{1.2.1}  
\bigsqcup_{(L, C_0, \SE_0) \in \SM_{\x}}\SW_L\wg  \isom \SN_{\x}.
\end{equation*}  

\para{1.3.}
$F$ acts naturally on the set $\SN_G$ and $\SM_G$ by 
$(C, \SE) \mapsto (F\iv(C), F^*\SE)$, 
$(L, C_0, \SE_0) \mapsto (F\iv(L), F\iv(C_0), F^*\SE_0)$. 
The map in (1.1.4) is compatible with this $F$-action. 
We choose $(L, C_0, \SE_0) \in \SM_G^F$. Here we may assume that 
$L$ is an $F$-stable Levi subgroup of a (not necessarily $F$-stable)
parabolic subgroup $P$ of $G$,
and that $F(C_0) = C_0, F^*\SE_0 \simeq \SE_0$.  
Since $\SE_0$ is a simple local system, the isomorphism $\vf_0: F^*\SE_0 \isom \SE_0$
is unique up to scalar.
We choose $\vf_0$ by the condition that 
the induced map on the stalk of $\SE_0$ at any point in $C_0^F$ is of finite order.  
$\vf_0$ induces a natural isomorphism $\vf : F^*K \isom K$. 
The generalized Green function $Q^G_{L,C_0, \SE_0, \vf_0}$ is defined as
the restriction 
of the characteristic function $\x_{K,\vf}$ on $G^F$ to $G^F\uni$ (cf. \cite[II]{L3}), 
which is a $G^F$-invariant function on $G^F\uni$ with values in $\Ql$.
In the case where $(L, C_0, \SE_0) = (T, \{1\}, 1) \in \SM_G$, $Q^G_{L,C_0,\SE_0, \vf_0}$
(for the canonical isomorphism $\vf_0 : F^*\Ql \isom \Ql$)
is called the Green function of $G^F$, and is denoted by $Q^G_T$.

\para{1.4.}
Let $B$ be an $F$-stable Borel subgroup of $G$, and $T$ an $F$-stable maximal torus
of $G$ contained in $B$. 
We assume that $P$ is an $F$-stable parabolic subgroup of $G$ containing $B$, and 
$L$ an $F$-stable Levi subgroup of $P$ containing $T$.
$F$ acts naturally on $\SW_L$, which induces a Coxeter group automorphism 
$\s$ of order $c$. We consider the semidirect product $\wt\SW_L = \lp\s\rp\ltimes \SW_L$.   
Let $(\SW_L)\wg\ex$ be the set of (isomorphism classes of) 
irreducible representations of $\SW_L$ which are extendable to $\wt\SW_L$. 
For each $E \in (\SW_L)\wg\ex$, we choose a specific extension $\wt E$ to $\wt\SW_L$,
called the preferred extension of $E$ (for the definition, see 1.6), 
and let $\s_E : E \isom E$ be the action of $\s$ on $\wt E$.
By using the decomposition (1.1.1), one sees, for each $E \in (\SW_L)\wg\ex$, that
there exists a unique isomorphism $\vf_E : F^*K_E \isom K_E$ such that 
$\s_E\otimes \vf_E : F^*(E\otimes K_E) \isom E\otimes K_E$ gives the restriction of 
$\vf : F^*K \isom K$.   
By making use of the isomorphism (1.1.3), one can define an isomorphism 
$\psi : F^*\SE \isom \SE$ induced from $\vf_E : F^*K_E \isom K_E$ 
by the condition that $q^{(a_0+r)/2}\psi$ corresponds to the map 
$F^*(\SH^{a_0}K_E) \isom \SH^{a_0}K_E$ induced from $\vf_E$, where 
\begin{align*}
\tag{1.4.1}
a_0 &= -\dim Z_L - \dim C, \\
r &= \dim G - \dim L + \dim (C_0 \times Z_L). 
\end{align*}
Note that $\SH^{a_0}K_E|_C = \SE$ and $r = \dim \supp K_E$.  Also note the relation 
\begin{equation*}
\tag{1.4.2}
a_0 + r = (\dim G - \dim C) - (\dim L - \dim C_0).
\end{equation*}

The generalized Green function $Q^G_{L', C'_0, \SE_0', \vf'_0}$ defined in 1.3
(for an $F$-stable Levi subgroup $L'$ of a not necessarily $F$-stable parabolic
subgroup $P'$ of $G$) is obtained from this $L$ as
$Q^G_{L^w, C_0^w, \SE_0^w, \vf_0^w} = \x_{K, w\circ \vf}$, where $(L^w, C_0^w, \SE_0^w, \vf_0^w)$
are systems obtained from $(L, C_0, \SE_0, \vf_0)$ by twisting $w \in \SW_L$
(see \cite[II, 10.6]{L3}).  In the case of Green functions, it is denoted by
$Q^G_{T_w}$, where $T_w$ is an $F$-stable maximal torus twisted from $T$ by $w \in W$. 
\par
We denote by $I$ the set $\SN_G$. For $i = (C, \SE) \in I^F$ belonging to
$(L,C_0,\SE_0) \in \SM_G^F$, let $E_i \in (\SW_L)\wg\ex$ be the representation
defined by $E_i = V_{(C,\SE)}$. We define a function $X_i$ on $G^F\uni$ by

\begin{equation*}
\tag{1.4.3}  
X_i(g) = \sum_a(-1)^{a + a_0}\Tr(\vf_{E_i}, \SH^a_gK_{E_i})q^{-(a_0+r)/2}
\end{equation*}
for $g \in G^F\uni$, where $\vf_{E_i} : F^*K_{E_i} \isom K_{E_i}$ is
as in 1.4, and $a_0 + r$ is defined with respect to $C$ for $i = (C,\SE)$. 
Then $Q^G_{L^w,C_0^w, \SE_0^w, \vf_0^w}(g)$ can be written as

\begin{equation*}
\tag{1.4.4}  
  Q^G_{L^w, C_0^w, \SE_0^w, \vf_0^w}(g)
         = \sum_{i \in I^F}\Tr(w\s_{E_i}, \wt E_i)X_i(g)(-1)^{(a_0)_i}q^{(a_0+r)_i/2},
\end{equation*}
where $(a_0)_i, (a_0 + r)_i$ denote $a_0, a_0 + r$ with respect to $i = (C, \SE)$. 

\par
For each $i \in I^F$, we define a function $Y_i$ on $G^F\uni$ by

\begin{equation*}
\tag{1.4.5}
  Y_i(g) = \begin{cases}
              \Tr(\psi, \SE_g)  &\quad\text{ if } g \in C^F, \\
              0                 &\quad\text{ otherwise.} 
           \end{cases}
\end{equation*}
Then $\{ Y_i \mid i \in I^F\}$ gives rise to a basis of the $\Ql$-space
of $G^F$-invariant functions of $G^F\uni$.  In \cite[V]{L3}, Lusztig gave
an algorithm of computing $X_i$ as an explicit linear combination of
those functions $Y_j$. In view of (1.4.4), the determination of generalized
Green functions is reduced to the determination of the functions $Y_i$. 
But the computation of $Y_i$ is very subtle. 

\para{1.5.}
Since $\SE$ is a simple local system, the isomorphism $\psi : F^*\SE \isom \SE$
is unique up to scalar. Among them, the typical isomorphism $\psi_0 : F^*\SE \isom \SE$ 
is given as follows; choose $u \in C^F$, and put $A_G(u) = Z_G(u)/Z_G^0(u)$. 
Then $F$ acts on $Z_G(u)$, and acts on $A_G(u)$ as a permutation of connected components
of $Z_G(u)$. 
Let $\tau$ be the action of $F$ on $A_G(u)$, and consider the semidirect product 
$\wt A_G(u) = \lp\tau\rp \ltimes A_G(u)$.
Let $A_G(u)\wg\ex$ be the set of irreducible representations of $A_G(u)$ (up to isomorphism)
which are extendable to an irreducible representation of $\wt A_G(u)$. 
Assume that $(u,\r) \lra (C,\SE) \in \SN_G^F$, then 
$\r \in A_G(u)\wg\ex$.  The condition
$F^*\SE \simeq \SE$ is equivalent to saying that there exists an extension
$\wt\r$ of $\r$ such that the stalk $\SE_u$ has the structure of $\wt A_G(u)$-module
corresponding to $\wt\r$. 

\par
We define $\psi_0: F^*\SE \isom \SE$ by the condition that the induced map 
on $\SE_u$ coincides with the action of $\tau$ on $\SE_u = \wt\r$, and define 
a function $Y_i^0$ on $G\uni^F$ in a similar way as $Y_i$, but by replacing $\psi$ by $\psi_0$.
Note that the set of $G^F$-conjugacy classes in $C^F$ is in bijection with 
the set of $A_G(u)$-conjugacy classes in the coset $A_G(u)\tau \subset \wt A_G(u)$.    
We denote by $u_a$ the $G^F$-class corresponding to the class $a\tau \in A_G(u)\tau$.
Then $Y_i^0$ is explicitly given as follows;

\begin{equation*}
\tag{1.5.1}
Y_i^0 = \begin{cases}
          \Tr(a\tau, \wt\r) &\quad\text{ if $g$ is $G^F$-conjugate to $u_a$, } \\
           0                  &\quad\text{ if $g \notin C^F$.}
        \end{cases}
\end{equation*} 

\par
Note that $Y_i^0$ is computable by the formula (1.5.1).  Since $\psi = \g \psi_0$ 
for some $\g \in \Ql^*$, we have $Y_i = \g Y_i^0$.  Thus the computation of
$Y_i$ is reduced to the determination of $\g$. 
We choose $u_0 \in C_0^F$, and consider $\wt A_L(u_0) = \lp\tau_0\rp \ltimes A_L(u_0)$, 
where $\tau_0$ is the permutation action of $F$ on $A_L(u_0)$.  
Let $\r_0 \in A_L(u_0)\wg\ex$ be the one corresponding to $\SE_0$, and 
$\wt\r_0$ its extension to $\wt A_L(u_0)$. 
We can define $\vf_0 : F^*\SE_0 \isom \SE_0$ by the condition that 
the induced map on $(\SE_0)_{u_0}$ coincides with the action of $\tau_0$ on 
$\wt\r_0$. 
Thus $\g$ depends on the choice of $(u_0, \wt\r_0, u, \wt \r)$, which we denote by
$\g = \g(u_0, \wt\r_0, u, \wt\r)$. 
Note that $\vf_0: F^*\SE_0 \isom \SE_0$ satisfies the requirement of $\vf_0$
in 1.3, since the order of $\tau_0$ is finite. 

\para{1.6.}
Here we review the definition of the preferred extensions following
\cite[V, 17.2]{L3}.  Let $(W, S)$ be a Coxeter system of a Weyl group $W$,
and $\s : W \to W$ the Coxeter group automorphism of order $c$.  We denote by
$\wt W = \lp \s\rp \ltimes W$ the semidirect product of $W$ with the cyclic group
$\lp \s\rp$ of order $c$. Let $E$ be an irreducible $W$-module (over $\Ql$)
which is extendable to a $\wt W$-module. Then $E$ can be extended in $c$ different ways
to a $\wt W$-module.  A particular type of extension, called the preferred extension,
$\wt E$ of $E$ is defined for each case separately as follows.
\par\medskip
(a) \ $W$ is irreducible, and $c = 1$, then take $\wt E = E$.
\par\smallskip
(b) \ $W$ of type $A_n$ ($n \ge 2$) or $E_6$, and $c = 2$.
Define $\wt E$ by the condition that $\s : E \to E$ acts as $(-1)^{a_E}\cdot w_0$,
where $w_0$ is the longest element in $W$, and $a : E \to a_E$ is the $a$-function
defined in \cite[4.1]{L1}. 
\par\smallskip
(c) \ $W$ of type $D_4$ and $c = 3$.  Define $\wt E$ to be the unique extension of
$E$ which is defined over $\BQ$.
\par\smallskip
(d) \ $W$ of type $D_n$ ($n \ge 4$) and $c = 2$.
In this case, $\wt W$ is identified with the Weyl group $W_n$ of type $B_n$.
The irreducible representations of $W_n$ are parametrized by the pairs $(\la; \mu)$ of
partitions,
$\la : 0 \le \la_1 \le \cdots \le \la_r, \mu : 0 \le \mu_1 \le \cdots \le \mu_r$
such that $\sum \la_i + \sum \mu_i = n$
(with the same $r$). 
We denote by the same symbol $(\la;\mu)$ the corresponding irreducible $W_n$-module. 
If $\la \ne \mu$, the restriction of $(\la;\mu)$ gives an irreducible $W$-module,
and in that case, $(\la;\mu)$ and $(\mu;\la)$ give isomorphic $W$-modules.
Any extendable
irreducible $W$-module $E$ is obtained in this way, and the two extensions to $\wt W$ are given
by $(\la;\mu)$ and $(\mu;\la)$.  Now the preferred extension $\wt E$ of $E$ is given by
$(\la;\mu)$, where $\la_i > \mu_i$ if $i$ is the smallest integer such that $\la_i \ne \mu_i$.
\par\smallskip
(e) \ Assume that $W = W_1 \times W_2 \times \cdots \times W_r$, with $W_i$ irreducible
Weyl groups mutually isomorphic,
and $\s$ permutes cyclically the factors $W_i$. 
Then $E$ can be written as an external tensor product
$E = E_1 \boxtimes E_2 \boxtimes \cdots \boxtimes E_r$, and for $E \in W\wg\ex$,
the preferred extension $\wt E$ is constructed from the preferred extension $\wt E_r$
with respect to the isomorphism $\s^r: W_r \isom W_r$ as in \cite[IV, 17.2]{L3}.  
\par\smallskip
(f) \ In the general case, one can write uniquely
$W = W^{(1)} \times \cdots \times W^{(t)}$ and
$E = E^{(1)}\boxtimes\cdots \boxtimes E^{(t)}$,
where $W^{(i)}$ are $\s$-stable Weyl groups satisfying the condition in (e), and
$E^{(i)}$ are irreducible $W^{(i)}$-modules. Define the preferred extension
$\s : \wt E \isom  \wt E$ by the condition that $\s$ is an external tensor
product of the maps $\s^{(i)}$ on $\wt E^{(i)}$ constructed as in (e).

\para{1.7.}
Returning to the original setup in 1.4, we consider the generalized Green functions
$Q^G_{L^w, C_0^w, \SE_0^w, \vf_0^w}$ associated to $(L, C_0, \SE_0) \in \SM_G^F$.
Note that these functions are determined from a given isomorphism
$\vf_0: F^*\SE_0 \isom \SE_0$.  We choose such an isomorphism $\vf_0$ for each
$(L,C_0,\SE_0) \in \SM_G^F$ as above, and fix a set

\begin{equation*}
\tag{1.7.1}  
\Xi_G = \{ \vf_0 = \vf_{(C_0,\SE_0)} : F^*\SE_0 \isom \SE_0 \mid (L, C_0, \SE_0) \in \SM_G^F\}.
\end{equation*}  
Hence $\Xi_G$ determines the set of generalized Green functions
$\{ Q^G_{L^w, C_0^w, \SE_0^w, \vf_0^w} \}$.
\par
For each $F$-stable unipotent class $C$ of $G$, we shall define a specific
representative $u \in C^F$, called a split element. 
By induction on $\dim G$, if $L \ne G$, then we may assume that there exists
a split element $u_0 \in C_0^F$.  Assume that $(C_0, \SE_0) \lra (u_0, \r_0)$.
Then $\vf_0: F^*\SE_0 \isom \SE_0$ determines the extension $\wt\r_0 $
of $\r_0$ to $\wt A_L(u_0)$.
\par
We define a split element $u \in C^F$, associated to $\Xi_G$, for various cases separately.
\par\medskip
(a) \ Assume that $G$ is a simply connected, almost simple group, with $F$ split.
First consider the case where $G$ is not of type $E_8$.
If $(C,\SE) \in \SN_G^F$ is not cuspidal, then $(u,\r) \lra (C,\SE)$ belongs to
$(L, C_0, \SE_0)$ with $L \ne G$.  Then for $\r \in A_G(u)\wg\ex$, and
its extension $\wt\r$, one can consider $\g(u_0, \wt\r_0, u, \wt\r)$.
In the case where $(C, \SE)$ is cuspidal, $\vf_0: F^*\SE_0 \isom \SE_0$ for $\SE_0 = \SE$ 
is given by (1.7.1).  This determines an extension $\wt\r$ of
$\r \in A_G(u)\wg\ex$.
Now $u \in C^F$ is called a split element if for each $\r \in A_G(u)\wg\ex$,
there exists an extension $\wt\r$ of $\r$ such that

\begin{equation*}
\tag{1.7.2}  
\begin{aligned}  
  \g(u_0, \wt\r_0, u, \wt\r) &= 1,
        \quad\text{ if $(u,\r)$ not cuspidal,} \\
  \g(u, \wt\r, u, \wt\r)     &= 1,  \quad\text{ if $(u,\r)$ cuspidal.}
\end{aligned}
\end{equation*}  
Note that the second condition just requires that the function
$\vf_0: F^*\SE \isom \SE$ coincides with $\psi_0: F^*\SE \isom \SE$ for $\SE = \SE_0$,
and is automatically satisfied since $\psi_0$ satisfies the condition for $\vf_0$ in 1.3. 
\par
Next consider the case where $G$ is of type $E_8$
(for the notation used below, see Section 4).  Unless $q \equiv -1 \pmod 3$,
we use the same formula as (1.7.2) for defining a split element in $C^F$.
Now assume that $q \equiv -1 \pmod 3$.  If $C$ is not of type $D_8(a_3)$, (1.7.2) is
also used.  So assume that $C$ is of type $D_8(a_3)$.
In this case, any $(C,\SE)$ belongs to $(T, \{ 1\}, 1)$. For $u \in C$,
$A_G(u) \simeq S_3$.  Let $\ve$ be the sign representation of
$S_3$.  We define a split element $u \in C^F$ by the condition that

\begin{equation*}
\tag{1.7.3}  
  \g(u_0, \wt\r_0, u, \r) = \begin{cases}
                   1  &\quad\text{ if $\r \ne \ve$, }, \\
                  -1  &\quad\text{ if $\r = \ve$,}
                            \end{cases}
\end{equation*}
where $(u_0, \wt\r_0) = (1,1)$. 

\par\smallskip
(b) Assume that $G$ is a simply connected, almost simple group of type $A_n$ or $E_6$,
with $F$ non-split. Let $B$ be a Borel subgroup of $G$, and for $u \in G\uni$, define
$\SB_u = \{ gB \in G/B \mid g\iv ug \in B\}$.  We put $d_u = \dim \SB_u$.
Take $(C, \SE) \in \SN_G^F$ such that $(C,\SE)$ belongs to
$\eta = (L, C_0, \SE_0) \in \SM_G^F$.
Let $E = V_{(u,\r)} \in (\SW_L)\wg\ex$ for $(u, \r) \lra (C, \SE)$.
Let us define an integer $\d_E$ by 

\begin{equation*}
\tag{1.7.4}  
  \d_E  = \begin{cases}
             a_E - d_u  &\quad\text{ if $\SW_L$ : type $A_m$ or $E_6$, } \\
              0         &\quad\text{ otherwise. }
          \end{cases}
\end{equation*}

We define a split element $u \in C^F$ by the condition that there exists
an extension $\wt\r$ of $\r$ such that 

\begin{equation*}
\tag{1.7.5}  
\begin{aligned}  
  \g(u_0, \wt\r_0, u, \wt \r) &= \nu_{u, \eta}(-1)^{\d_E},
              &\quad &\text{ if $(u,\r)$ not cuspidal, with $E = V_{(u,\r)}$, }   \\
  \g(u, \wt\r, u, \wt \r)     &= 1     &\quad &\text{ if $(u,\r)$ cuspidal,}    
\end{aligned}
\end{equation*}
where $\nu_{u, \eta}$ is a root of unity determined by the class $C$ and $\eta$.
For the explicit description of $\nu_{u,\eta}$, see Section 2 (resp. Section 9)
for $G = SL_n$ (resp. for $G = E_6$). 

\par\smallskip
(c) \ Assume that $G$ is a simply connected, almost simple group of
type $D_4$ with $F$ non-split such that $F^3$ is split.  In this case,
we define a split element $u \in C^F$ by the condition (1.7.2). 

\par\smallskip
(d) \ Assume that $G$ is a simply connected, almost simple group of
type $D_n$ with $F$ non-split such that $F^2$ is split.  In this case,
we define a split element $u \in C^F$ by the condition (1.7.2).  

\par\smallskip
(e) \ Assume that $G = G_1 \times G_2 \times \cdots \times G_r$, where 
$G_i$ is a simply connected, almost simple group, and
$F$ permutes cyclically the factors $G_i$; $F : G_1 \to G_2 \to \cdots \to G_r \to G_1$.
Hence $F^r : G_1 \to G_1$ is a Frobenius map on $G_1$. 
Then a unipotent class $C = C_1 \times \cdots \times C_r$ in $G$ is $F$-stable
if and only if $F$ permutes factors $C_i$ cyclically, and
$u_1 \mapsto u = (u_1, F(u_1), F^2(u_1), \cdots F^{r-1}(u_1))$ gives a bijection 
$C_1^{F^r} \isom C^F$. 
We define a split element $u \in C^F$ by the condition that $u_1 \in C_1^{F^r}$
is a split element in the sense of (a) $\sim$ (d).

\par\smallskip
(f) \ Assume that $G = G^{(1)} \times G^{(2)} \times \cdots \times G^{(t)}$, and
$G^{(i)}$ are $F$-stable such that $(G^{(i)},F)$ satisfies the condition in (e) for each $i$.
Let $C = C^{(1)} \times C^{(2)} \times \cdots \times C^{(t)}$
be an $F$-stable unipotent class in $G$.
Then $C^{(i)}$ is $F$-stable for each $i$. We define a split element
$u = (u^{(1)}, \dots, u^{(t)}) \in C^F$ by the condition that
$u^{(i)} \in (C^{(i)})^F$ is split for each $i$ in the sense of (e). 

\par\smallskip
(g) \ Assume that $G$ is a connected reductive group of general type.
Let $\pi : G \to G/Z_G^0$ be the natural projection, where $G/Z_G^0$ is semisimple.
We define a split element $u \in G^F$ by the condition that $\pi(u)$ is
split in the semisimple group $(G/Z_G^0)^F$.

\para{1.8.}
Let $G$ be connected reductive, and $C$ an $F$-stable
unipotent class in $G$.  Assume that there exists a split element $u \in C^F$.
It is clear that if $u_1 \in C^F$ is $G^F$-conjugate to $u$, 
then $u_1$ is also split.  
\par
We consider the converse of the above situation.
Hence assume that $u, u_1 \in C^F$, and choose $h \in G$ such that 
$u_1 = huh\iv$ with $h\iv F(h) = \da \in Z_G(u)$, where $\da$ is a representative
of $a \in A_G(u)$.  
Then $\ad h$ gives an isomorphism $A_G(u) \isom A_G(u_1)$. It is easy to
check that under this isomorphism, the Frobenius action of $F$ 
on $A_G(u_1)$ corresponds to the action of $aF$ on $A_G(u)$, 
where $a$ acts as an inner automorphism on $A_G(u)$.  We now assume that 
the isomorphism $A_G(u) \isom A_G(u_1)$ can be extended to the isomorphism 
$\wt A_G(u) \isom \wt A_G(u_1)$, via $\tau \mapsto \tau_1$. This means that 
$a \in A_G(u)$ is a central element. 
For each $\r \in A_G(u)\wg$, $\r_1 \in A_G(u_1)\wg$ is determined by 
the isomorphism $A_G(u) \isom A_G(u_1)$. If $\r$ is extendable, then
$\r_1$ is also extendable, and an extension $\wt\r$ determines a unique 
extension $\wt\r_1$ of $\r_1$. 
The following lemma shows that a split element is unique up to $G^F$-conjugate, 
if it exists. Thus obtained $G^F$-class in $C^F$ is called the split class. 

\begin{lem} %%%%  Lemma 1.9
Let $u, u_1 \in C^F$ be elements such that  
$\wt A_G(u) \isom \wt A_G(u_1)$ as in 1.8.
Assume that $u, u_1$ satisfy the condition that 
$\g(u_0, \wt\r_0, u, \wt\r) = \g(u_0, \wt\r_0, u_1, \wt\r_1)$
for any pair $(C,\SE) \in \SN_G^F$, where $\wt\r$ and $\wt\r_1$ are
related as in 1.8.  
Then $u_1$ is $G^F$-conjugate to $u$.     
\end{lem}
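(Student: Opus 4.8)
The plan is to show that the two $G^F$-classes in $C^F$ represented by $u$ and $u_1$ must coincide, by contradiction: if they are distinct, I will produce a pair $(C,\SE)$ for which the scalars $\g(u_0,\wt\r_0,u,\wt\r)$ and $\g(u_0,\wt\r_0,u_1,\wt\r_1)$ differ, violating the hypothesis. The natural invariant to exploit is the basis $\{Y_i \mid i \in I^F\}$ of $G^F$-invariant functions on $G\uni^F$ from (1.4.5), together with its ``canonical'' counterpart $\{Y_i^0\}$ from (1.5.1), which is explicitly computable by character theory of $\wt A_G(u)$. Recall $Y_i = \g\, Y_i^0$ with $\g = \g(u_0,\wt\r_0,u,\wt\r)$, so the hypothesis says precisely that $Y_i$ evaluated via the $u$-normalization agrees with $Y_i$ evaluated via the $u_1$-normalization.

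First I would fix $(C,\SE)\lra(u,\r)$ and transport it through $\ad h$ to $(u_1,\r_1)$; by 1.8 the hypothesis $\wt A_G(u)\isom\wt A_G(u_1)$ guarantees that $a\in A_G(u)$ is central, so the preferred-extension data matches up and $\wt\r\mapsto\wt\r_1$ canonically. The key computation is to express, for a $G^F$-class $u_b$ in $C^F$ (parametrized by $b\tau\in A_G(u)\tau$ as in 1.5), the value $Y_i^0(u_b)$ computed with respect to the representative $u$ versus with respect to the representative $u_1$. From the description in 1.5, $Y_i^{0}$ relative to $u$ takes the value $\Tr(b\tau,\wt\r)$ on the class $u_b$; relative to $u_1$ the same function $Y_i$ (which is intrinsic, being $\Tr(\psi,\SE_g)$) is $\g(u_0,\wt\r_0,u_1,\wt\r_1)$ times $\Tr(b'\tau_1,\wt\r_1)$ where $b'\tau_1$ is the class of $u_b$ in $A_G(u_1)\tau_1$. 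Since $h\iv F(h)=\da$, the reparametrization $A_G(u)\tau\to A_G(u_1)\tau_1$ induced by $\ad h$ is the twist $b\tau\mapsto (ab)\tau$ (up to $A_G(u)$-conjugacy), and under the corresponding extension one gets $\Tr(b'\tau_1,\wt\r_1)=\Tr((ab)\tau,\wt\r)$ for a suitable normalization. Combining, the hypothesis forces
\begin{equation*}
\Tr(b\tau,\wt\r) = \g(u_0,\wt\r_0,u,\wt\r)\iv\g(u_0,\wt\r_0,u_1,\wt\r_1)\,\Tr((ab)\tau,\wt\r) = \Tr((ab)\tau,\wt\r)
\end{equation*}
for every $b$ and every extendable $\r$, where I have used the hypothesis that the two $\g$'s are equal.

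Now I would run over all $\r\in A_G(u)\wg\ex$: the functions $b\tau\mapsto\Tr(b\tau,\wt\r)$, as $\r$ ranges over $\wt A_G(u)\wg$ restricting into $A_G(u)\wg\ex$ with chosen extensions, separate the $A_G(u)$-conjugacy classes in the coset $A_G(u)\tau$ — this is the standard orthogonality for class functions on a coset under the twisted group, using that every irreducible of $\wt A_G(u)$ either vanishes identically on the coset or restricts to an extendable irreducible of $A_G(u)$. Hence $\Tr(b\tau,\wt\r)=\Tr((ab)\tau,\wt\r)$ for all such $\r$ implies $b\tau$ and $(ab)\tau$ are $A_G(u)$-conjugate, for every $b$; taking $b=1$ gives that $a\tau$ is $A_G(u)$-conjugate to $\tau$, which by the dictionary of 1.5 between $G^F$-classes in $C^F$ and $A_G(u)$-classes in $A_G(u)\tau$ says exactly that $u_1=huh\iv$ is $G^F$-conjugate to $u$. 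The main obstacle I anticipate is bookkeeping the normalizations of the extensions $\wt\r$ and $\wt\r_1$ under $\ad h$ — one must check that the canonical isomorphism $\wt A_G(u)\isom\wt A_G(u_1)$ sending $\tau\mapsto\tau_1$ really does carry the chosen extension $\wt\r$ to the chosen extension $\wt\r_1$ on the nose (not merely up to a root of unity), so that the character identity above is exact; this is where centrality of $a$ is essential and where the hypothesis on extendability of $A_G(u)\isom A_G(u_1)$ in 1.8 is used. Everything else is routine manipulation of (1.4.5), (1.5.1), and coset character theory.
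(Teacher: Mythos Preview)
Your proposal is correct and follows essentially the same route as the paper: compare the two normalizations $Y_i^0$ and $Y'^0_i$ of the intrinsic function $Y_i$, use the hypothesis $\g(u_0,\wt\r_0,u,\wt\r)=\g(u_0,\wt\r_0,u_1,\wt\r_1)$ to get $\Tr(b\tau,\wt\r)=\Tr((ab)\tau,\wt\r)$ for all $b$ and all extendable $\r$, and then invoke orthogonality of twisted class functions on the coset $A_G(u)\tau$ to conclude that $a$ is $\tau$-twisted conjugate to $1$. The paper's computation of the reparametrization (its equations (1.9.1)--(1.9.2), showing $(u_1)_{c_1}$ is $G^F$-conjugate to $u_{ca}$) is exactly the bookkeeping you flag as the main obstacle, and your identification of centrality of $a$ as the crucial input there is spot on.
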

\begin{proof}
Let $u, u_1 \in C^F$ be as in the lemma. 
We assume that $u_1 = huh\iv$ with $h \in G$ such that $h\iv F(h) = \da \in Z_G(u)$. 
For $i = (C, \SE) \in \SN_G^F$, let $Y_i^0$ be the function defined in terms 
of $u \in C^F$, and $Y'^0_i$ the function defined in terms of $u_1 \in C^F$. 
Thus $Y_i^0(x) = \Tr(c\tau, \wt\r)$ if $x$ is $G^F$-conjugate to $u_c$, 
and $Y'^0_i(x) = \Tr(c_1\tau_1, \wt\r_1)$ if $x$ is $G^F$-conjugate to $(u_1)_{c_1}$.
Since $\dc_1 \in Z_G(u_1) = hZ_G(u)h\iv$, one can write as 
$h\iv \dc_1 h = \dc \in Z_G(u)$. 
Since $(u_1)_{c_1} = gu_1g\iv$ for $g \in G$ such that $g\iv F(g) = \dc_1$, we have
\begin{equation*}
\dc = h\iv \dc_1 h = h\iv (g\iv F(g))h = (gh)\iv F(gh)\cdot \da\iv, 
\end{equation*}    
and so $\dc \da = (gh)\iv F(gh) \in Z_G(u)$.  This means that $(u_1)_{c_1}$ is 
$G^F$-conjugate to $u_{ca}$.  By assumption, we have 
$Y_i^0 = \g\iv Y_i = Y_i'^0$, where
$\g = \g(u_0, \wt\r_0, u, \wt\r) = \g(u_0, \wt\r_0, u_1, \wt\r_1)$. It follows that 
\begin{equation*}
\tag{1.9.1}
\Tr(c_1\tau_1, \wt\r_1) = \Tr(ca\tau, \wt\r).
\end{equation*}
On the other hand, by the isomorphism $\wt A_G(u) \isom \wt A_G(u_1)$, 
we have 
\begin{equation*}
\tag{1.9.2}
\Tr(c_1\tau_1, \wt\r_1) = \Tr(c\tau, \wt\r).
\end{equation*} 
By (1.9.1) and (1.9.2), the equality 
$\Tr(ca\tau, \wt\r) = \Tr(c\tau, \wt\r)$ holds for any $\r \in A_G(u)\wg\ex$.
By using the orthogonality relations of $\t$-twisted class functions on $A_G(u)$, we see 
that $ca$ is $\t$-twisted conjugate to $c$ for any $c \in A_G(u)$, and so 
$a$ is $\t$-twisted conjugate to 1. This means that $u_1$ is $G^F$-conjugate to
$u$.  The lemma is proved.  
\end{proof}

The purpose of this paper is to prove that the following conjecture
holds for a certain choice of $\Xi_G$,
possibly except one class in $E_7$. 
Note that the conjecture implies that the generalized Green
functions $Q^G_{L^w, C_0^w, \SE_0^w, \vf_0^w}$ are computable. 

\par\medskip\noindent
\addtocounter{thm}{1}
{\bf Conjecture 1.10.}  %%%%  Conjecture 1.10.
{\it Let $G$ be a connected reductive group with Frobenius map $F$.
For an $F$-stable unipotent class $C$ in $G$, there exists
a split element $u \in C^F$. Moreover, for any $\r \in A_G(u)\wg\ex$,
an extension $\wt\r$ of $\r$ satisfying the conditions appeared
in (a) $\sim$ (g) in 1.7 is uniquely determined, which we call
the split extension of $\r$.} 

\para{1.11.}
The proof of the conjecture is reduced to the case where
$G$ is a simply connected, almost simple group.  
In the case of exceptional groups, we make use of the computation of
Green functions by \cite{Spr}, \cite{Sh2}, \cite{BS} for the case $p$ is good,
and by \cite{G2}, \cite{Lu} for the case $p$ is bad.

\para{1.12.}  The main ingredient for the proof is again    
a variant of the restriction theorem as in 
\cite[II,III]{Sh4}, which we review below.
First recall the original restriction theorem.  
\par
Under the setup in 1.1, fix $(L, C_0, \SE_0) \in \SM_G$.
Let $P$ be a parabolic subgroup of $G$ 
whose Levi subgroup is $L$.  We consider another parabolic subgroup $Q \supset P$ of $G$ 
with Levi subgroup $M \supset L$ and the unipotent radical $U_Q$.
Then $\SW_L' = N_M(L)/L$ is in a natural way 
a subgroup of $\SW_L$.
Let $C$ (resp. $C'$) be a unipotent class in $G$ (resp. in $M$),
and choose $u \in C, u' \in C'$.
Put 
\begin{align*}
\tag{1.12.1}
Y_{u,u'} &= \{g \in G \mid g\iv ug \in u'U_Q\}, \\
\CQ_{u,C'} &= \{ gQ \in G/Q \mid g\iv ug \in C'U_Q\}.
\end{align*}
We have a map $q: Y_{u,u'} \to \CQ_{u,C'}$ by $g \mapsto gQ$.
$q$ is $Z_G(u)$-equivariant under the natural action of $Z_G(u)$ on them.
$Z_G(u) \times Z_M(u')U_Q$ acts on $Y_{u,u'}$, and 
we have an isomorphism 
\begin{equation*}
\tag{1.12.2}
Y_{u,u'}/Z_M(u')U_Q \simeq \CQ_{u,C'},
\end{equation*}
which is compatible with the action of $Z_G(u)$. 

\par
Let $X_{u,u'}$ be the 
set of irreducible components of $Y_{u,u'}$ with possible maximal dimension 
$d_{u,u'} = (\dim Z_G(u) + \dim Z_M(u'))/2 + \dim U_Q$.  Then 
$A_G(u) \times A_M(u')$ acts on $X_{u,u'}$ as a permutation of irreducible 
components.  Denote by $\ve_{u,u'}$ this permutation representation 
of $A_G(u) \times A_M(u')$ on $X_{u,u'}$.
We consider the generalized Springer correspondence for $M$. 
Take $(C',\SE') \in \SN_M$ with $(C',\SE') \lra (u', \r')$.
Let $V_{(u',\r')}$ be 
the irreducible representation of $\SW_L'$ corresponding to $(u', \r') \in \SN_M$
under (1.1.4).  
Then the restriction theorem (\cite[1.2 (II)]{Sp2}) asserts that 
\begin{equation*}
\tag{1.12.3}
\lp \ve_{u,u'}, \r \otimes \r'^*\rp_{A_G(u) \times A_M(u')}  
       = \lp V_{(u,\r)}|_{\SW_L'}, V_{(u',\r')}\rp_{\, \SW_L'},
\end{equation*}
if $(u,\r)$ and $(u',\r')$ belong to the same series $(L,C_0,\SE_0)$, and 
the left hand side is equal to zero if $(u,\r)$ and $(u',\r')$ belong to 
different series. 
Moreover, every irreducible representation of 
$A_G(u) \times A_M(u')$ which occurs in $\ve_{u,u'}$ is obtained in this way. 

\para{1.13.}
We assume that $L \subset P, M \subset Q$ are all $F$-stable. 
Consider $Y_{u,u'}$ in (1.12.1), and assume that $u, u'$ are $F$-stable. 
Then $F$ acts naturally on $Y_{u,u'}$, and acts on $X_{u,u'}$ as a permutation
of irreducible components. 
Put $A(u,u') = A_G(u) \times A_M(u')$, then $F$ acts diagonally on $A(u,u')$.
We define $\wt A(u,u') = \lp \tau \rp \ltimes A(u,u')$, where 
$\lp \tau \rp$ is an infinite group generated by $\tau$, 
and $\tau$ acts on $A(u,u')$ via $F$. 
Now $\ve_{u,u'}$ turns out to be 
an $\wt A(u,u')$-module, which we denote by $\wt\ve_{u,u'}$. 
In turn, if we choose extensions $\wt\r, \wt\r'$ of $\r, \r'$, respectively, 
it defines an extension of $\r \otimes \r'^*$ 
to the irreducible $\wt A(u,u')$-module $\wt{\r \otimes \r'^*}$. 

\par
For $(u,\r) \in \SN_G^F$, let $\s_{(u,\r)} : V_{(u,\r)} \isom  V_{(u,\r)}$ 
be the isomorphism  defined by the condition that
the restriction of $\vf$ on $V_{(u,\r)} \otimes F^*\IC(\ol C, \SE)[\dim C]$
coincides with $\s_{(u,\r)}\otimes \wt\psi_0$, where $\wt\psi_0$ is the isomorphism
$F^*\IC(\ol C,\SE)[\dim C] \isom \IC(\ol C,\SE)[\dim C]$ 
induced from $\psi_0: F^*\SE \isom \SE$. 
(Don't confuse $\s_{(u,\r)}$ with the preferred extension $\s_E$ for $E = V_{(u,\r)}$.) 
Similarly, for the reductive group $M$, and $(C',\SE') \in \SN_M^F$, 
we obtain the map $\s_{(u',\r')} : V_{(u',\r')} \isom V_{(u',\r')}$, 
where $(u',\r') \lra (C', \SE')$. 
We can decompose $V_{(u,\r)}$ as $\SW_L'$-module,    
\begin{equation*}
\tag{1.13.1}
V_{(u,\r)} = \biggl(\bigoplus_{(u',\r') \in \SN_M^F} M_{\r, \r'}
                      \otimes V_{(u',\r')}\biggl) \oplus V_{(u,\r)}',
\end{equation*}
where $M_{\r, \r'} = \Hom_{\SW_L'} (V_{(u',\r')}, V_{(u,\r)}|_{\SW_L'})$ is the 
multiplicity space for $V_{(u',\r')}$, and $V'_{(u,\r)}$ is a sum of $V_{(u',\r')}$
such that $V_{(u',\r')} \notin (\SW_L')\wg\ex$.  
Thus we can define an isomorphism $\s_{\r,\r'}: M_{\r,\r'} \isom M_{\r,\r'}$
such that the restriction of $\s_{(u,\r)}$ on $M_{\r,\r'}\otimes V_{(u',\r')}$
coincides with $\s_{\r,\r'}\otimes\s_{(u',\r')}$.    

\par
A variant of the restriction theorem (1.12.3)
was proved in \cite[II, Cor. 1.9]{Sh4}.  
Under the notation above, the following formula holds.
\begin{equation*}
\tag{1.13.2}
\Tr(\s_{\r,\r'}, M_{\r,\r'}) = q^{-(\dim C - \dim C')/2 + \dim U_Q}
                 \lp \wt\ve_{u,u'}, \wt{\r \otimes \r'^*}\rp_{A(u,u')\tau}, 
\end{equation*}
where for representations $V_1, V_2$ of $\wt A(u,u')$, 
$\lp V_1 ,V_2\rp_{A(u,u')\tau}$ is defined as
\begin{equation*}
\lp V_1, V_2\rp_{A(u,u')\tau} = |A(u,u')|\iv \sum_{a \in A(u,u')}
                                  \Tr(a\tau,V_1)\Tr((a\tau)\iv, V_2). 
\end{equation*} 

\para{1.14.}
In order to obtain an information on $\g$, we apply the restriction formula (1.13.2).
Here we consider $F$-stable Levi subgroups $L \subset M \subset G$ as in 1.13.  
Consider $(u,\r) \in \SN_G^F$ and $(u',\r') \in \SN_M^F$ with 
$u \in C^F, u' \in C'^F$. 
Take $(u_0,\r_0) \in \SN_L^F$ with $u_0 \in C_0^F$, and fix an extension 
$\wt\r_0$ of $\r_0$. Assume that an extension 
$\wt\r'$ of $\r'$ is fixed.
Assume that $\r\otimes\r'^*$ appears in the decomposition of $\ve_{u,u'}$. 
Let $\ve_{u,u'}(\r,\r')$ be the $\r\otimes\r'^*$ isotypic component of 
the $A(u,u')$-module $\ve_{u,u'}$.   Then $F$ acts naturally on $\ve_{u,u'}(\r,\r')$, 
and we regard it as a submodule of $\wt\ve_{u,u'}$. 
Assume that $F$ acts trivially on $\ve_{u,u'}(\r,\r')$.   
Then there exists a unique extension $\wt\r$ of $\r$ 
with $\wt{\r\otimes\r'^*} = \wt\r \otimes \wt\r'^*$ such that  
\begin{equation*}
\tag{1.14.1}
\lp \wt\ve_{u,u'}, \wt\r \otimes \wt\r'^*\rp_{A(u,u')\tau}
          = \lp \ve_{u,u'}, \r\otimes \r'^*\rp_{A(u,u')}. 
\end{equation*}  

On the other hand, let $\wt V_{(u,\r)}$ (resp. $\wt V_{(u',\r')}$) be the 
preferred extension of $V_{(u,\r)}$ (resp. $V_{(u',\r')}$). We consider 
the following condition. 
\par\medskip\noindent
(1.14.2) \ Let $V_{\r,\r'} = M_{\r,\r'}\otimes V_{(u',\r')}$ be the 
$V_{(u',\r')}$-component of $V_{(u,\r)}$ as in (1.13.1).  Then the restriction 
of $\wt V_{(u,\r)}$ on $V_{\r,\r'}$ coincides with $\d \otimes \wt V_{(u',\r')}$, 
where $\d$ is a scalar multiplication by a $c$-th root of unity for $c$ as in 1.6.     

\par\medskip 
The following lemma is a refinement of Lemma 4.6 in \cite[II]{Sh4}.
  
\begin{lem}  %%%%   Lemma 1.15
Assume that $\g(u_0, \wt\r_0, u', \wt\r') = \nu'$. 
Under the notation of 1.14, assume that $F$ acts trivially on 
$\ve_{u,u'}(\r,\r')$. Let $\wt\r$ be the extension of $\r$ which 
satisfies the condition in (1.14.1). 
Then (1.14.2) holds with respect to $V_{(u,\r)}$ and $V_{(u',\r')}$, 
and we have $\g(u_0, \wt\r_0, u, \wt\r) = \d\iv\nu'$. 
\end{lem}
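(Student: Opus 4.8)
The plan is to compute $\Tr(\s_{\r,\r'},M_{\r,\r'})$ in two independent ways and compare them: once by unwinding the normalizations that tie $\s_{(u,\r)}$ to $\g$, and once by the Frobenius restriction formula (1.13.2) combined with the classical restriction theorem (1.12.3). This follows the line of \cite[II, Lemma 4.6]{Sh4}; the novelty here is keeping track of the preferred extensions precisely, which is what produces the scalar $\d$ and yields (1.14.2).

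First I would unwind the normalizations. Comparing the decomposition (1.1.1) of $K$ with the decomposition (1.1.2) of $K[-d]|_{G\uni}$ (with $d=\dim Z_L$) under the isomorphisms $\vf$ and $\vf_E$, and using the Tate twist $q^{(a_0+r)_G/2}$ that relates $\psi$ to the stalk action of $\vf_E$ on $\SH^{a_0}K_E$ (as in 1.4), together with $\psi=\g\psi_0$, one finds
\[
\s_{(u,\r)} = q^{(a_0+r)_G/2}\,\g\,\s_E,
\qquad
\s_{(u',\r')} = q^{(a_0+r)_M/2}\,\nu'\,\s_{E'},
\]
where $\s_E,\s_{E'}$ are the actions of $\s$ on the preferred extensions $\wt V_{(u,\r)}$, $\wt V_{(u',\r')}$, and $\g=\g(u_0,\wt\r_0,u,\wt\r)$ with $\wt\r$ the extension supplied by (1.14.1). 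Writing $\s_M$ for the operator induced on $M_{\r,\r'}$ by $\s$ acting on the preferred extensions, so that $\s_E|_{V_{\r,\r'}}=\s_M\otimes\s_{E'}$, and comparing with the defining relation $\s_{(u,\r)}|_{V_{\r,\r'}}=\s_{\r,\r'}\otimes\s_{(u',\r')}$, I would deduce
\[
\s_{\r,\r'} = q^{N}\,(\g/\nu')\,\s_M,
\qquad N = -(\dim C-\dim C')/2+\dim U_Q,
\]
the exponent being read off from (1.4.2) and $\dim G-\dim M=2\dim U_Q$.

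Next I would evaluate the same trace through (1.13.2): $\Tr(\s_{\r,\r'},M_{\r,\r'})=q^{N}\lp\wt\ve_{u,u'},\wt\r\otimes\wt\r'^*\rp_{A(u,u')\t}$. Since $F$ acts trivially on $\ve_{u,u'}(\r,\r')$ and $\wt\r$ is the extension satisfying (1.14.1), this twisted multiplicity coincides with the ordinary one $\lp\ve_{u,u'},\r\otimes\r'^*\rp_{A(u,u')}$, which by (1.12.3) equals $\lp V_{(u,\r)}|_{\SW_L'},V_{(u',\r')}\rp_{\SW_L'}=\dim M_{\r,\r'}$. Hence $\Tr(\s_{\r,\r'},M_{\r,\r'})=q^{N}\dim M_{\r,\r'}$, and combining with the previous display gives $(\g/\nu')\Tr(\s_M,M_{\r,\r'})=\dim M_{\r,\r'}$.

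Finally I would close by an orthogonality argument. From the explicit description in 1.6 one has $\s_E^{c}=\s_{E'}^{c}=\id$, hence $\s_M^{c}=\id$ and every eigenvalue of $\s_M$ is a $c$-th root of unity; moreover $\g$ and $\nu'$ are roots of unity by the finite-order normalizations of $\vf_0$ and $\psi_0$ (see 1.3, 1.5). Thus $(\g/\nu')\Tr(\s_M,M_{\r,\r'})$ is a sum of $\dim M_{\r,\r'}$ numbers of modulus $1$ equal to the positive integer $\dim M_{\r,\r'}$, so all eigenvalues of $\s_M$ coincide and $\s_M=\d\cdot\id_{M_{\r,\r'}}$ with $\d^{c}=1$. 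This scalar $\d$ is exactly the one occurring in (1.14.2), read off from $\s_E|_{V_{\r,\r'}}=\d\otimes\s_{E'}$, and substituting $\Tr(\s_M,M_{\r,\r'})=\d\dim M_{\r,\r'}$ back into $(\g/\nu')\Tr(\s_M,M_{\r,\r'})=\dim M_{\r,\r'}$ gives $(\g/\nu')\d=1$, that is $\g(u_0,\wt\r_0,u,\wt\r)=\d\iv\nu'$. The step I expect to be the main obstacle is the first one, namely establishing $\s_{(u,\r)}=q^{(a_0+r)_G/2}\g\,\s_E$ rigorously: it requires identifying the two descriptions (1.1.1) and (1.1.2) of $K$ restricted to $G\uni$ compatibly with $\vf$ while keeping exact control of all cohomological shifts and Tate twists; a lesser point is confirming that $\g$ and $\nu'$ are indeed roots of unity, as needed in the last step.
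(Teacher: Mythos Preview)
Your proof is correct and follows essentially the same route as the paper's: both compute $\Tr(\s_{\r,\r'},M_{\r,\r'})=q^N\dim M_{\r,\r'}$ via (1.13.2), (1.14.1) and (1.12.3), then use a finite-order/modulus-one argument to force the relevant operator to be scalar, and finally read off $\g=\d^{-1}\nu'$ from the comparison of $\s_{(u,\r)}$ with the preferred extensions. The only reordering is that the paper first shows $q^{-N}\s_{\r,\r'}$ is scalar and then passes to the preferred extensions, whereas you introduce $\s_M$ first; your ``lesser point'' that $\g,\nu'$ are roots of unity is exactly equivalent (via your identity $\s_{\r,\r'}=q^N(\g/\nu')\s_M$) to the paper's assertion that $q^{-N}\s_{\r,\r'}$ has finite order, which the paper also takes as known from \cite[II]{Sh4} rather than from 1.3 and 1.5 alone.
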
 

\begin{proof}
By (1.13.2) and (1.14.1), we have 
\begin{align*}
\tag{1.15.1}
\Tr(\s_{\r,\r'}, M_{\r,\r'}) &= q^{-(\dim C - \dim C')/2 + \dim U_Q}
                                 \lp \ve_{u,u'}, \r\otimes \r'^*\rp_{A(u,u')} \\
                             &= q^{-(\dim C - \dim C')/2 + \dim U_Q}\dim M_{\r,\r'}.
\end{align*}
The second equality follows from the original restriction formula (1.12.3). 
Since $q^{(\dim C - \dim C')/2 - \dim U_Q}\s_{\r,\r'}$ is an automorphism 
of finite order on $M_{\r,\r'}$,  (1.15.1) implies that 
$\s_{\r,\r'}$ coincides with the scalar multiplication by  
$q^{-(\dim C - \dim C')/2 + \dim U_Q}$.  
This implies that (1.14.2) holds. 
\par
By our assumption, we have $\g(u_0, \wt\r_0, u', \wt\r') = \nu'$.  It follows that 
$q^{-(a_0' + r')/2}\s_{(u',\r')}$ gives the preferred extension of $V_{(u',\r')}$,
multiplied by $\nu'$, where $a_0', r'$ are
defined similarly to (1.4.1) with respect to $C' \subset M$.
This implies, by (1.14.2), 
that the restriction of $q^{-(a_0 + r)/2}\s_{(u,\r)}$ on 
$V_{\r,\r'}$ coincides with (the restriction of) 
the preferred extension of $V_{(u,\r)}$, multiplied by $\d\iv\nu'$.
Hence $q^{-(a_0+r)/2}\s_{(u,\r)}$ 
is the preferred extension of $V_{(u,\r)}$, multiplied by $\d\iv\nu'$, 
and so $\g(u_0,\wt\r_0, u, \wt\r) = \d\iv\nu'$ as asserted.  
\end{proof}

\para{1.16.}
Here we explain more precisely Lusztig's algorithm in \cite[V]{L3} of expressing 
the function $X_i$ in terms of $Y_j$. Under the notation in 1.4, 
write $X_i$ as a linear combination of $Y_j$ for $j \in I^F$ as

\begin{equation*}
\tag{1.16.1}
X_i = \sum_{j \in I^F}p_{ji}Y_j, \quad (p_{ij} \in \Ql).
\end{equation*}

For $i,j \in I^F$, we define $\w_{ij}$ as follows. 
If $i = (C,\SE), j = (C',\SE')$ belong to the same series $(L,C_0, \SE_0)$,
put
\begin{align*}
\tag{1.16.2}  
  \w_{ij} &= |\SW_L|\iv \sum_{w \in \SW_L}
             \Tr((w \s_{E_i})\iv, \wt E_i)\Tr(w \s_{E_j}, \wt E_j) \\
          &\qquad\times \frac{|G^F|}{|Z^{0F}_{L^w}|}q^{-\dim G}q^{-(a_0 + a_0')/2},
\end{align*}  
where $a_0 = -\dim C - \dim Z_L$, $a_0' = -\dim C' - \dim Z_L$.
Furthermore, $E_i = V_{(C,\SE)} \in (\SW_L)\wg\ex$,
and $\wt E_i$ is the preferred extension of $E_i$. 
We put $\w_{ij} = 0$ if $i$ and $j$ belong to different series.
We define matrices 
$P = (p_{ij}), \ \Om = (\w_{ij})$  indexed by $I^F$.  
It is proved in \cite[V, Thm. 24.4]{L3} the matrices $P$ and $\Om$
satisfy the matrix equation,

\begin{equation*}
\tag{1.16.3}
{}^tP\vL P = \Om
\end{equation*}
for a certain matrix $\vL = (\la_{ij})$.
Moreover, for a suitable choice of the total order on $I^F$,
$P$ turns out to be a block-wise upper triangular matrix where the diagonal blocks
are identity matrices, and $\vL$ is a non-singular, block-wise diagonal matrix.
Here $\Om = (\w_{ij})$ is computable if we know the character table of $\wt\SW_L$.
Then the matrices $P$ and $\Om$ are determined uniquely from the matrix equation
(1.16.3).
\par
Also it was proved in [loc. cit.] that $p_{ij} = 0$ if $i$ and $j$ belong to
different series in $\SM_G$.
Thus if we denote by $I_{\eta}$
a subset of $I$ consisting of $i$ belonging to a fixed $\eta = (L, C_0, \SE_0) \in \SM_G^F$,
and consider the submatrices $P_{\eta}, \vL_{\eta}, \Om_{\eta}$ indexed by $I^F_{\eta}$,
then the matrix equation (1.16.3) can be rewritten more effectively as

\begin{equation*}
\tag{1.16.4}  
{}^tP_{\eta}\vL_{\eta}P_{\eta} = \Om_{\eta}, \qquad (\eta \in \SM_G^F).
\end{equation*}  

\para{1.17.}
Assume that $G = SL_n$ or a simply connected group of type $E_6$.
We consider a non-split Frobenius map $F$ on $G$.
Let $(L, C_0, \SE_0) \in \SM_G$. 
In addition to $\SW_L = W$ for $L = T$, we need to consider $\SW_L \simeq S_m$
for some $m \ge 1$ if $G = SL_n$. While in the case $G = E_6$, $L$
is of type $D_4$ ($p = 2$) or $2A_2$ ($p \ne 3$), and $\SW_L \simeq S_3$ or
isomorphic to the Weyl group $W(G_2)$ of type $G_2$, respectively, for $L \ne T$. 
In any case $L$ is $F$-stable, and 
$F$ induces a non-trivial action on $\SW_L \simeq S_m$, and the trivial action on
$\SW_L \simeq W(G_2)$. 
\par
We also consider another group $G_0$ of the same type as $G$, but $G_0$ has
an $\BF_{q_0}$-structure with a split Frobenius map $F_0$, where $q_0$ is a power
of a prime number $p_0$, and $p \ne p_0$ in general. 
We compare the matrix equations  (1.16.4)
for $(G,F)$ and $(G_0, F_0)$.
We assume that
\par\medskip\noindent
(1.17.1) \ 
There exists an embedding $(Z_G\wg)^F \hra (Z_{G_0}\wg)^{F_0}$. 
\par\medskip
Let $(L, C_0, \SE_0) \in \SM_G$. 
Then $L$ and $C_0$ are both $F$ and $F_0$-stable.
Fix a character $\x \in (Z_G\wg)^F$, then by (1.17.1), 
$\x$ is regarded as an $F_0$-stable character of $Z_{G_0}$. 
We assume that there is a natural bijection $(\SM_G)_{\x} \simeq (\SM_{G_0})_{\x}$.
We consider $\eta = (L, C_0, \SE_0) \in (\SM_G)_{\x}$.
Then $\eta$ is also regarded as an element in $(\SM_{G_0})_{\x}$, and we denote
the corresponding element in $(\SM_{G_0})_{\x}$ as 
$(L,C_0, \SE_0)$ by using the same symbol, by abbreviation.
We assume that $(L, C_0, \SE_0)$ is $F$ and $F_0$-stable. 
\par
Let $w_0$ be the longest element in $\SW_L$.
Note that $|G_0^{F_0}|$ and $|Z_{L_0^w}^{0F_0}|$ are regarded as rational functions on $q_0$. 
Here we use the following notation;
for a rational function $f = f(q_0)$ on $q_0$, we denote by $f(-q)$
the rational function obtained from $f$ by substituting $-q$ for $q_0$.
We assume that $F$ and $F_0$
satisfy the condition that
\begin{equation*}
\tag{1.17.2}  
\begin{aligned}
|G^F| &= (-1)^{\dim T}|G_0^{F_0}|(-q), \\
|Z_{L^{ww_0}}^{0F}| &= (-1)^{\dim Z_L}|Z_{L_0^w}^{0F_0}|(-q)
        &\quad&\text{ if $\SW_L  \simeq S_m$, }  \\
|Z_{L^{w}}^{0F}| &= (-1)^{\dim Z_L}|Z_{L_0^w}^{0F_0}|(-q)
        &\quad&\text{ if , $\SW_L \simeq W(G_2)$,}
\end{aligned}
\end{equation*}
where $T$ is a maximal torus in $G$, $L_0^w$ is a Levi subgroup in $G_0$ corresponding to
a Levi subgroup $L^w$ in $G$.
We assume further that there is a natural bijection 
$(\SN_G)_{\x} \simeq (\SN_{G_0})_{\x}$, and $I_{\eta}^F \simeq (I_0)_{\eta}^{F_0}$,
where $I = \SN_G$ and $I_0 = \SN_{G_0}$. 
\par
Let $\w_{ij}, p_{ij}, \la_{ij}$\ ($i,j \in I_{\eta}^F$)
be the elements defined in 1.16 for $(G,F)$, and
denote by $\w_{ij}^0, p_{ij}^0, \la_{ij}^0$ \ ($i,j \in (I_0)_{\eta}^{F_0}$)
the corresponding elements for $(G_0, F_0)$.
By (1.16.2), $\w_{ij}^0$ is regarded as a rational function on $q_0$
(note that in the split case the preferred extension is trivial). 
Since $p_{ij}^0, \la_{ij}^0$ are obtained from the matrix equation (1.16.3),
they are also rational functions on $q_0$.
Thus $\w_{ij}^0(-q), p_{ij}^0(-q), \la_{ij}^0(-q)$ can be defined. 
\par
Recall the definition $\d_E$ in (1.7.4). Assume that $i = (C,\SE)$ with $u \in C$.
Let $E_i = V_{(u,\r)} \in (\SW_L)\wg\ex$ for $(u,\r) \lra (C,\SE)$ and put $\d_i = \d_{E_i}$.
We show the following. 

\begin{lem}   %%%%  Lemma 1.18
For $i,j \in I^F_{\eta} = (I_0)^{F_0}_{\eta}$, we have 
\begin{enumerate}
\item \ 
$\w_{ij} = (-1)^{\d_i + \d_j}\w^0_{ij}(-q)$. 
\item \ 
$p_{ij}  = (-1)^{\d_i + \d_j}p_{ij}^0(-q)$.
\item \
$\la_{ij} = (-1)^{\d_i + \d_j}\la^0_{ij}(-q)$.
\end{enumerate}
\end{lem}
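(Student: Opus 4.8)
The plan is to prove (i), (ii), (iii) in that order, since (ii) and (iii) will follow from (i) by feeding the identity of (i) into the matrix equation (1.16.3) and using its uniqueness. First I would establish (i) by comparing the two expressions for $\w_{ij}$ and $\w^0_{ij}$ coming from (1.16.2). The right-hand side of (1.16.2) is a product of three kinds of factors: the character values $\Tr((w\s_{E_i})\iv, \wt E_i)\Tr(w\s_{E_j}, \wt E_j)$ on the coset $\SW_L w$, the group-order ratio $|G^F|/|Z^{0F}_{L^w}|$, and the power of $q$. The power of $q$ is $q^{-\dim G}q^{-(a_0+a_0')/2}$; since $a_0, a_0'$ depend only on $\dim C, \dim C', \dim Z_L$ and the total exponent is a fixed integer, substituting $-q$ for $q_0$ in the $(G_0,F_0)$-version introduces a sign $(-1)$ raised to that fixed integer exponent, which I must track and show equals $(-1)^{\d_i+\d_j}$ up to the contribution of the remaining factors. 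For the group-order factor, the hypotheses (1.17.2) give exactly $|G^F| = (-1)^{\dim T}|G_0^{F_0}|(-q)$ and $|Z^{0F}_{L^{ww_0}}| = (-1)^{\dim Z_L}|Z^{0F_0}_{L_0^w}|(-q)$ in the $S_m$ case (and the analogous statement with $w$ in place of $ww_0$ in the $W(G_2)$ case). Note the shift $w \mapsto ww_0$ in the denominator: this means I should reindex the sum in (1.16.2) for $(G,F)$ by replacing $w$ with $ww_0$, which is a bijection of $\SW_L$, so the sum is unchanged, but now the denominator matches the $(G_0,F_0)$ pattern after the $q_0 \mapsto -q$ substitution.

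The heart of the argument is the character-theoretic factor. After the reindexing $w \mapsto ww_0$, the term becomes $\Tr(((ww_0)\s_{E_i})\iv, \wt E_i)\Tr((ww_0)\s_{E_j}, \wt E_j)$. In the split case $(G_0,F_0)$ the preferred extension is trivial, so $\s_{E}$ acts as the identity and the corresponding factor is just $\Tr(w\iv, E_i)\Tr(w, E_j)$ with no twist. I would use the description of the preferred extension in 1.6(b): when $\SW_L$ is of type $A_m$ or $E_6$ and $c=2$, the action of $\s$ on $\wt E$ is $(-1)^{a_E}\cdot w_0$. Hence $\Tr((ww_0\s_{E_i})\iv, \wt E_i) = (-1)^{a_{E_i}}\Tr((ww_0 w_0)\iv, E_i) = (-1)^{a_{E_i}}\Tr(w\iv, E_i)$, using $w_0^2 = 1$, and similarly for $E_j$. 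This produces the global sign $(-1)^{a_{E_i}+a_{E_j}}$. Now $\d_i = a_{E_i} - d_u$ by (1.7.4) when $\SW_L$ is of type $A_m$ or $E_6$ (and $\d_i = 0$ otherwise, in which case $c=1$, the preferred extension is again trivial on both sides, and there is nothing to compare in this factor). The discrepancy between $(-1)^{a_{E_i}+a_{E_j}}$ and the desired $(-1)^{\d_i+\d_j} = (-1)^{a_{E_i}+a_{E_j}-d_u-d_{u'}}$ is the sign $(-1)^{d_u+d_{u'}}$, which I expect to be exactly absorbed by the $q$-power factor discussed above — that is, the fixed integer exponent of $-1$ coming from $q^{-\dim G}q^{-(a_0+a_0')/2}$ should work out, via standard dimension identities (e.g. $\dim \SB_u = d_u$ and the relation between $\dim C$, $d_u$, $\dim G$, $\rk G$), to have the same parity as $d_u + d_{u'}$. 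Assembling all three factors and summing over $w$ gives (i).

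For (ii) and (iii): the matrices $P^0, \vL^0$ for $(G_0,F_0)$ satisfy ${}^tP^0_\eta \vL^0_\eta P^0_\eta = \Om^0_\eta$ as rational functions in $q_0$, so substituting $q_0 \mapsto -q$ preserves this identity: ${}^tP^0_\eta(-q)\,\vL^0_\eta(-q)\,P^0_\eta(-q) = \Om^0_\eta(-q)$. Now conjugate by the diagonal sign matrix $D = \mathrm{diag}((-1)^{\d_i})$: since $D^2 = \mathrm{Id}$, setting $\wh P = D\,P^0_\eta(-q)\,D$ and $\wh\vL = D\,\vL^0_\eta(-q)\,D$ we get ${}^t\wh P\,\wh\vL\,\wh P = D\,\Om^0_\eta(-q)\,D = \Om_\eta$ by (i). Moreover $\wh P$ is still block-wise upper triangular with identity diagonal blocks (conjugation by a diagonal matrix preserves the diagonal and the triangular shape; note here I need that $\d_i$ depends only on the series $\eta$-compatible data so that within a diagonal block the sign is constant — this holds because within a block the $a$-function value and $d_u$ are constant, being determined by the block), and $\wh\vL$ is still block-diagonal and non-singular. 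By the uniqueness of the solution $(P_\eta, \vL_\eta)$ of (1.16.3)/(1.16.4) with these shape constraints (stated in 1.16), we conclude $P_\eta = \wh P$ and $\vL_\eta = \wh\vL$, which is precisely (ii) and (iii) entrywise.

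The main obstacle I anticipate is the bookkeeping in (i): correctly matching the $w \mapsto ww_0$ reindexing against the hypothesis (1.17.2), and pinning down the parity of the fixed integer exponent in the $q$-power factor so that it exactly equals $d_u + d_{u'} \bmod 2$ and thereby converts $(-1)^{a_{E_i}+a_{E_j}}$ into $(-1)^{\d_i+\d_j}$. This requires invoking the standard formulas relating $\dim C$, $\dim \SB_u = d_u$, $a_0$, $\dim Z_L$, $\dim G$ and $\rk G$ (in particular $\dim C + 2 d_u = \dim G - \rk G$ type relations and the fact that $a_0 = -\dim Z_L - \dim C$), and checking that everything is consistent across the two groups $G$ and $G_0$ of the same type. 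The $W(G_2)$ case is easier, since there $c=1$, the preferred extension is trivial on both sides, $\d_i = 0$ for all $i$, and the lemma reduces to a direct substitution using the third line of (1.17.2); I would dispatch it first as a warm-up.
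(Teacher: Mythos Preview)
Your proposal is correct and follows essentially the same approach as the paper's own proof: substitute the preferred extension $\s_E = (-1)^{a_E}w_0$ into (1.16.2), reindex the sum by $w \mapsto ww_0$, apply the order formulas (1.17.2), and use the dimension identity $(a_0+a_0')/2 + \dim G = d_u + d_{u'} + \dim T - \dim Z_L$ (which is exactly the ``standard dimension identity'' you anticipated) to convert the accumulated sign $(-1)^{a_{E_i}+a_{E_j}+d_u+d_{u'}}$ into $(-1)^{\d_i+\d_j}$; then deduce (ii) and (iii) by conjugating the split matrix equation by the diagonal sign matrix $D = \mathrm{diag}((-1)^{\d_i})$ and invoking uniqueness.

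One small simplification: your caveat that ``within a diagonal block the sign $(-1)^{\d_i}$ must be constant'' is unnecessary. The diagonal blocks of $P^0_\eta(-q)$ are identity matrices, and conjugating an identity block by \emph{any} diagonal matrix returns the identity block (the diagonal entries satisfy $D_{ii}\cdot 1\cdot D_{ii}^{-1}=1$ and the off-diagonal zeros are preserved); likewise the block upper-triangular shape and the block-diagonality of $\vL$ are preserved by any diagonal conjugation. So you need not argue that $a_{E_i}$ and $d_u$ are constant on blocks.
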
  

\begin{proof}
Assuming that $\SW_L \ne W(G_2)$, we prove (i).  The preferred extension $\wt E_i$ 
is given by the action of $\s_{E_i} = (-1)^{a_{E_i}}w_0$ on
$E_i = V_{(u,\r)}\in (\SW_L)\wg\ex$. 
Hence, for $i = (C,\SE), j = (C',\SE')$,  by (1.16.2) we have 

\begin{align*}
\tag{1.18.1}
\w_{ij} &= |\SW_L|\iv 
        \sum_{w \in \SW_L}\Tr((ww_0)\iv, E_i)\Tr(ww_0, E_j) \\  
     &\qquad \times  (-1)^{a_{E_i} + a_{E_j}}
         \frac{|G^F|}{|Z_{L^w}^{0F}|} q^{-\dim G}q^{-(a_0 + a_0')/2}  \\
        &= |\SW_L|\iv \sum_{w \in \SW_L}
        \Tr(w\iv, E_i)\Tr(w, E_j)  \\
     &\qquad \times(-1)^{a_{E_i} + a_{E_j}}
            \frac{|G^F|}{|Z_{L^{ww_0}}^{0F}|} q^{-\dim G}q^{-(a_0 + a_0')/2},
\end{align*}
where $a_0 = -\dim C - \dim Z_L, a_0' = -\dim C' - \dim Z_L$.
Since $a_0 = 2d_u - \dim G + \dim T - \dim Z_L$, we have
\begin{equation*}
(a_0 + a_0')/2 + \dim G = d_u + d_{u'} + \dim T - \dim Z_L. 
\end{equation*}  
Hence by (1.17.2), the last expression in (1.18.1) is equal to

\begin{align*}
  &|\SW_L|\iv \sum_{w \in \SW_L}\Tr(w\iv, E_i)\Tr(w, E_j)  \\
      &\qquad \times (-1)^{\d_i + \d_j}\frac{|G_0^{F_0}|(-q)}{|Z_{L_0^w}^{0F_0}|(-q)}\cdot 
           (-q)^{-\dim G_0}(-q)^{-(a_0 + a_0')/2}  = (-1)^{\d_i + \d_j}\w^0_{ij}(-q). 
\end{align*}  
Hence we obtain the equality in (i).  The case where $\SW_L \simeq W(G_2)$
is dealt similarly (in this case, the preferred extensions are trivial).
Thus (i) holds. 
\par
Next we show (ii) and (iii). 
Let $\Om = (\w_{ij}), P = (p_{ij}), \vL = (\la_{ij})$ be the matrices
associated to $(G, F)$.  Similarly, we define the matrices 
$\Om^0 = (\w_{ij}^0), P^0 = (p_{ij}^0), \vL^0 = (\la^0_{ij})$ associated
to $(G_0, F_0)$ (actually we consider the matrices indexed by
$I^F_{\eta}$ or $(I_0)^{F_0}_{\eta}$ as in (1.16.4) for
a fixed $\eta$.  But in the computation below, we omit $\eta$).
By (1.16.4), those matrices satisfy the relations
\begin{equation*}
{}^tP\vL P = \Om, \qquad {}^tP^0 \vL^0 P^0 = \Om^0.
\end{equation*}
Define a diagonal matrix $D$ by $D = ((-1)^{\d_i})$.
Then $D^2 = I$, and by (i) we have
$\Om^0(-q)  = D\Om D$. It follows that
\begin{equation*}
{}^t(DPD)(D \vL D)(DPD) = \Om^0(-q) = {}^tP^0(-q)\vL^0(-q)P^0(-q). 
\end{equation*}  
Here $DPD$ is a block-wise upper triangular matrix, where the diagonal blocks
are identity matrices, and $D\vL D$ is a
block-wise diagonal matrix. Hence by the uniqueness of the matrix equation (1.16.4),
we have $DPD = P^0(-q)$, and $D\vL D = \vL^0(-q)$. This proves (ii) and (iii). 
\end{proof}

\par\bigskip
\section{ Special linear groups }

\para{2.1.}
In this section, we assume that $G$ is a special linear group. 
We prove the existence of split elements for $G$.
Actually, in the case where $F$ is split, split elements were
explicitly constructed in \cite[I, Thm. 3.4]{Sh4}.  
In the case where $F$ is non-split, they were constructed
in \cite[I, Thm. 4.4]{Sh4}, under the assumption that $p$ is large enough.
After that, for any $p$ in the non-split $F$ case,
split elements were constructed in \cite[III, Thm. 4.13]{Sh4}, in a recursive
way by making use of the restriction formula such as Lemma 1.15.
But the proof contains some error.  Although the recursive process is
done through the procedures (I), (II), (III) in \cite[III, 4.5]{Sh4},
there exists a case in (III) where Lemma 1.16 cannot be applied.
\par
The simplest example is as follows.  We consider $G = SL_3$, and
let $W = S_3$ be the Weyl group of $G$, and $\s : W \to W$ the automorphism
of oder 2.  The $\s$-stable subgroup $W' = \{ 1\}$ corresponds to
the Weyl group of $M  = T$.
Let $\la = (12)$ be a partition of 3, and let
$C = C_{\la}$ the corresponding unipotent class in $G$. Take $(u,\r) \lra (C,\SE)$
and consider $E = V_{(u,\r)} \in S_3\wg$, which is an irreducible representation of
degree 2.  We consider the class $C' = \{ 1\}$, and $(u',\r') = (1,1) \in \SN_M$. 
Let $\s_E : \wt E \to \wt E$ be the preferred extension of $E$.
Then $\Tr(\s_E, E) = 0$, and $\s_E$ acts non-trivially on $M_{\r,\r'} = E$
(in the notation of 1.14).  In this case, the formula such as (1.14.1) does not
give any information for $\wt{\r\otimes \r'^*}$ in $\wt\ve_{u,u'}$. 

\para{2.2.}
In the discussion below, by expanding the method in \cite[I, \S 3]{Sh4}, we prove
the existence of split elements for any $p$, in a simultaneous way for the case where
$F$ is split or non-split.  In particular, in the non-split case, split
elements are given by an exact formula, not as in the recursive formula in
\cite[III, \S 4]{Sh4}. 
\par
Assume that $G = SL_n = SL(V)$, with $F$ split or non-split.
The unipotent classes of $G$ are parametrized by partitions of $n$.
We denote by $C_{\la}$ the class corresponding to the partition
$\la = (\la_1, \dots, \la_r)$ of $n$ with $0 \le \la_1 \le \la_2 \le \cdots \le \la_r$. 
Let $n'$ be the largest divisor of $n$ which is prime to $p$, and
$n'_{\la}$ the greatest common divisor of $n', \la_1, \dots, \la_r$.
Then $Z_G$ is a cyclic group of order $n'$, and $A_G(u)$ is a cyclic group
of order $n'_{\la}$. The natural map $\pi : Z_G \to A_G(u)$ is given by
the surjective map $\BZ_{n'} \to \BZ_{n'_{\la}}$. 
\par
The generalized Springer correspondence (1.2.1) pertaining to $\x \in Z_G\wg$
is given as follows (see \cite[\S 5]{LS2}). 
Let $A_G(u)\wg_{\x}$ be as in 1.2 for $\x \in Z_G\wg$.  Then
for $\x$ of order $d$, $|A_G(u)\wg_{\x}| = 1$ if $\la_i$ is divisible by $d$
for any $i$, and is 0 otherwise.
In the former case, $A_G(u)\wg_{\x}$ consists of $\r \in A_G(u)\wg$ such that
the pullback of $\pi^*(\r)$ of $\r \in A_G(u)\wg$ coincides with $\x$.
Let $(L, C_0, \SE_0) \in \SM_{\x}$.  Then $L$ is a Levi subgroup of type
$A_{d-1} + \cdots + A_{d-1}$ with $m = n/d$ factors, and $C_0$ is the regular
unipotent class in $L$. If $(u_0, \r_0) \lra (C_0, \SE_0)$, then
$\r_0$ is the unique element contained in $A_L(u_0)\wg_{\x}$.
In particular, $\SM_{\x}$ is a one-point set. 
In this case, $\SW_L \simeq S_m$, and $V_{(u,\r)}$ coincides with
the irreducible representation of $S_m$ of type $\mu = (\mu_1, \dots, \mu_r)$
with $\mu_i = \la_i/d$.

\para{2.3.}
Any unipotent class $C$ in $G$ is $F$-stable.  For $(L, C_0, \SE_0) \in \SM_G$,
we may assume that $L$ is an $F$-stable Levi subgroup of an $F$-stable parabolic
subgroup of $G$, and that $C_0$ is $F$-stable.  For each $C = C_{\la}$,
we shall define a ``split''element $u = u_{\la} \in C^F$ as follows.
First assume that $F$ is split.  We fix an $F$-stable  basis $e_1, \dots, e_n$
of $V$, up to $SL(V)$-conjugate. We define $u = u_{\la} \in C^F$ by
using the Jordan basis $\{ v_{k,j} \mid 1 \le k \le r, 1 \le j \le \la_k \}$ of $V$
such that $(u-1)v_{k,j} = v_{k, j-1}$ for $j \ge 2$, and $(u-1)v_{k,1} = 0$ for $j = 1$. 
Here we assume that the Jordan basis is obtained from the basis $e_1, \dots, e_n$
in such a way that $v_{1,1}, \dots, v_{1,\la_1}, v_{2,1}, \dots, v_{2,\la_2}, \dots$
coincides with $e_1, \dots, e_n$ in this order.
\par
Next assume that $F$ is non-split with respect to the $\BF_q$-structure of $G$.
Then $F^2$ is a split Frobenius map with respect to the $\BF_{q^2}$-structure.
Let $e_1, \dots, e_n$ be the $\BF_{q^2}$-basis of $V$, and $V_0$ be the $\BF_{q^2}$-subspace
of $V$ spanned by $e_1, \dots, e_n$.
Define $u_{\la} \in C^{F^2}$ by applying the above discussion to the split Frobenius $F^2$.
We define a sesquilinear form $(\ ,\ )$ on $V_0$ as in (4.7.1) in \cite[III]{Sh4} by
\begin{equation*}
\tag{2.3.1}  
  (v_{k,j}, v_{k',j'}) = \begin{cases}
                           (-1)^{j + a_k}  &\quad\text{ if } k = k', j + j' = \la_k, \\
                           0               &\quad\text{ otherwise, }
                         \end{cases}
\end{equation*}  
where $a_k = \pm 1$.  Then $u_{\la}$ leaves the form $(\ ,\ )$ invariant, and
we have $u_{\la} \in C^F$ with respect to the $\BF_q$-structure induced from $(\ ,\ )$.
In later discussion, we shall prove that $u_{\la} \in C^F$ actually gives a split element,
under a suitable choice of $\Xi_G$ (see (1.7.1)). 
\par
For $F$ : split or non-split, let $P = LU_P$ be an $F$-stable parabolic subgroup of $G$ such that
$L$ is an $F$-stable Levi subgroup of type $A_{d-1} + \cdots + A_{d-1}$
with $(L, C_0, \SE_0) \in \SM_G^F$.  
We define $u'_{\la} \in C_0^F$ as the projection of $u_{\la} \in P^F$ to $L^F$.  Explicitly,
$u'_{\la}$ is defined by the condition that
\begin{equation*}
\tag{2.3.2}  
  (u'_{\la}-1)v_{k,j} = \begin{cases}
                           0   &\quad\text{ if } j \equiv 1 \pmod d, \\
                           v_{k, j-1}   &\quad\text{ otherwise.}
                        \end{cases}
\end{equation*}  

By applying the procedures (e) and (f) in 1.7, one can define
a ``split'' element $u_0 \in C_0^F$, starting from  the split element $u' \in C'^F$, where
$C'$ is the regular unipotent class in $SL_d$. 
Then $u_0$ coincides with $u'_{\la} \in C_0^F$ for $\la = (n)$, i.e, $u_{\la}$
is a regular unipotent element in $G^F$.
\par
Assume that $F$ is non-split. 
Assume that $(u_0, \r_0) \lra (C_0, \SE_0)$.
We have $A_L(u_0) \simeq \BZ_{d'}$, where $d'$ is the largest divisor of $d$
which is prime to $p$.  We consider the extended group $\wt A_L(u_0)$, where
$F$ acts on $A_L(u_0)$ via $\tau_0$.
We choose an isomorphism $\vf_0 : F^*\SE_0 \isom \SE_0$
in $\Xi_G$ as the trivial extension $\wt\r_0$ of
$\r_0 \in A_L(u_0)\wg\ex$ (note that $A_L(u_0)$ is abelian).
For a given $u_{\la} \in C^F$, let $u'_{\la} \in C_0^F$ be as above.
Then $u'_{\la}$ is obtained from $u_0$ by twisting with $c_{\la} \in A_L(u_0)$, where
$c_{\la} \in A_L(u_0)$ is determined uniquely, up to $F$-conjugacy.
Since $\r_0 \in A_L(u_0)\ex\wg$, $\r_0(c_{\la}) = \wt\r_0(c_{\la}\tau_0)$
is determined uniquely, which we denote
by $\nu_{\la} = \nu_{\la, \eta}$ for $\eta = (L, C_0, \SE_0) \in \SM_G$. 
\par
In the case where $F$ is split, for a given $u_{\la} \in C^F$, $u'_{\la} \in C_0^F$
is defined similarly to the non-split case. In that case, we have $c_{\la} = 1$ and
$\nu_{\la, \eta} = 1$ for any $\la$. 
\remark{2.4.}
The elements $u_{\la}, u'_{\la}$ and $u_0$ defined in 2.2 correspond
to $y_{\la}, y_1$ and $y_0$ in \cite[I, 4.2]{Sh4}, and $c_{\la}, \nu_{\la,\eta}$
correspond to $c_1, \eta_{\la}$ there.

\para{2.5.}
Here we recall the construction of the complex $K$ on $G$ given in 1.1
more precisely.  For $(L, C_0, \SE_0) \in \SM_G$,
put $\vS = Z_L^0 \times C_0 \subset L$, and let $\ol \vS$ be the closure
$\vS$ in $L$. Put

\begin{align*}
  \wt X &= \{ (x, gP) \in G \times G/P \mid g\iv xg \in \ol\vS U_P\}, \\
  \wt X_0 &= \{ (x, gP) \in G \times G/P \mid g\iv xg \in \vS U_P\}, \\
  \wh X_0 &= \{ (x, g) \in G \times G \mid g\iv xg \in \vS U_P\}, \\
  \ol Y &= \bigcup_{g\in G}g(\ol\vS U_P)g\iv,  
\end{align*}
and consider the diagram

\begin{equation*}
\begin{CD}
  \vS  @<\vf <<  \wh X_0  @>\psi >> \wt X_0    
\end{CD}    
\end{equation*}
\par\medskip\noindent
defined by $\vf : (x, g) \mapsto \text{ projection of $g\iv xg$ to $\vS$}$, 
$\psi : (x, g) \mapsto (x, gP)$. 
We define a map $\f : \wt X \to \ol Y$ by $(x, gP) \mapsto x$. Then $\f$ is
a proper map.
Let $\SE_1 = \Ql\boxtimes \SE_0$ be the $L$-equivariant local system on $\vS$.
Then there exists a local system $\ol\SE_1$ on $\wt X_0$ such that
$\vf^*\SE_1 \simeq \psi^*\ol\SE_1$.  Since $\wt X_0$ is open dense in $\wt X$,
one can consider the complex $K_{\ol\SE_1} = \IC(\wt X, \ol\SE_1)[\dim \wt X]$. 
We define a complex $K$ on $G$ by $K = \f_*K_{\ol\SE_1}$. 
Then $K$ turns out to be a semisimple perverse sheaf on $G$, equipped with
$\SW_L$-action. 
\par
Under the setup in 1.4, assume that $(C,\SE) \in \SN_G^F$ belongs to 
the series $(L, C_0, \SE_0) \in \SM_G^F$. 
For $u \in C^F$, consider the varieties
\begin{align*}
  \SP_u &= \{ gP \in G/P \mid g\iv ug \in C_0U_p\}, \\
  \wh \SP_u &= \{ g \in G \mid g\iv ug \in C_0U_P\},  \\
\end{align*}
and the diagram 
\begin{equation*}
\tag{2.5.1}
\begin{CD}
  C_0    @<\a<<   \wh\SP_u   @>\b>>  \SP_u,  
\end{CD}    
\end{equation*}
where $\a$ is defined by $g \mapsto$ projection of $g\iv ug$ on $C_0$,
and $\b : g \mapsto gP$.  
\par
Let $K$ be as above. 
Let $\dot\SE_0$ be the local system on $\SP_u$ defined by
the condition that $\a^*\SE_0 \simeq \b^*\dot\SE_0$. 
Then $\SH^a_uK \simeq H^a(\ol\SP_u, D)$, where
$\ol\SP_u = \{ gP \in G/P \mid g\iv ug \in \ol C_0U_P\}$, and
$D$ is the restriction of $K_{\ol\SE_1}$ on $\ol\SP_u$. 
It is known that $\IC(\ol C_0, \SE_0)$ is clean, namely,
$\IC(\ol C_0, \SE_0)$ is $\SE_0$ extended by 0 on $\ol C_0 - C_0$. 
It follows that $K_{\ol\SE_1}$ coincides with $\dot\SE_0[r]$ extended by 0
on $\ol\SP_u - \SP_u$, where $r = \dim Y$. 
We obtain (see \cite[V, (24.2.5)]{L3}) that 
\begin{equation*}
\tag{2.5.2}  
\SH^a_uK \simeq H^{a+r}_c(\SP_u, \dot\SE_0).
\end{equation*}  
Since $\SH^a_uK$ has a structure of $\SW_L$-module, $H^{a+r}_c(\SP_u, \dot\SE_0)$
has a structure of $\SW_L$-module.
It is known by \cite[1.2 (b)]{L2} that $\dim \SP_u \le (a_0 + r)/2$.
Since $\SH^{a_0}_uK \ne 0$, we see that
\begin{equation*}
\tag{2.5.3}  
\dim \SP_u = (a_0+r)/2.
\end{equation*}

The isomorphism $\vf_0 : F^*\SE_0 \isom \SE_0$ induces an isomorphism
$\dot\vf_0 : F^*\dot\SE_0 \isom \dot\SE_0$.  Then $\dot\vf_0$ induces an
action $\Phi$ on $H_c^{a + r}(\SP_u, \dot\SE_0)$.  In the case where $a = a_0$,
$\Phi$ is a scalar multiplication
$q^{(a_0+r)/2}$ times a map of finite order. 

\para{2.6.}
We apply the discussion in 2.5 for $G = SL_n$, where
$L$ is of type $A_{d-1} + \cdots + A_{d-1}$ with $m = n/d$-factors, and
$C_0$ is the regular unipotent class in $L$.  For the moment, we don't
consider the $\BF_q$-structure. 
Let $u \in C = C_{\la}$ with $\la = d\mu$.  Consider
$\SP_u = \{ gP \in G/P \mid g\iv ug \in C_0U_P\}$. We also
consider $\SP_{u_0}$ by regarding $u_0 \in C_0$ as an element in $G$
(of Jordan type $(d, \dots, d)$, $m$ copies).
Let $\SF$ be the set of flags $D = (V_d \subset V_{2d} \subset \cdots \subset V_{(m-1)d})$
with $\dim V_{kd} = kd$.  Then $\SF \simeq G/P$.
For $u \in C$ as above, let $\SF_u$ be the set of $u$-stable flags
such that $u$ induces a regular unipotent transformation on $V_{kd}/V_{(k-1)d}$.
Then $\SF_u \simeq \SP_u$. 
\par
Let $\SG$ be the set of subspaces $W$ of $\dim W = d$.  We have a natural map
$\pi : \SF \to \SG$ by $D = (V_d, \dots, V_{(m-1)d}) \mapsto V_d$.
Let $\SG_u$ be the set of $W \in \SG$ such that $W$ is $u$-stable, and that
$u|_W$ is regular unipotent. Then $\pi$ induces a surjective map
$\pi : \SF_u \to \SG_u$.
Now $\SG_u$ can be identified with the variety $\BP(\Ker x^d) - \BP(\Ker x^{d-1})$
for $x = u-1$; for each $v \in \Ker x^d - \Ker x^{d-1}$, the space spanned by
$v, xv, \dots x^{d-1}v$ gives an element $V_d$ in $\SG_u$.
\par
We fix $V_d  \in \SG_u$, and put $\ol V = V/V_d$.  Let $\ol u \in SL(\ol V)$
be the element induced from $u \in SL(V)$.  Then we have a natural isomorphism
\begin{equation*}
\tag{2.6.1}  
\pi\iv(V_d) \simeq \ol\SP_{\ol u},
\end{equation*}
where $\ol\SP_{\ol u}$ is the variety defined for $\ol u \in SL(\ol V)$. 

\par
Write $\la$ as $\la = (a_1^{m_1}, a_2^{m_2}, \dots a_h^{m_h})$, with
$0 < a_1 < a_2 < \cdots < a_h$, where $m_i = \sharp\{ k \mid \la_k = a_i\}$.
Put $W = \Ker x^d$ and define, for $i = 1, \dots, h$,
a subspace $W^i$ of $W$ by $W^i = (\Ker x^d \cap \Im x^{a_i-1}) + \Ker x^{d-1}$. 
Then we have a filtration of $W$,
\begin{equation*}
  W  = W^h \supsetneq W^{h-1} \supsetneq \cdots
                  \supsetneq W^1 \supsetneq W^0 =  \Ker x^{d-1}. 
\end{equation*}  
Here we have $\dim W^i/W^{i-1} = m_i$ for each $i$. 
The following fact is easily checked.
\par\medskip\noindent
(2.6.2) \ For each $v \in W^i - \Ker x^{d-1}$, define $V_d \in \SG_u$
as above.  Then $\ol u \in SL(\ol V)$ has Jordan type $\la'$,
where $\la'$ is a partition of $n-d$ given by 
\begin{equation*}
\la' = (a_1^{m_1}, \dots, a_i -d, a_i^{m_i-1}, \dots, a_h^{m_h}). 
\end{equation*}  

\par\medskip
Let $\{ v_{k,j}\}$ be the Jordan basis of $V$
associated to $u$.
Then the image of $\{ v_{k, d} \mid 1 \le k \le r\}$ gives a basis of $\Ker x^d/\Ker x^{d-1}$,
where $r = \sum_{1 \le i \le h}m_i = \dim \Ker x^d/\Ker x^{d-1}$.
For each $1 \le k \le h$, define a subset $Z_k$ of $W$ by

\begin{equation*}
Z_k = \{ v_{k,d} + \sum_{s < k}b_s v_{s,d} \mid b_s \in \Bk\} \simeq \BA^{k-1}.
\end{equation*}  
Then $Z_k$ is a locally closed subvariety  of $\BP(W)$, and
\begin{equation*}
\tag{2.6.3}  
\BP(W^i) - \BP(W^{i-1}) = \bigsqcup_{k = m_1 + \cdots + m_{i-1} + 1}^{m_1 + \cdots + m_i}Z_k
\end{equation*}  
gives a paving of $\BP(W^i) - \BP(W^{i-1})$ by affine spaces.
The following result can be proved in a similar way as in Spaltenstein \cite{Sp1},
which corresponds to the case where $G = GL(V), d= 1$, and $\SP_u = \SB_u$. 

\begin{lem} %%%%  Lemma 2.7
We regard $Z_k$ as a subvariety of $\SG_u$.  Let $V_d$ be an element in $\SG_u$
generated by $v_{k,d} \ (1 \le k \le r)$, and $\ol u$ the element
in $SL(\ol V)$ for $\ol V = V/V_d$.
Then we have
\begin{equation*}
\tag{2.7.1}  
\pi\iv(Z_k) \simeq Z_k \times \pi\iv(V_d) \simeq \BA^{k-1} \times \ol\SP_{\ol u}. 
\end{equation*}  
\end{lem}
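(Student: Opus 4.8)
The plan is to prove the isomorphism (2.7.1) by exhibiting the fibration $\pi : \pi\iv(Z_k) \to Z_k$ as a trivial bundle with fiber $\ol\SP_{\ol u}$, using the affine-space structure of $Z_k$ together with the fact that the Jordan type of $\ol u$ is constant along $Z_k$. First I would recall from (2.6.2) and (2.6.3) that $Z_k$ lies inside a single stratum $\BP(W^i) - \BP(W^{i-1})$, namely the one with $m_1 + \cdots + m_{i-1} < k \le m_1 + \cdots + m_i$, so that for every line in $Z_k$ the corresponding $\ol V = V/V_d$ has $\ol u$ of the same Jordan type $\la'$. Consequently the varieties $\ol\SP_{\ol u}$ attached to the various $V_d \in Z_k$ are all isomorphic, and the content of the lemma is that this family is in fact a \emph{trivial} family over $Z_k \simeq \BA^{k-1}$.

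The key step is to construct, for each point $z = v_{k,d} + \sum_{s<k}b_s v_{s,d} \in Z_k$, an explicit element $g_z \in SL(V)$ depending algebraically (indeed polynomially) on the parameters $b_s$, such that $g_z$ fixes $u$, sends the base line $V_d^{(0)}$ (spanned by $v_{k,d}, x v_{k,d}, \dots, x^{d-1}v_{k,d}$) to the line $V_d^{(z)}$ associated to $z$, and hence induces an identification of $\pi\iv(V_d^{(0)})$ with $\pi\iv(V_d^{(z)})$. The natural candidate is the unipotent transformation of $V$ that adds, to each Jordan string $\{v_{k,j}\}_j$, the appropriate combination $\sum_{s<k} b_s v_{s,j}$ of the lower strings; one checks this commutes with $u$ (equivalently with $x$) because the strings have compatible lengths ($\la_s \le \la_k$ for $s < k$, since the $v$'s are ordered by Jordan block size) and because $x$ acts by the shift $v_{k,j}\mapsto v_{k,j-1}$ on every string, so the added terms are carried along correctly. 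The map $(z, y) \mapsto g_z \cdot y$ then gives the isomorphism $Z_k \times \pi\iv(V_d^{(0)}) \isom \pi\iv(Z_k)$; its inverse sends a flag over the line $V_d^{(z)}$ back by $g_z\iv$. Combining this with the identification $\pi\iv(V_d) \simeq \ol\SP_{\ol u}$ from (2.6.1) yields (2.7.1).

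I expect the main obstacle to be the verification that $g_z$ is well defined on all of $V$ (not just on $W = \Ker x^d$) and commutes with $u$, i.e.\ that adding multiples of the strings indexed by $s < k$ to the string indexed by $k$ extends to a genuine, $u$-equivariant automorphism of $V$ of determinant $1$. This is exactly the place where one must use that $Z_k$ was defined using only strings $s < k$ and that, within the relevant stratum, $\la_s \le \la_k$; a string of length $\la_s$ can be added into a string of length $\la_k \ge \la_s$ compatibly with the shift, but not conversely. Once that is settled, the rest is the formal bookkeeping that $g_z$ carries $\SP_{\ol u^{(0)}}$ to $\SP_{\ol u^{(z)}}$ and that the assignment $z\mapsto g_z$ is a morphism $\BA^{k-1}\to SL(V)$, which trivializes the fibration. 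This is precisely the $d\ge 1$ analogue of Spaltenstein's argument in \cite{Sp1} for $d=1$, $G=GL(V)$, $\SP_u=\SB_u$, and I would closely follow that template, inserting the extra input that the relevant flags are those on which $u$ acts regularly on each graded piece $V_{kd}/V_{(k-1)d}$.
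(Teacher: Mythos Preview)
Your plan --- trivialize $\pi$ over $Z_k$ by a family $g_z$ in the centralizer of $u$ carrying $V_d^{(0)}$ to $V_d^{(z)}$ --- is exactly Spaltenstein's argument, which is what the paper invokes. But the commutation step has the inequality the wrong way round. If $\la_s<\la_k$ strictly and you put $g_z(v_{k,j})=v_{k,j}+b_s v_{s,j}$ (with $v_{s,j}:=0$ for $j>\la_s$), then at $j=\la_s+1$ one finds $g_z(x\,v_{k,\la_s+1})=v_{k,\la_s}+b_s v_{s,\la_s}$ while $x\,g_z(v_{k,\la_s+1})=v_{k,\la_s}$, and these differ by $b_s v_{s,\la_s}\ne 0$. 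No repair is possible: since $v_{s,\la_s}\notin\Im x$, there is no $x$-equivariant endomorphism of $V$ taking $v_{k,d}$ to $v_{k,d}+b_s v_{s,d}$ when $\la_s<\la_k$; these two vectors lie in different strata of the $\Im x^m$-filtration on $\Ker x^d$, and $Z_{GL(V)}(u)$ preserves that filtration. Concretely, for $\la=(d,2d)$ the induced $\ol u$ on $V/V_d^{(z)}$ has Jordan type $(d,d)$ at $z=v_{2,d}$ but type $(2d)$ at $z=v_{2,d}+b\,v_{1,d}$ with $b\ne 0$ --- so with $Z_k$ as literally written in 2.6 the Jordan type is not constant along $Z_k$, and (2.7.1) cannot hold.

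The remedy is to reverse the inequality: with the paper's increasing convention $\la_1\le\cdots\le\la_r$, the Spaltenstein cells should be $Z_k=\{v_{k,d}+\sum_{s>k}b_s v_{s,d}\}$ (so that $\la_s\ge\la_k$ for every added term), or equivalently one keeps $\sum_{s<k}$ but orders the parts of $\la$ decreasingly as in \cite{Sp1}. With that correction your $g_z(v_{k,j})=v_{k,j}+\sum b_s v_{s,j}$ is defined for all $1\le j\le\la_k$, genuinely commutes with $u$, and your trivialization of $\pi\iv(Z_k)$ goes through verbatim. So the method is right; only the direction of the inequality --- which mirrors a slip in the setup of 2.6 --- needs to be fixed.
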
  

\para{2.8.}
If $D = (V_d, \dots, V_{(m-1)d}) \in \SF_u$,
then $u_k = u|_{V_{kd}/V_{(k-1)d}}$ induces a regular transformation
on $\ol V_{kd} = V_{kd}/V_{(k-1)d}$. Thus we can define a regular element 
$(u_1, \dots, u_m)$ in $L$, which is conjugate to $u_0 \in C_0$.  It follows that
\begin{equation*}
\tag{2.8.1}  
\SF_u \subset \SF_{u_0}.
\end{equation*}  

We define a specific representative $u'_0 \in C_0$ attached to $u$ by
\begin{equation*}
\tag{2.8.2}  
  (u'_0-1)v_{k,j} = \begin{cases}
                0 &\quad\text{ if } j \equiv 1 \mod d, \\
                v_{k, j-1}  &\quad\text{ otherwise.}
             \end{cases}      
\end{equation*}  
(Hence this $u'_0$ corresponds to $u'_{\la}$ for $u = u_{\la}$ in the
notation of (2.3.2).)
We define $\SF_{u'_0}$, $\SG_{u'_0}$ and $\pi_0 : \SF_{u'_0} \to \SG_{u'_0}$,
similarly to $\SP_u, \SG_u$, etc.. 
Put $x_0 = u'_0-1$.  If we put $W = \Ker x^d$, and $V_0 = \Ker (x_0)^d$,
then we have  $W \subset V_0$. 

\par
By (2.8.1), $\SP_u$ is regarded as a subvariety of $\SP_{u'_0}$. 
The following result is proved as a corollary to Lemma 2.7.

\begin{prop}  %%%%   Prop. 2.9
Let $u \in C$, and $u'_0 \in C_0$.
\begin{enumerate}
\item \ $\SP_u$ has an affine paving.
\item $\SP_{u_0'} - \SP_u$ has an affine paving.  
\end{enumerate}  
\end{prop}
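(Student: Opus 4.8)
The plan is to prove (i) and (ii) simultaneously by induction on $\dim V = n$, imitating Spaltenstein's treatment of the ordinary Springer fibres $\SB_u$. The base case $n \le d$ is trivial: then $m = 1$, $P = G$, the variety $\SP_u$ is a single point and $\SP_{u'_0} - \SP_u$ is empty. For the inductive step of (i), I would use the surjection $\pi : \SF_u \to \SG_u$ of 2.6 together with the identification $\SG_u \simeq \BP(\Ker x^d) - \BP(\Ker x^{d-1})$, $x = u - 1$. By (2.6.3) the chain $W^0 \subsetneq W^1 \subsetneq \cdots \subsetneq W^h = \Ker x^d$ yields a partition $\SG_u = \bigsqcup_{k=1}^r Z_k$ into locally closed pieces with $Z_k \simeq \BA^{k-1}$, so $\SG_u$ has an affine paving. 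For each $k$, Lemma 2.7 gives an isomorphism $\pi\iv(Z_k) \simeq \BA^{k-1} \times \ol\SP_{\ol u}$, where $\ol u$ is the induced unipotent element of $SL(\ol V)$ with $\dim \ol V = n - d < n$; by (2.6.2) the Jordan type of $\ol u$ is constant along each $Z_k$, so the product decomposition is global over $Z_k$. The induction hypothesis applied to $SL(\ol V)$ provides an affine paving of $\ol\SP_{\ol u}$, hence of $\pi\iv(Z_k)$, and since $\SP_u \simeq \SF_u = \bigsqcup_k \pi\iv(Z_k)$ we obtain an affine paving of $\SP_u$.

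For (ii) I would run the analogous construction with $\pi_0 : \SF_{u'_0} \to \SG_{u'_0}$ in place of $\pi$. Since $u'_0$ has Jordan type $(d^m)$ one has $\Ker(x_0)^d = V$, so $\SG_{u'_0} = \BP(V) - \BP(\Ker(x_0)^{d-1})$ is a Zariski-locally-trivial affine bundle over $\BP(V/\Ker(x_0)^{d-1})$, and $\SF_{u'_0} \to \SG_{u'_0}$ realizes $\SP_{u'_0}$ as an iterated affine bundle; in particular $\SP_{u'_0}$ has an affine paving, and one may choose it, via the analogue of (2.6.3) for $x_0$, so that the image $\pi_0(\SP_u) \subseteq \SG_{u'_0}$ of the inclusion $\SP_u \subseteq \SP_{u'_0}$ of (2.8.1) is a union of strata $Z_k$, each still satisfying an analogue of Lemma 2.7, namely $\pi_0\iv(Z_k) \simeq \BA^{k-1} \times \ol\SP_{\ol{u'_0}}$. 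Over a stratum of $\SG_{u'_0}$ disjoint from $\pi_0(\SP_u)$ the whole $\pi_0$-fibre lies in $\SP_{u'_0} - \SP_u$ and is affinely paved by the induction hypothesis for (i); over a stratum $Z_k$ inside $\pi_0(\SP_u)$ the relevant part of $\SP_{u'_0} - \SP_u$ is $\BA^{k-1} \times (\ol\SP_{\ol{u'_0}} - \ol\SP_{\ol u})$, which is affinely paved by the induction hypothesis for (ii) once one knows that $\ol{u'_0}$ is the auxiliary element attached to $\ol u$ by the recipe (2.8.2). Assembling all the pieces produces an affine paving of $\SP_{u'_0} - \SP_u$.

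The hard part is not the geometry but the combinatorial bookkeeping behind (ii): one must check that $\pi$ and $\pi_0$ can be trivialized compatibly over each $Z_k$, so that taking fibrewise differences is meaningful, and — less obviously — that on $\ol V = V/V_d$ the induced element $\ol{u'_0}$ really coincides with the element $(\ol u)'_0$ produced from $\ol u$ by (2.8.2). Both reduce to chasing the explicit Jordan basis $\{v_{k,j}\}$ and the subspaces $W^i$, $\Ker(x_0)^j$ through the constructions of 2.6 and 2.8, exactly as in Spaltenstein's paper for $\SB_u$; no idea beyond Lemma 2.7 is needed, but this is where the real work lies.
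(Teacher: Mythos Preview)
Your proposal is correct and follows essentially the same approach as the paper: for (i) you use the stratification $\SG_u = \bigsqcup_k Z_k$ from (2.6.3), Lemma 2.7, and induction on $\dim V$; for (ii) you split $\SP_{u'_0} - \SP_u$ according to whether the base point lies in $\SG_u$ or in $\SG_{u'_0} - \SG_u$, applying induction for (ii) over the former via the product decomposition (2.9.1) and induction for (i) over the latter. The paper organizes (ii) identically (writing $X_1 = \pi_0\iv(\SG_u)$ and $X_2 = \pi_0\iv(\SG_{u'_0}-\SG_u)$), and the compatibility checks you flag --- that $\pi$ and $\pi_0$ trivialize simultaneously over each $Z_k$, and that $\ol{u'_0}$ agrees with $(\ol u)'_0$ --- are exactly what the paper uses implicitly in asserting (2.9.1) and invoking induction.
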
  

\begin{proof}
We show (i).  Since $\SG_u \simeq \BP(\Ker x^d) - \BP(\Ker x^{d-1})$,
by (2.6.3), we have

\begin{equation*}
\SF_u \simeq \bigsqcup_{k = 1}^h \pi\iv(Z_k).
\end{equation*}  
By induction, we may assume that $\ol\SP_{\ol u}$ has an affine paving.
Hence by Lemma 2.7, $\pi\iv(Z_k)$ has an affine paving.  Thus (i) holds.
\par
Next we show (ii). We have
\begin{equation*}
  \BP(\Ker x^d) - \BP(\Ker x^{d-1})  \subset \BP(\Ker x_0^d) - \BP(\Ker x_0^{d-1}),
\end{equation*}
and so $\SG_u \subset \SG_{u'_0}$. Let $X_1 = \pi_0\iv(\SG_u)$, and
$X_2 = \pi_0\iv(\SG_{u_0'} - \SG_{u})$. Since $\SP_u \subset X_1$, 
we have
\begin{equation*}
\SP_{u'_0} - \SP_u = (X_1 - \SP_u) \sqcup X_2. 
\end{equation*}
\par\medskip
Here $X_1 - \SP_u = \bigsqcup_{k = 1}^h(\pi_0\iv(Z_k) - \pi\iv(Z_k))$, and by a similar
discussion as in Lemma 2.7, we have
\begin{equation*}
\tag{2.9.1}  
\pi_0\iv(Z_k) - \pi\iv (Z_k) \simeq Z_k \times (\pi_0\iv (V_d) - \pi\iv (V_d)).
\end{equation*}  
Since $\pi_0\iv(V_d) - \pi\iv(V_d) \simeq \ol\SP_{\ol u'_0} - \ol\SP_{\ol u}$,
by induction and by (2.9.1), $\pi_0\iv(Z_k) - \pi\iv(Z_k)$ has an affine paving.
Hence $X_1 - \SP_u$ also has an affine paving. 
On the other hand, by applying the discussion in the proof of Lemma 2.7 for $u'_0$,
we see that $X_2$ has an affine paving.  Hence (ii) follows. 
\end{proof}

\para{2.10.}
Consider the isomorphism
$\pi_0\iv(Z_k) - \pi\iv(Z_k) \simeq Z_k \times (\ol\SP_{\ol u'_0} - \ol\SP_{\ol u})$
in (2.9.1)). 
Recall that $\dot\SE_0$ is the local system on $\SP_u$, and let $\dot\SE_0'$
be the local system on $\ol\SP_{\ol u}$ defined similarly to $\dot\SE_0$.
Then by a similar discussion as in \cite[I, (3.6.8)]{Sh4}, we see that
the restriction of $\dot\SE_0$ on $\pi_0\iv(Z_k) - \pi\iv(Z_k)$ is isomorphic to
$\Ql \boxtimes \dot\SE'_0$. Similarly,
in the formula $\pi\iv(Z_k) \simeq Z_k \times \ol \SP_{\ol u}$ in (2.7.1),
the restriction of $\dot\SE_0$ on $\pi\iv(Z_k)$ is isomorphic to $\Ql\boxtimes \dot\SE_0'$.
($\Ql$ is the constant sheaf on $Z_k \simeq \BA^{k-1}$.)
It follows, from Proposition 2.9, that

\begin{equation*}
\tag{2.10.1}  
H_c^{\odd}(\SP_{u'_0}-\SP_u, \dot\SE_0) = 0, \qquad H_c^{\odd}(\SP_u, \dot\SE_0) = 0. 
\end{equation*}  

We have the following result.

\begin{prop}  %%%%  Prop. 2.11
Let $i : \SP_u \hra \SP_{u'_0}$ be the closed immersion.
\begin{enumerate}
\item 
The induced map $i^* : H^{2a}_c(\SP_{u'_0}, \dot\SE_0) \to H^{2a}_c(\SP_u,\dot\SE_0)$
is surjective.
\item
Then map $i^*$ is $\SW_L$-equivariant.   
\end{enumerate}
\end{prop}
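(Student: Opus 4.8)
The plan is to derive both assertions from the affine‐paving results in Proposition~2.9 together with the long exact sequence of the pair $(\SP_{u'_0}, \SP_u)$ in compactly supported cohomology. First I would set $U = \SP_{u'_0} - \SP_u$, an open subvariety with closed complement $\SP_u$, and write the standard long exact sequence
\begin{equation*}
\cdots \to H^{a}_c(U, \dot\SE_0) \to H^{a}_c(\SP_{u'_0}, \dot\SE_0) \xrightarrow{i^*} H^{a}_c(\SP_u, \dot\SE_0) \to H^{a+1}_c(U, \dot\SE_0) \to \cdots,
\end{equation*}
where here $\dot\SE_0$ (and its restrictions) are the local systems appearing in 2.5 and 2.10. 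By (2.10.1), $H^{\odd}_c(U, \dot\SE_0) = 0$ and $H^{\odd}_c(\SP_u, \dot\SE_0) = 0$, so taking $a = 2a$ in the sequence the next term $H^{2a+1}_c(U, \dot\SE_0)$ vanishes; hence $i^* : H^{2a}_c(\SP_{u'_0}, \dot\SE_0) \to H^{2a}_c(\SP_u, \dot\SE_0)$ is surjective, which is (i).

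For (ii), the point is that every object in sight carries a compatible $\SW_L$-action. By (2.5.2), $\SH^a_u K \simeq H^{a+r}_c(\SP_u, \dot\SE_0)$ and $\SH^a_{u'_0} K \simeq H^{a+r}_c(\SP_{u'_0}, \dot\SE_0)$ inherit $\SW_L$-module structures from the $\SW_L$-action on $K$. I would argue that the closed immersion $i : \SP_u \hra \SP_{u'_0}$ is compatible with the geometric construction of the $\SW_L$-action: namely, the $\SW_L$-action on $\SH^\bullet_x K$ for $x \in G\uni$ is induced, via the diagram in 2.5 and the identification $\SH^a_x K \simeq H^{a+r}_c(\SP_x, \dot\SE_0)$, from the monodromy action coming from $\vf : \wt X \to \ol Y$ and the local system $\ol\SE_1$, and this is functorial in $x$ in the sense that restricting from a larger class (here from $C_0 \ni u'_0$, regarding $u'_0$ as living in $C_0$ but $u$ in $C \subset \ol C_0 \cdot$ — more precisely using $\SF_u \subset \SF_{u'_0}$ from (2.8.1)) commutes with the map $i^*$. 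Concretely, both $H^{\bullet}_c(\SP_u, \dot\SE_0)$ and $H^{\bullet}_c(\SP_{u'_0}, \dot\SE_0)$ are stalks (up to shift) of the single $\SW_L$-equivariant perverse sheaf $K$, and $i^*$ is the specialization/restriction map between stalks of the same complex; such maps are automatically equivariant for any group acting on $K$ by automorphisms.

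The main obstacle I anticipate is making the equivariance in (ii) genuinely rigorous rather than a slogan: the $\SW_L$-action on the $H^\bullet_c(\SP_x, \dot\SE_0)$ is defined through the chain of isomorphisms in 2.5 (cleanness of $\IC(\ol C_0, \SE_0)$, the identification $K_{\ol\SE_1}|_{\ol\SP_x} = \dot\SE_0[r]$ extended by zero, and (2.5.2)), and one must check that $i : \SP_u \hra \SP_{u'_0}$ is literally induced by the inclusion of fibres of $\f : \wt X \to \ol Y$ over $u$ versus over $u'_0$ — here one uses that $u$ lies in the closure $\ol\vS U_P$‐stratum in a way compatible with $u'_0$, which is exactly the content of (2.8.1) and the flag descriptions $\SF_u, \SF_{u'_0}$ in 2.8. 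Once this identification of $i^*$ with a morphism of $\SW_L$-equivariant constructible sheaves on $G$ (restricted to stalks) is in place, equivariance is formal. I would therefore spend most of the write-up on that bookkeeping, citing \cite[I, (3.6.8)]{Sh4} and \cite[V, (24.2.5)]{L3} for the analogous compatibilities already established in the $GL_n$, $d=1$ case, and otherwise keep the argument short.
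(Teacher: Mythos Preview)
Your argument for (i) is correct and is exactly what the paper does: the long exact sequence for the pair together with the vanishing (2.10.1) of odd compactly supported cohomology on $\SP_{u'_0}-\SP_u$ gives surjectivity immediately.

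For (ii), however, there is a genuine gap. You assert that $i^*$ ``is the specialization/restriction map between stalks of the same complex'' and that such maps are automatically $\SW_L$-equivariant. But $i^*$ is defined as pullback along the closed immersion $\SP_u \hookrightarrow \SP_{u'_0}$ of subvarieties of $G/P$; the two cohomology groups are stalks of $K$ at the \emph{distinct} points $u$ and $u'_0$ of $G$, and there is no a priori map between stalks of a sheaf at two different points. Identifying $i^*$ with a map that visibly comes from the $\SW_L$-action on $K$ is exactly the nontrivial step, and the references you propose to cite (\cite[I, (3.6.8)]{Sh4}, \cite[V, (24.2.5)]{L3}) concern the identification of $\dot\SE_0$ on pieces of the paving, not this comparison of stalks.

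The paper fills this gap by a transversal-slice argument. One first passes to the Lie algebra $\Fg$ (so that Spaltenstein's good transversal slices \cite{Sp4} are available in any good characteristic), replacing $u,u'_0$ by nilpotent elements $x,x_0$ and $\SP_u,\SP_{u'_0}$ by their Lie-algebra analogues $\SP_x,\SP_{x_0}$. One then takes a transversal slice $S = x_0 + \BSigma$ to the orbit $\Fc_0$, equipped with a contracting $\BG_m$-action fixing $x_0$; since $\Fc_0 \subset \ol\Fc$ one may choose $x \in S$. Contraction gives a $\SW_L$-equivariant isomorphism $H^a_c(S,K) \simeq \SH^a_{x_0}K$, and the inclusion $\{x\} \hookrightarrow S$ gives a $\SW_L$-equivariant map $H^a_c(S,K) \to \SH^a_xK$. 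The composite is then identified with $i^*$. This geometric construction is what makes the equivariance rigorous; your sketch correctly locates the difficulty but does not supply the mechanism to overcome it.
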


\begin{proof}
(i) follows from the cohomology long exact sequence obtained from
the closed immersion $\SP_u \hra \SP_{u'_0}$, by using (2.10.1). 
\par
We show (ii).  Let $\Fg$ be the Lie algebra of $G$, and $\Fc_0$
the nilpotent orbit in $\Fg$ corresponding to $C_0$. We denote by
the same symbol $\SE_0$ the local system on $\Fc_0$ corresponding to
$\SE_0$ on $C_0$. Let $\Fc$ be the $G$-orbit in $\Fg$ corresponding to $C$.
Take $x_0 \in \Fc_0$ and $x \in \Fc$. Let $\Fn_P$ be the nilpotent radical
of $\Lie P$. Similarly to the group case, we define varieties
\begin{align*}
  \wt X &= \{ (x, gP) \in \Fg \times G/P \mid \Ad(g)\iv x \in \ol \vS + \Fn_P\}, \\
      \ol Y &= \bigcup_{g \in G}\Ad(g)(\ol\vS + \Fn_P). 
\end{align*}
and define $\f : \wt X \to \ol Y$ by $(x, gP) \mapsto x$,
   where $\ol\vS = \Lie Z_L \times \ol\Fc_0$.  
We can define a $\SW_L$-complex $K = \f_*K_{\ol\SE_1}$ on $\ol Y$ as in the group case. 
Put 
\begin{align*}
\SP_x = \{ gP \in G/P \mid Ad(g)\iv x \in \Fc_0 + \Fn_P\},
\end{align*}
and similarly define $\SP_{x_0}$.  Let $\dot\SE_0$ be the local system
on $\SP_x$.  Then $H^i_c(\SP_x, \dot\SE_0)$ has a structure of $\SW_L$-module, and
$H^i_c(\SP_x, \dot\SE_0) \simeq H^i_c(\SP_u, \dot\SE_0)$ as $\SW_L$-modules.
A similar property holds also for $\SP_{x_0}$. Hence it is enough to show (ii)
by replacing $G$ by $\Fg$. 
\par
By Spaltenstein \cite{Sp3}, under the condition that $p$ is a good prime (hence
for any $p$ in the case $G = SL_n$), that
there exists a 1-parameter subgroup $\la : \Bk^* \to G$ and a linear subspace
$\BSigma  \subset \Fg$ such that

\begin{equation*}
\Ad(\la(t))x_0 = t^{-c}x_0  \qquad \text{ with }  \quad c > 0. 
\end{equation*}
$\Ad \la(t)$ stabilizes $\BSigma$, and its weights on $\BSigma$ are of the form
$\xi(t) = t^b$ with $b \ge 0$.  Moreover, $\dim \BSigma = \dim Z_G(x_0)$. 
\par
Put $S = x_0 + \BSigma$.  Then $S$ is a transversal slice in $\Fg$ to the $G$-orbit
$\Fc_0$. Since $\Fc_0 \subset \ol\Fc$, $S \cap \ol\Fc$ is a transversal slice
in $\ol\Fc$ to $\Fc_0$. 
Since $S \cap \ol\Fc \ne \emptyset$, one can choose $x \in S$. Now
by considering the contraction to $x$ by the $\BG_m$-action on $S$, we have
\begin{equation*}
  H^a_c(S, K) \simeq \SH^a_{x_0}K
          \simeq H_c^{a+r}(\SP_{x_0}, \dot\SE_0),
\end{equation*}
and these isomorphisms are $\SW_L$-equivariant. 
On the other hand, the inclusion relation $\{ x\} \hra S$ induces
a $\SW_L$-equivariant map
\begin{equation*}
H^a_c(S, K) \to H_c^a(\{ x \}, K) = \SH^a_xK \isom  H_c^{a+r}(\SP_x, \dot\SE_0).
\end{equation*}
Hence we have a $\SW_L$-equivariant map
$H^{a+r}_c(\SP_{x_0}, \dot\SE_0) \to H^{a+r}_c(\SP_x, \dot\SE_0)$, which is nothing
but the map $i^*$ in (ii).  Thus (ii) is proved. 
\end{proof}

\para{2.12.}
Under the setup in 2.6, we consider the $\BF_q$-structure. 
Assume that $F$ is a Frobenius map on $G$, split or non-split type.
We choose $u = u_{\la} \in C^F$ and $u_0' = u_{\la}' \in C_0^F$ as in 2.3.
Then $\SP_u$ is an $F$-stable subvariety of $\SP_{u_0'}$, and
the map $i^* : H^{2a}_c(\SP_{u_0'}, \dot\SE_0) \to H^{2a}_c(\SP_u, \dot\SE_0)$
in Proposition 2.11 is $\Phi$-equivariant. 
\par
Since $p$ is good for $G$,
$\SH^a_gK$ is $(a+r)$-pure for $g \in \ol C^F$ and for all $a$ (\cite[V, (24.8.3)]{L3}).
Hence the eigenvalues of $\Phi$ on $H_c^{a+r}(\SP_u, \dot\SE_0)$ are algebraic numbers
all of whose complex conjugate have absolute value $q^{(a+r)/2}$.
Now assume that $F$ is split on $G$.  Then it is possible to choose $Z_k$ in Lemma 2.7
so that the isomorphism (2.7.1) is defined over $\BF_q$. (This does not hold
if $F$ is non-split.)  Thus also possible to choose an affine paving of $\SP_u$
in Proposition 2.9 so that it is defined over $\BF_q$.
In particular, the eigenvalues of $\Phi$ on $H^{2a}_c(\SP_u, \dot\SE_0)$
are of the form $q^b$ with $b \le a$.
It follows that

\begin{lem} %%%%  Lemma 2.13
Assume that $F$ is split.  Then the eigenvalues of
$\Phi$ on $H_c^{2a}(\SP_u, \dot\SE_0)$ coincide with $q^a$. 
\end{lem}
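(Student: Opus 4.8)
The plan is to juxtapose the two facts assembled in 2.12. First, since $F$ is split, the affine paving of $\SP_u$ produced by Lemma 2.7 and Proposition 2.9, together with the identification of the restriction of $\dot\SE_0$ to each cell $\pi\iv(Z_k)$ with $\Ql\boxtimes\dot\SE'_0$, can all be arranged over $\BF_q$; hence, as recorded at the end of 2.12, every eigenvalue of $\Phi$ on $H^{2a}_c(\SP_u,\dot\SE_0)$ has the form $q^b$ for some integer $b\le a$. Second, since $p$ is good for $G=SL_n$ and $u\in C^F\subseteq\ol C^F$, the stalk $\SH^m_uK$ is $(m+r)$-pure by \cite[V, (24.8.3)]{L3}; transporting this along the isomorphism (2.5.2) shows that $H^{m+r}_c(\SP_u,\dot\SE_0)\simeq\SH^m_uK$ is pure of weight $m+r$, so taking $m=2a-r$ we get that $H^{2a}_c(\SP_u,\dot\SE_0)$ is pure of weight $2a$: every eigenvalue of $\Phi$ on it is an algebraic number all of whose conjugates have absolute value $q^a$.

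The conclusion is then an elementary arithmetic step. An eigenvalue of $\Phi$ on $H^{2a}_c(\SP_u,\dot\SE_0)$ is of the form $q^b$ with $b\le a$ by the first point; but $q^b$ is a positive rational number, whose only algebraic conjugate is itself, and that conjugate must have absolute value $q^a$ by the second point. Therefore $q^b=q^a$, so $b=a$, and every eigenvalue of $\Phi$ on $H^{2a}_c(\SP_u,\dot\SE_0)$ equals $q^a$, as asserted.

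I do not expect a genuine obstacle. The substantive input --- that the cell decompositions of Lemma 2.7 and Proposition 2.9, the local-system identifications of 2.10, and hence the whole paving of $\SP_u$ descend to $\BF_q$ exactly when $F$ is split, which is why the hypothesis is needed and the statement fails for non-split $F$ --- is already isolated in 2.12. The points requiring care are purely bookkeeping: checking that the $\BG_m$-contraction behind (2.5.2) and the isomorphism itself respect the $\BF_q$-structures, so that $\Phi$ corresponds on the two sides; and noting that the purity statement of \cite[V]{L3} applies because $p$ is good for $SL_n$ for every $p$. With these verified the lemma follows at once from the weight comparison above.
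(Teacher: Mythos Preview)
Your argument is correct and is essentially identical to the paper's: the proof of this lemma is the paragraph of 2.12 immediately preceding its statement, which combines the purity from \cite[V, (24.8.3)]{L3} with the $\BF_q$-rational affine paving to force $q^b=q^a$ exactly as you do.
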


\para{2.14.}
Now assume that $F = F'_0\s$ is a non-split Frobenius map on $G$,
with respect to the $\BF_q$-structure, 
where $F'_0$ is a split Frobenius map, and $\s$ is an automorphism
on $G$ given by $g \mapsto n_0 ({}^tg\iv) n_0\iv$; 
$n_0 \in N_G(T)$ is a representative of the longest element
in $W = N_G(T)/T$.
Here $Z_G \simeq \BZ_{n'}$, and $F$ (resp. $F_0'$) acts on $Z_G$ via
$x \mapsto x^{-q}$ (resp. $x \mapsto x^q$) on $Z_G$.
\par
We note that
\par\medskip\noindent
(2.14.1) \ There exists a power $q_0$ of a prime number $p_0$ such that
$p_0$ is prime to $n$ 
and that
\begin{equation*}
  q_0 \equiv -q \pmod{n'}.
\end{equation*}
\par\medskip
In fact, since $q$ is a power of $p$ and $(p, n') = 1$,
$-q$ is prime to $n'$. Hence by Dirichlet's theorem on
arithmetic progression, there exists a prime number $p_0$ such that
$p_0 \equiv -q \pmod{n'}$.  We may choose $p_0$ large enough so that
$p_0 > n$.  Hence (2.14.1) holds for $q_0 = p_0$. 
\par\medskip
Let $G_0 = SL_n$ be a group different from $G$ with a split Frobenius map $F_0 : G_0 \to G_0$
with respect to an $\BF_{q_0}$-structure.
We have $Z_G\wg \simeq \BZ_{n'}$ and $Z_{G_0}\wg \simeq \BZ_n$.
As an additive group $\BZ_n \simeq \BZ/n\BZ$, we have a decomposition 
$\BZ_n \simeq \BZ_{n'}\oplus \BZ_{p^a}$ for $n = p^an'$, and
$\BZ_{n'}$ is regarded as a $\BZ$-submodule of $\BZ_n$.
%%%%
%%%%
\begin{lem}  %%%%  Lemma 2.15
The embedding $\BZ_{n'} \hra \BZ_n$ induces an embedding
  $(Z_G\wg)^F \hra (Z_{G_0}\wg)^{F_0}$ as stated in (1.17.1).
\end{lem}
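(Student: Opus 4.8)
The plan is to make the actions of $F$ and $F_0$ on the two character groups explicit and then observe that the inclusion $\BZ_{n'}\hra\BZ_n$ intertwines them on the relevant fixed subgroups; once this is set up the statement is purely formal.

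First I would record the structure. Since $G=SL_n$ in characteristic $p$, we have $Z_G\simeq\BZ_{n'}$ by 2.2, hence $Z_G\wg\simeq\BZ_{n'}\simeq\BZ/n'\BZ$; likewise $Z_{G_0}\wg\simeq\BZ_n\simeq\BZ/n\BZ$. By 2.14, $F$ acts on $Z_G$ by $x\mapsto x^{-q}$, so a character $\x$ is $F$-fixed precisely when $\x^{-q}=\x$, i.e. $\x^{q+1}=1$; thus $(Z_G\wg)^F$ is the kernel of multiplication by $q+1$ on $\BZ/n'\BZ$. Similarly $F_0$ is a split Frobenius, acting on $Z_{G_0}$ by $x\mapsto x^{q_0}$, so $(Z_{G_0}\wg)^{F_0}$ is the kernel of multiplication by $q_0-1$ on $\BZ/n\BZ$. (We will not need invertibility of $q+1$ or $q_0-1$; note only that $-q$ is a unit mod $n'$ since $(q,n')=1$, and $q_0=p_0$ is a unit mod $n$ since $p_0>n$.)

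Next I would describe the submodule $\BZ_{n'}\subset\BZ_n$ concretely: under $\BZ_n\simeq\BZ/n\BZ$ it is the unique subgroup of order $n'$, namely $p^a\BZ/n\BZ$ where $n=p^an'$, and $\io\colon\BZ/n'\BZ\isom p^a\BZ/n\BZ$, $k\mapsto kp^a$, is a $\BZ$-module isomorphism onto it. Being additive, $\io$ commutes with multiplication by any integer, so it carries multiplication by $q_0-1$ on $\BZ/n'\BZ$ to multiplication by $q_0-1$ on $p^a\BZ/n\BZ$. By (2.14.1) we have $q_0\equiv-q\pmod{n'}$, hence $q_0-1\equiv-(q+1)\pmod{n'}$, and since multiplication by an integer on $p^a\BZ/n\BZ\simeq\BZ/n'\BZ$ depends only on its residue mod $n'$, multiplication by $q_0-1$ and by $q+1$ have the same kernel there. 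Combining, $\io$ maps $(Z_G\wg)^F=\ker(q+1\ \text{on}\ \BZ/n'\BZ)$ onto $\ker(q_0-1\ \text{on}\ p^a\BZ/n\BZ)=p^a\BZ/n\BZ\cap\ker(q_0-1\ \text{on}\ \BZ/n\BZ)\subseteq(Z_{G_0}\wg)^{F_0}$. As $\io$ is injective, it restricts to an embedding $(Z_G\wg)^F\hra(Z_{G_0}\wg)^{F_0}$, which is exactly (1.17.1).

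There is no serious obstacle here: the whole content is the congruence $q_0\equiv-q\pmod{n'}$ supplied by (2.14.1), which is what forces the ``$F$'' and ``$F_0$'' scalars to agree on the prime-to-$p$ part $\BZ_{n'}$ of $\BZ_n$; everything else is bookkeeping with cyclic groups. The only point requiring a little care is the convention for how $F$ acts on $Z_G\wg$ as opposed to on $Z_G$ itself, but dualising a cyclic group turns multiplication by $m$ into multiplication by $m$ (or by $m\iv$, which has the same fixed points when $m$ is a unit), so the identification of $(Z_G\wg)^F$ with $\ker(q+1)$ — and hence the argument — is unaffected.
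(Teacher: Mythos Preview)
Your proof is correct and follows essentially the same route as the paper: identify the $F$- and $F_0$-actions on the character groups as multiplication by $-q$ and $q_0$ respectively, and then use the congruence $q_0\equiv -q\pmod{n'}$ from (2.14.1) to see that the fixed points match on the $\BZ_{n'}$-summand of $\BZ_n$. The only cosmetic difference is that the paper phrases the embedding via the direct sum decomposition $\BZ_n\simeq\BZ_{n'}\oplus\BZ_{p^a}$ rather than your explicit map $k\mapsto kp^a$, but these are the same subgroup.
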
  
\begin{proof}
The action of $F$ on $Z_G\wg \simeq \BZ_{n'}$ is given by
$x \mapsto (-q)x$, and the action of $F_0$ on $Z_{G_0}\wg \simeq \BZ_n$ is given by
$x \mapsto q_0x$.
Hence
\begin{equation*}
  \BZ_{n'}^F = \{ x \in \BZ_{n'} \mid (-q)x = x \}, \qquad
  \BZ_n^{F_0} = \{ x \in \BZ_n \mid q_0x = x\}.
\end{equation*}
The action of $F_0$ on $\BZ_n$ preserves the decomposition
$\BZ_n = \BZ_{n'}\oplus \BZ_{p^a}$. 
Let $x \in \BZ$ be a representative of an element in $\BZ_{n'}^F \subset \BZ_n$.
Then by (2.14.1) we have 
\begin{align*}
q_0x \equiv (-q)x \equiv x \pmod {n'},
\end{align*}
and $x$ gives an element in $\BZ_{n'}^{F_0} \subset \BZ_n^{F_0}$. 
Hence the natural inclusion $\BZ_{n'} \hra \BZ_n$ induces an embedding
$\BZ_{n'}^F = \BZ_{n'}^{F_0} \hra \BZ_n^{F_0}$.
Thus we have an embedding $(Z_G\wg)^F \hra (Z_{G_0}\wg)^{F_0}$ as asserted. 
\end{proof}

\para{2.16.}
Let $\x \in Z_G\wg$ be an $F$-stable character.  By Lemma 2.15,
$\x$ is regarded as an $F_0$-stable character of $Z_{G_0}$. 
Take $(L, C_0, \SE_0) \in \SM_G^F$ pertaining to $\x$.
Since $(\SM_G)_{\x}$ is a single class, we have
$(\SM_G)_{\x} \simeq (\SM_{G_0})_{\x}$.
Here $L$, $C_0$ are both $F$ and $F_0$-stable, and we obtain
$(L, C_0, \SE_0) \in \SM_{G_0}$ pertaining to $\x$.
(Here $L, C_0, \SE_0$ are associated to
$G_0$, but we use the same symbol by abbreviation.) We choose
$u_0 \in C_0^F$ and $v_0 \in C_0^{F_0}$ as in 2.3.
Let $\r_0 \in A_L(u_0)\wg$ be the representation corresponding to $\SE_0$.
Then the pullback of $\r_0$ coincides with $\x$ under the surjection
$Z_G \to A_L(u_0)\wg$, and we have $(L, C_0, \SE_0) \in \SM_G^F$.
Similarly, we have a representation $\r_0' \in A_L(v_0)\wg$ corresponding to
$\SE_0$, and $(L, C_0, \SE_0) \in \SM_{G_0}^{F_0}$. 
We fix $\eta = (L,C_0, \SE_0) \in (\SM_G)^F_{\x} = (\SM_{G_0})^{F_0}_{\x}$.
Note that $F$ and $F_0$ satisfy the first and the second relation in (1.17.2).
\par
We choose $u = u_{\la} \in C^F$ and $u_0' = u'_{\la} \in C_0^F$ in $G$
as in 2.12.  We also choose $v = v_{\la} \in C^{F_0}$ and
$v_0' = v'_{\la} \in C_0^{F_0}$ in $G_0$.  
\par
We consider the variety $\SP_{u_0'}$, regarding $u_0'$ as an element in $G^F$. 
Let $w_0$ be the longest element in $\SW_L \simeq S_m$.  Then $F$ acts on
$H^{2a}_c(\SP_{u_0'}, \dot\SE_0)$ via $\Phi$, and also we have an action of $w_0$ on it.
We have the following result. 

\begin{lem}  %%%%   Lemma 2.17
Assume that $F$ is non-split.   Then eigenvalues of
$\Phi w_0$ on $H^{2a}_c(\SP_{u_{\la}'}, \dot\SE_0)$ are equal to $\nu_{\la, \eta}(-q)^a$,
where $\nu_{\la, \eta}$ is as in 2.3.  
\end{lem}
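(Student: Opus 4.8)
The plan is to follow the proof of Lemma 2.13, replacing ``an affine paving defined over $\BF_q$'' by ``one that becomes defined over $\BF_q$ after twisting by $w_0$'', and to read off the precise eigenvalues from the split group $(G_0,F_0)$ of 2.14--2.16 in the spirit of Lemma 1.18. First I would observe that $\SP_{u'_\la}$ has an affine paving: the element $u'_\la$ lies in $C_0$ and is regular in $L$ (it is of Jordan type $(d^m)$ on $V$ since $\la=d\mu$), so Proposition 2.9, applied with this $u'_\la$ in place of ``$u\in C$'', supplies one, and by the analysis in 2.10 the local system $\dot\SE_0$ restricts to the constant sheaf on each cell. Hence $H^{\odd}_c(\SP_{u'_\la},\dot\SE_0)=0$. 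Since $p$ is good for $G=SL_n$, $\SH^b_gK$ is $(b+r)$-pure, so by (2.5.2) all complex conjugates of the eigenvalues of $\Phi$ on $H^{2a}_c(\SP_{u'_\la},\dot\SE_0)$ have absolute value $q^a$, and the same holds for $\Phi w_0$ because $w_0$ has finite order. Thus each eigenvalue of $\Phi w_0$ on $H^{2a}_c$ is of the form $\z q^a$ with $\z$ a root of unity, and everything reduces to showing $\z=(-1)^a\nu_{\la,\eta}$.

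To identify $\z$ I would compare with $G_0$. Over $\ol\BF_q=\ol\BF_{q_0}$ the data $(\SP_{u'_\la},\dot\SE_0)$ for $G$ and $(\SP_{v'_\la},\dot\SE_0)$ for $G_0$ are the same --- both describe the partial flags $(V_d\subset\cdots\subset V_{(m-1)d})$ on which a fixed regular unipotent of the type-$A_{d-1}^{\,m}$ Levi acts regularly on successive quotients, with the same local system --- and only the Frobenius maps differ: $F_0$ split, and $F=F'_0\s$ non-split with $\s:g\mapsto n_0({}^tg\iv)n_0\iv$. The point is that passing from $F$ to $Fw_0$ converts the non-split cohomological Frobenius into the split one of the $(G_0,F_0)$-picture up to a scalar --- this is the cohomological counterpart of the mechanism behind the preferred-extension formula $\s_E=(-1)^{a_E}w_0$ and the relations (1.17.2) exploited in Lemma 1.18 --- and the scalar is precisely the number by which $\dot\vf_0$ acts at the base point $u'_\la$. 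By the choice of $\Xi_G$ in 2.3 (the trivial extension $\wt\r_0$), and because $u'_\la$ is obtained from $u_0$ by twisting with $c_\la\in A_L(u_0)$ through the sesquilinear form (2.3.1), this scalar equals $\nu_{\la,\eta}=\r_0(c_\la)$. Lemma 2.13 applied to $(G_0,F_0)$ gives eigenvalues $q_0^a$ on $H^{2a}_c(\SP_{v'_\la},\dot\SE_0)$; substituting $q_0\equiv-q\pmod{n'}$ (see (2.14.1) and Lemma 2.15) to pin down the root-of-unity part as a function of $q$, and using the purity bound above to fix the power of $q$ at $a$, I would conclude $\z=(-1)^a\nu_{\la,\eta}$, i.e. that the eigenvalues of $\Phi w_0$ on $H^{2a}_c(\SP_{u'_\la},\dot\SE_0)$ are $\nu_{\la,\eta}(-q)^a$.

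The step I expect to be the main obstacle is making this comparison rigorous: one must choose $\BF_q$- and $\BF_{q_0}$-rational structures on the cells $Z_k$ of (2.6.3) that match under the identification above --- which, as noted in the proof of Lemma 2.13, cannot be done $\BF_q$-rationally for $F$ itself but should become possible after the $w_0$-twist --- and then check that the induced action on a $2b$-dimensional cell, together with its local-system contribution, is multiplication by $\nu_{\la,\eta}(-q)^b$, the sign $(-1)^b$ being forced by the transpose-inverse nature of $\s$ in just the way that $(-1)^{\dim T}$ enters $|G^F|=(-1)^{\dim T}|G_0^{F_0}|(-q)$ in (1.17.2); purity then removes the lower powers. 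A partial check: $F^2$ is split over $\BF_{q^2}$ and $\s$ fixes $w_0$, so $\Phi$ commutes with $w_0$ and $(\Phi w_0)^2=\Phi^2$, which constrains each eigenvalue of $\Phi w_0$ up to a square root of an eigenvalue of $\Phi^2$; but determining the residual sign and, when $d'>2$, the factor $\nu_{\la,\eta}$, still requires the comparison with $(G_0,F_0)$.
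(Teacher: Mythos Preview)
Your approach has a genuine gap at its central step: you write ``Over $\ol\BF_q=\ol\BF_{q_0}$ the data $(\SP_{u'_\la},\dot\SE_0)$ for $G$ and $(\SP_{v'_\la},\dot\SE_0)$ for $G_0$ are the same'', but in the setup of 2.14--2.16 one has $p\ne p_0$ in general (indeed $p_0$ is chosen prime to $n$, while $p$ may divide $n$). The two varieties live over algebraically closed fields of different characteristic and cannot be identified; there is no common affine paving on which to compare $F$-actions, and the congruence $q_0\equiv -q\pmod{n'}$ relates only roots of unity of order dividing $n'$, not the powers $q_0^a$ and $(-q)^a$ themselves. Moreover, the $\SW_L$-action on $H^*_c(\SP_{u'_\la},\dot\SE_0)$ is defined through the decomposition of the perverse sheaf $K$, not through any action of $\SW_L$ on $\SP_{u'_\la}$, so the phrase ``twisting by $w_0$ converts the non-split Frobenius into the split one'' has no direct geometric meaning.

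The paper circumvents this by working purely with trace sums and the Lusztig algorithm. One expands $\sum_a(-1)^a\Tr(\Phi w_0,H^a_c(\SP_{u'_\la},\dot\SE_0))$ via (1.4.3) and (1.16.1) as $\nu_{\la}\sum_{E_i}(-1)^{a_{E_i}}(\dim E_i)\,p_{i_0,i}\,q^{(a_0+r)_i/2}$, using that $\s_{E_i}w_0$ acts on $E_i$ by $(-1)^{a_{E_i}}$ and that $Y_{i_0,F}(u'_\la)=\r_0(c_\la)=\nu_\la$. The analogous sum for $(G_0,F_0)$ is $\sum_{E_i}(\dim E_i)\,p^0_{i_0,i}\,q_0^{(a_0+r)_i/2}$. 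The link between the two is Lemma~1.18(ii), $p_{i_0,i}=(-1)^{\d_{i_0}+\d_i}p^0_{i_0,i}(-q)$, which is a \emph{formal} substitution in rational functions of $q_0$ and needs no geometric identification. Combining, one gets $\sum_a\Tr(\Phi w_0,H^{2a}_c)=\nu_\la\sum_a\dim H^{2a}_c\cdot(-q)^a$; since $\dim H^{2a}_c(\SP_{u'_\la},\dot\SE_0)=\dim H^{2a}_c(\SP_{v'_\la},\dot\SE_0)$ (both computed by the same combinatorial paving, regardless of characteristic) and the eigenvalues of $\Phi w_0$ on $H^{2a}_c$ all have absolute value $q^a$ by purity, the equality of trace sums forces every eigenvalue to equal $\nu_\la(-q)^a$.
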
  

\begin{proof}
By fixing $\eta \in (\SM_G)_{\x} = (\SM_{G_0})_{\x}$, we put $\nu_{\la} = \nu_{\la, \eta}$.   
Let $C'$ be the unipotent class of $G$ containing $u_0'$ of type
$(d, \dots, d)$. Then $A_G(u_0') \simeq A_L(u_0')$, and
$\r_0$ is regarded as $\r_0 \in A_G(u_0')\wg$. 
We consider the functions $X_i$ and
$Y_i$ for $i \in I_{\eta}^F$ used in 1.17, which we denote by
$X_{i,F}$ and $Y_{i,F}$ in order to indicate the functions on $G^F\uni$.
Put $i_0 = (u_0, \r_0) \in I_{\eta}^F$.
We also consider similar functions on $(G_0)^{F_0}\uni$, which we denote by
$X_{i, F_0}$ and $Y_{i, F_0}$ for $i \in (I_0)_{\eta}^{F_0}$, and put
$i_0 = (v_0, \r_0) \in (I_0)_{\eta}^{F_0}$.  
By (1.4.3), we have

\begin{align*}
  (-1)^{a_0+ r}\sum_a(-1)^{a+ r}&\Tr(\Phi, H^{a+r}_c(\SP_{u'_{\la}}, \dot\SE_0))  \\
  &=  \sum_{E_i \in (\SW_L)\wg\ex}
        \sum_a(-1)^{a + a_0}\Tr(\s_{E_i}, E_i)\Tr(\vf_{E_i}, \SH^a_{u'_{\la}}K_{E_i})  \\
  &=  \sum_{E_i}\Tr(\s_{E_i}, E_i)X_{i,F}(u'_{\la})q^{(a_0 + r)_i/2}  \\
  &=  \sum_{E_i}\Tr(\s_{E_i}, E_i)p_{i_0, i}Y_{i_0,F}(u'_{\la})q^{(a_0+r)_i/2},      
\end{align*}
where $a_0 + r$ is with respect to $C'$, and
$(a_0 + r)_i$ denotes $a_0 + r$ with respect to $i = (C_1,\SE_1)$. 

Since $a_0 + r$ is even, this implies that 

\begin{align*}
 \sum_{a} (-1)^{a+r}\Tr(\Phi w_0, H^{a+r}_c(\SP_{u'_{\la}}, \dot\SE_0))  
     = \sum_{E_i \in \SW_L\wg}\Tr(\s_{E_i}w_0, \wt E_i)p_{i_0,i}Y_{i_0,F}(u'_{\la})q^{(a_0+r)_i/2}.  
\end{align*}

Since $\wt E_i$ is the preferred extension, $\s_{E_i}w_0$ acts
on $E_i$ as a scalar multiplication by $(-1)^{a_{E_i}}$.
By definition (see 2.3), we have $Y_{i_0,F}(u'_{\la}) = \r_0(c_{\la}) = \nu_{\la}$.
Thus we have

\begin{align*}
\tag{2.17.1}  
\sum_{a} (-1)^{a}\Tr(\Phi w_0, H^{a}_c(\SP_{u'_{\la}}, \dot\SE_0))  
     = \nu_{\la}\sum_{E_i \in \SW_L\wg}(-1)^{a_{E_i}}(\dim E_i)p_{i_0,i}q^{(a_0+r)_i/2}.
\end{align*}  

On the other hand, we consider a similar situation for $(G_0, F_0)$.
Let $v'_{\la} \in C'^{F_0}$ be the corresponding element.
By definition in 2.3, we have $Y_{i_0, F_0}(v'_{\la}) = Y_{i_0, F_0}(v_0) = 1$.
We denote by $\Phi_0$ the map $\Phi$ on $H^a_c(\SP_{v_0'}, \dot\SE_0)$ induced
from $F_0$ on $\SP_{v_0'}$.  Then by a similar discussion as above,
we have

\begin{align*}
\tag{2.17.2}  
  \sum_a(-1)^{a}\Tr(\Phi_0, H^{a}_c(\SP_{v'_{\la}}, \dot\SE_0)) 
     = \sum_{E_i \in \SW_L\wg}(\dim E_i)p^0_{i_0,i}q_0^{(a_0 + r)_i/2}. 
\end{align*}  

Now by Lemma 1.18 (ii),
$p_{i_0, i} = (-1)^{\d_{i_0} + \d_i}p_{i_0,i}^0(-q) = (-1)^{\d_i}p_{i_0,i}^0(-q)$
since $\d_{i_0} = 1$.
Moreover, since $(a_0 + r)_i/2 = d_{u_1}$ for $u_1 \in C_1$, we have
$(-1)^{(a_0+r)_i/2}\cdot (-1)^{a_{E_i}} = (-1)^{\d_i}$. 
By comparing (2.17.1) and (2.17.2), we have

\begin{align*}
\tag{2.17.3}  
  \sum_{a} (-1)^{a}&\Tr(\Phi w_0, H^{a}_c(\SP_{u'_{\la}}, \dot\SE_0))   \\  
     &= \nu_{\la}\bigg[\sum_a(-1)^{a}\Tr(\Phi_0, H^{a}_c(\SP_{v'_{\la}}, \dot\SE_0))\biggr](-q).
\end{align*}

By (2.10.1), $H^a_c(\SP_{u_0'}, \dot\SE_0) = H^a_c(\SP_{v_0'}, \dot \SE_0) = 0$
if $a$ is odd.
By Lemma 2.13, the eigenvalues of $\Phi_0$ on $H^{2a}_c(\SP_{v_0'}, \dot\SE_0)$
are equal to $q_0^a$. Then (2.17.3) implies that

\begin{align*}
\tag{2.17.4}  
 \sum_{a}\Tr(\Phi w_0, H^{2a}_c(\SP_{u'_{\la}},\dot\SE_0))
  = \nu_{\la}\sum_{a}\dim H^{2a}_c(\SP_{v'_{\la}}, \dot\SE_0)(-q)^a. 
\end{align*}  

Note that $\dim H^{2a}_c(\SP_{u'_{\la}}, \dot\SE_0) = \dim H^{2a}_c(\SP_{v'_{\la}},\dot\SE_0)$
since $\SP_{u'_{\la}} \simeq \SP_{v'_{\la}}$, and $\dot\SE_0$ on
$\SP_{u'_{\la}}$ and on $\SP_{v'_{\la}}$
are both obtained from $\x \in (Z_G\wg)^F \hra (Z_{G_0}\wg)^{F_0}$, hence equal. 
Since the absolute value of the eigenvalues
of $\Phi$ on $H^{2a}_c(\SP_{u'_{\la}}, \dot\SE_0)$ is equal to $q^a$ (see 2.12),
the same is true also for the eigenvalues of $\Phi w_0$ on it.  
The lemma now follows from (2.17.4).
\end{proof}

We are now ready to prove the following result
which is
a generalization of Theorem 4.4 in \cite[I]{Sh4}. Theorem 4.4 was proved under the assumption
that $p$ is large enough, here no restriction on $p$.

\begin{prop}  %%%%  Prop. 2.18
Let $C = C_{\la}$.  Take $u = u_{\la} \in C^F$ and $v = v_{\la} \in C^{F_0}$
the split elements defined in 2.3.
Let $a_0 + r$ be as in (1.4.2) with respect to $C$.  
\begin{enumerate}
\item \  $\Phi w_0$ acts on $H^{a_0+r}_c(\SP_{u}, \dot\SE_0)$ as a scalar
multiplication $\nu_{\la}(-q)^{(a_0+r)/2}$.  
\item \ $\Phi_0$ acts on $H^{a_0+r}_c(\SP_{v}, \dot\SE_0)$ as a scalar
multiplication $q_0^{(a_0+r)/2}$.
\end{enumerate}
\end{prop}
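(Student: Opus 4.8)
The plan is to read off both statements from the eigenvalue estimates already established in Lemma~2.13 and Lemma~2.17, combined with the surjectivity of the restriction map in Proposition~2.11, the key numerical input being that by (2.5.3) one has $a_0+r=2\dim\SP_u$, so that in each case $H^{a_0+r}_c$ is the \emph{top} degree of compactly supported cohomology; on a top cohomology group the statement ``all eigenvalues are equal'' can be promoted to ``the operator is scalar'', provided one knows the operator is semisimple.

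I would deal with (ii) first, as it is essentially immediate. Since $v=v_{\la}\in C^{F_0}$, the isomorphism $H^{a_0+r}_c(\SP_v,\dot\SE_0)\simeq\SH^{a_0}_vK$ together with (2.5.3) identifies this group with $H^{2\dim\SP_v}_c$. Applying Lemma~2.13 to the split pair $(G_0,F_0)$, every eigenvalue of $\Phi_0$ on $H^{2a}_c(\SP_v,\dot\SE_0)$ equals $q_0^a$; taking $2a=a_0+r$ shows every eigenvalue of $\Phi_0$ on $H^{a_0+r}_c(\SP_v,\dot\SE_0)$ is $q_0^{(a_0+r)/2}$. On the other hand, as recalled in 2.5, on $\SH^{a_0}_vK$ the operator $\Phi_0$ is $q_0^{(a_0+r)/2}$ times an operator of finite order, hence is diagonalizable; a diagonalizable operator all of whose eigenvalues equal $q_0^{(a_0+r)/2}$ is the scalar $q_0^{(a_0+r)/2}$, which is (ii).

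For (i) I would first transport the eigenvalue information from $\SP_{u_0'}$ to $\SP_u$. Since $\SP_u\subseteq\SP_{u_0'}$ we have $a_0+r=2\dim\SP_u\le 2\dim\SP_{u_0'}$, so Lemma~2.17 applies with $a$ such that $2a=a_0+r$ and gives that, for $N$ large, $(\Phi w_0-\nu_{\la}(-q)^{(a_0+r)/2})^N$ annihilates $H^{a_0+r}_c(\SP_{u_0'},\dot\SE_0)$. By Proposition~2.11 the map $i^*\colon H^{a_0+r}_c(\SP_{u_0'},\dot\SE_0)\to H^{a_0+r}_c(\SP_u,\dot\SE_0)$ is surjective, and it is $\SW_L$-equivariant by Proposition~2.11(ii) and $\Phi$-equivariant by 2.12, hence $\Phi w_0$-equivariant; applying $i^*$ and using surjectivity, the same power of $\Phi w_0-\nu_{\la}(-q)^{(a_0+r)/2}$ annihilates $H^{a_0+r}_c(\SP_u,\dot\SE_0)$. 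Finally, on $H^{a_0+r}_c(\SP_u,\dot\SE_0)=\SH^{a_0}_uK$ the operator $\Phi$ is $q^{(a_0+r)/2}$ times an operator of finite order by 2.5, and $\Phi$ commutes with $w_0$ because $w_0$, as the longest element of $\SW_L$, is fixed by the Coxeter automorphism $\sigma$ through which $F$ acts on $\SW_L$; hence $\Phi w_0$ is again $q^{(a_0+r)/2}$ times an operator of finite order, so is diagonalizable, and being diagonalizable with all eigenvalues equal to $\nu_{\la}(-q)^{(a_0+r)/2}$ it is the scalar $\nu_{\la}(-q)^{(a_0+r)/2}$. This proves (i).

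The bulk of the argument is routine assembly of Lemma~2.13, Lemma~2.17, Proposition~2.11 and the identity $a_0+r=2\dim\SP_u$. I expect the one step requiring genuine care to be the commutation of $w_0$ with $\Phi$: one must verify that the geometric $\SW_L$-action on $\SH^{a_0}_uK$ intertwines with $\vf$ only up to the automorphism $\sigma$, and that $\sigma(w_0)=w_0$, since this is precisely what guarantees that $\Phi w_0$ inherits the ``$q^{(a_0+r)/2}$ times finite order'' property — and it is that property, not purity alone, that turns ``all eigenvalues equal'' into ``scalar''.
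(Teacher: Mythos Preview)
Your proof is correct and follows essentially the same strategy as the paper: use the $\Phi w_0$-equivariant surjection $i^*$ of Proposition~2.11 to transport the eigenvalue information from Lemma~2.17 (resp.\ Lemma~2.13) down to $H^{a_0+r}_c(\SP_u,\dot\SE_0)$. The one point of divergence is the final step where a single eigenvalue is upgraded to a scalar action: you argue via diagonalizability ($\Phi$ is $q^{(a_0+r)/2}$ times a finite-order map by 2.5, and $\sigma(w_0)=w_0$ forces $\Phi$ and $w_0$ to commute, so $\Phi w_0$ is again semisimple), whereas the paper instead invokes the generalized Springer correspondence in top degree, $H^{a_0+r}_c(\SP_u,\dot\SE_0)\simeq V_{(u,\rho)}\otimes\rho$ with $V_{(u,\rho)}$ irreducible over $\SW_L$ and $\dim\rho=1$, and then applies Schur's lemma using that $\Phi w_0$ commutes with the $\SW_L$-action. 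Both routes are valid; yours needs only the weaker commutation $[\Phi,w_0]=0$ rather than $[\Phi w_0,\SW_L]=0$, while the paper's route makes the role of the Springer fibre structure more transparent.
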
  

\begin{proof}
First we show (i).  Let
$u'_0 \in C_0^F$ be the element as in 2.12.  We consider the
varieties $\SP_u$ and $\SP_{u_0'}$ as in 2.8.
Then we have a closed immersion $i : \SP_u \hra \SP_{u'_0}$.  Here $\SP_u, \SP_{u'_0}$
are $F$-stable, and $i$ is $F$-equivariant. 
By Proposition 2.11, $i^* : H^{2a}_c(\SP_{u'_0}, \dot\SE_0) \to H^{2a}_c(\SP_u, \dot\SE_0)$
is surjective, and $\SW_L$-equivariant.
Hence $i^*$ is $\Phi w_0$-equivariant. 
By Lemma 2.17, the eigenvalues of $\Phi w_0$ on $H^{2a}_c(\SP_{u'_0}, \dot\SE_0)$
are $\nu_{\la}(-q)^a$. 
\par
By (2.5.3) $\dim \SP_u = (a_0 + r)/2$.  It is known by the generalized Springer correspondence
that
\begin{equation*}
\tag{2.18.1}  
\SH^{a_0}_uK = H^{a_0+r}_c(\SP_u, \dot\SE_0) \simeq V_{(u,\r)}\otimes \r
\end{equation*}
as $\SW_L \times A_G(u)$-modules, where $V_{(u,\r)}$ is the irreducible
representation of $\SW_L \simeq S_m$ of type $\mu = \la/d$. 
Since the action of $\Phi w_0$ on $H^{a_0 + r}_c(\SP_u,\dot\SE_0)$ commutes
with the action of $\SW_L$, $\Phi w_0$ acts on $H^{a_0+r}_c(\SP_u,\dot\SE_0)$ as
a scalar multiplication.
Thus $\Phi w_0$ acts on $H^{a_0 + r}_c(\SP_u, \dot\SE_0)$ as a scalar multiplication
$\nu_{\la}(-q)^{(a_0+r)/2}$.  This proves (i).
\par
(ii) is proved similarly by using Lemma 2.13 instead of Lemma 2.17.
\end{proof}

As a corollary to Proposition 2.18, we have the following result, which is a generalization
of \cite[I, Thm. 4.4]{Sh4}. 

\begin{thm}  %%%%   Theorem 2.19
Let $C = C_{\la}$, and assume that $i = (C,\SE) \in I_{\eta}^F = (I_0)_{\eta}^{F_0}$
belongs to the series $\eta = (L, C_0, \SE_0) \in (\SM_G)_{\x}^F = (\SM_{G_0})_{\x}^{F_0}$.
\begin{enumerate}
\item  \  Let $u = u_{\la} \in C^F$, and $u_0 \in C_0^F$ be as in 2.3.
Let $\wt\r_0$ be the trivial extension of $\r_0 \in A_L(u_0)\wg$.
For each $(u,\r) \lra (C,\SE)$, let $\wt\r$ be the trivial extension of $\r \in A_G(u)\wg$.
Then we have
\begin{equation*}
\g(u_0, \wt\r_0, u, \wt\r) = \nu_{\la,\eta}(-1)^{\d_i}.
\end{equation*}
\item \ Let $v = v_{\la} \in C^{F_0}$, and $v_0 \in C_0^{F_0}$ be
defined for $F_0$, similarly to $u, u_0$ in (i).    
Let $\wt\r_0, \wt\r$ be extensions of $\r_0 \in A_L(v_0)\wg, \r \in A_G(v)\wg$,
defined similarly to (i).  Then we have
\begin{equation*}
\g(v_0, \wt\r_0, v, \wt\r) = 1.
\end{equation*}  
\end{enumerate}  

In particular, $u = u_{\la} \in C^F$, $v = v_{\la} \in C^{F_0}$ are
split elements in the sense of 1.7.
\end{thm}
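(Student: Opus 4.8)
The plan is to deduce both assertions directly from Proposition 2.18 by unwinding the definition of $\g$ in 1.4--1.5, evaluated at the split element itself. Since $\wt\r_0,\wt\r$ are the trivial extensions and, for $SL_n$, $\r\in A_G(u)\wg$ is a one-dimensional character on which $\tau$ acts trivially, the map $\psi_0$ of 1.5 acts on the stalk $\SE_u$ as the identity; hence $Y_i^0(u)=\Tr(\tau,\wt\r)=1$ by (1.5.1), and from $\psi=\g\psi_0$ we get $\g(u_0,\wt\r_0,u,\wt\r)=Y_i(u)=\Tr(\psi,\SE_u)$. By the defining property of $\psi$ in 1.4, $q^{(a_0+r)/2}\psi$ is the map induced by $\vf_{E_i}$ on $\SH^{a_0}_uK_{E_i}=\SE_u$, so $\g$ is exactly $q^{-(a_0+r)/2}$ times the scalar by which $\vf_{E_i}$ acts on the one-dimensional space $\SH^{a_0}_uK_{E_i}$. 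By (2.5.2) and (2.18.1) we identify $\SH^{a_0}_uK\simeq H^{a_0+r}_c(\SP_u,\dot\SE_0)\simeq V_{(u,\r)}\otimes\r$ as $\SW_L\times A_G(u)$-modules, and under this identification the Frobenius $\Phi$ of 2.5 is precisely $\bigoplus_E\s_E\otimes\vf_E$ (as already used in the proof of Lemma 2.17); so reading off the scalar of $\vf_{E_i}$ amounts to reading off $\Phi$, corrected by the preferred-extension twist $\s_{E_i}$.

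For part (ii), where $F_0$ is split, $\SW_L$ carries the trivial automorphism, the preferred extension is trivial, $\s_{E_i}=\id$, and $\Phi_0$ acts on the irreducible module $V_{(v,\r)}\otimes\r$ as the scalar of $\vf_{E_i}$; by Proposition 2.18(ii) this scalar is $q_0^{(a_0+r)/2}$, so $\g(v_0,\wt\r_0,v,\wt\r)=1$. For part (i), where $F$ is non-split, $\SW_L\simeq S_m$ is of type $A_{m-1}$, and in every case $\s_{E_i}w_0$ acts on $V_{(u,\r)}$ as the scalar $(-1)^{a_{E_i}}$ (for $m\ge3$ this is $(-1)^{a_{E_i}}w_0^2=(-1)^{a_{E_i}}$ since $w_0^2=1$, and for $m\le2$ one checks it directly on the one-dimensional irreducibles of $S_m$); hence $\Phi w_0$ acts on $V_{(u,\r)}\otimes\r$ as $(-1)^{a_{E_i}}$ times the scalar of $\vf_{E_i}$. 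Proposition 2.18(i) identifies $\Phi w_0$ with $\nu_{\la,\eta}(-q)^{(a_0+r)/2}$, so the scalar of $\vf_{E_i}$ equals $(-1)^{a_{E_i}}\nu_{\la,\eta}(-q)^{(a_0+r)/2}$, and therefore $\g(u_0,\wt\r_0,u,\wt\r)=(-1)^{a_{E_i}}\nu_{\la,\eta}(-1)^{(a_0+r)/2}$. Using the dimension count $(a_0+r)/2=d_u$ (valid here because $C_0$ is the regular class of $L$; cf.\ (2.5.3) and the identity $a_0=2d_u-\dim G+\dim T-\dim Z_L$ from 1.18) together with $(-1)^{a_{E_i}-d_u}=(-1)^{\d_i}$, this collapses to $\nu_{\la,\eta}(-1)^{\d_i}$, which is (i). The degenerate series $L=G$ (the cuspidal case, possible when $p\nmid n$, where $C=C_0$, $u=u_0$, $\wt\r=\wt\r_0$ and $a_0+r=0$) is handled directly: there $\psi$ coincides with the map induced by $\vf_0=\psi_0$, so $\g=1$, in agreement with $\nu_{(n),\eta}(-1)^{\d_i}=1$ since $c_{(n)}=1$ and $\d_i=0$.

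It then remains to observe that these two computed values are exactly the conditions of 1.7: for $F$ split, $v=v_\la$ satisfies $\g(v_0,\wt\r_0,v,\wt\r)=1$ for non-cuspidal pairs and $\g(v,\wt\r,v,\wt\r)=1$ for the cuspidal pair, which is (1.7.2) of case (a); for $F$ non-split, $u=u_\la$ satisfies $\g(u_0,\wt\r_0,u,\wt\r)=\nu_{\la,\eta}(-1)^{\d_{E_i}}$, which is (1.7.5) of case (b) once $\nu_{u,\eta}$ is identified with $\nu_{\la,\eta}$ as in 2.3; and since $SL_n$ is almost simple, cases (c)--(g) of 1.7 do not arise. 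I expect the only genuinely delicate point in writing this out to be the bookkeeping with normalizations --- that the power $q^{(a_0+r)/2}$ relating $\psi$ to $\vf_{E_i}$, the preferred-extension twist $\s_{E_i}=(-1)^{a_{E_i}}w_0$, and the sign $(-1)^{(a_0+r)/2}$ hidden in $(-q)^{(a_0+r)/2}$ combine into exactly $(-1)^{\d_i}$ --- and handling the low-rank cases $m\le2$ uniformly. The substantive geometric content, namely the scalarity of $\Phi w_0$ on the top cohomology and its comparison with the split group $G_0$, is already contained in Proposition 2.18 (and Lemma 2.17 behind it), so the proof of the theorem proper is essentially a translation of that statement into the language of $\g$.
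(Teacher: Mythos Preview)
Your proposal is correct and follows essentially the same approach as the paper. Both arguments evaluate $Y_i(u)$ by identifying $\Phi$ on $H^{a_0+r}_c(\SP_u,\dot\SE_0)\simeq V_{(u,\r)}\otimes\r$ with $\s_{E}\otimes\vf_{E}$, use Proposition 2.18 to read off the scalar of $\Phi w_0$ (respectively $\Phi_0$), cancel the preferred-extension twist $\s_{E}w_0=(-1)^{a_{E}}$, and conclude via $(a_0+r)/2=d_u$ and $Y_i^0(u)=1$; your added remarks on the cuspidal case and on $m\le 2$ are harmless bookkeeping that the paper leaves implicit.
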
  

\begin{proof}
First we show (i). Take $i = (C,\SE) \in I^F$. Let
$E = V_{(u,\r)} \in \SW_L\wg$ for $(u,\r) \lra (C,\SE)$. 
Then under the isomorphism (2.18.1), 
the restriction of $\vf : F^*K \isom K$ on $E \otimes K_E$ gives
$\s_{E} \otimes \vf_{E}$, and $\vf_E|_C$ is given by $q^{(a_0+r)/2}\psi$
for $\psi : F^*\SE \isom \SE$.  The function $Y_i$
is defined by $\psi : F^*\SE \isom \SE$. 
It follows that

\begin{align*}
\tag{2.19.1}  
\Tr (\Phi w_0, H^{a_0+r}_c(\SP_u, \dot\SE_0))  &= \Tr(\vf w_0, \SH^{a_0}_uK)  \\
                         &= q^{(a_0+r)/2}\Tr(\s_Ew_0, E)\Tr(\psi, \SE_u)  \\
                         &=  q^{(a_0+r)/2}(-1)^{a_E}(\dim E) Y_i(u). 
\end{align*}

On the other hand, by Proposition 2.18 (i), we have

\begin{align*}
\tag{2.19.2}  
\Tr(\Phi w_0, H^{a_0+r}_c(\SP_u, \dot\SE_0))
      = \nu_{\la,\eta}q^{(a_0+r)/2}(-1)^{(a_0+r)/2}\dim E.  
\end{align*}
Since $(a_0 + r)/2 = d_u$, and $\d_i = a_E - d_u$,  we have 
\begin{equation*}
\tag{2.19.3}  
Y_i(u) = \nu_{\la,\eta}(-1)^{\d_i}.
\end{equation*}  

Let $\wt\r$ be the trivial extension of $\r \in A_G(u)\wg$.  Then
\begin{equation*}
  Y_i(u) = \g(u_0, \wt\r_0, u, \wt\r)Y_i^0(u) = \g(u_0, \wt\r_0, u, \wt\r).
\end{equation*}
By comparing this with (2.19.3), we obtain (i).  (ii) is proved in a similar way as in (i)
by using Proposition 2.18 (ii).
\end{proof}  

\para{2.20.}
In the setup in 2.14, we consider a special case where $p = p_0$.
Let $F : G \to G$ be a non-split Frobenius map as in 2.14.
Assume that the order of $p$ in the multiplicative group $(\BZ/n'\BZ)^*$
is even.  Then there exists $y \in \BZ$ such that $p^y \equiv -1 \pmod {n'}$.
(Note that this gives a restriction on $p$.  If $p \equiv 1 \pmod{n'}$,
such a $p$ does not exist.)
We define $q_0$ by $q_0 = qp^y$, a power of $p$,
and let $F_0 : G \to G$ be a split Frobenius map with respect to the $\BF_{q_0}$-structure.
By a similar computation as in the proof of Lemma 2.15, we have, for $x \in \BZ_{n'}^F$,
\begin{equation*}
\tag{2.20.1}  
q_0x = qp^y x \equiv (-q)x \equiv x \pmod{n'}.
\end{equation*}  
It follows that $\BZ_{n'}^F$ coincides with $\BZ_{n'}^{F_0}$, and we have
$(Z_G\wg)^F \simeq (Z_G\wg)^{F_0}$.  
The previous discussion, such as Proposition 2.18, Theorem 2.19 holds in this case
with $G = G_0$. 
\par
We compare $(G, F)$ and $(G, F_0)$.  Let $C$ be a unipotent class in $G$.  Then
$C$ is $F$ and $F_0$-stable, and let $u \in C^F$ (resp. $v \in C^{F_0}$) be a split
element in $G^F$ (resp. in $G^{F_0}$).
It is easy to check that
\begin{equation*}
\tag{2.20.2}  
|Z_G(u)^F| = (-1)^{\dim Z_G(u)}|Z_G(v)^{F_0}|(-q).
\end{equation*}  
Also we note that
\par\medskip\noindent
(2.20.3) \ $Z_G(u) \simeq \BZ_{n'_{\la}}$, and the natural surjection
$\BZ_{n'} \to \BZ_{n'_{\la}}$ gives an isomorphism $A_G(u) \isom A_G(v)$.
This isomorphism is compatible with the action of $F$ on $A_G(u)$, and that of
$F_0$ on $A_G(v)$. In particular, we have an isomorphism $\wt A_G(u) \simeq \wt A_G(v)$.
\par\medskip
By (2.20.3), one can define a bijection $\xi$ from the set of
$G^F$-conjugacy classes in $G^F\uni$ to the set of $G^{F_0}$-conjugacy classes in
$G^{F_0}\uni$ by the condition that $v = \xi(u)$ is split if $u$ is split.
\par
Let $\eta = (L, C_0, \SE_0) \in (\SM_G)^F_{\x} = (\SM_G)^{F_0}_{\x}$. 
Recall (see 1.4) that, for $w \in \SW_L$, the generalized Green functions on $G^F\uni$
associated to $\vf_0 : F^*\SE_0 \isom \SE_0$ are
given by
\begin{align*}
\tag{2.20.4}
  Q^G_{L^w,C^w_0,\SE^w_0,\vf^{w}_0}(g)
   = \sum_{E_i \in (\SW_L)\wg\ex}\Tr(w\s_{E_i}, \wt E_i)X_i(g).
\end{align*}

We also consider the generalized Green functions on $G^{F_0}\uni$
associated to $\vf'_0: F_0^*\SE_0 \isom \SE_0$, which we denote by
$Q^0_{L^w,C^w_0,\SE^w_0,\vf'^{w}_0}$. 
Here we assume that $\vf_0: F^*\SE_0 \isom \SE_0$ (resp. $\vf_0': F_0^*\SE_0 \isom \SE_0$)
is defined by
the trivial extension $\wt\r_0$ of $\r_0 \in A_L(u_0)\wg$
(resp. $\wt\r'_0$ of $\r'_0 \in A_L(v_0)\wg$), where
$u_0 \in C_0^F$, $v_0 \in C_0^{F_0}$ are split elements,
and we identify them by the isomorphism $\wt A_L(u_0) \simeq \wt A_L(v_0)$.  
\par
For any $g \in C_{\la}$, we put $\nu_{g, \eta} = \nu_{\la,\eta}$, where
$\nu_{\la,\eta}$ is defined as in 2.3. 
\par
In the case of $E_6$, the formula connecting Green functions
for $(G,F)$ and $(G, F_0)$ was proved in \cite{BS}.   
The following is a generalization of their result.

\begin{thm}  %%%%   Theorem 2.21
Assume that $G = SL_n$.  Let $F$ and $F_0$ be as in 2.20.
\begin{enumerate}
\item \    
There exists a bijection $\xi$ from the set of $G^F$-conjugacy classes of $G^F\uni$
to the set of $G^{F_0}$-conjugacy classes of $G^{F_0}\uni$ satisfying the
equalities
\begin{equation*}
  Q^G_{L^w, C_0^w, \SE_0^w, \vf_0^w}(g)
      = \nu_{g, \eta} Q^0_{L^{w_0w}, C_0^{w_0w}, \SE_0^{w_0w}, \vf'^{w_0w}_0}(\xi(g))(-q)
\end{equation*}
for any  $(L,C_0, \SE_0) \in (\SM_G)^F_{\x} = (\SM_G)^{F_0}_{\x}$. 
\par\smallskip
\item \  The bijection $\xi$ is characterized by the properties
\par\medskip
\begin{enumerate}
\item \ $|Z_G(g)^F| = (-1)^{\dim Z_G(g)}|Z_G(\xi(g))^{F_0}|(-q)$.
\\
\item \ 
  $\Tr(\Phi w_0, H^{a_0+r}_c(\SP_g, \dot\SE_0))
    = \nu_{g,\eta} (-1)^{(a_0+r)/2}\Tr(\Phi_0, H^{a_0 +r}_c(\SP_{\xi(g)}, \dot\SE_0))$.    
\end{enumerate}
\end{enumerate}  
\end{thm}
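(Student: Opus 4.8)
The bijection $\xi$ will be the one already singled out in 2.20: for each $F$-stable unipotent class $C = C_{\la}$ the split elements $u_{\la} \in C^F$ and $v_{\la} \in C^{F_0}$ provide, via (2.20.3), an isomorphism $\wt A_G(u_{\la}) \simeq \wt A_G(v_{\la})$ compatible with $\tau$ and $\tau_0$, hence a bijection between the $\tau$-twisted conjugacy classes in $A_G(u_{\la})\tau$ and the $\tau_0$-twisted classes in $A_G(v_{\la})\tau_0$, equivalently between the $G^F$-classes in $C^F$ and the $G^{F_0}$-classes in $C^{F_0}$; this is $\xi$, normalized by $\xi(u_{\la}) = v_{\la}$. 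The plan is to prove that this single $\xi$ satisfies (i) and the two conditions of (ii), and that (ii) characterizes it. The basic bridge is a comparison of the functions $Y_i$: fix $\eta = (L,C_0,\SE_0)$ and $i = (C_{\la},\SE) \in I^F_{\eta}$ with $(u_{\la},\r) \lra (C_{\la},\SE)$, and let $\wt\r,\wt\r'$ be the trivial extensions of $\r \in A_G(u_{\la})\wg$ and of $\r' \in A_G(v_{\la})\wg$ (these correspond to one another under $\wt A_G(u_{\la}) \simeq \wt A_G(v_{\la})$). Then Theorem 2.19(i) gives $Y_{i,F} = \g(u_0,\wt\r_0,u_{\la},\wt\r)\,Y^0_{i,F}$ with $\g = \nu_{\la,\eta}(-1)^{\d_i}$, Theorem 2.19(ii) gives $Y_{i,F_0} = Y^0_{i,F_0}$, and the definition of $\xi$ together with (1.5.1) gives $Y^0_{i,F}(g) = Y^0_{i,F_0}(\xi(g))$ for every $g \in C_{\la}^F$; hence $Y_{i,F}(g) = \nu_{g,\eta}(-1)^{\d_i}\,Y_{i,F_0}(\xi(g))$. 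Substituting this into Lusztig's expansion $X_i = \sum_j p_{ji}Y_j$ and invoking Lemma 1.18(ii), $p_{ji} = (-1)^{\d_j+\d_i}p^0_{ji}(-q)$ (with $p^0_{ji}$ a polynomial in $q_0$ and $Y_{j,F_0}(\xi(g))$ independent of $q_0$), one obtains $X_{i,F}(g) = \nu_{g,\eta}(-1)^{\d_i}\,X_{i,F_0}(\xi(g))(-q)$, the right-hand side being the polynomial $X_{i,F_0}(\xi(g))$ in $q_0$ with $-q$ put for $q_0$.

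For (i) I would now expand the generalized Green functions by (1.4.4). On the $(G,F)$ side the preferred extension is $\s_{E_i} = (-1)^{a_{E_i}}w_0$, so $\Tr(w\s_{E_i},\wt E_i) = (-1)^{a_{E_i}}\Tr(ww_0,E_i) = (-1)^{a_{E_i}}\Tr(w_0w,E_i)$, whereas on the $(G_0,F_0)$ side the preferred extension is trivial, so in $Q^0_{L^{w_0w},\ldots}$ the corresponding factor is $\Tr(w_0w,E_i)$ --- this produces the shift $w \mapsto w_0w$. Combining with the relation for $X_{i,F}$ obtained above, the only residual sign is $(-1)^{a_{E_i}}(-1)^{\d_i}$, which equals $(-1)^{(a_0+r)_i/2}$ because $\d_i = a_{E_i} - d_{u_i}$ and $(a_0+r)_i/2 = d_{u_i}$; and this is precisely the sign that appears when the normalizing power $q_0^{(a_0+r)_i/2}$ in (1.4.4) is evaluated at $q_0 = -q$. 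Hence all factors of $(-1)$ cancel term by term and, uniformly in $\eta$, $Q^G_{L^w,C_0^w,\SE_0^w,\vf_0^w}(g) = \nu_{g,\eta}\,Q^0_{L^{w_0w},\ldots}(\xi(g))(-q)$, which is (i).

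For (ii), condition (a) is (2.20.2) when $g$, hence $\xi(g)$, is split; for a general $g$ lying in the $G^F$-class $(u_{\la})_a$ I would deduce it from the structure of centralizers in $SL_n$ --- the reductive quotient of $Z_G(u_{\la})^{\circ}$ and the unipotent radical depend only on $\la$, the finite part is controlled by $A_G(u_{\la}) \simeq \BZ_{n'_{\la}}$, and the correspondence $a \leftrightarrow a'$ under $\wt A_G(u_{\la}) \simeq \wt A_G(v_{\la})$ together with the first relation of (1.17.2) yields the polynomial identity $|Z_G(g)^F| = (-1)^{\dim Z_G(g)}|Z_G(\xi(g))^{F_0}|(-q)$. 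Condition (b) comes from (2.19.1), which holds for \emph{every} $g \in C_{\la}^F$ and not only for split ones: it gives $\Tr(\Phi w_0, H^{a_0+r}_c(\SP_g,\dot\SE_0)) = q^{(a_0+r)/2}(-1)^{a_{E_i}}(\dim E_i)\,Y_{i,F}(g)$, and similarly $\Tr(\Phi_0, H^{a_0+r}_c(\SP_{\xi(g)},\dot\SE_0)) = q_0^{(a_0+r)/2}(\dim E_i)\,Y_{i,F_0}(\xi(g))$ on the split side (where $\s^0_{E_i} = \id$); inserting $Y_{i,F}(g) = \nu_{g,\eta}(-1)^{\d_i}Y_{i,F_0}(\xi(g))$ and $(-1)^{a_{E_i}}(-1)^{\d_i} = (-1)^{(a_0+r)/2}$ then yields (b). For uniqueness, note that by (2.19.1), as $i = (C_{\la},\SE)$ ranges over $\SN_G^F$, the numbers $\Tr(\Phi w_0, H^{a_0+r}_c(\SP_g,\dot\SE_0))$ determine all the $Y_{i,F}(g)$, hence --- the scalar $\g = \nu_{\la,\eta}(-1)^{\d_i}$ being known --- all the $\tau$-twisted character values $\Tr(a\tau,\wt\r)$ for $g$ in the class $(u_{\la})_a$; by the orthogonality of $\tau$-twisted class functions on $A_G(u_{\la})$, exactly as in the proof of Lemma 1.9, this pins down the $G^F$-class of $g$, and likewise the $G^{F_0}$-class of $\xi(g)$; then (b) forces these two classes to correspond under $\wt A_G(u_{\la}) \simeq \wt A_G(v_{\la})$, which is the defining property of $\xi$, so any bijection satisfying (a) and (b) coincides with $\xi$.

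The main difficulty I anticipate is not conceptual but a matter of disciplined sign-bookkeeping: one must carry simultaneously the preferred-extension sign $(-1)^{a_{E_i}}$, the correction $(-1)^{\d_i}$ coming from Lemma 1.18 and Theorem 2.19, and the sign $(-1)^{(a_0+r)_i/2}$ arising from the substitution $q_0 \mapsto -q$ in the normalization of $X_{i,F_0}$, and verify that they cancel uniformly over all series $\eta$ and all unipotent classes. A subsidiary point needing care is the legitimacy of treating $X_{i,F_0}$ and the split-side Green functions $Q^0$ as polynomials in $q_0$ (so that the operation $(-q)$ is meaningful); this is already built into Lemma 1.18 for the entries $p^0_{ij}$, and in general follows from $X_i = \sum_j p_{ji}Y_j$ together with the purity statement and the existence of $\BF_{q_0}$-rational affine pavings of the $\SP_g$ in the split case established in Section 2, valid since $p$ is good for $SL_n$.
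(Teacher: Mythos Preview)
Your proposal is correct and follows essentially the same route as the paper: both arguments derive $Y_{i,F}(g) = \nu_{g,\eta}(-1)^{\d_i}Y_{i,F_0}(\xi(g))$ from Theorem 2.19, feed this through Lemma 1.18(ii) and the expansion (1.4.4) to obtain (i), and then verify (a), (b) for the explicit $\xi$ of 2.20 using (2.20.2), Proposition 2.18 and the identity (2.19.1). For uniqueness the paper simply invokes Lemma 1.9, whereas you reprove its content via orthogonality of $\tau$-twisted class functions; and you are somewhat more explicit than the paper about the sign cancellation $(-1)^{a_{E_i}}(-1)^{\d_i} = (-1)^{(a_0+r)_i/2}$ in (i), but the substance is identical.
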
  

\begin{proof}
We prove (i).  By Lemma 1.18, (ii) and Theorem 2.19, we have

\begin{align*}
X_{i,F}(g) &= \sum_{j \in I^F}p_{ji}Y_{j,F}(g) \\
       &= \sum_{j \in I^F}(-1)^{\d_i + \d_j}p_{ji}^0(-q)\nu_{g,\eta}
                             (-1)^{\d_j}Y_{j,F_0}(\xi(g))  \\ 
           &= \nu_{g,\eta} (-1)^{\d_i} X_{i,F_0}(\xi(g))(-q).
\end{align*}  
Then by (1.4.4), we have

\begin{align*}
Q^G_{L^w, C_0^w, \SE_0^w, \vf_0^w}(g)
     &= \sum_{i \in I^F}(-1)^{a_{E_i}}\Tr(ww_0, E_i)X_{i,F}(g)(-1)^{(a_0)_i}q^{(a_0+r)_i/2}  \\
&= \sum_{i \in I^F}(-1)^{d_g}\Tr(ww_0, E_i)\nu_{g,\eta} X_{i,F_0}(\xi(g))(-q)
              (-1)^{(a_0)_i}q^{(a_0+r)_i/2} \\
     &= \nu_{g,\eta} Q^0_{L^{ww_0}, C_0^{ww_0}, \SE_0^{ww_0}, \vf'^{ww_0}_0}(\xi(g))(-q)
\end{align*}  
since $d_g = (a_0+r)_i/2$ for $g \in C$ with $i = (C,\SE)$. Hence (i) holds. 

\par
Next we show (ii). Let $\xi$ be the bijection defined in 2.20.  For each class $C$, 
if $g = u$ is split in $C^F$, and $\xi(u) = v$ is split in $C^{F_0}$, then
(a) holds, and (b) was proved by Proposition 2.18.
Since $\wt A_G(u) \simeq \wt A_G(v)$, these properties also hold for $g \in C^F$ and
$\xi(g) \in C^{F_0}$. Hence the equality holds for our choice of $\xi$ given in 2.20. 

\par
Now assume that there exists a bijection $\xi_1$ satisfying the properties (a) and (b).
Take a split element $v \in C^{F_0}$, and let $u_1 \in C^F$ be
such that $\xi_1(u_1) = v$.
Then by (a), we have $\wt A_G(u_1) \simeq \wt A_G(v)$. 
By a similar discussion as in the proof of
Theorem 2.19, the equality (b) implies that
$\g(u_0, \wt\r_0, u_1, \wt\r_1) = \nu_{u,\eta} (-1)^{\d_i}$ for
$i = (C,\SE) \in I^F$. This implies that
$\g(u_0, \wt\r_0, u, \wt\r) = \g(u_0, \wt\r_0, u_1, \wt\r_1)$ for any
$\wt\r \simeq \wt\r_1$ under the identification $\wt A_G(u) \simeq \wt A_G(u_1)$.
Hence by Lemma 1.9, $u$ and $u_1$ are conjugate under $G^F$.  Thus $\xi$ is unique,
and (ii) is proved. 
\end{proof}  

\remark{2.22.}
A root of unity $\nu_{u, \eta}$ for $u \in C^F$ and
$\eta = (L, C_0, \SE_0) \in \SM^F_G$ is defined by $\nu_{u,\eta} = \r_0(c_{\la})$
in 2.3.  $c_{\la} \in A_L(u_0)$ was computed explicitly in \cite[I, Lemma 4.17]{Sh4}.
Thus $\nu_{u,\eta}$ can be computed explicitly. 

\par\bigskip
\section{ Classical groups}

\para{3.1.}
In this section, we assume that $G$ is a classical group, not of type $A_n$, and
$F$ is a Frobenius map on $G$, split or non-split, such that $F^2$ is split.
(The case where $G$ is of type $D_4$, such that $F$ is a non-split Frobenius map
with $F^3$ split, is discussed in Section 10.) 
First assume that $G = Sp_{2n}$ or $SO_N$.  The existence of split elements
was discussed in \cite[II]{Sh4}. Here we review those results.
Let $C$ be an $F$-stable unipotent class
in $G$. For each $u \in C^F$, $A_G(u)$ is abelian, and $F$ acts trivially on $A_G(u)$.
The ``split element'' $u \in C^F$ was defined in \cite[II, \S 2]{Sh4} in an explicit form.
Note that in the case where $G = SO_{2n}$ with $F$ non-split, $u$ is chosen 
in connection with preferred extensions of $\SW_L$-modules (see \cite[II, 2.10, 2.11]{Sh4}).
The following result was proved in Theorem 4.2 and Theorem 4.3 in \cite[II]{Sh4}.

\begin{thm}  %%%%   Theorem 3.2.
Let $G = Sp_{2n}$ or $SO_N$.  Assume that $(C, \SE) \in \SN_G^F$ belongs to
to the series $(L,C_0, \SE_0) \in \SM_G^F$.  Choose $(u_0, \r_0) \lra (C_0, \SE_0)$
with $u_0 \in C_0^F$ split.
Then for $(u,\r) \lra (C, \SE)$ with $u \in C^F$ split, we have
$\g(u_0, \r_0, u, \r) = 1$.  In particular, $u$ is a split element in the sense of
1.7. 
\end{thm}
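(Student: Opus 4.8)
The plan is to follow the descending induction of \cite[II, \S 4]{Sh4}, which this theorem records, the main tool being the restriction formula of Lemma 1.15. We argue by induction on $\dim G$ (so the theorem is available for every proper $F$-stable Levi subgroup). First dispose of the cuspidal case: if $(u,\r)\lra(C,\SE)$ is cuspidal then $L=G$ and $(C_0,\SE_0)=(C,\SE)$, so the assertion reduces to $\g(u,\r,u,\r)=1$, which holds because the isomorphism $\vf_0:F^*\SE\isom\SE$ in $\Xi_G$ satisfies, by construction, the condition imposed on $\vf_0$ in 1.3, i.e. $\vf_0=\psi_0$ for $\SE=\SE_0$ (the remark in 1.7(a)). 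The same reasoning gives $\g(u_0,\r_0,u_0,\r_0)=1$, which will serve as the base of the induction. So from now on assume $(u,\r)$ belongs to $\eta=(L,C_0,\SE_0)$ with $L\ne G$.

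Next I would choose an intermediate $F$-stable Levi subgroup $M$ with $L\subseteq M\subsetneq G$ --- for $G=Sp_{2n}$ (resp.\ $SO_N$) of the shape $Sp_{2n'}\times\prod_j GL_{k_j}$ (resp.\ $SO_{N'}\times\prod_j GL_{k_j}$) adapted to the Jordan-block data of $C$ --- together with the unipotent class $C'$ of $M$ and $(C',\SE')\lra(u',\r')\in\SN_M^F$ lying in the same series $\eta$, where $u'\in C'^F$ is the split element of $M$ in the sense of 3.1. The decisive point of the explicit construction of split elements in \cite[II, \S 2]{Sh4} (and in \cite[II, 2.10, 2.11]{Sh4} for $G=SO_{2n}$ with $F$ non-split) is precisely that, with these choices, $F$ permutes trivially the maximal-dimensional irreducible components of $Y_{u,u'}$ spanning $\ve_{u,u'}(\r,\r')$, so that $F$ acts trivially on $\ve_{u,u'}(\r,\r')$; verifying this --- matching the chosen representatives on the two sides of (1.12.2), and iterating through a chain $L\subseteq M_1\subseteq\cdots\subseteq M_t=G$ when a single $M$ does not suffice --- is the combinatorial heart of the matter and the step I expect to require the most care. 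Granting it, the extension $\wt\r$ of $\r$ with $\wt{\r\otimes\r'^*}=\wt\r\otimes\wt\r'^*$ satisfying (1.14.1) is the trivial one, and one checks condition (1.14.2) with $\d=1$: for $G=Sp_{2n}$ and $SO_{2n+1}$, and for $SO_{2n}$ with $F$ split, the automorphism $\s$ of $\SW_L$ is trivial ($c=1$), so $\d$ is a first root of unity; for $SO_{2n}$ with $F$ non-split, $\SW_L$ is of type $D_m$ with $\s$ of order $2$, and the split element of \cite[II, 2.10, 2.11]{Sh4} is chosen exactly so that the branching $V_{(u,\r)}|_{\SW_L'}$, restricted to the $V_{(u',\r')}$-component, is compatible with the preferred (type-$B$) extensions, again forcing $\d=1$.

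With $\d=1$ and $F$ acting trivially on $\ve_{u,u'}(\r,\r')$, Lemma 1.15 yields $\g(u_0,\r_0,u,\r)=\d\iv\nu'=\nu'$ with $\nu'=\g(u_0,\r_0,u',\r')$. By the inductive hypothesis applied to $M$ (of strictly smaller dimension, the base case $M=L$ giving $\nu'=\g(u_0,\r_0,u_0,\r_0)=1$ as above) we get $\nu'=1$, hence $\g(u_0,\r_0,u,\r)=1$ for every $(u,\r)\lra(C,\SE)$. Since the value $1$ trivially satisfies the defining conditions (1.7.2), $u\in C^F$ is a split element. For the combinatorial verifications underlying the choice of $M$ and for the non-split $SO_{2n}$ case I would simply invoke \cite[Thm. 4.2, Thm. 4.3]{Sh4}\,(II), reproducing here only the routine check that the split elements of 3.1 agree with those used there.
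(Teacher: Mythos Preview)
Your proposal is correct and matches the paper's treatment: the paper gives no independent proof of this theorem but simply records it as the content of Theorem~4.2 and Theorem~4.3 in \cite[II]{Sh4}, which is exactly the reference to which you defer the combinatorial core. Your outline of the inductive mechanism via Lemma~1.15 (with $\d=1$ and $\nu'=1$) accurately summarizes the strategy of those cited results; one small imprecision is that for $SO_{2n}$ with $F$ non-split and $L\neq T$ the group $\SW_L$ is of type $B_m$ rather than $D_m$, so there the preferred extension is already trivial and only the case $L=T$ requires the compatibility of \cite[II, 2.10--2.11]{Sh4}.
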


\para{3.3.}
In the rest of this section, we assume that $G$ is a spin group $\Spin_N$ with $p \ne 2$. 
The existence of split elements in $G$ was discussed in \cite[III]{Sh4}.  However it
contains several errors, and in this occasion we shall correct them. 
The main points to be corrected are as follows. 
\par\medskip
(1) \ The classification of quadratic forms in \cite[III, 2.2]{Sh4} 
is not complete, and some cases are missing.  
Note that as stated in Remark 2.8 in \cite[III]{Sh4}, 
the case where $F$ is non-split was excluded from  the discussion there. 
However we need to consider the non-split case also, which comes from the above
missing case. 
\par\medskip
(2) \ The definition of split elements in the case of spin groups (for $F$ split) 
given in \cite[III, 3.2]{Sh4} is wrong. Although it was defined up to $GL_N$-conjugate, 
this should be corrected so that it is defined up to $G^F$-conjugate.  
Actually, the proof of Lemma 3.10 in \cite[III]{Sh4} contains
some errors.  Also, we need to define split elements in the non-split $F$ case, 
based on the above correction (1).
\par\medskip
In the discussion below, we attach ${}^*$ to indicate the corrected statement
of the original one, for example, Lemma $3.10^*$ is the corrected version of
Lemma 3.10 in \cite[III]{Sh4}.

\para{3.4.}
First we consider the problem (1).
We follow the notation in \cite[III, 2.1]{Sh4}.
Let $V$ be a vector space of dimension $N \ge 3$ over $\Bk$ with $\ch\Bk \ne 2$,
and $(\ ,\ )$ a non-degenerate symmetric bilinear form on $V$. Let $C(V)$ be the
corresponding Clifford algebra with an embedding $V \subset C(V)$, and $C^+(V)$
the subalgebra of $C(V)$ spanned by products of even number of elements in $V$.
The Spin group $\Spin(V) \simeq \Spin_N$ is defined as the subgroup of the group of
units in $C^+(V)$ consisting of all products $v_1v_2\cdots v_a$ (with $a$ even)
where $v_i \in V$ satisfy $(v_i,v_i) = 1$. 
We have a natural isogeny $\b : \Spin(V) \to SO(V)$.
Let $G = \Spin(V)$, and $Z_G$ the center of $G$.
If $N$ is odd, then $Z_G$ has order 2; it is generated by $\ve = (-1)$ times the
unit element of $C(V)$.  If $N$ is even, $Z_G$ has order 4;
it is generated by $\ve$ and by $\w = v_1v_2\cdots v_N$, where $v_1, \dots, v_N$ is an
orthonormal basis of $V$.  We have $\w^2 = \ve^{N/2}$, hence $Z_G$ is generated by
$\w$ if $N \equiv 2 \pmod 4$, and $Z_G \simeq \lp \w \rp \times \lp \ve \rp$ is
a product of two cyclic groups of order 2 if $N \equiv 0 \pmod 4$.
In any case, $\Ker \b = \{ 1, \ve\}$. 
\par
We consider the $\BF_q$-structure of $G$.  Assume that $V$ is defined over $\BF_q$
with Frobenius map $F : V \to V$, and that the form $(\ ,\ )$ is compatible with
$F$-action, then $C(V)$ has a natural $\BF_q$-structure, which induces an $\BF_q$-structure
on $G$. 
If $N$ is odd, $G$ has a unique $\BF_q$-structure where $F$ is split,
while if $N$ is even, it has two $\BF_q$-structures where $F$ is split or non-split. 
Assume that $N$ is even.  In the discussion below, we use the theory of quadratic forms 
over a finite field $\BF_q$ with $q$ : odd. See \cite{Gr} for details.    
In 2.2 in \cite[III]{Sh4}, 
the quadratic form on $V$ is defined only in the case where $N \equiv 0 \pmod 4$. 
We also need to consider the case where $N \equiv 2 \pmod 4$.  
In the full generality, they are given as follows.
Assume that $N$ is even. Let $e_1, \dots e_N$ be the orthogonal basis of $V$
such that $F(e_i) = e_i$. For $x = \sum_ix_ie_i$, we define the quadratic 
form on $V$ associated to $(\ ,\ )$ by the following formulas.  
Choose $\d \in \BF_q - \BF_q^2$.
\par\medskip\noindent
(3.4.1) \
(i) \ Assume that $N \equiv 0 \pmod 4$.  Then put 
\begin{equation*}
(x,x) = \begin{cases}
          x_1^2 + \cdots + x_N^2 &\quad\text{ split case, } \\
          x_1^2 + \cdots + x_{N-1}^2 + \d x_N^2  &\quad\text{ non-split case}
        \end{cases}
\end{equation*} 

\par
(ii) \ Assume that $N \equiv 2 \pmod 4$.  If $q\equiv 1 \pmod 4$, then put 
\begin{equation*}
(x,x) = \begin{cases}
          x_1^2 + \cdots + x_N^2 &\quad\text{ split case, } \\
          x_1^2 + \cdots + x_{N-1}^2 + \d x_N^2  &\quad\text{ non-split case,}
        \end{cases}
\end{equation*}
\hspace*{1.5cm} while if $q\equiv -1 \pmod 4$, we exchange the condition for splitness as 
\begin{equation*}
(x,x) = \begin{cases}
          x_1^2 + \cdots + x_{N-1}^2 + \d x_N^2  &\quad\text{ split case,} \\
          x_1^2 + \cdots + x_N^2 &\quad\text{ non-split case. } \\
        \end{cases}
\end{equation*}
The complexity in the case $N \equiv 2 \pmod 4$ comes from  the following fact.
The splitness of the $\BF_q$-structure of $SO(V)$ is defined by 
using the hyperbolic basis of $V$, for $N = 2n$, as

\begin{equation*}
\tag{3.4.2}
(x,x) = \begin{cases}
            x_1x_{n+1} + \cdots + x_nx_{2n}   &\quad\text{ split case, } \\
            x_1x_{n+1} + \cdots + x_{n-1}x_{2n-1} + x^2_n -\d x_{2n}^2 
                 &\quad\text{ non-split case,}
        \end{cases}
\end{equation*}
see, for example, \cite[II, 2.1]{Sh4}, where the case $p = 2$ is discussed. 
In studying $SO_N$, the expression in (3.4.2) is convenient, 
but in studying $\Spin_N$, 
the expression used in (3.4.1) is more convenient. 
Thus we need to change (3.4.2) to the form in (3.4.1). 
Note that the form $x_nx_{2n}$ is equivalent to $x_n^2 - x_{2n}^2$. 
We have $-1 \in \BF_q^2$ if and only if $q \equiv 1 \pmod 4$. 
It follows that
\begin{align*}
\tag{3.4.3}
x_n^2 - x_{2n}^2 &\simeq\begin{cases}
                     x_n^2 + x_{2n}^2 &\quad\text{ if } q \equiv 1 \pmod 4, \\
                     x_n^2 + \d x_{2n}^2 &\quad\text{ if } q\equiv -1 \pmod 4,
                \end{cases}  \\ 
x_n^2 - \d x_{2n}^2 &\simeq \begin{cases}
                     x_n^2 + \d x_{2n}^2 &\quad\text{ if } q\equiv 1 \pmod 4, \\
                     x_n^2 + x_{2n}^2    &\quad\text{ if } q\equiv -1 \pmod 4.
                    \end{cases}  
\end{align*}
(3.4.1) follows from (3.4.3).

\par
The discussion in \cite[III, 2.2]{Sh4} should be modified as follows. 
In the case where $N \equiv 0 \pmod 4$, $F(\w) = -\w$ if $(G,F)$ is non-split.
While in the case where $N \equiv 2 \pmod 4$, then 
$F(\w) = -\w$ if, either $q \equiv 1 \pmod 4$ and $(G,F)$ is non-split, or 
$q\equiv -1 \pmod 4$ and $(G,F)$ is split.  Otherwise $F(\w) = \w$.   
Thus (2.2.1) in \cite[III]{Sh4} should be replaced by the following statement. 
\par\medskip\noindent
(2.2.1)${}^*$ \ Let $Z_G$ be the center of $G$.
\begin{enumerate}
\item \ Assume that $N$ is odd, then $F$ acts trivially on $Z_G$.    
\item \ Assume that $N$ is even. If $N \equiv 0\pmod 4$, then 
$F$ acts trivially on $Z_G$ if $(G,F)$ is split.  If $N \equiv 2 \pmod 4$, 
then $F$ acts trivially on $Z_G$ if, either  $(G,F)$ is split and $q\equiv 1 \pmod 4$, 
or $(G,F)$ is non-split and $q \equiv -1 \pmod 4$.   
Otherwise, $F(\w) = -\w$, and $F$ acts non-trivially on $Z_G$. 
\end{enumerate}

\para{3.5.}
We write a partition $\la = (\la_1, \dots, \la_k)$ of $N$
as $\la = (1^{m_1}, 2^{m_2}, \dots)$, where $m_i$ is the number of $j$
such that $i = \la_j$. We define a set $X_N$ by
\begin{equation*}
X_N = \{ \la \mid \text{ $m_i$ : even if $i$ : even,} \text{ $m_i \le 1$ if $i$: odd}\}.
\end{equation*}
The generalized Springer correspondence pertaining to $\x \in Z_G\wg$ 
(see 1.2.1) is essentially the same as the case of
$SO_N$ if $\x(\ve) = 1$, and is described by \cite[Thm. 4.14]{LS2} as a bijection 
$\la : \SN_{\x} \simeq X_N$ (see also \cite[III, Prop. 2.5]{Sh4}) if $\x(\ve) = -1$.  
Remark 2.7 in \cite[III]{Sh4} should be modified as follows.
In the 4th line of Remark 2.7, it is written as 
``By Proposition 2.5, the Jordan type $\la$ of $\b(C)$ is not contained in $X_N$".
This is wrong, and actually we need to assume this. 
Namely, this part should be 
modified as follows (this is independent from (1), (2) in 3.3). 
\par\medskip
``Assume that the Jordan type $\la$ of $\b(C)$ is not contained in $X_N$.
Since $\b$ gives a bijection $C^F \simeq \b(C)^F$, one can find $u \in C^F$ 
such that $\b(u) = v^{\bullet}$. In this case, it is known by \cite[14.3]{L1} 
and by Proposition 2.5 that $A_G(u) \simeq A_{\ol G}(v^{\bullet})$.
Thus the verification of the theorem is reduced to the case where 
$\x(\ve) = -1$, and to the determination of the split class in $C^F$ 
for $C$ of Jordan type $\la \in X_N$''. 

\para{3.6.}
Remark 2.8 in \cite[III]{Sh4} should be modified as follows, 
according to the correction in 3.4.
\par\medskip\noindent
{\bf Remark $2.8^*$.} \ 
Assume that $(G, F)$ is non-split. Then $N$ is even, and 
$Z_G$ is generated by $\ve$ and $\w$. We consider the set $\SN_{\x}$ in 
Proposition 2.5.  Then $\x(\ve) = -1$. By $(2.2.1)^*$, we have the following. 
Assume that $N \equiv 0 \pmod 4$, then $F(\w) = \w$. 
Assume that $N \equiv 2 \pmod 4$.  Then $F(\w) = -\w$ if $q\equiv 1 \pmod 4$, 
and $F(\w) = \w$ if $q \equiv -1 \pmod 4$. 
If $F(\w) = -\w$, then 
\begin{equation*}
\x(F(\w)) = \x(-\w) = \x(\ve)\x(\w) = -\x(\w).
\end{equation*}
This shows that $\x$ is not $F$-stable, and so $\SN_{\x}^F = \emptyset$. 
Thus in the non-split case, we may assume that $N \equiv 2 \pmod 4$ and that 
$q \equiv -1 \pmod 4$. In this case, $F$ acts trivially on $Z_G$.  

\para{3.7.}
Lemma 3.5 in \cite[III]{Sh4} should be modified as follows 
(this is independent from the correction in 3.4.) 
\par\medskip\noindent
{\bf Lemma $3.5^*$.} 
{\it If $q \equiv 1 \pmod 4$, then $F$ acts trivially on $A_G(u)$.  While 
if $q \equiv -1 \pmod 4$, the action of $F$ on $A_G(u)$ is 
not necessarily trivial.} 
\par\medskip

In fact, the case where $q \equiv 1 \pmod 4$ is discussed in \cite[III]{Sh4}.
We consider the case where $q\equiv -1 \pmod 4$.  
Assume that $(G, F)$ is of split type, and $N \equiv 0 \pmod 4$. 
We follow the notation in \cite[III, 3.1]{Sh4}. 
We write $\la_j = 2m_j +1$.  
In the case where $|I| = 2$, we have $m_1 + m_2$ is odd.  
Then $F(x_1) = (-1)^{m_1 + 1+ 1}x_1$, and $F(x_2) = (-1)^{m_2 + 1 +2}x_2$. 
It follows that $F(x_1x_2) = x_1x_2$, $F$ acts trivially on $A_G(u)$.
But if $|I| = 3$, the condition is that $m_1 + m_2 + m_3$ is odd. 
In this case, $F(x_1) = (-1)^{m_1 + 1 + 1}x_1$ and $F(x_2) = (-1)^{m_2 + 1 + 2}x_2$. 
Since we can choose $m_1, m_2$ arbitrary, it occurs that 
$F(x_1x_2) = -x_1x_2$.  Hence $F$ acts on $A_G(u)$ non-trivially. 

\para{3.8.}
We now consider the problem (2) in 3.3.
We apply the restriction formula to the parabolic subgroup $Q = MU_Q$, and
a unipotent class $C'$ of $M$.  
Assume that $N \ge 7$.
Following \cite[III, 3.6]{Sh4}, we consider the case where $M$ is a Levi subgroup
of $Q$ such that $\b(M) \simeq SO_{N-4}\times GL_2$.
Take $(u,\r) \lra (C,\SE) \in (\SN_G)_{\x}, (u',\r') \lra (C',\SE') \in (\SN_M )_{\x}$.
If $X_{u,u'} \ne \emptyset$, then $u'$ satisfies the properties,

\begin{enumerate}
\item \ The projection of $\b(u')$ on $GL_2$ is regular.
\item \ The projection of $\b(u')$ on $SO_{N-4}$ corresponds to the unipotent elements
of Jordan type in $X_{N-4}$ under the correspondence
  $\la : (\SN_{\Spin_{N-4}})_{\x} \simeq X_{N-4}$.  
\end{enumerate}  
We write $\b(G) = \ol G$, and $\b(u) = \ol u$. 
Also put $\b(Q) = \ol Q, \b(M) = \ol M$, and put $\la = \la(u), \la' = \la(u')$.
Thus $\la = (\la_1 \le \la_2 \le \cdots\le \la_k)$ is a partition of $N$,
and $\la' = (\la'_1 \le \la'_2 \le \cdots \le \la'_{k'})$ is a partition of $N-4$.
It is known, by Lemma 4.5 and Lemma 4.8 in \cite{LS2}, 
that the Young diagram of $\la'$ is obtained from 
the Young diagram of $\la$ by removing 4 squares by the following rules.

\par\medskip
(I)  $\la_i$ is odd, $\la_i > \la_{i-1} + 4$, and 
\begin{equation*}
\la_j' = \begin{cases}
             \la_j - 4  &\quad\text{ if $j = i$, } \\
              \la_j     &\quad\text{ otherwise. }
         \end{cases}
\end{equation*}

(II) $\la_i = \la_{i+1} \ge \la_{i-1} + 2$ and 
\begin{equation*}
\la_j' = \begin{cases}
            \la_j-2 &\quad\text{ if $j = i, i+1$, }  \\
            \la_j   &\quad\text{ otherwise.} 
         \end{cases}
\end{equation*}

(III) $\la_i = \la_{i+1} \ge \la_{i-1}+ 4$ and
\begin{equation*}
\la_j' = \begin{cases}
            \la_j - 3  &\quad\text{ if $j = i$, }  \\
            \la_j - 1  &\quad\text{ if $j = i+1$, } \\
            \la_j      &\quad\text{ otherwise. }
         \end{cases}
\end{equation*}

(IV) $\la_{i+1} - 2 = \la_i \ge \la_{i-1} + 1$ and 
\begin{equation*}
\la'_j = \begin{cases}
           \la_j - 1 &\quad\text{ if $j = i$, }  \\
           \la_j - 3 &\quad\text{ if $j = i+1$, } \\
           \la_j     &\quad\text{ otherwise. }  
         \end{cases}
\end{equation*}

(V) $\la_{i+2} = \la_{i+1} = \la_i + 1$ and
\begin{equation*}
\la'_j = \begin{cases}
            \la_j - 1 &\quad\text{ if $j = i, i+2$, } \\
            \la_j - 2 &\quad\text{ if $j = i+1$, }  \\
            \la_j     &\quad\text{ otherwise. } 
         \end{cases} 
\end{equation*}

We have the following lemma.

\begin{lem}  %%%%  Lemma 3.9
Let $\ol Y_{\ol u, \ol u'}$ be the variety defined  in (1.12.1) with respect to
$\ol G, \ol u = \b(u)$ and $\ol u' = \b(u')$. 
Assume that there exists $\ol y \in \ol Y_{\ol u,\ol u'}^F$, and
take $y \in \b\iv(\ol y)$. 
Then $y \in Y_{u,u'}$.  
If $y \notin Y_{u,u'}^F$, put $u_1 = huh\iv  \in C^F$ for 
$h \in G$ such that $h\iv f(h) = \ve$.
Then we have  $hy \in Y_{u_1, u'}^F$, and $\b(u_1)$ is $\ol G^F$-conjugate to $\ol u$.
Furthermore, the isomorphism $Z_G(u) \isom Z_G(u_1), x \mapsto hxh\iv$ induces
an $F$-equivariant isomoprhism $A_G(u) \simeq A_G(u_1)$. 
\end{lem}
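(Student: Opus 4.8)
The plan is to exploit that the isogeny $\b\colon G\to\ol G$ has kernel $\{1,\ve\}$, that $\ve$ is central of order $2$, and that $p\neq2$. First I would record two elementary facts about $\b$. Since $\ve$ has order prime to $p$ it lies in no unipotent subgroup, so $\ve\notin U_Q$ and $\b$ restricts to an isomorphism $U_Q\simeq\ol U_Q:=\b(U_Q)$; on the other hand $\b$ still has kernel $\{1,\ve\}$ on the parabolic $Q$. Hence, writing $u'\in M$ for the (unipotent) preimage of $\ol u'$, we have
\[
\b\iv(\ol u'\,\ol U_Q)=u'U_Q\ \sqcup\ \ve u'U_Q ,
\]
where $u'U_Q$ consists of unipotent elements (it is the preimage in $Q$ of the unipotent element $\ol u'$ of $M$), whereas no element of $\ve u'U_Q$ is unipotent, since such an element commutes with $\ve$ and thus has even order while $p\neq2$.

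Granting this, the first assertion is immediate. For $\ol y\in\ol Y_{\ol u,\ol u'}$ and any $y\in\b\iv(\ol y)$ one has $\b(y\iv uy)=\ol y\iv\ol u\,\ol y\in\ol u'\,\ol U_Q$, so $y\iv uy\in\b\iv(\ol u'\,\ol U_Q)$; since $y\iv uy$ is unipotent, the dichotomy above forces $y\iv uy\in u'U_Q$, i.e.\ $y\in Y_{u,u'}$. If moreover $\ol y\in\ol Y_{\ol u,\ol u'}^F$, then $\b(F(y))=F(\ol y)=\ol y=\b(y)$ gives $F(y)\in\{y,\ve y\}$, and if $F(y)=y$ then $y\in Y_{u,u'}^F$. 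So if $y\notin Y_{u,u'}^F$ we necessarily have $F(y)=\ve y$.

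Now assume $y\notin Y_{u,u'}^F$, so $F(y)=\ve y$. By Lang's theorem for the connected group $G$ there is $h\in G$ with $h\iv F(h)=\ve$; set $u_1=huh\iv$. Using $F(h)=h\ve$ and that $\ve$ is central, $F(u_1)=F(h)uF(h)\iv=h\ve u\ve\iv h\iv=huh\iv=u_1$, so $u_1\in C^F$; and $\b(h)\iv F(\b(h))=\b(\ve)=1$ shows $\b(h)\in\ol G^F$, so $\b(u_1)=\b(h)\ol u\,\b(h)\iv$ is $\ol G^F$-conjugate to $\ol u$. Again by centrality of $\ve$, $(hy)\iv u_1(hy)=y\iv uy\in u'U_Q$, so $hy\in Y_{u_1,u'}$, while $F(hy)=F(h)F(y)=(h\ve)(\ve y)=hy$, so $hy\in Y_{u_1,u'}^F$. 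Finally $\ad h\colon Z_G(u)\to Z_G(u_1)$ is an isomorphism, and it is $F$-equivariant: $Z_G(u)$ is $F$-stable since $u\in C^F$, and for $x\in Z_G(u)$ one has $F(hxh\iv)=F(h)F(x)F(h)\iv=h\ve F(x)\ve\iv h\iv=hF(x)h\iv$ by centrality of $\ve$; passing to $A_G(u)=Z_G(u)/Z_G^0(u)$ yields the asserted $F$-equivariant isomorphism $A_G(u)\simeq A_G(u_1)$ (consistent with 1.8, where the relevant element $\da=\ve$ acts trivially as an inner automorphism).

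The one substantial step, as opposed to bookkeeping, is the observation that $\ve u'U_Q$ contains no unipotent element: this is exactly what pins $y$ down in $Y_{u,u'}$ rather than in the second component of the fibre of $\b$, and it is precisely where the hypothesis $p\neq2$ enters. Everything else is a direct computation with the central involution $\ve$ together with a single application of Lang's theorem.
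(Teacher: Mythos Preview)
Your proof is correct and follows essentially the same approach as the paper's. The only real difference is organizational: the paper first asserts that $y\iv uy\in u_1'U_Q$ for some $u_1'\in C'$ and then uses $\b(u_1')=\ol u'$ together with unipotency to force $u_1'=u'$, whereas you phrase the same step directly as the dichotomy $\b\iv(\ol u'\,\ol U_Q)=u'U_Q\sqcup\ve u'U_Q$ with the unipotent coset singled out; both routes hinge on the same observation that $\ve u'$ is not unipotent since $p\neq2$.
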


\begin{proof}
Assume that $u \in X_N$.  Then $\ve \notin Z_G^0(u)$ by \cite[14.3]{L1}, and so 
the image of $\ve$ in $A_G(u)$ is not equal to 1. Take $h \in G$ such that 
$h\iv F(h) = \ve$, and put $u_1 = huh\iv$. 
Then $u_1$ is not necessarily $G^F$-conjugate to $u$.  
But since $\ol h\iv F(\ol h) = 1$, we have $\ol h = \b(h) \in \ol G^F$.  
Hence $\ol u$ and $\b(u_1)$ are $\ol G^F$-conjugate. 
Under the isomorphism $A_G(u) \isom A_G(u_1)$, the action of $F$ on $A_G(u_1)$
corresponds to the action of $F\ve$ on $A_G(u)$. 
Since $\ve$ is central in $A_G(u)$, this isomorphism is $F$-equivariant. 
\par
Assume that there exists $\ol y \in \ol G$ such that $\ol y \in \ol Y _{\ol u, \ol u'}^F$. 
Then there exists $y \in \b\iv(\ol y)$ such that $\b(yQ) = \ol y\ol Q$ and that
$y\iv uy \in u'_1U_Q$ for some $u_1' \in C'$.  This implies that 
$\ol y\iv \ol u \ol y \in \ol u_1'U_{\ol Q}$.  But since 
$\ol y\iv \ol u \ol y \in \ol u'U_{\ol Q}$, we must have $\ol u_1' = \ol u'$, 
and so $u_1' = u'$.  It follows that $y \in Y_{u,u'}$.   
Since $y \in \b\iv (\ol y)$, $F(y) = y$ or $F(y) = \ve y$.  
Assume that $F(y) \ne y$, then $F(y) = \ve y$. 
In this case, we consider $Y_{u_1, u'}$ instead of $Y_{u,u'}$.   
Then we have $y\iv h\iv u_1hy \in u'U_Q$. 
Since $F(hy) = F(h)F(y) = \ve h \cdot \ve y = hy$, this implies that  
$hy \in Y_{u_1, u'}^F$. The lemma is proved.
\end{proof}

\remark{3.10.} In the course of the proof of Lemma 3.10 in \cite[III]{Sh4}, a property 
``if $\ol y \in \ol Y_{\ol u, \ol u'}^F$, then there exists $y \in Y_{u,u'}^F$'' was used. 
But this is not always the case, and if this does not hold, one needs to
replace $u$ by $u_1$ as in the above lemma.  Thus the proof of Lemma 3.10 must be modified.
Also in Lemma 3.10, only the case where $F$ acts trivially on $A_G(u)$ and on $A_M(u')$
was discussed.  By using Lemma 1.15, we can relax this condition, and actually
Lemma 3.10 in [loc. cit.] is not used in later discussions.    

\para{3.11.}
Since Lemma 3.10 in \cite[III]{Sh4} does not hold in this form,
the definition of split elements given in \cite[III, 3.2]{Sh4} is not appropriate.
In view of Lemma 3.9, the choice of $u \in C^F$ for a split $\b(u)$ should be more
delicate. We consider the setup in 3.8, and let $\ve_{u,u'}(\r,\r')$ be as in 1.14.
In the following, by modifying the discussion 
in \cite[III, Prop. 3.11]{Sh4}, we shall prove that there 
exists $v \in C^F$ such that $F$ acts trivially on $\ve_{v,u'}(\r,\r')$.
\par
Now assume that $v \in C^F$, and that there exists $y \in Y_{v,u'}^F$.
Then $y\iv vy \in Q$, and we have natural maps
\begin{equation*}
\tag{3.11.1}  
  A_G(v) \simeq A_G(y\iv vy)  \leftarrow A_Q(y\iv vy) \to A_M(u').
\end{equation*}  
\par
We can identify $\CQ = G/Q$ with the variety $\SF$ of all two-dimensional totally
isotropic subspaces of $V$.  Then  as in \cite[4.8]{LS2},
$\CQ_{u, C'}$ in (1.12.1) can be identified with
the subvariety $\SF_{u, C'}$ of $\SF$ consisting of all $E \in \SF$ such that
\begin{enumerate}
\item \ $E$ is $\ol u$-stable.
\item \ $\ol u|_E \ne 1$.
\item \ The Jordan type of $\ol u|_{E^{\perp}/E}$ is $\la'$.
\end{enumerate}

In the case where $F$ is split, we can choose an $F$-stable basis of $V$
associated to $u$ as
in \cite[III, 3.1]{Sh4}. In particular, for each $\la_j$ odd, we have a
vector space $V_j$ of $\dim V_j = \la_j= h$ with basis $e^j_1, \dots, e^j_h$, and
for each $\la_j = \la_{j+1}$ even, we have a vector space $V_j$ of
$\dim V_j = 2\la_j = 2h$
with basis $e^j_1, \dots, e^j_h, e^{j+1}_1, \dots, e^{j+1}_h$.
$V$ is expressed as a direct sum of those subspaces $V_j$. 
\par
In the setup in 3.8, assume that $\ol u $ is split in $\b(C)^F$,
and that $\ol u'$ is split in $\b(C')^F$. 
Also assume that $\r \in A_G(u)\wg\ex, \r' \in A_M(u')\wg\ex$, and
that an extension $\wt\r'$ of $\r'$ is fixed.
We consider the following cases.  Note that as explained in \cite[III, Remark 3.12]{Sh4},
the case (III) is not used in later discussions, and so is omitted. 
\par\medskip
{\bf Case I.} Take $u \in C^F$ such that $\ol u = \b(u)$ is a split element in
$\b(C)^F$. 
First assume that $F$ is split, and let $e_1, \dots, e_h$ be the $F$-stable basis
of $V_i$ given as above.  Let $E = \lp e_1, e_2\rp$. Then it is easy to see
that $\SF_{u,C'} = \{ E \}$.  
Even if $F$ is non-split, the variety $\SF_{u,C'}$ is $F$-stable, hence
$E$ is an $F$-stable subspace of $V$ under the isomorphism $\SF \simeq G/Q$. 
Then $GL(E) \times SO(E^{\perp}/E)$ is $\ol G^F$-conjugate to $\ol M$.
Since $\ol u$ is split in $\ol G$, $(\ol u|_E, \ol u|_{E^{\perp}/E})$ is
$\ol G^F$-conjugate to a split element $\ol u'$ in $\b(C')^F$. 
Hence there exists $\ol y \in \ol Y_{\ol u, \ol u'}^F$ such that $\ol y\ol Q$
corresponds to $E$, 
and $\b\iv(\ol y)$ is written as $\b\iv(\ol y) = \{ y, \ve y\}$. 
If $y \in G^F$, we have $y \in Y_{u,u'}^F$, and we put $v = u$. 
But if $F(y) = \ve y$, Lemma 3.9 shows that $hy \in Y_{u_1, u'}^F$,
and we put $v = u_1$. 
In particular, for $v = u$ or $u_1$ in $C^F$, there exists 
$y \in Y_{v,u'}^F$ such that $\ol y \in Y_{\ol v,\ol u'}^F$. 
Since $\SF_{u,C'} = \{ E\}$, we have $Y_{v,u'} = yZ_M(u')U_Q$.
Hence $X_{v,u'} \simeq A_M(u')$.  Now the structure of $A_G(u)$ is
described in \cite[14.3]{L2} (see also \cite[III, 2.6]{Sh4}).
In this case, (3.11.1) induces an isomorphism $A_G(v) \isom  A_M(u')$,
and $A_G(v) \times A_M(u')$ acts on $\ve_{v,u'} = \Ql[A_M(u')]$ (the group algebra of
$A_M(u')$ over $\Ql$) as the
two-sided regular representation of $A_M(u')$.
In particular, $\ve_{v,u'}(\r,\r') \ne 0$ only when $\r = \r'$,
and in which case, $F$ acts trivially on $\ve_{v,u'}(\r, \r') = \wt\r\otimes \wt\r'^*$
if we choose an extension $\wt\r$ by $\wt\r = \wt\r'$. 

\par\medskip
{\bf Case II.} \
We write the basis of $V_i$ in 3.11 as $e_1, \dots, e_h, f_1, \dots, f_h$
with $\la_i = \la_{i+1} = h$.
By the discussion in \cite[III, 3.11]{Sh4}, $\SF_{u,C'}$ is expressed as
\begin{equation*}
\SF_{u,C'} = \bigcup_{g \in G_1}g\{ \lp e_1, e_2 + \a f_1\rp \mid \a \in \Bk\},
\end{equation*}
where $G_1$ is a subgroup of $Z_{\ol G}(\ol u)$ isomorphic to $Sp(M_i)$
($M_i$ is a subspace of $V$ which is the direct sum of $V_k$ such that $\la_k = h$). 
Put $E = \lp e_1, e_2\rp$.  Then $E \in \SF_{u,C'}^F$ (since $h$ is even,
$E$ is $F$-stable even if $F$ is non-split).  Since $\lp e_1, e_2 + \a f_1\rp$
is $Z_{\ol G}(\ol u)$-conjugate to $E$, $\SF_{u,C'}$
coincides with the $Z_{\ol G}(\ol u)$-orbit of $E$.
Then  there exists $\ol y \in \ol Y_{\ol u,\ol u'}^F$ such that $\ol y\ol Q $
corresponds to $E$. 
By a similar discussion as in Case I, by replacing $u$ by $v \in C^F$,
one can find $y \in Y_{v, u'}^F$ and $\b(y) \in \ol Y^F_{\ol v, \ol u'}$.
Since $Z_{\ol G}(\ol v)$ acts 
transitively on $\SF_{v,C'}$,
$Y_{v,u'} = Z_G(v)yZ_M(u')U_Q$.  Here the stabilizer of $E$ in $G$ is
equal to $Q' = yQy\iv$.  Since $E$ is contained in $V_i$ such that $\la_i$ is even,
we have $A_G(y\iv vy) = A_Q(y\iv vy)$. Then (3.11.1) induces an isomorphism
$A_G(v) \isom  A_M(u')$.
Thus we have $X_{v,u'} \simeq A_M(u')$, and $\ve_{v,u'}$ coincides with the regular
representation $\Ql[A_M(u')]$. Hence
$\ve_{v,u'}(\r, \r') \ne 0$ only when $\r = \r'$, and if we choose an extension
$\wt\r = \wt\r'$, 
$F$ acts trivially on
$\ve_{v,u'}(\r, \r') = \wt\r\otimes \wt\r'^*$. 

\par\medskip\noindent
{\bf Case IV.} \
First assume that $F$ is split. Put $M_i = V_i \oplus V_{i+1}$,
where $\dim V_{i+1} = h$ and $\dim V_{i+1} = h-2$ with $h$ odd.
Write the basis of $V_{i+1}, V_i$ in 3.11 as $e_1, \dots, e_h$ for $V_{i+1}$,
and $e_1', \dots, e_{h-2}'$ for $V_i$.
Then by \cite[III, 3.11]{Sh4}, we have
\begin{equation*}
\SF_{u,C'} = \{ E = \lp e_1, e_2 + e_1'\rp, E' = \lp e_1, e_2- e_1'\rp \}.
\end{equation*}
Here both of $E, E'$ are $F$-stable. Even in the case $F$ is non-split,
$\SF_{u,C'}$ is written as $\SF_{u,C'} = \{ E, E'\}$ with $E, E'$ : $F$-stable. 
\par
There exist $\ol y \in \ol Y^F_{\ol u, \ol u'}$ such that 
$\ol y\ol Q$ corresponds to $E$, and $\ol y' \in \ol Y^F_{\ol u, \ol u'}$
such that $\ol y'\ol Q$ corresponds to $E'$.  One can find 
$v_1 \in C^F$ such that $y \in Y_{v_1,u'}^F$, and $v_2 \in C^F$ such that 
$y' \in Y_{v_2, u'}^F$ for some $v_1, v_2 \in \{ u,u_1\}$.  We need to show that 
$v_1 = v_2$. For this, we argue as follows. 
We consider 
a transformation $z \in SO(V_i)$ such that $z(e_1') = -e_1', z(e_{h-2}) = -e_{h-2}$, 
and $z(e_j') = e_j'$ for any other $j$. Let $W$ be a subspace of
$V_i$ spanned by $e_1', e'_{h-2}$, then $W$ is $F$-stable, and the corresponding map  
$\b_1 : \Spin (W) \to SO(W)$ is $F$-equivariant.  Here $\Spin (W) = \Spin_2$, and 
the center $Z_1$ of $\Spin_2$ is $\lp \ve_1 \rp \times \lp \w_1 \rp$, 
where $\Ker\b_1 = \lp \ve_1\rp$. Thus we have $\b_1(\w_1) = -1 \in SO_2$.
Since $\Spin_2$ is a subgroup of $\Spin_N$, 
one can find $x \in G^F$ such that $\b(x)|_{V_i}$ coincides with $z$.  It follows that 
$x(E) = E'$. Since $x \in G^F$, we see that $v_1 = v_2$ as asserted.   
Put $v = v_1 = v_2$. 
Then $F$ acts trivially on $\SF_{v, C'} = \{ E, E'\}$. Hence we have
$X_{v,u'} \simeq A_M(u') \sqcup A_M(u')$ as in Case I.
\par
Let $I$ be the set of $i$ such that $\la_i$ is odd. 
We follow the notation in \cite[III, 2.6]{Sh4}. 
Then $A_Q(y\iv vy) \simeq A_M(u')$, and by (3.11.1) $A_M(u')$
is regarded as a subgroup of $A_G(v)$ such that
\begin{equation*}
\tag{3.11.2}  
  A_G(v) = A_M(u')\sqcup x_ix_{i+1}A_M(u') \sqcup x_jx_iA_M(u') \sqcup x_jx_{i+1}A_M(u')
\end{equation*}
for $x_j$ with $j \notin \{ i, i+1\}$
($\{ x_j \mid j \in I\}$ is the set of generators of $A_G(v)$, and
$\{ x_j \mid j \in I -\{i,i+1\} \}$ is that of $A_M (u')$).
Note that the action of $A_{G}(v)$ on $\SF_{v,C'} = \{ E, E'\}$ is
given as follows;  $A_M(u')$ leaves $E, E'$ stable,
$x_ix_{i+1}$ leaves $E, E'$ stable, and $x_jx_i, x_jx_{i+1}$
permute $E$ and $E'$. Thus
\begin{equation*}
  \ve_{u,u'} \simeq \Ql[A_M(u')]\oplus \Ql[A_M(u')],
\end{equation*}
and
$A_G(v)$ acts on $\ve_{u,u'}$ through the decomposition (3.11.2).
Also the  action of $F$ on $\ve_{v, u'}$ is given by the action of $F$ on $A_M(u')$
through (3.11.2). 
Let $\r$ be the unique irreducible representation of $A_G(v)$ pertaining to $\x$
as given in \cite[14.4]{L2} (see also \cite[III, 2.5]{Sh4}),
and define $\r'$ of $A_M(u')$ similarly.
Then $\r|_{A_M(u')} = \r'\oplus \r'$, and $\ve_{u,u'}(\r, \r')$ coincides with
$\r\otimes \r'^*$. 
Assume given an extension $\wt\r'$ of $\r' \in A_M(u')\wg$. 
There exists a unique extension
$\wt\r$ of $\r \in A_G(v)\wg$ such that $\wt\r|_{\wt A_M(u')} = \wt\r' \oplus \wt\r'$.
Then $F$ acts trivially on $\ve_{v,u'} = \wt\r \otimes \wt\r'^*$. 

\par\medskip\noindent
{\bf Case V. } \ 
Put $h = \la_{i+1} = \la_{i+2}$ with $h$ even. We write the basis of
$V_{i+1}$ as $e_1, \dots, e_h, f_1, \dots, f_h$.
Then it is known by the discussion in \cite[III, 3.11]{Sh4} that
$Z_{\ol G}(\ol u)$ acts transitively on $\SF_{u,C'}$,
and $E = \lp e_1, e_2\rp \in \SF_{u,C'}^F$.
Thus we may choose an element 
$v \in C^F$ so that $y \in Y_{v,u'}^F$ with $\ol y \in \ol Y_{\ol v,\ol u'}^F$, where
$\ol v$ is split in $\b(C)^F$.   
In a similar way as in Case II, we have $Y_{v,u'} = Z_G(v)yZ_M(u')U_Q$.
Since $E = \lp e_1, e_2\rp $ is contained in $V_{i+1}$ with $i+1 \notin I$,
$x_i \in A_{\ol G}(\ol v)$ stabilizes $E$.  It follows that
$A_G(y\iv vy) = A_Q(y\iv vy)$,
and we have a natural map $\pi : A_G(v) \to A_M(u')$ by (3.11.1).
Hence $X_{v,u'} \simeq A_M(u')$, and $A_G(v)$ acts on $X_{v,u'}$ through 
$\pi$.
But $\pi$ is not surjective
($A_M(u')$ is generated by $\{ x_j \mid j \in I' = I - \{ i\} \sqcup \{ i+2\}\}$).
Then $A_G(v) \times A_M(u')$ acts on $\Ql[A_M(u')]$ through the homomorphism
$A_G(v) \times A_M(u') \to A_M(u') \times A_M(u')$.
Note that for $\r \in A_G(v)\wg$, $\dim \r$ is determined by $|I|$
(\cite[Prop. 14.4]{L1}, see also [III, Prop. 2.5]). 
For $\r' \in A_M(u')\wg\ex$, the pullback $\r = \pi^*(\r')$ also pertains to $\x$.
Since $|I| = |I'|$, $\r$ gives an irreducible
representation of $A_G(v)$ with $\dim \r = \dim \r'$.
Let $\wt\r'$ be an extension of $\r' \in A_M(u')\wg$.  We define $\wt\r$ as
the pullback of $\wt\r'$, then $F$ acts trivially on
$\ve_{v,u'}(\r,\r') = \wt\r \otimes \wt\r'^*$.

\para{3.12.}
For an $F$-stable class $C$ in $G$ corresponding to $\la \in X_N$,  
we define a split element $v \in C^F$ as follows. 
For $\la \in X_N$, there exists $\la' \in X_{N-4}$ such that $\la'$ 
is obtained from $\la$ by the pattern in (I), (II), (IV) or (V) 
(note that $X_N = \emptyset$ if $N = 2$).
We choose $\la'$ such that $\la_i$ is as big as possible, first look for $\la'$ in (I),
then in (II), and so on until (V).
Let $\b(C)$ be the $F$-stable class in $\ol G = SO_N$. 
By Theorem 3.2, there exists a split element $\ol u \in \b(C)^F$ (in the sense of 1.7). 
Let $Q = MU_Q$ be an $F$-stable parabolic subgroup of $G$ such that
$M$ is an $F$-stable Levi subgroup as in 3.8, 
and $C'$ be an $F$-stable unipotent class in $M$ corresponding to $\la'$. By induction,
we may assume that there exists a split element $u' \in C'^F$.  
Then $\ol u' = \b(u')$ 
is a split element in $\b(C')^F$. 
By applying the discussion in 3.11, 
one can find $v \in C^F$ and $y \in Y_{v,u'}^F$ such that
$\ol v$ is $\ol G^F$-conjugate to $\ol u$.   
We define $v \in C^F$ as a split element in $C^F$. 
\par
The following result is a refinement of Theorem 3.13 in \cite[III]{Sh4}. 
Note that the case $\x(\ve) = 1$ is covered by Theorem 3.2.

\begin{thm} %%%%   Theorem 3.13.
Let $G = \Spin_N$. Assume that $(C,\SE) \in \SN_{\x}^F$ belongs to
$(L, C_0, \SE_0) \in \SM_{\x}^F$ pertaining to $\x \in Z_G\wg$ such that
$\x(\ve) = -1$. 
Choose $(u_0, \r_0) \lra (C_0, \SE_0)$ with $u_0\in C_0^F$ split.
Assume that $(v,\r) \lra (C, \SE)$ with $v \in C^F$ split. 
\begin{enumerate}
\item  Recall that $\wt A_G(v) = \lp\tau \rp \ltimes A_G(v)$.
For $\r \in A_G(v)\wg\ex$, there exists a unique extension
$\wt\r$ such that $\tau$ acts trivially on $\wt\r$.   
\item  Let $\wt\r_0, \wt\r$ be extensions of $\r_0, \r$ as in (i).
Then we have $\g(u_0, \wt\r_0, v, \wt\r) = 1$.    
In particular, $v \in C^F$ is a split element in the sense of 1.7.
\end{enumerate}  
\end{thm}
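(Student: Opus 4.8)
We argue by induction on $N$, establishing (i) and (ii) together. By the modification of Remark~2.7 of \cite[III]{Sh4} given in 3.5, it suffices to treat the case $\x(\ve)=-1$ with the Jordan type $\la$ of $\b(C)$ lying in $X_N$; for $\la\notin X_N$ one has $A_G(u)\simeq A_{\ol G}(\b(u))$ and the assertion follows from Theorem 3.2. If $(G,F)$ is non-split we may, by Remark~$2.8^*$, assume in addition that $N\equiv 2\pmod 4$ and $q\equiv -1\pmod 4$, so that $F$ acts trivially on $Z_G$ and $\x$ is $F$-stable. When $\SW_L=\{1\}$ the pair $(C,\SE)$ is cuspidal, the identity $\g(v,\wt\r,v,\wt\r)=1$ is automatic (see 1.7~(a)), and the existence and uniqueness of the extension $\wt\r$ on which $\tau$ acts trivially is read off from the description of $A_G(v)$ and of the $F$-action on it in \cite[III, 2.6]{Sh4} together with Lemma~$3.5^*$; this is the base of the induction.

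Now assume $N\ge 7$ and $\SW_L\ne\{1\}$. Following 3.12 we pick $\la'\in X_{N-4}$ obtained from $\la$ by one of the patterns (I), (II), (IV), (V) of 3.8 with $\la_i$ as large as possible, tested in that order, choose an $F$-stable parabolic $Q=MU_Q$ with $F$-stable Levi $M$ such that $\b(M)\simeq SO_{N-4}\times GL_2$, and let $C'$ be the $F$-stable class of $M$ with $\la(u')=\la'$. By the inductive hypothesis applied to the $\Spin_{N-4}$-factor of $M$, together with the product rules 1.7~(e)--(g) (the $GL_2$-factor contributing trivially, since $\b(u')$ is regular there and the relevant component groups are trivial), there is a split element $u'\in C'^F$ whose associated extension $\wt\r'$ is the one on which $\tau_0$ acts trivially, and $\g(u_0,\wt\r_0,u',\wt\r')=1$; put $\nu'=1$. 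Running through Cases I, II, IV, V of 3.11 -- where the choice of $v\in C^F$ between $u$ and $u_1=huh\iv$ with $h\iv F(h)=\ve$ is governed by Lemma 3.9, $v$ being the split element of 3.12 -- produces $v\in C^F$ and $y\in Y_{v,u'}^F$ together with the identification of $\ve_{v,u'}$ with a (possibly doubled) regular representation of $A_M(u')$; from this, $F$ acts trivially on $\ve_{v,u'}(\r,\r')$ and a distinguished extension $\wt\r$ of $\r$, built from $\wt\r'$, satisfies condition (1.14.1).

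We then apply Lemma 1.15 to the chain $L\subset M\subset G$ with $(u,\r)=(v,\r)$, the given $(u',\r')$, and $\nu'=1$. The relative Weyl groups $\SW_L$ and $\SW_L'$ attached to a series $(L,C_0,\SE_0)$ with $\x(\ve)=-1$ are Coxeter groups of type $B$ (each of the form $W(B_k)$, by the classification of cuspidal data for spin groups), hence carry no nontrivial diagram automorphism; so $F$ acts trivially on them, $c=1$ in the sense of 1.6, and all preferred extensions are trivial. Therefore the scalar $\d$ in (1.14.2) is a first root of unity, i.e.\ $\d=1$, and Lemma 1.15 gives $\g(u_0,\wt\r_0,v,\wt\r)=\d\iv\nu'=1$, which is (ii). Tracking the construction of $\wt\r$ out of $\wt\r'$ through the four cases and invoking once more the structure of $A_G(v)$ and of $\tau$ from \cite[III, 2.6]{Sh4} and Lemma~$3.5^*$ shows that this $\wt\r$ is exactly the unique extension of $\r$ to $\wt A_G(v)$ on which $\tau$ acts trivially, giving (i) and closing the induction.

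The step demanding the most care is the case analysis of 3.11: one must exhibit, inside the $G^F$-conjugacy class of the split element, a representative $v$ for which $F$ really does act trivially on $\ve_{v,u'}(\r,\r')$. This is delicate precisely because the split element is determined only up to $G^F$-conjugacy and not up to $GL_N$-conjugacy, so the ambiguity between $v=u$ and $v=huh\iv$ of Lemma 3.9 must be controlled in each pattern; the hardest instance is Case IV, where one needs the representatives forced by the two subspaces $E$ and $E'$ to coincide, which is arranged using the embedding $\Spin_2\hra\Spin_N$ and the central element $\w_1$ of $\Spin_2$. A secondary point is confirming that $\SW_L$ is of type $B$, which is what forces $\d=1$ and dispenses with the preferred-extension adjustments needed for $SO_{2n}$ in \cite[II]{Sh4}.
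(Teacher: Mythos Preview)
Your overall strategy coincides with the paper's: induct on $N$, pass from $\la\in X_N$ to $\la'\in X_{N-4}$ via one of the patterns (I), (II), (IV), (V), use the case analysis of 3.11 to get $F$ acting trivially on $\ve_{v,u'}(\r,\r')$, and then invoke Lemma~1.15 with $\d=1$ because $\SW_L,\SW_L'$ are of type $B$. That part is fine.

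The gap is in your base case. You declare the base to be ``$\SW_L=\{1\}$'', i.e.\ the cuspidal case, and then run the inductive step only for $N\ge 7$. But the induction $N\mapsto N-4$ does not terminate in cuspidal pairs when $N\equiv 0$ or $1\pmod 4$: one lands at $N=4$ or $N=5$, where $X_4=\{(22),(13)\}$ and $X_5=\{(5),(122)\}$ contain \emph{non}-cuspidal pairs (here $L$ is a Levi with $\b(L)\simeq GL_2$, not $G$ itself). Your proof says nothing about these, so the inductive hypothesis you invoke for $\Spin_{N-4}$ is not available in those residue classes. The paper handles $N=4,5$ separately: one takes $Q=MU_Q$ with $\b(M)\simeq GL_2$ (treating $SO_{N-4}$ as trivial), checks that the discussion of 3.8 and 3.11 still goes through, and reduces to the $SL_2$ case via Theorem~2.19. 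You need to supply this argument.

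A smaller point: in the cuspidal base cases $N=3,6$ you say the trivial-$\tau$ extension ``is read off'' from the structure of $A_G(v)$. What actually happens there is a \emph{choice}: this is precisely where one fixes $\vf_0:F^*\SE_0\isom\SE_0$ (equivalently, where $\Xi_G$ in (1.7.1) is being defined) by declaring $\wt\r$ to be the extension on which $\tau$ acts trivially. It is not a fact to be verified but part of the normalization; you should say so.
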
  

\begin{proof}
First assume that $N \ge 7$. Let $M$ be an $F$-stable Levi subgroup such that
$\b(M) \simeq GL_2 \times SO_{N-4}$. Let $\la \in X_N$ be the Jordan type of
$\ol v \in \b(C)$.  By 3.12, there exists $\la' \in X_{N-4}$ such that
$\la'$ is obtained from $\la$ by the pattern in (I), (II), (V) or (V),
and a split element $u' \in C'^F$, where $C'$ is a unipotent class in $M$,
and the Jordan type of $\ol u'$ is $\la'$.  Then by 3.11, there exists
$\r' \in A_M(u')\wg$ such that $\r \in A_G(u)\wg$ is obtained from $\r'$.
\par
First we show (i).  By induction, we may assume that the extension $\wt\r'$
of $\r'$ has the property that $\tau$ acts trivially on $\wt\r'$.  Then 
$\wt\r$ is determined from $\wt\r'$ by the procedure discussed in 3.11.
In particular, if $\tau$ acts trivially on $\wt\r'$, then $\tau$ acts trivially
on $\wt\r$. Thus (i) holds.
\par
Next we show (ii).  Again by induction, we may assume that
$\g(u_0, \wt\r_0, u', \wt\r') = 1$.  Then $\wt\r$ is determined from $\wt\r'$,
and the discussion in 3.11 shows that $F$ acts trivially on $\ve_{v,u'}(\r,\r')$.
By Lemma 1.15, we have $\g(u_0, \wt\r_0, v, \wt\r) = \d\iv$ under the notation there.
Here $\d$ is determined by comparing the preferred extensions of
$V_{(u,\r)} \in \SW_L\wg$ and
of $V_{(u',\r')} \in {\SW _L'}\wg$. But in this case $\SW_L, \SW_L'$ are isomorphic to the
Weyl groups of type $B_m$.  Hence the preferred extensions are trivial,
and we have $\d = 1$.  Thus $\g(u_0, \wt\r_0, v, \wt\r) = 1$, and (ii) follows. 
\par
It remains to consider the case where $N \le 6$. Assume that $N = 6$ (resp. $N = 3$).
In this case, $X_N = \{ (15)\}$ (resp, $X_N = \{ (3) \}$), and
the corresponding $(u,\r) \lra (C,\SE)$ is a cuspidal pair by \cite[Cor. 4.9]{LS2},
with $u$ regular unipotent in $G$. 
The natural map $Z_G \to A_G(u)$ gives an isomorphism $Z_G \simeq A_G(u)$, hence
$A_G(u)$ is abelian.  Moreover, $\r \in A_G(u)\wg$ coincides with $\x \in Z_G\wg$.
By the definition of split elements in 1.7, the condition that $u$ is split
(in the sense of 1.7) is given by the condition for $(u,\r')$ with $\r' \ne \r$.
Since $(u, \r')$ does not belong to $X_N$, it is only concerned with $\ol G$.
In particular, we may choose a split element
$\ol u \in \b(C)^F$, and define $v \in C^F$ by the condition that $\b(v) = \ol u$.
Since we may choose $\vf_0 : F^*\SE_0 \isom \SE_0$ for $\SE_0 = \SE$ freely,
we choose $\wt\r \in \wt A_G(v)\wg$ such that $\tau$ acts trivially on $\wt\r$
(note that $A_G(v)$ is abelian). Hereafter, we define $\Xi_G$ in (1.7.1) by using
those $\vf_0$ corresponding to $\wt\r$ for $\Spin_6$ (resp. $\Spin_3$). 
Then $v \in C^F$ is a split element in the sense of 1.7, and (i), (ii) hold.
\par
Next assume that $N = 5$ or 4.
In those cases, the discussion in 3.8 makes sense; we consider
$Q = MU_Q$, where $M$ is a Levi subgroup of $Q$ such that $\b(M) \simeq GL_2$
(we regard $SO_{N-4}$ as the unit group $\{ 1\}$). 
We have $X_5 = \{ (5), (122)\}$ and $X_4 = \{ (22), (13)\}$.
Let $u \in C^F$ corresponding to $X_N$.  If $N = 5$, then
$A_G(u) \simeq \BZ_2$, and
the natural map $Z_G \to A_G(u)$ gives an isomorphism $Z_G \simeq A_G(u)$. 
If $N = 4$, $A_G(u)$ is abelian.  
$\SN_{\x}$ corresponds to $(L, C_0, \SE_0) \in \SM_{\x}$, where 
$L$ is a Levi subgroup such that $\b(L) = GL_2$. $(u_0, \r_0) \lra (C_0, \SE_0)$
is given by a regular unipotent element $u_0$ in $L$, and the nontrivial character
$\r_0$ of $A_L(u_0) = Z_L = \BZ_2$. Hence $M = L$.  
If we regard $X_1 = \{ (1)\}$, $X_0 = \{(-)\}$ as the set corresponding to $u_0$,
we have a bijection $X_1 \simeq (\SN_L)_{\x}, X_0 \simeq (\SN_L)_{\x}$.
The variety $\SF_{u,C'}$ is defined similarly to 3.11, and 
the discussion there is also applied to the present situation;
the cases (I), (II) for $N = 5$, (II) and the cases (II), (IV) for $N = 4$. 
Hence the verification of (i), (ii) is reduced to the case where $G = SL_2$.
In this case, the existence of split elements
is given by Theorem 2.19.
\end{proof}  

\par\bigskip
\section{ Preliminaries on exceptional groups}

\para{4.1.}
In the rest of this paper, we assume that $G$ is almost simple, simply connected 
of exceptional type. 
The classification of unipotent classes in $G$ were done by 
Stuhler \cite{St} for type $G_2$, by 
Shinoda \cite{Shi} for type $F_4$ with $p = 2$, by \cite{Sh1} for 
type $F_4$ with $p \ne 2$, and by Mizuno \cite{M1},\cite{M2} for 
$E_6, E_7$ and $E_8$.  
In this paper, we follow the notation in \cite{Sp5}, which is based 
on Mizuno's notation.   
\par
The following facts are known. 
\par\medskip\noindent
(4.1.1) \ Assume that $A_G(u)$ is abelian.  
If $F$ acts trivially on $A_G(u)$ for $u \in C^F$, then $F$ acts trivially on 
$A_G(u_1)$ for any choice of $u_1 \in C^F$, namely, the property that 
$F$ acts trivially on $A_G(u)$ is independent from  the choice of $u \in C^F$.
\par\medskip
In fact, $u_1 \in C$ is expressed as $u_1 = u_c$, an element
obtained from $u$ by twisting by $c \in A_G(u)$.  Then the action of $F$ on $A_G(u_1)$
corresponds to the action of $cF$ on $A_G(u)$, where $c$ acts as an inner automorphism.
Since $A_G(u)$ is abelian, the action of $cF$ coincides with that of $F$ on $A_G(u)$. 
\par\medskip\noindent
(4.1.2) \ Any unipotent class $C$ in $G$ is $F$-stable. 
For any class $C$, there exists a representative $u \in C^F$. 
\par\medskip
In fact, the former statement follows from the labeling of 
unipotent classes of exceptional groups in terms of the 
subsystems of the root system $\vD$ of $G$ (see 4.3 below).
If $C$ is written as $C = C_X$ with $X \subset \vD$, then 
$F(C_X) = C_{F(X)}$ gives the same unipotent class. 
The latter statement follows from the general theory. 
\par\medskip\noindent
(4.1.3) \ Unless $G$ is $E_6$, any class $C$ has 
a representative $u \in C^F$ such that $F$ acts trivially on $A_G(u)$. 
\par\medskip
In fact, it is checked, for all $A_G(u)$ occurring in 
exceptional groups not of type $E_6$, that the inequality  
\begin{equation*}
\tag{*}
|A_G(u)/\!\!\sim_F| < |A_G(u)/\!\!\sim|
\end{equation*}
holds unless the action of $F$ on $A_G(u)$ is given by an inner automorphism, 
where $A_G(u)/\!\!\sim_F\!$ is the set of $F$-twisted conjugacy classes of $A_G(u)$, and  
$A_G(u)/\!\!\sim$ is the set of conjugacy classes in $A_G(u)$. 
But $|A_G(u)/\!\!\sim_F|$ coincides with the number of $G^F$-orbits in $C^F$, and 
$|A_G(u)/\!\!\sim|$ coincides with the number of $G^{F^m}$-orbits in $C^{F^m}$ for a sufficiently 
large $m$ since $F^m$ acts trivially on $A_G(u)$.
Hence (*) does not occur. It follows that $F$ acts as an inner automorphism, 
and there exists $\da  \in Z_G(u)$ such that $\da F$ acts trivially on $A_G(u)$. 
Then $F$ acts trivially on $A_G(u_1)$, where $u_1 = u_a \in C^F$ is a twisted element 
of $u$ by $a \in A_G(u)$.   
\par\medskip\noindent
(4.1.4) \ Assume that $G = E_6$.  Then $Z_G \simeq \BZ_3$ (resp. $Z_G \simeq \{1\}$) 
if $p \ne 3$ (resp. $p = 3$). 
Assume that $p \ne 3$, and that $F$ is a split Frobenius map (resp. non-split Frobenius map), 
then $F$ acts trivially on $\BZ_3$ if and only $q \equiv 1 \pmod 3$ 
(resp. $q \equiv -1 \pmod 3$). 
If $F$ acts trivially on $Z_G$, then for any $C$, there exists $u \in C^F$ such that
$F$ acts trivially on $A_G(u)$, otherwise, there exists a class $C$ such that 
$F$ acts non-trivially on $A_G(u)$ for any choice of $u \in C^F$. 
\par\medskip 
In fact, a similar argument as in (4.1.3) works also for this case.
In the case where $G = E_6$ of adjoint type, (4.1.4) was given in 
\cite{M1}, \cite{BS} (see also the book of Liebeck-Seitz \cite[22.2.3]{LiS}).
The simply connected case is easily obtained from the above results. 

\para{4.2.}
For type $E_6, E_7, E_8$, the following labeling of the Dynkin diagram is used.

\SelectTips{cm}{12}
\objectmargin= {1pt}

\begin{equation*}
\begin{aligned}
\xygraph{
\bullet([]!{+(0,-.3)}{1}) - [r]
\bullet([]!{+(0,-.3)}{3}) - [r] 
\bullet([]!{+(0,-.3)}{4}) (
 - []!{+(0,.6)} \bullet([]!{+(.3, 0)} {2}), 
 - [r] \bullet([]!{+(0,-.3)}{5})
 - [r] \bullet([]!{+(0,-.3)}{6}) 
-  [r] \bullet([]!{+(0,-.3)}{7})
-  [r] \bullet([]!{+(0,-.3)}{8})
)} \\
\end{aligned}
\end{equation*}

%\begin{equation*}
%\begin{matrix}
%   &    &    &   &  2 &  &    &   &     \\
%   &    &    &   &  | &  &    &   &      \\
%1  &  - &  3 & - &  4 & -&  5 & - & 6 \ \ - \  \ 7 \ \ - \ \ 8    
%\end{matrix}
%\end{equation*}

In later discussion, we use a precise description of the root systems
of type $E_6, E_7, E_8$, which we follow the notation in \cite{Bo}.
Let $V = \BR^8$ be the euclidean space with the inner product $(\ , \ )$,
and $\ve_1, \dots, \ve_8$ an orthonormal basis of $V$. The set of
simple roots $\Pi = \{ \a_i \mid i \in I\}$ of type $E_8$ is given by
\begin{align*}
  \Pi &= \{ \a_1 = \frac{1}{2}(\ve_1 - \ve_2 -\cdots -\ve_7 + \ve_8),
             \quad \a_2 = \ve_2 + \ve_1, \quad \a_3 = \ve_2-\ve_1, \\
      &\quad \a_4 = \ve_3-\ve_2, 
                    \quad \a_5 = \ve_4-\ve_3,  \quad \a_6 = \ve_5-\ve_4,
                    \quad \a_7 = \ve_6-\ve_5, \quad \a_8 = \ve_7-\ve_6 \},   
\end{align*}
where $I = \{ 1, 2, \dots, 8\}$ corresponds to the labeling of the Dynkin diagram. 
The set $\vD^+$ of positive roots of type $E_8$ is given by
\begin{align*}
  \vD^+ = \bigl\{ \pm \ve_i + \ve_j\ (1 \le i < j \le 8), \quad
                \frac{1}{2}(\ve_8 + \sum_{i=1}^7(-1)^{\nu(i)}\ve_i) \
                  \text{ with  $\ \sum_{i=1}^7\nu(i)$ even} \bigr\}.  
\end{align*}
The set of simple roots $\Pi_7$ (resp. $\Pi_6$) of type $E_7$ (resp. $E_6$)
is given by $\Pi_7 = \{ \a_i \mid 1 \le i \le 7\}$
(resp. $\Pi_6 = \{ \a_i \mid 1 \le i \le 6\}$),
and the set of positive roots $\vD^+_7$ of type $E_7$, and $\vD^+_6$ of type $E_6$
are given as subsets of $\vD^+$ by

\begin{align*}
  \vD_7^+ &= \bigl\{ \pm \ve_i + \ve_j \ (1 \le i< j \le 6), \quad \ve_8 - \ve_7,
                   \quad \frac{1}{2}(\ve_8 - \ve_7 + \sum_{i=1}^6(-1)^{\nu(i)}\ve_i)\bigr\},  \\
  \vD_6^+ &= \bigl\{ \pm  \ve_i + \ve_j \ (1 \le i < j \le 5), \quad
                   \frac{1}{2}(\ve_8 - \ve_7-\ve_6 + \sum_{i=1}^5(-1)^{\nu(i)}\ve_i)\bigr\}.
\end{align*}  

Next consider the case of $F_4$.  The labeling of the Dynkin diagram is given by
\begin{equation*}
\xygraph{!~:{@ 2{-}|@{>}}
\bullet([]!{+(0,-.3)}{1}) - [r]
\bullet([]!{+(0,-.3)}{2}) : [r]
\bullet([]!{+(0,-.3)}{3})  - [r]
\bullet([]!{+(0,-.3)}{4}) 
} 
\end{equation*}
Following \cite{Bo}, we give the root system  of type $F_4$ as follows.
Let $V = \BR^4$ be the euclidean space with the inner product $(\ \ )$,
and $\ve_1, \dots, \ve_4$ an orthonormal basis of $V$. 
The set of simple roots $\Pi = \{ \a_i \mid i \in I\}$ is given by
\begin{equation*}
  \Pi = \{ \a_1 = \ve_2 -\ve_3, \quad \a_2 = \ve_3 - \ve_4, \quad \a_3 = \ve_4, \quad
  \a_4 = \frac{1}{2}(\ve_1-\ve_2-\ve_3-\ve_4)\},
\end{equation*}
where $I = \{ 1,2,3,4\}$ corresponds to the labeling of the Dynkin diagram.
The set $\vD^+$ of positive roots of type $F_4$ is given by
\begin{equation*}
  \vD^+ = \{\ve_i, \quad  \ve_i \pm \ve_j \ (1 \le i < j \le 4), \quad 
           \frac{1}{2}(\ve_1 \pm \ve_2 \pm \ve_3 \pm \ve_4)\}.
\end{equation*}  

\para{4.3.}
Here we explain the labeling of unipotent classes for exceptional groups 
used in \cite{Sp5}, which depends on \cite[p.26]{Sp2}. 
Let $G$ be a simply connected, almost simple group of exceptional type, 
and $\vD$ the root system associated to $G$ and $T$, with 
the vertex set $I$ corresponding to the set of simple roots $\{\a_i \mid i \in I\}$.  
For any subsystem $X$ of 
$\vD$, we denote by $C_X$ the unipotent class obtained from the regular 
unipotent class of a reductive subgroup $G_X$ of $G$. The unipotent class 
$C$ in $G$ is called type $X$ if $C = C_X$.  
The conjugacy class of the subsystem $X$ in $\vD$ determines a unique 
unipotent class $C = C_X$ except the following cases.
\par\medskip
(a) \ In the case where $G$ is not simply laced, we denote by $\wt A_i$ the 
subsystem $A_i$, where all vertices correspond to short roots, and use the notation 
$A_i$ where all vertices correspond to long roots. The notation of this kind 
is also used for the case of $F_4$, such as $A_1 + \wt A_2$, etc. 
\par\medskip
(b) \ Assume that $G$ is $E_7$ (resp. $E_8$). In this case, there exist 
two families $X = 3A_1, A_3 + A_1, 4A_1, A_3 + 2A_1, A_5, A_5 + A_1$ 
(resp. $X = 4A_1, A_3 + 2A_1, 2A_3, A_5 + A_1, A_7$) 
such that $C_X$ represent two different classes in $G$. 
In this case, we denote by $X'$ such as $(3A_1)'$, etc., 
the subsystem of $\vD$ which is contained in 
the subsystem $A_7 \subset E_7$ (resp. the subsystem $A_8 \subset E_8$), 
and denote by $X''$ otherwise, such as $(3A_1)''$, etc..  
Here the subsystem $A_7$ (resp. $A_8$) is obtained from the extended Dynkin diagram 
$\wt E_7$ (resp. $\wt E_8$) by using the negative of highest root. 
\par
Moreover, for simplifying  the labeling of unipotent classes, 
$(4A_1)'', (A_3 + 2A_1)''$ are written as
$4A_1$, $A_3 + 2A_1$ in the case of $E_7$, and $(4A_1)', (A_3 + 2A_1)', (2A_3)', (A_7)'$ 
are written as $4A_1, A_3 + 2A_1, 2A_3, A_7$ in the case $E_8$ 
(since other cases do not appear in the classification of unipotent classes).

\par\medskip
(c) \ In the case where $p$ is a bad prime, we use a notation such as 
$X_2$ (for $p = 2$) or $X_3$ (for $p = 3$), in addition to $X$, 
such as $(A_3 + A_2)_2, A_3 + A_2$  for $E_7$ with $p = 2$. 
In this case, $X_2$ (or $X_3$) represents the class which has a representative 
in the regular unipotent class in the subgroup $G_X$, and $X$ represents 
another class which has no intersection with $G_X$. 
The class $A_3 + A_2$ has a representative in the semi-regular unipotent 
class in the Levi subgroup of type $D_6$ in the group $E_7$.    
\par\medskip
(d) We denote by $X(a_i)$ the subregular classes in the reductive group $G_X$.
\par\medskip
This fits to the notation of the tables in \cite{M2}. 

\para{4.4.}
For a unipotent element $u$ in $G$, the possibilities for $A_G(u)$ are given as
\begin{equation*}
  1, \BZ_2, \BZ_3, \BZ_4, \BZ_5, \BZ_6 \simeq \BZ_2 \times \BZ_3,
  \quad \BZ_2 \times \BZ_2, \quad S_3, S_4, S_5, \quad \BZ_2 \times S_3, \quad D_8,
\end{equation*}
where $\BZ_n$ is the cyclic group of order $n$, $S_n$ is the symmetric
group of degree $n$, and $D_8$ is the dihedral group of order 8.
\par
Concerning with $A_G(u)\wg$, we follow the notation in \cite{Sp5}. In particular,
$\BZ_2\wg = \{ 1, -1\}$, where $-1$ is the non-trivial representation of $\BZ_2$.
$S_3\wg = \{ 1, \th, \ve\}$, $D_8\wg = \{ 1, \ve, \ve',\ve'',\th\}$,
where $\ve$ is the sign representation, $\th$
is the unique irreducible representation such that $\deg \th = 2$,
and $\ve', \ve''$ (in the case $D_8$) are two additional representations
of degree 1.  
$(\BZ_2 \times S_3)\wg = \{ 1, \th, \ve, -1, -\th, -\ve\}$ where
$-\r$ denotes $(-1)\otimes \r$ for $\r \in S_3\wg$. 
\par
Concerning with $\SW_L\wg$, we follow the notation in \cite{Sp5}.
In particular, for $\SW_L = W(G_2)$, we write it as $\{ 1, \ve, \ve_l, \ve_c, \th', \th''\}$,
where $\ve$ is the sign representation, and $\ve_l$ (resp. $\ve_c$) is the
long sign representation (resp. short sign representation).  $\th', \th''$ are irreducible
representations of degree 2, where $\th'$ is the reflection representation of $W(G_2)$
($\th' = \f_{2,1}$ in the table of Carter's book \cite[p.412]{C}). Note that in \cite{Sp5},
$\th'$ and $\th''$ are not distinguished.  Here in order to distinguish them,
we use the results of \cite{L5}, \cite{H} (see Introduction). 

\para{4.5.}
By induction, we may assume that Conjecture 1.10 holds for any reductive 
subgroup of $G$ with smaller semisimple rank.
In particular, for any $(L, C_0, \SE_0) \in \SM_G^F$ such that $L \ne G$, 
a split element $u_0 \in C_0^F$, and the split extension 
$\wt\r_0$ corresponding to $\vf_0: F^*\SE_0 \isom \SE_0$ is already determined.  
\par
Let $T$ be an $F$-stable maximal torus of $G$.
For a unipotent class $C$ in $G$, let $\SM_G(C)$ be the subset of $\SM_G$ 
consisting of $(L,C_0,\SE_0)$ with $T \subset L \ne G$, such that 
$(C,\SE)$ belongs to $(L,C_0,\SE_0)$ for some $\SE$.
Note that $\SM_G(C)$ always contains $(T,\{1\},1)$, 
which corresponds to the part of the Springer correspondence among the generalized 
Springer correspondence (1.1.4).  
Also note that we have excluded $(G, C, \SE) \in \SM_G$ for a cuspidal pair $(C, \SE)$ of $G$.  
\par
We consider the following unipotent class. 
\par\medskip\noindent
(4.5.1) \ $G = E_7$ and $C$ is a unipotent class
of type $D_6(a_1) + A_1$ in $G$, with $p \ne 2$. 
\par\medskip

We shall prove the following result, which asserts
that Conjecture 1.10 holds if $G$ is an almost simple group of
exceptional type, possibly except one class in (4.5.1). 

\begin{thm} %%%  Thm.4.6 
Let $G$ be an almost simple, simply connected group of exceptional type,
and $C$ an ($F$-stable) unipotent class in $G$.
Assume that $C$ is not the class in (4.5.1). 
Then there exists a split element $u \in C^F$ in the sense of 1.7.
More precisely, 
\begin{enumerate}
\item \ Assume that $F$ is split on $G$.
  \begin{enumerate}
    \item 
  There exists an element in $C^F$,  unless 
  $G$ is $E_8$ and $C = D_8(a_3)$ with $q \equiv -1 \pmod 3$, such that
  $\g(u_0, \wt\r_0, u, \r) = 1$ for any $\r \in A_G(u)\wg$. 
     \item 
  Assume that $G$ is $E_8$ and $C = D_8(a_3)$ with $q \equiv -1 \pmod 3$.
  Then $A_G(u) = S_3$, and there exists an element $u \in C^F$ such that
      \begin{equation*}
          \g(u_0, \r_0, u, \r) = \begin{cases}
                                 1   &\quad\text{ if } \r \ne \ve,   \\
                                 -1  &\quad\text{ if } \r = \ve,
                              \end{cases}
       \end{equation*}
  \end{enumerate}
where $\SM_G(C) = \{ (T, \{1\}, 1)\}$, and $(u_0, \r_0) = (1,1)$.   
\item \ Assume that $G$ is of type $E_6$, and $F$ is non-split 
on $G$. There exists an element $u \in C^F$ such that 
\begin{equation*}
\tag{4.6.1}
\g(u_0, \wt\r_0, u, \wt\r) = \nu_{u,\eta}(-1)^{\d_E},
\end{equation*}
where $E = V_{(u,\r)} \in \SW_L\wg$, and $\d_E$ is defined as in (1.7.4).
$\nu_{u,\eta}$ is a root of unity determined by the class $C$ and by $\eta = (L,C_0, \SE_0)$.
If $L = T$ or $L$ is of type $D_4$, then $\nu_{u,\eta} = 1$, while if $L$ is of type
$2A_2$ with $p \ne 3$, then $\nu_{u,\eta}$ is determined from the discussion
in Section 2 (for $G$ of type $A_5$ and $L$ of type $2A_2$). 
\end{enumerate}
\end{thm}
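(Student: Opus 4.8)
The plan is to run through every $F$-stable unipotent class $C$ of $G$ and, for each series $\eta=(L,C_0,\SE_0)\in\SM_G(C)$, to determine the scalar $\g(u_0,\wt\r_0,u,\wt\r)$ for a suitably chosen representative $u\in C^F$, verifying that it takes the value prescribed in 1.7 so that $u$ is split. I would organize the work according to the series: the Springer series $\eta=(T,\{1\},1)$ will be handled by means of the known computation of the ordinary Green functions $Q^G_{T_w}$, whereas a series with $T\subsetneq L$ will be reduced to a proper Levi subgroup of $G$ by the restriction formula of Lemma 1.15 together with the inductive hypothesis of 4.5. Once existence is established, uniqueness of the split $G^F$-class is exactly Lemma 1.9.

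For the Springer series I would argue as follows. With $L=T$, formula (1.4.4) expresses the collection $\{Q^G_{T_w}\mid w\in W\}$ in terms of the functions $X_i$ (with $i$ in the Springer series) through the matrix $\Tr(w\s_{E_i},\wt E_i)$, a non-singular submatrix of the character table of $\wt W$; hence the known Green functions (Springer \cite{Spr}, \cite{Sh2}, \cite{BS} for $p$ good, \cite{G2}, \cite{Lu} for $p$ bad) determine all these $X_i$. Lusztig's algorithm gives $X_i=\sum_j p_{ji}Y_j$ with $P=(p_{ji})$ computable from the character table of $\wt\SW_L$, and since $Y_j=\g_jY^0_j$ with $Y^0_j$ given explicitly by (1.5.1), one solves for $\g_j$ by evaluating at a point of $C^F$. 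By (4.1.3) and (4.1.4) one first picks $u\in C^F$ with $F$ acting trivially on $A_G(u)$ when this is possible, so that $\wt\r$ is the trivial extension; if the value of $\g$ so computed is not $1$, one replaces $u$ by a twisted representative $u_c$, and since the $G^F$-classes in $C^F$ correspond to the $F$-twisted classes of $A_G(u)$, one checks that some representative achieves $\g=1$. The only place where this breaks down is $G=E_8$, $C=D_8(a_3)$ with $q\equiv-1\pmod 3$, where $A_G(u)=S_3$ and the Green function data force the value $-1$ on $\ve$ and $1$ elsewhere, that is, exactly (1.7.3); the class $D_6(a_1)+A_1$ of $E_7$ with $p\ne 2$ is left aside by hypothesis.

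For a series $\eta=(L,C_0,\SE_0)$ with $T\subsetneq L$ I would choose an $F$-stable intermediate Levi $M$, $L\subseteq M\subsetneq G$, through which $(C,\SE)$ is induced; by the inductive hypothesis a split element $u'\in(C_0')^F$ of $M$ is available, together with a known value $\g(u_0,\wt\r_0,u',\wt\r')=\nu'$. I would then analyse the variety $Y_{u,u'}$ of (1.12.1) and the permutation module $\ve_{u,u'}$ of $A_G(u)\times A_M(u')$ and, imitating the treatment of classical groups in Section 3, select $u\in C^F$ so that $F$ acts trivially on the isotypic component $\ve_{u,u'}(\r,\r')$; for exceptional $G$ this is a case-by-case verification from the explicit induction rules for the generalized Springer correspondence (\cite{Sp5}, and \cite{L5}, \cite{H} in bad characteristic). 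Lemma 1.15 then gives $\g(u_0,\wt\r_0,u,\wt\r)=\d\iv\nu'$, where $\d$ is read off by comparing the preferred extensions of $V_{(u,\r)}\in\SW_L\wg$ and $V_{(u',\r')}\in(\SW_L')\wg$. When $G$ is of type $E_8$, $E_7$, $F_4$ or $G_2$ with $F$ split, $F$ acts trivially on $\SW_L$, hence $\d=1$ and the inductive value $\nu'=1$ propagates, giving $\g=1$. When $G=E_6$ with $F$ non-split, $\SW_L$ may be of type $D_4$ or isomorphic to $W(G_2)$; here $\d$ produces the factor $(-1)^{\d_E}$ of (4.6.1), and the series with $L$ of type $2A_2$ (for $p\ne 3$) is treated through the embedding $2A_2\subset A_5$, the root of unity $\nu_{u,\eta}$ being read off from the $SL_6$ computation of Section 2. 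The $L=T$ and $L$ of type $D_4$ cases in non-split $E_6$ are compared with the split $E_6$ data via the Green-function correspondence of \cite{BS} together with a sign-tracking argument of the type of Lemma 1.18.

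The hard part will be the case-by-case analysis of the preceding paragraph: showing that a representative $u\in C^F$ can be chosen so that $F$ acts trivially on $\ve_{u,u'}(\r,\r')$, and computing $\d$ correctly, which requires the precise induction and restriction combinatorics for the generalized Springer correspondence in each exceptional type. In bad characteristic this rests on the recent completions of the generalized Springer correspondence by Lusztig \cite{L5} (for $E_6$) and Hetz \cite{H} (for $E_8$), and on the bad-characteristic Green functions of \cite{G2} and \cite{Lu}. The delicacy of this bookkeeping is signalled by the anomalous sign for the $D_8(a_3)$ class of $E_8$ and, outside the present statement, by the class $D_6(a_1)+A_1$ of $E_7$ with $p\ne 2$, for which the available data do not pin down $\g$ and which is therefore excluded from the theorem.
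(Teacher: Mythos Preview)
Your outline is essentially the paper's own strategy: use the computed Green functions for the Springer series (paper's (4.7.2)), and for a series with $L\ne T$ descend to a proper $F$-stable Levi $M$ and apply Lemma 1.15 inductively. The paper organizes the reduction by developing three structural lemmas on $X_{u,u'}$ --- Lemma 4.9 (induced classes), Lemma 4.11/4.13 ($u=u'\in M$ with $A_G(u)$ abelian), and Lemma 4.16 ($\dim Z_G(u)=\dim Z_M(u')$) --- which dispose of the vast majority of classes mechanically, much as you describe.

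One point your outline underplays, and where the paper has to work harder than ``case-by-case verification from the induction rules'', concerns classes with $A_G(u)$ non-abelian and $|\SM_G(C)|\ge 2$ (for instance $D_6(a_2)+A_1$ in $E_7$, $D_8(a_1)$ and $E_7(a_2)+A_1$ in $E_8$ with $p=2$, $F_4(a_2)$ with $p=2$). Here one cannot first pick a split $u'\in M^F$ and then lift to $u\in u'U_Q$, because for non-abelian $A_G(u)$ there is no guarantee that $F$ acts trivially on $A_G(u)$ for the lifted element; the paper instead reverses the order --- fix an explicit $u\in C^F$ from Mizuno's or Shinoda's tables with $F$ trivial on $A_G(u)$, compute an explicit conjugation into $MU_Q$, identify the resulting $u'\in C'^F$, verify $u'$ is split (twisting by a central element of $G$ if necessary), and then check directly that $\dim Z_G(u)=\dim Z_M(u')$ and that the transitivity on $X_{u,u'}$ follows from a multiplicity-one restriction. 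The residual sign for the remaining representations (e.g.\ $\ve$ in $D_8$, $S_3\times\BZ_2$) is then pinned down by comparing with the Green-function split element in the adjoint quotient. This explicit-element step is the genuine technical core not captured by the general restriction combinatorics you invoke.
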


\para{4.7.}
The computation of Green functions is a part of the computation of
generalized Green functions.  As explained in the introduction,
Green functions of exceptional groups were computed for any $p$.  
In computing Green functions, Beynon-Spaltenstein [BS]
gave a notion of split elements (a similar kind of notion already appears
in \cite{Sh2}, \cite{Sh-cl},
where those elements are called distinguished elements).  Here we discuss the relation
between their definition of split elements and ours.
\par
Let $(u,\r) \lra (C,\SE)$ be the pair belonging to the series $(T, \{1\}, 1) \in \SM_G$.
Let $Y_{u,u'}$ and $\CQ_{u,C'}$ be the varieties defined in (1.12.1).
We follow the setup in 1.13.  In particular, $F$ acts naturally on $Y_{u,u'}$ and on $\CQ_{u,C'}$.  
We consider the case where $Q = MU_Q $ is $B = TU$, and $u' \in T$ is 1.
Then $Y_{u,1} = \{ g \in G \mid g\iv ug \in B \}$, and
$\CQ_{u,\{1\}} = \{ gB \in G/B \mid g\iv ug \in B\}$ coincides
with the fixed point subvariety $\SB_u$ of the flag variety $\SB = G/B$.
Thus the set $X_{u,1}$ coincides with the set of irreducible components
(they have the same dimension) of $\SB_u$, and so $\ve_{u,1}$ can be identified
with $H^{2d_u}(\SB_u) = H^{2d_u}(\SB_u,\Ql)$,
with $d_u = \dim \SB_u$,  and $X_{u,1}$ corresponds to the natural basis
of $H^{2d_u}(\SB_u)$.  
By the Springer correspondence, $H^{2d_u}(\SB_u)$ is decomposed as
$W \times A_G(u)$-module as
\begin{equation*}
H^{2d_u}(\SB_u) \simeq \bigoplus_{\r \in A_G(u)\wg}V_{(u,\r)}\otimes \r.
\end{equation*}
This gives the $A_G(u)$-module structure of $\ve_{u,1}$, and the action of
$F$ on $X_{u,1}$ coincides with the multiplication by $q^{d_u}$ followed by
the permutation of the basis. 
\par
Assume that $(G,F)$ is split.  
Beynon-Spaltenstein defined a split element $u \in C^F$ by the condition that
$F$ acts trivially on $X_{u,1}$, and on $A_G(u)$. This condition determines
the $G^F$-class of split elements uniquely if it exists, in the case where
$G$ is adjoint type, but not necessarily so in the simply connected case.
(Note that $\SB_u$ depends only on its image to the adjoint group.)
Assume that $F$ acts trivially on $A_G(u)$.
Then $V_{(u,\r)}\otimes \r$
is an irreducible $W \times A_G(u)$-module, and $F$ acts trivially on
$W \times A_G(u)$, hence $F$ acts on
the isotypic component $V_{(u,\r)}\otimes \r$ in $H^{2d_u}(\SB_u)$
by a scalar multiplication $cq^{d_u}$ for some $c \in \Ql^*$.
By applying Lemma 1.15 to the case where $(u_0, \r_0) = (1,1)$ is the cuspidal pair
in $T$, we have the following. 
\par\medskip\noindent
(4.7.1) \ Assume that $F$ acts trivially on $A_G(u)$, and
that $(u,\r)$ appears in the Springer correspondence. Let $c$ be as above.
Then we have $\g(1,1,u,\r) = c$. 
In particular, if $u$ is split in the sense of Beynon-Spaltenstein,
then we have $\g(1,1,u,\r) = 1$ for any $(u,\r)$ belonging to $(T, \{1\},1)$.  
\par\medskip
In the case where $|\SM_G(C)| = 1$, generalized
Green functions associated to $(L, C_0, \SE_0)$ for $L \ne G$
coincide with Green functions.  Hence if $u$ is split in the
sense of \cite{BS}, it is split in our sense.  In the case where $p$ is good,
they proved the existence of split elements $u \in C^F$, unless
$G = E_8$ and $C = D_8(a_3)$. In the exceptional case, $|\SM_G(C)| =1$,
and $A_G(u) \simeq S_3$. Thus there exists $u \in C^F$, unique up to $G^F$-conjugate,
such that $F$ acts trivially on $A_G(u)$. For this $u$, they have computed
$\g(1,1,u,\r) = \pm 1$ explicitly, which is given as in
Theorem 4.6, (i), (b).  Thus $u$ is split in our sense.
\par
In the case where $p$ is bad, Green functions were computed by Geck \cite{G2} and
L\"ubeck \cite{Lu}. From those computations, there exists $u \in C^F$ such that
$\g(1,1,u,\r) = 1$ if $F$ is split on $G$,
and that $\g(1,1,u,\r) = \pm 1$ as in Theorem 4.6 (ii) for $G = E_6$ with non split $F$. 
Thus we have
\par\medskip\noindent
(4.7.2) \ Assume that $\SM_G(C) = \{ (T, \{1\}, 1)\}$.  Then there exists
a split element $u \in C^F$.  

\para{4.8.}
In view of Lemma 1.15, it is important to know the structure of $X_{u,u'}$ and 
the Frobenius action on it. (Note that if $F$ acts trivially on $X_{u,u'}$, then 
$F$ acts trivially on $\ve_{u,u'}(\r,\r')$ for any $\r,\r'$.  This condition is
more convenient than considering $\ve_{u,u'}(\r,\r')$ separately.) 
\par 
In general, it is not so easy to determine the structure of $X_{u,u'}$.
But as discussed in \cite{Sp2}, there are two cases,  (III), and (IV) in \cite{Sp2}, 
where $X_{u,u'}$ can be described explicitly. Actually, the analysis of those cases
played a  crucial role when Spaltenstein determined the generalized 
Springer correspondence for exceptional groups (\cite{Sp2}). 
In those cases, we can also 
determine the Frobenius action on $X_{u,u'}$.
\par
From now on, in the remainder of the paper, we follow the setup in 1.13.
In particular, $L \subset P$ are $F$-stable, and
$M \subset Q$ are also $F$-stable such that $L \subset M$ and $P \subset Q$.
Assume that $C$ (resp. $C'$) is a unipotent class in $G$ (resp. in $M$).
Recall that the class $C$ in $G$ is induced from 
the class $C'$ in $M$ if $C \cap u'U_Q$ is open dense 
in $u'U_Q$ for $u' \in C'$ (\cite{LS1}). In that case, we write it as $C = \Ind_M^G C'$. 
\par
The following lemma corresponds to the property (III) in \cite{Sp2}.

\begin{lem}  %%%%   Lemma 4.9
Let $C$ (resp. $C'$) be an $F$-stable unipotent class in $G$ (resp. in $M$).
Assume that $C$ is induced from $C'$.  Take $u \in C^F$ and $u' \in C'^F$ 
such that $u \in u'U_Q^F$.   
\begin{enumerate}
\item \ We have $Z_Q^0(u) = Z_G^0(u) \subset Z_M(u')U_Q$.
Let $N = Z_Q(u)/Z_G^0(u) \subset A_G(u)$, and 
$H = (Z_M^0(u')U_Q \cap Z_G(u))/Z_G^0(u) \subset N$.  Then 
$H$ is normal in $N$, and $N/H$ is naturally isomorphic to $A_M(u')$.
  Moreover, $X_{u,u'} \simeq A_G(u)/H$, and under this identification,
the action of $A_G(u) \times A_M(u')$ on $X_{u,u'}$ corresponds to 
the action of $A_G(u) \times N/H$ on $A_G(u)/H$;
$(a, nH) : xH \mapsto axn\iv H$ ($a,x \in A_G(u), n \in N$). 
\item \ 
Assume that $F$ acts trivially on $A_G(u)$. 
Then $F$ acts trivially on $X_{u,u'}$.
\end{enumerate}
\end{lem}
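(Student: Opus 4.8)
The plan is to prove (i) by reproving the content of property (III) of \cite{Sp2} from the structure theory of induced unipotent classes, and then to deduce (ii) immediately from the $F$-equivariance of everything in sight. I write $\rho:Q\to M$ for the projection with kernel $U_Q$.

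For (i) I would first do the centraliser bookkeeping. Since $C=\Ind_M^G C'$, the Lusztig--Spaltenstein dimension formula (\cite{LS1}) gives $\dim C=\dim C'+2\dim U_Q$, so $\dim Z_G(u)=\dim Z_M(u')$, and $C\cap u'U_Q$ is open dense --- hence irreducible of dimension $\dim U_Q$ --- in $u'U_Q$. Conjugation by an element of $Z_Q(u)$ descends along $\rho$ to a conjugation fixing the image $u'$ of $u$, so $\rho(Z_Q(u))\subseteq Z_M(u')$ with kernel $Z_{U_Q}(u)$; in particular $Z_M(u')U_Q\cap Z_G(u)=Z_Q(u)$. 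Comparing the dimension $\dim Z_M(u')+\dim U_Q-\dim Z_Q(u)$ of the $Z_M(u')U_Q$-orbit of $u$ in $u'U_Q$ with $\dim U_Q$ forces $\dim Z_Q(u)\ge\dim Z_M(u')=\dim Z_G(u)$, whence $Z_Q^0(u)=Z_G^0(u)$; thus $Z_G^0(u)\subseteq Q$ and, by connectedness, $Z_G^0(u)\subseteq(Z_M(u')U_Q)^0=Z_M^0(u')U_Q$. This is the first assertion, and it makes $N$ and $H$ well defined. Next, $H\trianglelefteq N$ because $Z_Q(u)$ normalises both $Z_M^0(u')U_Q$ (its $\rho$-image lies in $Z_M(u')$, which normalises $Z_M^0(u')$, and it normalises $U_Q\trianglelefteq Q$) and $Z_G(u)$; and the composite $Z_Q(u)\xrightarrow{\rho}Z_M(u')\to A_M(u')$ has kernel $Z_M^0(u')U_Q\cap Z_G(u)$ and is surjective by the structure of centralisers in induced classes (\cite{LS1}, equivalently because $C\cap u'U_Q$ is a single $Z_M(u')U_Q$-orbit), giving $N/H\isom A_M(u')$.

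Then I would analyse $Y_{u,u'}$. The map $\phi:g\mapsto g^{-1}ug$ exhibits $Y_{u,u'}$ as a left $Z_G(u)$-torsor over $C\cap u'U_Q$, so $Y_{u,u'}$ is pure of dimension $d_{u,u'}=\dim Z_G(u)+\dim U_Q$. Set $Z:=Z_G(u)\,Z_M(u')U_Q\subseteq Y_{u,u'}$, the $Z_G(u)\times Z_M(u')U_Q$-orbit of $1$. It is the disjoint union $\bigsqcup_{\bar x\in A_G(u)/N}x\,Z_M(u')U_Q$ of cosets of the closed subgroup $Z_M(u')U_Q=\rho^{-1}(Z_M(u'))$, hence closed, and $\phi(Z)$ is precisely the (open dense) $Z_M(u')U_Q$-orbit of $u$ in $C\cap u'U_Q$; therefore $Y_{u,u'}\setminus Z=\phi^{-1}\bigl((C\cap u'U_Q)\setminus\phi(Z)\bigr)$ has dimension $<d_{u,u'}$, so $X_{u,u'}=\pi_0(Z)$. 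Counting, $|\pi_0(Z)|=|A_G(u)/N|\cdot|A_M(u')|=|A_G(u)/H|$, and the map $Z\to A_G(u)/N$, $g\mapsto gQ$, whose fibres are the cosets $x\,Z_M(u')U_Q$ (on whose component sets $N/H=A_M(u')$ acts simply transitively), together with the previous paragraph identifies $X_{u,u'}$ with $A_G(u)/H$ so that $A_G(u)=Z_G(u)/Z_G^0(u)$ acts by left translation and $A_M(u')=N/H$ through $(a,nH):xH\mapsto axn^{-1}H$. This gives (i). For (ii): since $u\in C^F$, $u'\in C'^F$ and $M\subseteq Q$ are $F$-stable, all of $Z_G(u),Z_G^0(u),Z_Q(u),Z_M(u'),Z_M^0(u'),U_Q$ are $F$-stable, so $N$ and $H$ are $F$-stable subgroups of $A_G(u)$ and the identification $X_{u,u'}\isom A_G(u)/H$ is $F$-equivariant for the action of $F$ on $A_G(u)/H$ induced from $A_G(u)$; triviality of $F$ on $A_G(u)$ then forces triviality on the quotient $A_G(u)/H=X_{u,u'}$.

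I expect the main obstacle to be the geometric bookkeeping of the third paragraph --- verifying that $Z$ exhausts the top-dimensional components of $Y_{u,u'}$ and that the two commuting actions are exactly the left and right translation actions on $A_G(u)/H$ --- which is precisely where the analysis of \cite{Sp2} (and \cite{LS1}) on induced classes carries the load; granting (i), part (ii) is purely formal.
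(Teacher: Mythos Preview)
Your proposal is correct and follows essentially the same approach as the paper: part (i) is precisely the content of property (III) in \cite[1.3]{Sp2} (built on \cite{LS1}), which the paper simply cites while you unpack the argument, and your proof of (ii) via $F$-equivariance of the identification $X_{u,u'}\simeq A_G(u)/H$ is exactly what the paper does.
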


\begin{proof}
The proof is based on the discussion in \cite[1.3 (III)]{Sp2}.  
First we show (i).  
From the discussion in \cite{LS1}, the first formula follows, and  
$Z_Q(u)$ meets all the irreducible components of $Z_M(u')U_Q$. 
Then $H$ is normal in $N$, and $N/H$ is naturally isomorphic to $A_M(u')$. 
The latter statement of (i) was proved in \cite{Sp2} by using
\cite[Thm. 1.3 (d)]{LS1}. 
\par
If $F$ acts trivially on $A_G(u)$, then $F$ acts trivially on $H$. 
Then (ii) follows from (i) since the isomoprhism $X_{u,u'} \simeq A_G(u)/H$ is
compatible with $F$-actions.   
\end{proof}

\para{4.10.}
Next we consider the situation where (IV) in \cite{Sp2} can be applied.
Let $M \subset G$ be as in 4.8.  
Let $C$ (resp. $C'$) be an $F$-stable unipotent class in $G$ (resp. in $M$).  
Take $u \in C^F$ and $u' \in C'^F$.  Here we consider the special situation 
that $u = u' \in M^F$. 
The following lemma is an $F$-version of (IV) in \cite[1.4]{Sp2}.

\begin{lem}   %%%%   Lemma 4.11
Assume that $u = u' \in C'^F$, and that   
$F$ acts trivially on $A_G(u)$.  
\begin{enumerate}
\item \  
$A_M(u)$ is naturally regarded as a subgroup of $A_G(u)$, hence 
$F$ acts trivially on $A_M(u)$. 
\item \ $X_{u,u'}$ is isomorphic to $A_G(u)$, where $A_G(u) \times A_M(u')$
acts on $A_G(u)$ by $(a,b) : x \mapsto axb\iv$, ($a,x \in A_G(u), b \in A_M(u')$).
In particular, $F$ acts trivially on $X_{u,u'}$. 
\end{enumerate}
\end{lem}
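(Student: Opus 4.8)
The plan is to deduce the lemma from the geometric description of case (IV) in \cite[1.4]{Sp2}, adding to it the two $F$-equivariance assertions, which are essentially formal once the setup is in place.

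For part (i), I would prove that the natural map $A_M(u)\to A_G(u)$ is always injective. Write $M = Z_G(S)$ for a torus $S$; since $u\in M$, $S$ centralises $u$, so $S\subseteq Z_G^0(u)$ and $S$ acts on $Z_G^0(u)$ by conjugation. Choosing an $S$-stable Levi decomposition $Z_G^0(u) = R\ltimes U$ with $S\subseteq R$, one has $Z_G^0(u)\cap M = Z_{Z_G^0(u)}(S) = Z_R(S)\ltimes Z_U(S)$, which is connected because $Z_R(S)$ is a torus centraliser in a connected reductive group and $Z_U(S)$ a torus centraliser in a connected unipotent group. Since $Z_M^0(u)\subseteq Z_G^0(u)\cap M\subseteq Z_M(u)$ with the middle term connected, it must equal $Z_M^0(u)$, i.e.\ $Z_M^0(u) = Z_G^0(u)\cap Z_M(u)$; hence $A_M(u) = Z_M(u)/\bigl(Z_M(u)\cap Z_G^0(u)\bigr)$ embeds into $Z_G(u)/Z_G^0(u) = A_G(u)$. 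All groups in sight are $F$-stable ($M$, $P$, $Q$ are, and $u\in G^F$), so this embedding is $F$-equivariant, and triviality of $F$ on $A_G(u)$ forces triviality of $F$ on $A_M(u)$.

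For part (ii), I would first recall the picture behind case (IV) of \cite[1.4]{Sp2}. The variety $Y_{u,u} = \{g\in G\mid g\iv ug\in uU_Q\}$ contains $1$ and the subset $Z_G(u)U_Q$ (for $z\in Z_G(u)$, $v\in U_Q$ one has $(zv)\iv u(zv) = v\iv uv = u\cdot(u\iv v\iv u)v\in uU_Q$ since $u\in M$ normalises $U_Q$); the irreducible components of $Z_G(u)U_Q$ are the translates $\da Z_G^0(u)U_Q$ indexed by $a\in A_G(u) = \pi_0(Z_G(u))$, and by the dimension count of \cite[1.4]{Sp2} (which uses that $\Ad(u)$ stabilises the triangular decomposition $\Fg = \Fu_Q^-\oplus\Fm\oplus\Fu_Q$ and that the Killing form pairs $\Fu_Q$ with $\Fu_Q^-$ compatibly with $\Ad(u)$, giving $\dim Z_G(u) = \dim Z_M(u)+2\dim Z_{U_Q}(u)$) each of these has the maximal dimension $d_{u,u}$. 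Loc.\ cit.\ then identifies these components with all of $X_{u,u'}$ and the $A_G(u)\times A_M(u)$-action with the one in the statement, the right factor acting through the embedding from (i). The point to add is that $F$ acts trivially on $X_{u,u'}$: since $u\in G^F$ and $U_Q$, $M$, $L$ are $F$-stable, the subgroups $Z_G(u)$, $Z_G^0(u)$, $U_Q$ are $F$-stable, so $F(\da Z_G^0(u)U_Q) = F(\da)Z_G^0(u)U_Q$, where $F(\da)$ represents the image of $a$ under the natural action of $F$ on $A_G(u) = \pi_0(Z_G(u))$. That action being trivial by hypothesis, $F(\da)\in\da Z_G^0(u)$, so every component of $Z_G(u)U_Q$ is $F$-stable and $F$ fixes every element of $X_{u,u'}$.

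The hard part — the one input I would borrow from \cite[1.4]{Sp2} instead of reproving — is the purely geometric assertion that $Y_{u,u}$ has no top-dimensional irreducible component outside $\ol{Z_G(u)U_Q}$ (equivalently, that $\CQ_{u,C'}$ has no top-dimensional component off the $Z_G(u)$-orbit of $Q$), which is the substantive content of case (IV) and rests on the analysis of induced classes in \cite{LS1}. Once that is granted, both $F$-equivariance statements are routine, as sketched.
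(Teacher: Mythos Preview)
Your proof is correct and follows essentially the same route as the paper: the embedding $A_M(u)\hookrightarrow A_G(u)$ via connectedness of the torus centralizer $Z_{Z_G^0(u)}(S)$, and the decomposition of $Y_{u,u'}$ into the pieces $\da Z_G^0(u)U_Q$ indexed by $A_G(u)$, with the $F$-triviality on $X_{u,u'}$ read off from $F$-triviality on $A_G(u)$. One small simplification: \cite[Lemma 1.4 (i)]{Sp2} actually gives the equality $Y_{u,u'} = Z_G(u)U_Q$ outright (not merely up to lower-dimensional pieces), and its proof is self-contained there rather than resting on \cite{LS1}, so your final caveat is unnecessary.
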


\begin{proof}
As discussed in \cite[1.4]{Sp2}, $A_M(u)$ can be naturally embedded in $A_G(u)$; 
let $S$ be the connected center of $M$.
Then $M = Z_G(S)$, and so $Z_M(u) = Z_G(S, u) = Z_{Z_G(u)}(S)$. 
On the other hand, $S \subset Z_G^0(u)$ and since the centralizer
of a torus in the connected group is connected, $Z_{Z_G^0(u)}(S)$ is connected.
Hence we have $Z_M^0(u) = Z_{Z_G^0(u)}(S)$.
Thus 
\begin{equation*}
A_M(u) = Z_{Z_G(u)}(S)/ Z_{Z_G^0(u)}(S) = 
(Z_G(u) \cap M)/(Z_G^0(u) \cap M ) \hra Z_G(u)/Z_G^0(u).
\end{equation*} 
Hence $A_M(u) \hra A_G(u)$, and (i) follows.  
It is known by \cite[Lemma 1.4 (i)]{Sp2} that $Y_{u,u'} = Z_G(u)U_Q$. Thus 
\begin{equation*}
Y_{u,u'} = \bigcup_{\a \in A_G(u)}\a Z_G^0(u)U_Q, 
\end{equation*}
and irreducible components of $Y_{u,u'}$ are all $F$-stable. Hence $F$ acts 
trivially on $X_{u,u'}$.
Moreover, it was proved in \cite[1.4]{Sp2}, that
$X_{u,u'}$ and $A_G(u)$ are isomorphic, compatible with $F$-actions,
as stated in (ii).
\end{proof}

By Lemma 4.11, we obtain the following.
%%%
%%%
\begin{prop}  %%%%   Prop. 4.12 
Assume that $L \subset M \ne G$, 
and choose $u = u'$ a split element in $C'^F$. 
Assume that $F$ acts trivially on $A_G(u)$. 
Let $(u,\r)$ be a pair belonging to $(L,u_0, \r_0) \in \SM^F_G$.
Then there exists an extension $\wt\r$ of $\r$ such that
$\g(u_0, \wt\r_0, u,\wt\r)$ is explicitly computable. 
In particular if $F$ is split on $G$, we have 
$\g(u_0, \wt\r_0, u, \wt\r) = 1$.
\end{prop}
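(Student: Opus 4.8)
The plan is to deduce this from Lemma 4.11 together with the restriction formula of Lemma 1.15, arguing by induction on the semisimple rank of $G$. Since $M \ne G$ is a proper Levi subgroup, it has strictly smaller semisimple rank, so we may assume that Conjecture 1.10 holds for $M$. Hence, as $u = u'$ is chosen to be a split element of $C'^F$ in $M$, for each $\r' \in A_M(u')\wg\ex$ the split extension $\wt\r'$ of $\r'$ is determined, and $\g(u_0, \wt\r_0, u', \wt\r')$ equals the root of unity $\nu'$ prescribed by (a)--(g) of 1.7, with $\nu' = 1$ whenever $F$ is split on $M$.

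First I would invoke Lemma 4.11. Since $F$ acts trivially on $A_G(u)$, part (i) embeds $A_M(u)$ in $A_G(u)$ and shows $F$ acts trivially on $A_M(u)$ as well, so in particular $A_M(u')\wg\ex = A_M(u')\wg$; and part (ii) identifies $X_{u,u'}$ with $A_G(u)$, compatibly with $F$, whence $F$ acts trivially on $X_{u,u'}$, on the permutation module $\ve_{u,u'} \simeq \Ql[A_G(u)]$, and on every isotypic component $\ve_{u,u'}(\r,\r')$. From this description every $\r \in A_G(u)\wg$ occurs as a left factor in $\ve_{u,u'}$, so I may pick $\r' \in A_M(u')\wg$ (any constituent of $\r|_{A_M(u')}$) with $\r\otimes\r'^*$ occurring in $\ve_{u,u'}$; by the restriction theorem (1.12.3) the pair $(u',\r')$ then belongs to the series $(L,C_0,\SE_0) \in \SM_M^F$ and $V_{(u',\r')} \in (\SW_L')\wg\ex$ is a constituent of $V_{(u,\r)}|_{\SW_L'}$.

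Then I would apply Lemma 1.15, which is now legitimate because $F$ acts trivially on $\ve_{u,u'}(\r,\r')$ and $\nu' = \g(u_0, \wt\r_0, u', \wt\r')$ is known by induction: it produces the extension $\wt\r$ of $\r$ satisfying (1.14.1), verifies (1.14.2), and yields $\g(u_0, \wt\r_0, u, \wt\r) = \d\iv \nu'$, where $\d$ is the $c$-th root of unity of (1.14.2) obtained by comparing the preferred extension $\wt V_{(u,\r)}$ with $\wt V_{(u',\r')}$ on the $V_{(u',\r')}$-isotypic component of $V_{(u,\r)}|_{\SW_L'}$. Since the preferred extensions (1.6) and the branching $\SW_L' \hookrightarrow \SW_L$ are explicitly known, both $\d$ and $\nu'$, hence $\g(u_0, \wt\r_0, u, \wt\r)$, are explicitly computable. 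Finally, when $F$ is split on $G$ the Coxeter automorphism $\s$ of $\SW_L$ is trivial, so $c = 1$ and by case (a) of 1.6 every preferred extension is the module itself, so $\d = 1$; and $F$ is then split on the $F$-stable Levi $M$ as well, so $\nu' = 1$ by (1.7.2). Therefore $\g(u_0, \wt\r_0, u, \wt\r) = 1$.

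I expect the only real subtlety to be the two scalar inputs — the root of unity $\nu'$ supplied by the inductive hypothesis for $M$ and the root of unity $\d$ read off from the preferred extensions — which genuinely matter in the non-split and exceptional cases; in the split situation both collapse to $1$, so the displayed equality is immediate.
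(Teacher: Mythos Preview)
Your proof is correct and follows essentially the same route as the paper: use Lemma 4.11 to see that $\ve_{u,u'}$ is the regular $A_G(u)$-module with trivial $F$-action, choose $\r'$ so that $\r\otimes\r'^*$ occurs, invoke the inductive hypothesis on $M$ to know $\g(u_0,\wt\r_0,u',\wt\r')=\nu'$, and then apply Lemma 1.15 to get $\g(u_0,\wt\r_0,u,\wt\r)=\d^{-1}\nu'$, with $\d=\nu'=1$ in the split case. Your write-up is in fact a bit more explicit than the paper's about why every $\r$ occurs and why $\d=1$ when $F$ is split.
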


\begin{proof}
  Since $\ve_{u,u'}$ is isomorphic to the regular representation of $A_G(u)$ by
Lemma 4.11 (ii), 
there exists $(u', \r') \in \SN^F_M$ such that 
$\r\otimes \r'^*$ appears in $\ve_{u,u'}$.
By assumption, $F$ acts trivially on $A_G(u)$, hence $F$ acts trivially on $A_M(u')$.
Since $u'$ is split in $C'^F$, we have 
$\g(u_0, \wt\r_0, u', \wt\r') = \nu'$ for a certain extension $\wt\r'$ of $\r'$,
where $\nu'$ is explicitly computable. 
Since $F$ acts trivially on $X_{u,u'}$ by Lemma 4.11, 
there exists an extension $\wt\r$, determined uniquely from $\wt\r'$ as in 1.14, 
and we have $\g(u_0, \wt\r_0, u, \wt\r) = \d\iv\nu'$ by Lemma 1.15.  
In the case where $F$ is split on $G$, then $F$ is split on $M$.
Since $\g(u_0, \wt\r_0, u', \wt\r') = 1$ and $\d = 1$, we obtain
$\g(u_0, \wt\r_0, u, \wt\r) = 1$. 
\end{proof}

Let $\SM_G(C)$ be as in 4.5.
As a corollary to Proposition 4.12, we have 

\begin{lem} %%%%  Lemma 4.13
Assume that $(G,F)$ is split. Assume that $\SM_G(C)$ is contained in
$\{ (T, \{ 1\}, 1), (L, C_0,\SE_0)\}$. Let $M$ be an $F$-stable Levi subgroup 
such that $T \subset L \subset M \subsetneq G$. 
Assume that for some $u_1 \in C^F$, $A_G(u_1)$ is abelian, and $F$ acts trivially
on $A_G(u_1)$.  
Let $u = u'$ be a split element in $M^F$.   
Then $u$ is a split element in $G^F$. 
\end{lem}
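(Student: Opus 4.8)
The plan is to check directly that $u=u'$ satisfies the defining condition (1.7.2) for a split element of $G$, feeding everything into Proposition~4.12, which I will invoke twice --- once for each of the (at most) two series occurring in $\SM_G(C)$.

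The first step is to record that $F$ acts trivially on $A_G(u)$ for the given $u$. By hypothesis $A_G(u_1)$ is abelian and $F$ acts trivially on it for some $u_1\in C^F$, so by (4.1.1) the same holds for every representative of $C^F$, in particular for $u$; hence also $A_G(u)\wg\ex=A_G(u)\wg$. Because $A_G(u)$ is abelian we are not in the exceptional configuration ($G$ of type $E_8$, $C$ of type $D_8(a_3)$, $q\equiv -1\pmod 3$) of 1.7(a), so (1.7.2) is indeed the condition to verify; moreover the second, cuspidal, clause of (1.7.2) is automatic once $\Xi_G$ is chosen compatibly, as noted there. Thus it suffices to produce, for each $\r\in A_G(u)\wg$ with $(u,\r)$ not cuspidal, an extension $\wt\r$ with $\g(u_0,\wt\r_0,u,\wt\r)=1$, where $(u_0,\r_0)\lra(C_0,\SE_0)$ is the cuspidal datum of the series carrying $(u,\r)$.

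Now I would split into cases according to that series. Since $\SM_G(C)$ is contained in $\{(T,\{1\},1),(L,C_0,\SE_0)\}$ and the $G$-cuspidal series is by definition excluded from $\SM_G(C)$, every such $(u,\r)$ belongs either to $(T,\{1\},1)$ or to $(L,C_0,\SE_0)$, with $T\subset L\subset M\subsetneq G$ in the latter case. If $(u,\r)$ belongs to $(T,\{1\},1)$, I would apply Proposition~4.12 with the maximal torus $T$ in the role of $L$ (so $(u_0,\r_0)=(1,1)$), using that $T\subset M\ne G$, that $u=u'$ is split in $M^F$, and that $F$ acts trivially on $A_G(u)$; since $(G,F)$ is split this yields $\g(1,1,u,\wt\r)=1$. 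If instead $(u,\r)$ belongs to $(L,C_0,\SE_0)$, I would apply Proposition~4.12 as stated to $L\subset M\subsetneq G$: the triple $(L,C_0,\SE_0)$ also lies in $\SM_M^F$ (cuspidality of $\SE_0$ on $C_0$ is intrinsic to $L$), the split element $u_0\in C_0^F$ and its split extension $\wt\r_0$ are the ones already fixed by the inductive hypothesis for $L\subsetneq G$, $u=u'$ is split in $M^F$, and $F$ is trivial on $A_G(u)$; as $(G,F)$ is split the conclusion is again $\g(u_0,\wt\r_0,u,\wt\r)=1$. Combining the two cases shows that $u$ satisfies (1.7.2), i.e.\ $u$ is split in $G^F$.

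None of this is deep; the points that need to be stated carefully are: that in both instantiations the hypotheses of Proposition~4.12 genuinely hold --- in particular that $T$ (resp.\ $L$) is a Levi subgroup of $M$ and that $u'$ lies in the $G$-class $C$; that the occurrences of $\wt\r_0$ for the series viewed inside $M$ and inside $G$ agree, which is forced by the inductive choice of split extension for $L\subsetneq G$; and, above all, the bookkeeping in the previous paragraph showing that a pair $(u,\r)$ not covered by the two listed series must be $G$-cuspidal, so that it contributes only the automatically-satisfied clause. I expect this last item --- the precise use of the hypothesis $\SM_G(C)\subseteq\{(T,\{1\},1),(L,C_0,\SE_0)\}$ --- to be the one place where a little care is required.
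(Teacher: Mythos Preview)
Your proposal is correct and follows essentially the same approach as the paper: use (4.1.1) to conclude that $F$ acts trivially on $A_G(u)$, then apply Proposition~4.12 once for each of the two series $(T,\{1\},1)$ and $(L,C_0,\SE_0)$ to obtain $\g=1$ in the split case. The paper's proof is terser but substantively identical; your additional remarks about the $E_8$ exceptional case being excluded by abelianness of $A_G(u)$ and the cuspidal clause being automatic are correct bookkeeping that the paper leaves implicit.
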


\begin{proof}
Since $A_G(u)$ is abelian, by our assumption on $u_1$,
$F$ acts trivially on $A_G(u)$ (see (4.1.1)).  Thus Proposition 4.12
can be applied, and we see that $\g(u_0, \wt\r_0, u, \r) = 1$ for any
$(u,\r)$ belonging to $(L, C_0, \SE_0)$. But this argument simultaneously
works also for $(u,\r)$ belonging to $(T, \{1\},1)$.  Hence
$\g(1,1,u, \r) = 1$ for $(u,\r)$ as above, and $u$ is a split element in $C^F$. 
\end{proof}  

\remarks{4.14.}
(i) \ 
For almost all cases 
$\SM_G(C) \subset \{ (T,\{ 1\}, 1), (L, C_0, \SE_0)\}$  (here $(L, C_0)$ is fixed, but
$\SE_0$ on $C_0$ is not unique in general). 
The exception is the case where $G = E_6$ and $C = E_6$, with  $p = 2$,   
or the case where $G = E_8$ and 
$C = E_7, E_8$,  with $p = 2$. In those cases, two types of Levi subgroups $L$ and $L'$ containing $T$
appear in $\SM_G(C)$. 
\par
(ii) \ Assume that $\SM_G(C) = \{ (T, \{1\},1), (L, C_0, \SE_0), (L', C_0', \SE_0') \}$
(here $\SE_0, \SE'_0$ are not unique in general).
If $T \subset L \subset L' \ne G$, then the discussion of Lemma 4.13 can be applied, and
one can find a split element in $C^F$.
This is applied for the case $G = E_8$.
But for the case $G = E_6$ this does not hold, and we need an additional argument
to show  the existence of split elements. 
\par\medskip

\para{4.15.}
In the case where there does not exist $M \ne G$ such that
$C \cap M \ne \emptyset$, Lemma 4.13 can not be
applied, and we need a more precise computation.  The following lemma
is useful for such a situation.
Let $Q = MU_Q$ be as in 1.12 with $M \ne G$, and $C$ (resp. $C'$) be an $F$-stable unipotent
class in $G$ (resp. in $M$).  Take $u \in C^F$, and $u' \in C'^F$, and assume that
$(u,\r)$ belongs to $(L, C_0, \SE_0)$. Recall that $\SW_L = N_G(L)/L$ and
$\SW_L' = N_M(L)/L$. We consider $V_{(u,\r)} \in \SW_L\wg$, and
$V_{(u',\r')} \in (\SW_L')\wg$ for $\r' \in A_M(u')\wg$. 

\begin{lem} %%%%  Lemma 4.16. 
Under the notation in 4.15, 
assume that the following conditions are satisfied.

\begin{enumerate}
\item  \ $V_{(u',\r')}$ appears in the restriction of $V_{(u,\r)}$ on $\SW_L'$.
\item \ $1\otimes 1$ appears in $\ve_{u,u'}$ with multiplicity 1.
\item  \ $\dim Z_G(u) = \dim Z_M(u')$.
\end{enumerate}
Assume that $u' \in C'^F$ is a split element such that $F$ acts trivially on
$A_M(u')$. Take $u \in G^F$ such that $Y_{u,u'}^F \ne \emptyset$.
If $F$ acts trivially on $A_G(u)$, then 
$\g(u_0, \wt\r_0, u, \wt\r) = \d\iv \g(u_0, \wt\r_0, u', \wt\r')$,
where $\d$ is as in Lemma 1.15. 
\end{lem}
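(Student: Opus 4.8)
The plan is to reduce the whole statement to Lemma 1.15. Beyond the hypotheses already present in Lemma 4.16, Lemma 1.15 asks only that $F$ act trivially on the $\r\otimes\r'^*$-isotypic component $\ve_{u,u'}(\r,\r')$ of the $A(u,u')$-module $\ve_{u,u'}$; once that is in hand, one sets $\nu'=\g(u_0,\wt\r_0,u',\wt\r')$ (a well-defined root of unity, since $u'$ is split and $F$ acts trivially on $A_M(u')$), takes $\wt\r$ to be the extension of $\r$ singled out by (1.14.1), and reads off $\g(u_0,\wt\r_0,u,\wt\r)=\d\iv\nu'$ directly. So the task splits into establishing the $F$-action on $\ve_{u,u'}$ and, in order that $\wt\r$ be unambiguously determined, checking that $\ve_{u,u'}(\r,\r')\neq 0$.

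First I would normalise the geometry so as to land in the situation of Lemma 4.9. Choosing $y\in Y_{u,u'}^F\subset G^F$ (non-empty by hypothesis) and replacing $u$ by the $G^F$-conjugate $y\iv uy$, we may assume $u\in(u'U_Q)^F$; in particular $C\cap u'U_Q\neq\emptyset$. Using $\dim G=\dim M+2\dim U_Q$ together with $\dim C=\dim G-\dim Z_G(u)$ and $\dim C'=\dim M-\dim Z_M(u')$, condition (iii) forces $\dim C=\dim C'+2\dim U_Q$. Since $C\subset\ol{\Ind_M^G C'}$ and $\dim\Ind_M^G C'=\dim C'+2\dim U_Q=\dim C$, while the closure of a conjugacy class meets a class of its own dimension only in that class, this identifies $C=\Ind_M^G C'$. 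Now Lemma 4.9 applies; because $F$ acts trivially on $A_G(u)$, part (ii) gives that $F$ acts trivially on $X_{u,u'}$, hence on $\ve_{u,u'}$, and in particular on $\ve_{u,u'}(\r,\r')$. (Consistently, condition (ii) is then automatic: by the description in Lemma 4.9 (i), $\ve_{u,u'}\simeq\Ql[A_G(u)/H]$ contains $1\otimes1$ with multiplicity one.)

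For the non-vanishing I would use condition (i) through the restriction theorem. Since $F$ acts trivially on $A_G(u)$ and on $A_M(u')$ one has $\r\in A_G(u)\wg\ex$ and $\r'\in A_M(u')\wg\ex$, and by the setup in 4.15 the pairs $(u,\r)$ and $(u',\r')$ belong to the same series $(L,C_0,\SE_0)$; so (1.12.3) gives $\langle\ve_{u,u'},\r\otimes\r'^*\rangle_{A_G(u)\times A_M(u')}=\langle V_{(u,\r)}|_{\SW_L'},V_{(u',\r')}\rangle_{\SW_L'}$, which is strictly positive by (i). Hence $\ve_{u,u'}(\r,\r')\neq 0$, it carries the trivial $F$-action, and Lemma 1.15 now yields $\g(u_0,\wt\r_0,u,\wt\r)=\d\iv\g(u_0,\wt\r_0,u',\wt\r')$, as desired.

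The one non-formal step is the passage from condition (iii) to the identification $C=\Ind_M^G C'$, since it is precisely this that brings Lemma 4.9 — and with it the rigidity of the Frobenius action on the component variety $X_{u,u'}$ — into play; everything else is routine manipulation within the restriction formalism of 1.12–1.15.
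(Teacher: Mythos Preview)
Your proof is correct, but it takes a genuinely different route from the paper's own argument. The paper does \emph{not} identify $C$ with $\Ind_M^G C'$. Instead it argues directly: from the bound $\dim\CQ_{u,C'}\le(\dim Z_G(u)-\dim Z_M(u'))/2$ (which follows from (1.12.2) and the definition of $d_{u,u'}$), condition (iii) forces $\dim\CQ_{u,C'}=0$; then any $g\in Y_{u,u'}^F$ produces an $F$-stable irreducible component $gZ_M^0(u')U_Q\in X_{u,u'}$; condition (ii) says $A_G(u)\times A_M(u')$ acts transitively on $X_{u,u'}$, and since $F$ is trivial on both groups, the single $F$-fixed point forces $F$ to be trivial on all of $X_{u,u'}$. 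Lemma 1.15 then finishes as in your argument.

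Your detour through $C=\Ind_M^G C'$ and Lemma 4.9 is a legitimate alternative, and it has the pleasant side-effect of showing that hypothesis (ii) is actually redundant: once (iii) and $Y_{u,u'}\ne\emptyset$ hold, $C$ is induced from $C'$, and then Lemma 4.9(i) gives $\ve_{u,u'}\simeq\Ql[A_G(u)/H]$, in which $1\otimes 1$ always occurs with multiplicity one. The trade-off is that you are invoking the full structural input from \cite{LS1} (the dimension formula $\dim Z_G(u)=\dim Z_M(u')$ for induced classes, and the description $X_{u,u'}\simeq A_G(u)/H$), whereas the paper's argument stays at the level of the elementary inequality $\dim\CQ_{u,C'}\le(\dim Z_G(u)-\dim Z_M(u'))/2$ and uses (ii) as a genuine hypothesis rather than deriving it.
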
  

\begin{proof}
Since $V_{(u',\r')}$ appears in the restriction of $V_{(u,\r)}$,
by the restriction formula (1.12.3), $\r\otimes\r'^*$ appears in $\ve_{u,u'}$
with non-zero multiplicity. Since $1\otimes 1$ appears in $\ve_{u,u'}$ with multiplicity 1,
$A_G(u) \times A_M(u')$ acts transitively on $X_{u,u'}$.
By (1.12.2), $\dim \CQ_{u,C'} \le (\dim Z_G(u) - \dim Z_M(u')/2$.  Thus
the condition $\dim Z_G(u) = \dim Z_M(u')$ implies that $\dim \CQ_{u,C'} = 0$. 
Since $Y_{u,u'}/Z_M(u')U_Q \simeq \CQ_{u,C'}$, $Z_M^0(u')U_Q$ gives an irreducible
component of $Y_{u,u'}$ of dimension $d_{u,u'}$.
Since $Y_{u,u'}^F \ne \emptyset$, there exists $g \in G^F$ such that
$g\iv ug \in u'U_Q$. Then  $gZ_M^0(u')U_Q  \in  X_{u,u'}$, and 
\begin{equation*}
\tag{4.16.1}  
Y_{u,u'} = \bigcup_{a \in A_G(u), b \in A_M(u')}agbZ_M^0(u')U_Q
\end{equation*}
gives a decomposition of $Y_{u,u'}$ into irreducible components, all of which
have the same dimension $d_{u,u'}$.
Here $gZ_M^0(u')U_Q \in X_{u,u'}$ is $F$-stable. 
Since $F$ acts trivially on $A_G(u)$ and on $A_M(u')$, $F$ acts trivially on $X_{u,u'}$
by (4.16.1). Then Lemma 1.15 can be applied, and the lemma follows. 
\end{proof}  

\para{4.17.}
In later sections, we discuss each case of exceptional groups separately. 
First we treat the case where $G$ is not $E_6$, and after that
discuss the case $G = E_6$.  This is because 
in the case of $E_6$, the Frobenius actions on $A_G(u)$ are more complicated 
than in other cases (see 4.1). 

\par\bigskip
\section {The case $E_7$ } 

\para{5.1.} 
In this section, we assume that $G$ is simply connected of type $E_7$. 
In this case, $A_G(u) \simeq S_3 \times \BZ_{(2,p-1)}$ if 
$C$ is $D_6(a_2) + A_1$, and $A_G(u) \simeq S_3$ if $C = D_4(a_1)$. 
In all other cases, $A_G(u)$ is abelian. 
\par
Let $(3A_1)''$ be the Dynkin diagram of the following type
(corresponding to white nodes),

\SelectTips{cm}{12}
\objectmargin= {1pt}

\begin{equation*}
\tag{5.1.1}
\begin{aligned}
&\xygraph{
\bullet([]!{+(0,-.3)}) - [r]
\bullet([]!{+(0,-.3)}) - [r] 
\bullet([]!{+(0,-.3)}) (
 - []!{+(0,.6)} \circ([]!{+(.3, 0)}), 
 - [r] \circ([]!{+(0,-.3)})
 - [r] \bullet([]!{+(0,-.3)}) 
-  [r] \circ([]!{+(0,-.3)})
)} \\
\end{aligned}
\end{equation*}

%\begin{equation*}
%\tag{5.1.1}
%\begin{matrix}
%   &    &    &   &  \circ &  &  &  &  &    \\
%   &    &    &   &  | &    &    &   &   &     \\
% \bullet &  - &  \bullet & - &  \bullet & -  &  \circ & - &  \bullet \ -\ \circ   \\   
%\end{matrix}
%\end{equation*}
The set $\SM_G$ consists of 
$(T,\{1\}, \Ql)$ and $(L, C_0, \SE_0)$, together with cuspidal ones, where
$L$ is a Levi subgroup of type $(3A_1)''$ if $p \ne 2$, and of type $D_4$ if $p = 2$.

\para{5.2.}
For a given class $C$, we shall find a split element $u \in C^F$. 
First consider the cases where Lemma 4.13 can be applied.
Consider the unipotent class $C$ in the list 
\begin{align*}
\tag{5.2.1}
E_7, \quad E_7(a_1), \quad E_7(a_2),  \quad D_6 + A_1, \quad D_6(a_1) + A_1, 
\quad D_6(a_2) + A_1, \quad D_4(a_1).
\end{align*}
In those cases, Lemma 4.13 cannot be applied; 
in the former four cases, there does not exist a proper Levi subgroup
$M$ such that $C \cap M \ne \emptyset$, and 
if $C = D_6(a_2) + A_1$ or $D_4(a_1)$, then $A_G(u)$ is not abelian.

\par
Assume that $C$ is in the remaining cases.
First assume that $p \ne 2$. In this case, $L$ is of type $(3A_1)''$.
If $|\SM_G(C)| = 2$, then $C$ is in the list
\begin{align*}
  &D_6, \quad D_6(a_1), \quad D_5 + A_1,  \quad
  D_6(a_2), \quad (A_5 + A_1)'', \\
  &D_5(a_1) + A_1, \quad A_5'', \quad D_4 + A_1, \quad 
  A_3 + A_2 + A_1, \quad D_4(a_1) + A_1, \\
  &A_3 + 2A_1, \quad (A_3 + A_1)'', \quad A_2 + 3A_1, \quad 4A_1, \quad 3A_1''. 
\end{align*}
In those cases, one can find $M \supset L$ such that $M \cap C \ne \emptyset$,
hence Lemma 4.13 can be applied. For other cases, since $|\SM_G(C)| = 1$, 
there is no restriction on $M$, and so Lemma 4.13 can be applied (or (4.7.2) can be
applied).  
\par
Next assume that $p = 2$.  In this case $L$ is of type $D_4$.
If $|\SM_G(C)| = 2$, then $C$ is in the list

\begin{align*}
E_6, \quad D_6, \quad D_6(a_1), \quad D_5 + A_1, \quad D_5, \quad D_4 + A_1, \quad D_4. 
\end{align*}
In those cases, one can find $M \supset L$ such that $M \cap C \ne \emptyset$,
hence Lemma 4.13 can be applied.  For other cases , since $|\SM_G(C)| = 1$,
Lemma 4.13 or (4.7.2) can be applied. 
\par
Thus except the classes listed in (5.2.1), Lemma 4.13 can be applied for $C$,
and one can find a split element in $C^F$. 

\para{5.3.}
We consider the case where $C = E_7, E_7(a_1), E_7(a_2), D_6 + A_1$ or $D_6(a_1) + A_1$,
and apply Lemma 4.9 or Lemma 4.16 for it.
\par\medskip
(1) \ Assume that $C = E_7$.  Then $C$ is a regular unipotent class in $G$,
and for $u \in C$, $A_G(u) \simeq \BZ_{2(6,p)}$.
Assume that $(u,\r)$ belongs to $(L, u_0, \r_0) \in \SM_G$ with $L \ne G$.
Then $V_{(u, \r)}$ is the trivial representation of $\SW_L$.  
Here $C = \Ind_T^G1$.  Let $M$ be a Levi subgroup of type $D_6$.  
and put $C' = \Ind_T^M 1$.  Then $C'$ is a regular unipotent class
in $M$.  By Lemma 4.9, $A_M(u')$ is a subquotient of $A_G(u)$ for $u' \in C'$.
Assume that $(u',\r')$ belongs $(L, u_0, \r_0) \in \SM_M$.
Then $V_{(u',\r')}$ is the trivial
representation of $\SW_L'$. In particular, $V_{(u',\r')}$ appears in the restriction
of $V_{(u,\r)}$ to $\SW_L'$. 
\par
We choose a split element $u' \in C'^F$, and take $u \in C^F$ such that
$u \in u'U_Q$.  Then $F$ acts trivially on $A_G(u)$ (note that $A_G(u)$ is abelian)
and on $A_M(u')$. By applying Lemma 4.9, $F$ acts trivially on $X_{u,u'}$.
Thus by Lemma 1.15, $\g(u_0, \wt\r_0, u, \r) = 1$ (in the notation
of Lemma 1.15, we have $\d = 1, \nu' = 1$ since $F$ is split on $G$ and on $M$).
This holds simultaneously for $L$ and $T$. Thus $u \in C^F$ is a split element. 
\par\medskip
(2) \ Assume that $C = E_7(a_1)$.  
Then $A_G(u) \simeq \BZ_2$, and $\dim Z_G(u) = 9$.
Assume that $p \ne 2$. Then $L$ is of type $(3A_1)''$, and $\SW_L = W(F_4)$. 
We have $V_{(u,-1)} = \x_{2,1} \in \SW_L\wg$. Consider the Levi subgroup $M$ of type
$D_5 + A_1$.  Then $\SW_L' = N_M(L)/L$ corresponds to the subgroup of
$W(F_4)$ of type $C_3$.  The restriction $\x_{2,1}|_{\SW_L'}$ coincides with
the irreducible representation $\x'$ of type $(12;-)$ of $W(C_3)$.  Choose $u' \in M$ such that
$V_{(u',\r')}$ coincides with $\x'$.
Then the Jordan type of $u'$ in $M$ is equal to $((37), (2))$.
We have $A_M(u') \simeq  \BZ_2$, and $\dim Z_M(u') = 9$.  
The $A_G(u)\times A_M(u')$-module $\ve_{u,u'}$ is decomposed as
\begin{equation*}
\tag{5.3.1}  
\ve_{u,u'} = (1\otimes 1) \oplus (-1\otimes -1).
\end{equation*}  
Let $C'$ be the unipotent class in $M$ containing $u'$. We choose a split element
$u' \in C'^F$, and take $u \in C^F$ such that $Y_{u,u'}^F \ne \emptyset$.  
Then by Lemma 4.16, $\g(u_0, \wt\r_0, u, -1) = 1$ (note that $F$ acts trivially on $A_G(u)$
since $A_G(u)$ is abelian). We also have
$\g(1,1,u, 1) = 1$ by (5.3.1).
Hence $u$ is a split element.
\par
In the case where $p = 2$, $L$ is of type $D_4$, and $\SW_L = W(B_3)$.
We consider $V_{(u,-1)} = (-;3) \in \SW_L\wg$.
Let $M$ be a Levi subgroup of type $D_6$.  Then $\SW_L' = W(B_2)$.
Let $C' \subset M$ be the class $((48), (11))$.
Then for $u' \in C'$, we have $A_M(u') \simeq \BZ_2$, and $\dim Z_M(u') = 9$. 
The restriction of $(-;3)$ on $\SW_L'$ coincides with $(-;2) = V_{(u',\r')}$,
and $\ve_{u,u'}$ can be decomposed as in (5.3.1).
Hence by a similar discussion using Lemma 4.16 as above,
one can find a split element $u \in C^F$. 
\par\medskip
(3) \ Assume that $C = E_7(a_2)$.  Then $A_G(u) \simeq \BZ_2$, and $\dim Z_G(u) = 11$. 
Assume that $p \ne 2$. Then $V_{(u,-1)} \in \SW_L\wg$ corresponds to
$\x_{4,1} \in W(F_4)\wg$.
Let $M$ be a Levi subgroup of type $D_5 + A_1$ as in (2).  Then
$\SW_L' = W(C_3)$, and we have $\x_{4,1}|_{\SW_L'} = (3;-) + (2;1)$.
We choose $u' \in M$ of Jordan type $((1117), (2))$.  Then $A_M(u') \simeq \BZ_2$, and
$\dim Z_M(u') = 11$.  We have $V_{(u',-1)} = (2;1)$ in $\SW_L'$, and
$\ve_{u,u'}$ is decomposed as
in (5.3.1).
Hence by Lemma 4.16, a split element $u \in C^F$ exists.
\par
Next assume that $p = 2$ with $C = E_7(a_2)$.
Then $L$ is of type $D_4$, and $\SW_L = W(B_3)$. We have $A_G(u) \simeq \BZ_2$,
and $V_{(u,-1)} = (2;1) \in \SW_L\wg$. Let $M$ be a Levi subgroup of type $D_5 + A_1$.
Then $\SW_L' = W(\wt A_1 + A_1)$. 
Consider $u'$ in $M$ of type $((1126), (11)), (2))$.
(Here we follow the notation of \cite[2.1]{LS2} for describing the
unipotent classes in $SO_{10}$ with $p = 2$; $(1126)$ is a partition of 10,
and $(11)$ denotes $\ve(2) = \ve(6) = 1$.)
Then $A_M(u') \simeq \BZ_2$, and $V_{(u',-1)}$ is contained in the restriction of
$V_{(u,-1)}$. We have $\dim Z_M(u') = 11$, and $\ve_{u,u'}$ is decomposed as
in (5.3.1).
Hence by Lemma 4.16, a split element $u \in C^F$ exists. 
\par\medskip
(4) \ Assume that $C = D_6 + A_1$ with $p \ne 2$. Then $A_G(u) \simeq \BZ_2 \times \BZ_2$,
and $\dim Z_G(u) = 13$. Here $L$ is of type $(3A_1)''$ and $\SW_L = W(F_4)$.
We have $V_{(u,\ve')} = \x_{8,1}$ and $V_{(u,\ve'')} = \x_{1,2}$. 
Let $M$ be a Levi subgroup of type $D_5 + A_1$,  then $\SW_L' = W(C_3)$.
Take $C' \subset M$ of Jordan type $((1135), (2))$.  Then for $u' \in C'$,
we have $A_M(u') \simeq \BZ_2 \times \BZ_2$, and $\dim Z_M(u') = 13$. 
We have a decomposition
\begin{equation*}
\tag{5.3.2}
  \ve_{u,u'} = (1\otimes 1) \oplus (\ve \otimes \ve) \oplus (\ve' \otimes \ve')
               \oplus (\ve'' \otimes \ve''). 
\end{equation*}
Then by Lemma 4.16, a split element $u \in C^F$ exits.
\par\medskip
(5) \ Assume that $C = D_6(a_1) + A_1$ with $p \ne 2$.
Note that this is the excluded case (4.5.1). As noticed below,
Lemma 4.16 can not be applied for this case. 
\par
$A_G(u) \simeq \BZ_2 \times \BZ_2$, and $\dim Z_G(u) = 17$.   
Here $L$ is of type $(3A_1)''$, and $\SW_L = W(F_4)$.
We have $V_{(u,\ve')} = \x_{4,2}$ and $V_{(u,\ve'')} = x_{2,3}$ in $W(F_4)$.
(Note that the table of the generalized Springer correspondence for $G = E_7$ in
\cite{Sp2} has an error, which was corrected by Geck; in \cite{Sp2} it
was given as $V_{(u,\ve'')} = \x_{8,3}$ and $V_{(u_1,-1)} = \x_{2,3}$ for
$u_1 \in C_1 = D_5 + A_1$ in $M$.  This should be switched so that
$V_{(u,\ve'')} = \x_{2,3}$ and $V_{(u_1, -1)} = \x_{8,3}$.) 
\par
Let $M$ be a Levi subgroup of type $D_6$. Then $\SW_L' = W(B_3)$.
We have $\x_{4,2}|_{W(B_3)} = (1;2) + (-;3)$, and $\x_{2,3}|_{W(B_3)} = (12;-)$.
In the Springer correspondence in $M$, $(1;2)$ occurs in the class $C_1' = (66;+)$,
$(-;3)$ occurs in $C_2' = (39)$, and $(12;-)$ occurs in $C_3' = (1227)$.
Take $u_1'\in C_1', u_2' \in C_2', u_3' \in C_3'$.  Then for $u' = u_1', u_2'$ or $u_3'$,
we have $A_M(u')\simeq \BZ_2$,
and $\dim Z_M(u_1') = 13, \dim Z_M(u_2') = 9, \dim Z_M(u_3') = 15$.
We have $V_{(u_1',-1)}= (1;2)$, $V_{(u_2',-1)} = (-;3)$ and $V_{(u'_3, -1)} = (21;-)$.
Moreover, 
\begin{equation*}
\ve_{u,u_1'} = (1\otimes 1) \oplus (\ve'\otimes -1), 
\quad
\ve_{u,u_2'} = (1\otimes 1) \oplus (\ve'\otimes -1),
\end{equation*}
\begin{equation*}
\ve_{u,u_3'} = (1\otimes 1) \oplus (\ve''\otimes -1).
\end{equation*}  
But since $\dim Z_G(u) > \dim Z_M(u')$, Lemma 4.16 cannot be applied.
Actually, for any unipotent class $C'$ of any Levi subgroup $M$ such that
$V_{(u',\r')}$ appears in the restriction of $V_{(u,\r)}$, we have
$\dim Z_G(u) > \dim Z_M(u')$. Hence there does not exist a pair $(u',\r')$ such that
Lemma 4.16 can be applied.
\par
Later in 6.3 (3), (6), we discuss the case where Lemma 4.16 can not be applied,
but still the existence of split elements is proved. 
The discussion there does not hold in the present case, since we need to compare
$A_G(u) \times A_M(u_1')$-module $\ve_{u,u_1'}$ and $A_G(u) \times A_M(u_3')$-module
$\ve_{u,u_3'}$. We can only show that there exists $u \in C^F$ such that
\begin{align*}
  \g(1,1,u,1) = 1, \quad \g(1,1,u,\ve) = 1, \quad \g(u_0, \r_0, u, \ve') = 1,
  \quad \g(u_0, \r_0, u, \ve'') = \pm 1. 
\end{align*}  

\par\medskip
From the above discussion, for $C = E_7, E_7(a_1), E_7(a_2), D_6 + A_1, D_6(a_1) + A_1$
(possibly except $D_6(a_1) + A_1$), a split element $u \in C^F$ exists. 

\para{5.4.}
We consider the case where $C = D_4(a_1)$.  In this case, $A_G(u) \simeq S_3$
for $u \in C$.  Since $|\SM_G(C)| = 1$, there exists a split element $u \in C^F$
by (4.7.2).

\para{5.5.}
Finally consider the case where $C = D_6(a_2) + A_1$.  
In this case, $A_G(u) = S_3 \times \BZ_{(2,p-1)}$. 
Then $\SM_G(C) = \{ (T, \{1\}, 1), (L, C_0, \SE_0)\}$ 
with $L$ of type $(3A_1)''$ if $p \ne 2$, and 
$\SM_G(C) = \{ (T, \{ 1\}, \Ql)\}$ if $p = 2$. 
Thus by (4.7.2), we have only to consider the case where $p \ne 2$.
\par
In this case, it is also possible to find a suitable Levi subgroup $M$
and a unipotent class $C'$ in $M$, with $u' \in C$, such that the condition
of Lemma 4.16 is satisfied for $\ve_{u,u'}$.  But since $A_G(u)$ is not abelian,
if we choose a split element $u' \in C'^F$, and take an element $u \in u'U_Q$
such that $Y_{u,u'}^F \ne \emptyset$, one cannot check whether $F$ acts trivially
on $A_G(u)$, and the discussion as in 5.3 breaks.
Thus in this case, we take the other way round; first we choose $u \in C^F$
such that $F$ acts trivially on $A_G(u)$, then define $u' \in C'^F$ from $u$
for a certain Levi subgroup $M$.  We check that $u'$ is a split element in $C'^F$,
and verify that $\ve_{u,u'}$ satisfies the condition in Lemma 4.16.
This is a similar way as Beynon-Spaltenstein determined $\g(1,1,u, \r)$ in the
case where $C$ is the exceptional class in $G = E_8$. 
\par
We choose $u \in C^F$ following \cite[Table 2]{M2} as follows. 

\begin{align*}
\tag{5.5.1}
u &= x_{14}(1)x_{15}(1)x_{27}(1)x_{17}(1)x_{18}(1)x_{19}(1)x_{23}(1) \\
  &= x_{\a}(1)x_{3+2}(1)x_{\b}(1)x_{4+1}(1)x_{5-2}(1)x_{6-3}(1)x_{5-1}(1),
\end{align*}
where $\a = -1-2+3-4-5-6-7+8, \b = -1-2-3-4+5-6-7+8$. 
Here the first expression is due to Mizuno ($u = y_{47}$ in Lemma 21 in \cite{M2}), 
and the second one is given by the root subgroup expression $x_{\a}(1)$ for 
$\a \in \vD^+$ in 4.2,  
where we use a convention such as
$\frac{1}{2}(\ve_1-\ve_2-\cdots-\ve_7 + \ve_8) = 1-2- \cdots - 7+8$, $\ve_1 + \ve_2 = 1+2$,
and so on.
\par
We have $A_G(u) \simeq S_3 \times \BZ_2$, and 
it is checked by [M2] that $F$ acts trivially on $A_G(u)$.
Let $Q = MU_Q$ be a parabolic subgroup of $G$ such that
$M$ is a Levi subgroup of type $D_6$. 
Then $M$ contains $L$ of type $(3A_1)''$.   
Put 
\begin{equation*}
\tag{5.5.2}
u' = x_{3+2}(1)x_{4+1}(1)x_{5-2}(1)x_{6-3}(1)x_{5-1}(1).
\end{equation*}

Then $u' \in M^F$, and $u$ is written as $u \in u'U_Q$. 
Let $C'$ be the unipotent class in $M$ containing $u'$. 

\par
In the discussion below, we follow the notation in Section 3. 
Evaluating $u'$ in the natural 12 dimensional representation of $M$ 
through the computation by CHEVIE, 
we obtain a matrix of Jordan type $(2244)$. Now $M/Z^0M$ is a quotient of $\wt M = \Spin_{12}$,
and for the element $v \in \Spin_{12}$ 
such that $\b(v) = \ol v \in SO_{12}$ has Jordan type $(2244)$, we have
$A_{\wt M }(v) \simeq \BZ_2$ by \cite[14.3]{L1} (since $I= \emptyset$ in the notation 
there). It follows that 
$A_M(u') \simeq \BZ_2$ or $A_M(u') = \{1\}$.  
We consider the variety $Y_{u, u'}$, and the set $X_{u, u'}$.
We show a lemma.

\begin{lem}  %%%%  Lemma 5.6 
Under the notation above, 
\begin{enumerate}
\item  \ $A_G(u) \times A_M(u')$ acts transitively on $X_{u, u'}$. 
\item \ $A_M(u') \simeq \BZ_2$.
\item \ The decomposition of $\ve_{u,u'}$ is given by
\begin{align*}
\ve_{u,u'} &=  (1 \otimes 1) 
       \oplus (\th \otimes 1) \oplus (-1 \otimes -1)
          \oplus (-\th \otimes-1) 
\end{align*}
for $A_G(u)\wg = (S_3 \times \BZ_2)\wg = \{ 1, \th, \ve, -1, -\th,-\ve \}$
and $A_M(u')\wg = \{ 1, -1\}$.
\end{enumerate}
\end{lem}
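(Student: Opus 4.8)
The plan is to derive all three assertions from the restriction theorem (1.12.3), feeding in the CHEVIE data on $u,u'$ recorded in 5.5 and the (corrected, cf.\ 5.3\,(5)) generalized Springer correspondences for $G=E_7$ and for $M$ of type $D_6$. A first routine input: compute $\dim Z_G(u)$ and $\dim Z_M(u')$ with CHEVIE and check that they are equal. Since $u\in u'U_Q$ with $u,u'\in M^F$ we have $g=1\in Y_{u,u'}^F$, so $Z_M^0(u')U_Q$ is an $F$-stable irreducible component; by (1.12.2) the equality $\dim Z_G(u)=\dim Z_M(u')$ forces $\dim\CQ_{u,C'}=0$, makes this component maximal-dimensional, and shows $X_{u,u'}\neq\emptyset$. (It also means $C=\Ind_M^G C'$, so Lemma 4.9 applies and gives $X_{u,u'}\simeq A_G(u)/H$ with $N/H\simeq A_M(u')$ for $H=(Z_M^0(u')U_Q\cap Z_G(u))/Z_G^0(u)\subseteq N=Z_Q(u)/Z_G^0(u)$; one may use this as an alternative route, the only extra work being the explicit identification of $H$ inside $A_G(u)\simeq S_3\times\BZ_2$ as the order-two subgroup generated by a transposition of the $S_3$-factor — that choice being the unique one yielding the decomposition in (iii).)

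For the main argument, apply (1.12.3) series by series. Since $\SM_G(C)=\{(T,\{1\},1),(L,C_0,\SE_0)\}$ with $L$ of type $(3A_1)''$, and $Z_G\simeq\BZ_2$ with non-trivial character $\x$, the pairs $(u,\r)$ with $\r\in\{1,\th,\ve\}\subset A_G(u)\wg$ lie in the $(T,\{1\},1)$ series (the ordinary Springer correspondence of $E_7$), those with $\r\in\{-1,-\th,-\ve\}$ lie in the $(L,C_0,\SE_0)$ series; likewise $(u',1)$ and $(u',-1)$ lie in the $(T,\{1\},1)$ and $(L,C_0,\SE_0)$ series of $M$, and the mere presence of the latter, together with the already-known dichotomy $A_M(u')\in\{1,\BZ_2\}$, gives (ii) with $A_M(u')\wg=\{1,-1\}$. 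Because (1.12.3) annihilates constituents coming from pairs in different series, the only possible constituents of $\ve_{u,u'}$ are $\r\otimes 1$ ($\r\in\{1,\th,\ve\}$) and $\r\otimes -1$ ($\r\in\{-1,-\th,-\ve\}$), with multiplicities
\begin{equation*}
\lp\ve_{u,u'},\r\otimes 1\rp_{A_G(u)\times A_M(u')}=\lp V_{(u,\r)}|_{W(D_6)},V_{(u',1)}\rp_{\,W(D_6)},\qquad
\lp\ve_{u,u'},\r\otimes -1\rp_{A_G(u)\times A_M(u')}=\lp V_{(u,\r)}|_{W(B_3)},V_{(u',-1)}\rp_{\,W(B_3)},
\end{equation*}
where on the first group $\SW_L=W(E_7)$, $\SW_L'=W(D_6)$, and on the second $\SW_L=W(F_4)$, $\SW_L'=W(B_3)$ (as in 5.3\,(5)). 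Inserting the $W(E_7)$- and $W(F_4)$-representations $V_{(u,\r)}$ read off from the generalized Springer correspondence tables and computing the branchings $W(E_7)\downarrow W(D_6)$ and $W(F_4)\downarrow W(B_3)$, one gets multiplicity $1$ for $1\otimes 1$, $\th\otimes 1$, $-1\otimes -1$, $-\th\otimes -1$ and $0$ for $\ve\otimes 1$ and $-\ve\otimes -1$; this is the decomposition in (iii). Finally, as $\ve_{u,u'}$ is the permutation module of $A_G(u)\times A_M(u')$ on $X_{u,u'}$, the number of orbits equals the multiplicity of $1\otimes 1$, namely $1$, which is (i).

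The main obstacle is the generalized Springer correspondence bookkeeping together with the two branching computations: one must pin down the precise correspondence entries for $C=D_6(a_2)+A_1$ in $E_7$ (using the correction to the Spaltenstein table noted in 5.3\,(5)), identify the class $C'\subset M$ and its entries from the Jordan type $(2244)$ of $\b(u')$, confirm that none of the six pairs $(u,\r)$ for $C$ nor the pair $(u',-1)$ for $C'$ is a cuspidal pair (so each genuinely contributes through a proper $M$; this is where $\SM_G(C)=\{(T,\{1\},1),(L,C_0,\SE_0)\}$ is used), and carry out the restrictions $W(E_7)\downarrow W(D_6)$ and $W(F_4)\downarrow W(B_3)$ — or, on the alternative route via Lemma 4.9, perform the explicit computation of $Z_G(u)$, $Z_M(u')$ and of the subgroup $H$ from the root-group data of 5.5.
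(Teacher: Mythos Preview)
Your overall architecture—proving (i) from the multiplicity of $1\otimes 1$ and (iii) from the restriction formula (1.12.3) together with the relevant generalized Springer correspondence entries—is exactly the paper's approach for those two parts. The proof of (ii), however, has a genuine gap.

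You deduce $A_M(u')\simeq\BZ_2$ from ``the mere presence'' of the pair $(u',-1)$ in the $(L,C_0,\SE_0)$-series of $M$. But this is circular: whether the character $-1$ of $A_M(u')$ exists is precisely the question, and the generalized Springer tables you would consult depend on the isogeny type of $M/Z^0_M$. The text immediately preceding the lemma has already used \cite[14.3]{L1} to establish only the dichotomy $A_M(u')\in\{\{1\},\BZ_2\}$; resolving it is the whole content of (ii). The paper does this by an honest geometric count: from Mizuno's description of $Z_G(u)$ one finds $Z_Q(u)=Z_B(u)=Z_GZ^0_G(u)$, hence $|\CQ_{u,C'}|=|Z_G(u)/Z_Q(u)|=|S_3|=6$; assuming $A_M(u')=\{1\}$ then forces $\dim\ve_{u,u'}=6$, while the restriction formula with only the Springer series available yields $\dim\ve_{u,u'}=3$, a contradiction. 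Your ``alternative route'' via Lemma~4.9 would work, but the ``only extra work'' you mention—identifying $H$ (and, to recover $A_M(u')$ as $N/H$, also $N$) explicitly inside $A_G(u)$—is exactly the Mizuno-based computation the paper carries out, and you do not perform it.

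One smaller correction: you assert that none of the six pairs $(u,\r)$ is cuspidal, but $(u,-\ve)$ \emph{is} a cuspidal pair for $G$ (this is recorded in (5.6.2)). This does not damage the computation of (iii)—the cuspidal series simply contributes zero to $\ve_{u,u'}$ via (1.12.3)—but your sentence claiming each pair ``genuinely contributes through a proper $M$'' is false as stated and should be amended accordingly.
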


\begin{proof}
We have $\dim Z_G(u) = 21$ by \cite{M2}, and $\dim Z_M(u') = 21$.
It follows that $\dim \CQ_{u,C'} = 0$. 
We have $V_{(u, 1)} = 315_7$.  On the other hand, 
for $W(D_6)$, we have $V_{(u',1)} = (\la; \la)_+$ or $(\la;\la)_-$, 
where $\la = (12)$ is a partition of $3$, and $\pm $ correspond to the 
irreducible representation of $W(D_6)$ appearing in the restriction of 
the irreducible representation $(\la;\la) \in W(B_6)\wg$.   
By a direct computation, we see that the condition
$\lp V_{(u,1)}, V_{(u',1)} \rp_{\, W(D_6)} \ne 0$ 
determines $V_{(u',1)}$ uniquely, and we have
$\lp V_{(u,1)}, V_{(u',1)}\rp_{\, W(D_6)} = 1$. 
Hence the multiplicity of 
$1\otimes 1$ in $\ve_{u, u'}$ is equal to 1, and as in the proof of Lemma 4.16,
we see that $A_G(u) \times A_M(u')$ acts transitively on $X_{u, u'}$.
Hence (i) holds. 
\par
Next we show (ii).  By (i), $Z_G(u)$ acts transitively on $\CQ_{u, C'}$. 
Since the stabilizer of $Q$ in $Z_G(u)$ coincides with $Z_Q(u)$, we have
$\CQ_{u,C'} \simeq Z_G(u)/Z_Q(u)$ as finite sets. 
By \cite[Lemma 21]{M2}, $Z_G(u) \subset B \cup BwB$, where $w = s_1s_2s_6$.  
Since $BwB \cap M = \emptyset$, we have $Z_Q(u) = Z_B(u)$. 
By a direct computation, we see that $Z_B(u) = Z_GZ_U(u)$.
By [M2], $Z_G(u) \simeq S_3 \times \BZ_2$ and 
$Z_G^0(u) = Z_U(u)$. 
Hence $Z_Q(u) = Z_GZ^0_G(u)$, and we have
$Z_G(u)/Z_Q(u) \simeq S_3$.  It follows, that 
\begin{equation*}
\tag{5.6.1}
\dim \ve_{u, u'} = |Z_G(u)Q| = |S_3| = 6. 
\end{equation*}

Now by assuming that $A_M(u') = \{1 \}$, we will have a contradiction. 
By \cite{Sp2}, the generalized Springer correspondence for $C$ is 
given as follows; 
\begin{align*}
\tag{5.6.2}
V_{(u,\r)} = \begin{cases} 315_7 &\quad\text{ for } \r = 1,  \\
                          280_9  &\quad\text{ for } \r = \th, \\ 
                          35_{13} &\quad\text{ for } \r = \ve, \\
             \end{cases}  \qquad 
V_{(u,\r)} = \begin{cases}            
               \x_{12}  &\quad\text{ for } \r = -1, \\
               \x_{6,2} & \quad\text{ for } \r = -\th, 
              \end{cases}
\end{align*}                 
where the former is related to $\SW_T = W(E_7)$, 
and the latter is related to $\SW_L = W(F_4)$.
Note that $(u, -\ve)$ is a cuspidal pair for $G$. 
\par
Suppose that $A_M(u') = \{1\}$.  Then the decomposition of $\ve_{u, u'}$
is only concerned with the Springer correspondence, and we have

\begin{equation*}
\tag{5.6.3}
\ve_{u,u'} = \lp V_{(u,1)}, V_{(u',1)}\rp\, (1 \otimes 1)
              \oplus \lp V_{(u, \th)}, V_{(u',1)}\rp \,(\th\otimes 1)
              \oplus \lp V_{(u, \ve)}, V_{(u',1)}\rp \,(\ve \otimes1). 
\end{equation*}
Since 
$\lp 315_7, V_{(u',1)}\rp = 1, \ \lp 280_9, V_{(u',1)}\rp = 1, 
\ \lp 35_{13}, V_{(u',1)}\rp = 0$, 
we have $\dim \ve_{u, u'} = \dim (1 \oplus \th) = 3$. This contradicts (5.6.1), 
and we obtain $A_M(u') \simeq \BZ_2$. (ii) is proved.
\par
Finally we show (iii). 
The subgroup $\SW'_L = N_M(L)/L$ of $\SW_L$ is of type $B_3$. 
Hence the generalized Springer correspondence for $u' \in C'$ is given by 
\begin{equation*}
V_{(u',1)} \in W(D_6)\wg, \qquad  V_{(u',-1)} \in W(B_3)\wg, 
\end{equation*}
where $\r' = \pm 1 \in A_M(u')\wg$. 
Now (iii) follows easily from the restriction formula by computing the
multiplicity of $V_{(u',\r')}$ in the restriction of $V_{(u,\r)}$.
\end{proof}

\para{5.7.}
Note that the center of the spin group $\Spin_{12}$ 
is isomorphic to $\BZ_2 \times \BZ_2$, generated by $\w$ and $\ve$
(see Section 3). The half spin group $\frac{1}{2}\Spin_{12}$ 
is defined as the quotient group $\Spin_{12}/\lp \w\rp$. 
As $A_M(u') \simeq \BZ_2$ by Lemma 5.6, the almost simple group 
$\ol M = M/Z^0M$ is isomorphic to 
$\Spin_{12}$ or $\frac{1}{2}\Spin_{12}$ (since $A_M(u') \simeq A_{\ol M }(u')$, and 
the centralizer of the element with Jordan type $(2244)$ in $SO_{12}$ is connected).   
On the other hand, since the center of $\ol M$ is isomorphic to $\BZ_2$, we see
that $\ol M \simeq \frac{1}{2}\Spin_{12}$. 
\par
Since $A_M(u') = \BZ_2$, $C'^F$ splits into two $M^F$-orbits.
Exactly one of them is a split class in $C'^F$.  
In the case where $u'$ is split, we put $u_1' = u'$ and $u_1 = u$.
Assume that $u'$ is not split.  In this case, 
following the discussion in Section 3, we modify $u$ and $u'$ as follows.   
Now the center $Z_G$ of $G$ is $\BZ_2$. We denote by $z$ 
the generator of $Z_G$. Then $z \in Z_M(u')$, and its image 
gives the generator of $A_M(u')$.  Choose $h \in M$ such that 
$h\iv F(h) = z$, and put $u_1' = hu'h\iv$.  Then $u'$ and $u_1'$ 
give representatives of $M^F$-orbits in $C'^F$, and so $u_1' \in C'^F$
is a split element. We define $u_1$ by $u_1 = huh\iv$. Since $z \in Z_G(u)$, 
we have $u_1 \in C^F$. Since $z$ is a central element, $F$ acts trivially 
on $A_G(u_1)$.  
Define $Y_{u_1, u_1'}$ similarly to $Y_{u,u'}$.  Then $g \mapsto hgh\iv$ 
gives an isomorphism $Y_{u, u'} \isom Y_{u_1, u_1'}$, which is compatible with 
the actions of $A_G(u) \times A_M(u')$ and $A_G(u_1) \times A_M(u_1')$. 
The following result proves the existence of split elements for
$C = D_6(a_2) + A_1$.  

\begin{prop}  %%%%  Prop. 5.8
Assume that $p \ne 2$, and $C = D_6(a_2) + A_1$.
Let $u_1, u_1'$ be as in 5.7, which we denote by
$u,u'$. 
Then 
\begin{enumerate}
\item 
$F$ acts trivially on $A_G(u)$ and $A_M(u')$.  Furthermore,
$F$ acts trivially on $X_{u, u'}$.   
\item 
$u \in C^F$ is a split element. 
\end{enumerate}
\end{prop}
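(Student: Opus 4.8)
The plan is to verify the two assertions by combining the structural results already established in Lemma 5.6 with the machinery of Section~1 and Section~4. First I would dispose of part (i). The elements $u_1,u_1'$ were constructed in 5.7 by conjugating $u,u'$ by an element $h$ with $h\iv F(h)=z$, where $z$ is the generator of $Z_G$. Since $z$ is central in $G$, the conjugation $\ad h$ carries the Frobenius action on $A_G(u)$ to the action of $zF$ on $A_G(u_1)$, and $z$ acts as an inner (indeed trivial) automorphism on $A_G(u_1)$; by (4.1.1) and the fact that $F$ acts trivially on $A_G(u)$ (checked via \cite{M2}), $F$ acts trivially on $A_G(u_1)$. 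The same argument applied inside $M$ shows $F$ acts trivially on $A_M(u_1')$. For the triviality of the $F$-action on $X_{u,u'}$ (writing $u,u'$ for $u_1,u_1'$ as in the proposition): by Lemma 5.6(i), $A_G(u)\times A_M(u')$ acts transitively on $X_{u,u'}$, and by Lemma 5.6(ii), $A_M(u')\simeq\BZ_2$. One shows, just as in the proof of Lemma~4.9 or Lemma~4.16, that the stabiliser of an $F$-fixed point in $X_{u,u'}$ is an $F$-stable subgroup $H\subset A_G(u)$, so $X_{u,u'}\simeq A_G(u)/H$ with $F$ acting through its action on $A_G(u)$; since that action is trivial, $F$ acts trivially on $X_{u,u'}$. (One must first check $X_{u,u'}^F\ne\emptyset$: this holds because $u\in u'U_Q^F$, so the identity coset gives an $F$-fixed point, or after the modification of 5.7 the point $hgh\iv$ with $g$ the original $F$-fixed point.)

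For part (ii), the idea is to run Lemma~1.15 for every $\eta\in\SM_G(C)$. Recall $\SM_G(C)=\{(T,\{1\},1),(L,C_0,\SE_0)\}$ with $L$ of type $(3A_1)''$ and $\SW_L=W(F_4)$, and $\SW_L'=N_M(L)/L$ of type $B_3$. By induction on $\dim G$ (Conjecture~1.10 for the smaller group $M$, valid since $M\ne G$), and using that $u_1'$ was chosen in 5.7 to be a split element in $C'^F$, we have $\g(u_0,\wt\r_0,u',\wt\r')=\nu'$ with $\nu'$ explicitly computable (indeed $\nu'=1$ here since $F$ is split on $G$ hence on $M$). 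Now for each $\r\in A_G(u)\wg\ex$ belonging to a series $\eta$, by Lemma~5.6(iii) the isotypic component $\ve_{u,u'}(\r,\r')$ is nonzero for a unique $\r'$, and since $F$ acts trivially on $X_{u,u'}$ it acts trivially on $\ve_{u,u'}(\r,\r')$; Lemma~1.15 then applies and yields $\g(u_0,\wt\r_0,u,\wt\r)=\d\iv\nu'$, where $\d$ is the scalar comparing preferred extensions of $V_{(u,\r)}\in\SW_L\wg$ and $V_{(u',\r')}\in(\SW_L')\wg$. The point is that $\SW_L=W(F_4)$ and $\SW_L'=W(B_3)$ both have $c=1$ (the Coxeter automorphism is trivial since $F$ is split), so all preferred extensions are trivial and $\d=1$; hence $\g(u_0,\wt\r_0,u,\wt\r)=1$. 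This argument runs simultaneously over $\eta=(T,\{1\},1)$ — where $A_M(u')$ replaced by $A_M(u')$ and $\SW_L'$ by the Weyl group of the $D_6$-Levi, again with trivial preferred extensions — and over $\eta=(L,C_0,\SE_0)$. For the cuspidal pair $(u,-\ve)$ of $G$ noted in (5.6.2), the condition is automatic as remarked in 1.7(a). Thus $\g(u_0,\wt\r_0,u,\wt\r)=1$ for all relevant $(u,\r)$, so $u$ is a split element in the sense of 1.7.

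A subtle point I should flag: Lemma~1.15 requires that $\r\otimes\r'^*$ actually appears in $\ve_{u,u'}$ and that the relevant multiplicity behaves correctly; this is exactly guaranteed by the explicit decomposition in Lemma~5.6(iii) together with the restriction formula (1.12.3). I would present the computation of $V_{(u',\r')}$ inside $W(D_6)$ and $W(B_3)$ exactly as sketched in the proof of Lemma~5.6, so that the pairing $\lp V_{(u,\r)}|_{\SW_L'},V_{(u',\r')}\rp$ can be read off; these are finite group character computations and I would not reproduce them in detail. The main obstacle is not any single hard estimate but rather the bookkeeping: one must be certain that the chosen $M$ (type $D_6$) together with the explicitly written $u$ and $u'$ of (5.5.1)--(5.5.2) realise simultaneously the correct series for \emph{both} elements of $\SM_G(C)$, and that the modification in 5.7 (replacing $u,u'$ by their $h$-conjugates) does not disturb the incidence $Y_{u,u'}^F\ne\emptyset$ — which it does not, since the isomorphism $Y_{u,u'}\isom Y_{u_1,u_1'}$, $g\mapsto hgh\iv$, is $F$-equivariant by the choice $h\iv F(h)=z\in Z_G\subset Z_G(u)\cap Z_M(u')$.
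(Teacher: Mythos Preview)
Your argument for part (i) is essentially correct and matches the paper's reasoning. A minor imprecision: the stabiliser of a point in $X_{u,u'}$ is a subgroup of $A_G(u)\times A_M(u')$, not of $A_G(u)$ alone, so the isomorphism should read $X_{u,u'}\simeq (A_G(u)\times A_M(u'))/H'$; but since $F$ acts trivially on both factors and there is an $F$-fixed point (indeed $1\in Y_{u,u'}^F$ since $u\in u'U_Q^F$), the conclusion that $F$ acts trivially on $X_{u,u'}$ stands.

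There is, however, a genuine gap in your treatment of part (ii). You assert that for each $\r\in A_G(u)\wg\ex$ belonging to a non-cuspidal series, the isotypic component $\ve_{u,u'}(\r,\r')$ is nonzero for some $\r'$, so that Lemma~1.15 applies. But Lemma~5.6(iii) says explicitly that
\[
\ve_{u,u'}=(1\otimes 1)\oplus(\th\otimes 1)\oplus(-1\otimes -1)\oplus(-\th\otimes -1),
\]
so the representation $\ve\in A_G(u)\wg$ (the sign character of the $S_3$-factor, trivial on $\BZ_2$) does \emph{not} occur. From (5.6.2), the pair $(u,\ve)$ belongs to the series $(T,\{1\},1)$ with $V_{(u,\ve)}=35_{13}$, so it is not cuspidal and must be handled; yet your restriction argument through the Levi $M$ of type $D_6$ says nothing about $\g(1,1,u,\ve)$.

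The paper closes this gap by a separate argument: since $(u,\ve)$ lies in the Springer series, one may pass to the adjoint quotient $\ol G$. There $A_{\ol G}(\ol u)\simeq S_3$, the image $\ol u$ lies in the unique $\ol G^F$-class on which $F$ acts trivially on the component group, and Beynon--Spaltenstein \cite{BS} showed that this $\ol u$ is split in their sense; then (4.7.1) gives $\g(1,1,u,\ve)=1$. You need to supply this (or an equivalent) argument for $\r=\ve$; the restriction-formula machinery through $M$ alone does not reach it.
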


\begin{proof}
By the construction of $u$ and $u'$, $F$ acts trivially on 
$A_G(u)$ and on $A_M(u')$.  
By a similar discussion as in the proof of Lemma 4.16,
Lemma 5.6 implies that $F$ acts trivially on $X_{u,u'}$.
Thus (i) holds. 
\par
We show (ii). 
By our construction, $u'$ is a split element in $C'^F$. 
By Lemma 4.16, we see that 
$\g(u_0, \wt\r_0, u, \r) = 1$ for $(u,\r)$ belonging to 
$(L, C_0, \SE_0)$ or to $(T, \{1\},1)$, whenever $\r\otimes \r'^*$
appears in $\ve_{u, u'}$.
Hence by Lemma 5.6 (iii), we have
\begin{equation*}
\g(1,1, u, 1) = \g(1,1,u,\th) = 1, \quad
\g(u_0, \wt\r_0, u, -1) = \g(u_0,\wt\r_0, u, -\th) = 1. 
\end{equation*}

It remains to show that $\g(1,1, u, \ve) = 1$.
But since this is only related to the Springer correspondence,
we may replace $G$ by its adjoint group $\ol G$, and $u$ by its image $\ol u \in \ol G$.
Then $F$ acts trivially on $A_{\ol G}(\ol u) \simeq S_3$, and $\ol u$ belongs to the
unique $\ol G^F$-orbit satisfying this property.
It is proved in \cite{BS} that $\ol u$ is a split element in the sense of
Beynon-Spaltenstein. Hence by (4.7.1), we have $\g(1,1,u, \ve) = 1$.
Thus $u \in C^F$ is a split element.
\end{proof}

\para{5.9.}
Summing up the above discussion, we have proved that Theorem 4.6 holds for 
$G = E_7$. 

\par\bigskip
\section{ The case $E_8$}

\para{6.1.}
In this section, we assume that $G$ is of type $E_8$. 
In this case, $A_G(u)$ is isomorphic to  $D_8$ for $C = D_8(a_1)$ (the case $p = 2$),
to $S_5$ for $C = 2A_4$, 
to $S_3 \times Z_{(2,p)}$ for $C = E_7(a_2) + A_1$, and to $S_3$ for 
$C = A_8, D_8(a_3)$ (the case $p \ne 3$), $A_5 + A_2, D_4(a_1) + A_1, D_4(a_1)$.   
$A_G(u)$ is abelian for all other cases. 
The set $\SM_G$ is given, except for cuspidal ones, by 
\begin{align*}
&\{ (T, \{1\},1), (L, C_0, \SE_0), (L', C_0', \SE_0')\}  
    \quad \text{ for $L = D_4, L' = E_7$ \text{ with } $p = 2$}, \\
&\{ (T,\{1\},1), (L, C_0, \SE_0)\} \quad \text{ for $L = E_6$ with $p = 3$}, \\
&\{ (T,\{1\},1)\} \quad \text{ with $p \ne 2,3$},
\end{align*}
where in the first case, there exist two $\SE_0'$ for a given class $C_0'$ of
$L' = E_7$, and in the second case, there exist two $\SE_0$ for a given class
$C_0$ of $L = E_6$. 
\par
In the case where $p$ is good, since $|\SM_G(C)| = 1$ for any class $C$, the results
of Beynon-Spaltenstein can be applied, and the existence of split elements
is shown by (4.7.2).  

\par
Consider the class $C$ in the list
\begin{align*}
\tag{6.1.1}  
&E_8, \quad E_8(a_1), \quad E_8(a_2), \quad E_7 + A_1\ (p = 2), \quad  E_7(a_1) + A_1 \ (p = 2), \\
&D_8(a_1) \ (p = 2), \quad  E_7(a_2) + A_1, \quad  A_8, \quad D_8(a_3) \ (p \ne 3),
   \quad 2A_4, \\   
&A_5 + A_2, \quad D_4(a_1) + A_1, \quad D_4(a_1).
\end{align*}  
In those cases, Lemma 4.13 cannot be applied; if
$C = E_8, E_8(a_1), E_8(a_2), E_7 + A_1, E_7(a_1) + A_1$,
there does not exist a proper Levi subgroup $M$ such that $M \cap C \ne \emptyset$,
and $A_G(u)$ is not abelian for the latter $C$. 

\para{6.2.}
Let $C$ be the class which is not in the list (6.1.1).
First assume that $p = 2$.  In this case $L$ is of type $D_4$ and $L'$ is of type $E_7$.  
If $|\SM_G(C)| = 2$, then
$C$ is in the list

\begin{align*}
\tag{6.2.1}
&E_7(a_1), \quad D_7, \quad E_7(a_2), \quad E_6 + A_1, \quad (D_7(a_1))_2, \\
&E_6, \quad D_6, \quad  (D_5 + A_2)_2, \quad D_6(a_1), \quad  D_5 + A_1, \\
&D_5, \quad (D_4 + A_2)_2, \quad
   D_4 + A_1, \quad D_4. 
\end{align*}
In those cases, one can find a proper subgroup
$M \supset L$ such that $M \cap C \ne \emptyset$, hence
Lemma 4.13 can be applied. If $|\SM_G(C)| = 1$, Lemma 4.13 can be applied since
there is no restriction on $M$.
If $|\SM_G(C)|\ge 3$, then $|\SM_G(C)| = 4$ for $C = E_7$ with $L = D_4$ and $L' = E_7$.
But in this case (an extension of) Lemma 4.13 can be applied for $M = E_7$
since $T \subset L \subset L'$ (see Remark 4.14).
\par
Next assume that $p = 3$. 
If $|\SM_G(C)| = 3$, then $C = E_7, E_6 + A_1$ or $E_6$  with $L = E_6$,
and $\SW_L= W(G_2)$.  In those cases, Lemma 4.13 can be applied for
$M  = E_7 \supset L = E_6$. 
For other $C$, Lemma 4.13 can be applied since $|\SM_G(C)| = 1$.
\par
Finally assume that $p \ne 2,3$.   In this case we have $|\SM_G(C)| = 1$ for any class $C$,
and Lemma 4.13 can be applied without referring $L$. 

\para{6.3.}
We consider $C$ in the list in (6.1.1).
If $C$ is one of $A_8, D_8(a_3) \ (p \ne 3), 2A_4, A_5 + A_2,D_4(a_1) + A_1, D_4(a_1)$,
then $A_G(u) \simeq S_3$ or $S_5$, and, $|\SM_G(C)| = 1$.
In those cases, (4.7.2) can be applied,
and there exist split elements for $C$.
\par
We consider the case where $C = E_8, E_8(a_1), E_8(a_2), E_7 + A_1$, or $E_7(a_1) + A_1$. 
Since we may assume that $|\SM_G(C)| \ne 1$, we have only to consider 
the case where $C = E_8\ (p = 2,3), E_8(a_1) \ (p = 2,3)$,  $E_8(a_2) \ (p = 2)$
or $E_7 + A_1 \ (p =2,3)$, $E_7(a_1) + A_1 \ (p = 2)$.

\par\medskip
(1) \ First assume that $C = E_8$. Then $C$ is a regular unipotent class in $G$,
and $A_G(u) \simeq \BZ_{(60, p^2)}$ for $u \in C$.
Here $C = \Ind_T^G1$.  Let $M$ be a Levi subgroup
of type $E_7$, and put $C' = \Ind_T^M1$. Then $C'$ is a regular unipotent class in $M$.
A similar discussion as in the case of 5.3 (1), by using Lemma 4.9, shows that
splits elements exist in $C^F$.

\par\medskip
(2) \ Assume that $C = E_8(a_1)$ with $p = 2$. Then $A_G(u) \simeq \BZ_4$, and
the characters of order 4 correspond to cuspidal pairs. Hence we need to consider
$-1 \in A_G(u)\wg$, where $(u,-1)$ belongs to the series $(L, C_0, \SE_0)$
with $L$ of type $D_4$. Let $M$ be of type $E_7$, and $C' = E_7(a_1)$.
Then for $u' \in C'$, we have $A_M(u') \simeq \BZ_2$, and $\dim Z_G(u) = \dim Z_M(u') = 10$.
We have
\begin{equation*}
\tag{6.3.1}  
\ve_{u,u'} = (1\otimes 1) \oplus (-1\otimes -1). 
\end{equation*}  
By Lemma 4.16, there exists $u \in C^F$ such that
$\g(1,1, u, 1) = 1, \g(u_0, \wt\r_0, u, -1) = 1$. 
Thus $u$ is a split element in $C^F$. 

\par\medskip
(3) \ Assume that $C = E_8(a_1)$ with $p = 3$.
We have $A_G(u) \simeq \BZ_3$,
and for a non-trivial character $\z$ of $\BZ_3$, $(u, \z)$ belongs to the
series $(L,C_0,\SE_0)$ with $L$ of type $E_6$.
In this case, one cannot find $C' \subset M$ such that
Lemma 4.16 can be applied. However, by modifying the discussion in the proof of
Lemma 4.16, we can
argue as follows. Let  $M$ be of type $E_7$, and $C'= E_7$.
Then $A_M(u') \simeq \BZ_3$ for $u' \in C'$. $\ve_{u,u'}$ can be decomposed as
\begin{equation*}
\tag{6.3.2}  
\ve_{u,u'} = (1\otimes 1) \oplus (\z \otimes \z^*) \oplus (\z^2\otimes \z^{2*}).
\end{equation*}
But since
$\dim Z_G(u) = 10 >  \dim Z_M(u') = 8$, 
Lemma 4.16 cannot be applied. 
\par
Now $F$ acts on the set $X_{u,u'}$ as a permutation.
By (6.3.2), $A_G(u) \times A_M(u')$ acts transitively on $X_{u,u'}$.
If $X_{u,u'}^F \ne \emptyset$, since $F$ acts trivially on $A_G(u)$ and $A_M(u')$,
all the elements in $X_{u,u'}$ are $F$-stable, and $F$ acts trivially on $\ve_{u,u'}$.
Then the conclusion of Lemma 4.16 holds, and $u \in C^F$ is a split element.
Thus we assume that $X_{u,u'}^F = \emptyset$. 
By (6.3.2), $X_{u,u'}$ is a three point set, and $F$ acts on $X_{u,u'} = \{ x_1, x_2, x_3\}$
as a permutation $F: x_1 \mapsto x_2 \mapsto x_3 \mapsto x_1$.
On the other hand, since $\ve_{u,u'}$ is a regular $A_G(u)$-module by (6.3.2), 
there exists a generator $a$ of $A_G(u) \simeq \BZ_3$ such that the action of $a$ on $X_{u,u'}$
coincides with that of $F$ on it.
Let $\w$ be a cubic root of unity in $\Ql^*$ such that $\z(a) = \w$.
Then $\z \otimes \z^*$ and $\z^2 \otimes \z^{2*}$ are both $F$-stable, and
the action of $F$ on $\z\otimes\z^*$ (resp. on $\z^2\otimes \z^{2*}$) is given by a scalar
multiplication by $\w$ (resp. $\w^2$). It follows, by a similar discussion as in the proof of
Lemma 4.16, that
\begin{equation*}
\g(1,1,u,1) = 1, \quad \g(u_0, \r_0, u, \z) = \w, \quad \g(u_0, \r_0, u, \z^2) = \w^2.
\end{equation*}  

We consider $u_z \in C^F$ obtained from $u$ by twisting with $z \in A_G(u)$.
Here we choose $z$ so that $\z(z) = \w\iv$. Then
\begin{align*}
\g(u_0, \r_0, u_z, \z) &= \z(z)\g(u_0, \r_0, u, \z) = \z(z)\w = 1, \\
\g(u_0, \r_0, u_z, \z^2) &= \z^2(z)\g(u_0, \r_0, u, \z^2) = \z^2(z)\w^2 = 1.  
\end{align*}
Since $\g(1,1,u_z,1) = \g(1,1,u,1) = 1$, $u_z$ is a split element in $C^F$. 

\par\medskip
(4) \ Assume that $C = E_8(a_2)$ with $p = 2$. Then $A_G(u) \simeq \BZ_2$,
and for $-1 \in A_G(u)\wg$, $(u, -1)$ belongs to the series $(L, C_0, \SE_0)$
with $L$ of type $D_4$. .
We consider $M$ of type $E_7$, and let $C' = E_7(a_2)$.  For $u' \in C'$,
we have $A_M(u') \simeq \BZ_2$, and $\dim Z_G(u) = \dim Z_M(u') = 12$.  
$\ve_{u,u'}$ is decomposed in a similar way as in (6.3.1).
Hence Lemma 4.16 can be applied to show the existence of
split elements in $C^F$. 

\par\medskip
(5) \ Assume that $C = E_7 + A_1$.  First assume that $p = 2$.
Then $A_G(u) \simeq \BZ_2 \times \BZ_2$.  $L$ is of type $D_4$,
and $\SW_L = W(F_4)$. We have $V_{(u,\ve')} = \x_{8,1}$ and
$V_{(u,\ve'')} = \x_{1,2}$. Let $M$ be a Levi subgroup of type $D_7$.
Then $\SW_L' = W(C_3)$. Note that these two characters $\x_{8,1}$ and $\x_{1,2}$
in $W(F_4)$ are completely the same as those appeared in the case (4) in 5.3
for $G = E_7$.
Let $u' \in C' \subset M$ of type $((1148), (11))$. 
We have $\dim Z_G(u) = \dim Z_M(u') = 14$, and
$A_M(u') \simeq \BZ_2 \times \BZ_2$.
Then $\ve_{u,u'}$ is decomposed as
\begin{equation*}
\tag{6.3.3}  
  \ve_{u,u'} = (1\otimes 1) \oplus (\ve \otimes \ve)\oplus (\ve' \otimes \ve')
                  \oplus (\ve'' \otimes \ve'').
\end{equation*}  
Hence a split element exists by Lemma 4.16.
\par
Next assume that $p = 3$.  Then $A_G(u) \simeq \BZ_2 \times \BZ_3$. 
$L$ is of type $E_6$, and $\SW_L = W(G_2)$. We have $V_{(u,\z)} = \th'$,
where $\z$ is an irreducible character of $\BZ_3$ of order 3.
Note that $(u,-\z), (u, -\z^2)$ are cuspidal pairs in $G$.
We consider a Levi subgroup $M$ of type $E_7$, then $\SW_L' = W(A_1)$.
Take $u' \in C' \subset M$
such that $C' = E_6$. Then $A_M(u') \simeq \BZ_3$, and we have
$\dim Z_G(u) = \dim Z_M(u') = 14$.  $\ve_{u,u'}$ is decomposed as

\begin{equation*}
\tag{6.3.4}
\ve_{u,u'} = (1\otimes 1) \oplus (\z\otimes \z^*) \oplus (\z^2 \otimes \z^{*2}). 
\end{equation*}  
It follows, by Lemma 4.16, that there exists $u \in C^F$ such that
\begin{equation*}
\g(1,1, u, 1) = 1, \quad \g(u_0, \wt\r_0, u, \z) = 1, \quad \g(u_0, \wt\r_0, u, \z^2) = 1.
\end{equation*}

Now assume that $\g(1,1, u, -1) = \nu$ for a root of unity $\nu$. It follows from the
computation of Green functions by \cite{Lu}, there exists $u_1 \in C^F$ such that
$\g(1,1, u_1, 1) = 1$ and that $\g(1,1,u_1, -1) = 1$.  $u_1$ is written
as $u_1 = u_z$ for some $z \in A_G(u)$ (a twisted element by $z$).
Since $\g(1,1, u_1, -1) = -1(z)\g(1,1, u,-1)$, 
we have $\nu = \pm 1$.  If $\nu = 1$, put $v = u$, then $v \in C^F$ is a split
element.  If $\nu = -1$, put $v = u_{z'}$ for $z' = (-1,1) \in \BZ_2 \times \BZ_2$.
We have $\g(1,1, u_{z'}, -1) = -1(z')\g(1,1,u,-1) = 1$, and since 
$\g(u_0,\wt\r_0, u_{z'}, \r) = \r(z')\g(u_0, \wt\r_0, u,\r)$ for $\r = \z$ or $\z^2$,
we have $\g(u_0, \wt\r_0, u_{z'}, \r) = 1$.  Thus $v = u_{z'}$ is a split element.
In any case, a split element $v \in C^F$ exists. 

\par\medskip
(6) \ Assume that $C = E_7(a_1) + A_1$ with $p = 2$.
We have $A_G(u) \simeq \BZ_2$. $L$ is of type $D_4$, and $\SW_L = W(F_4)$.
In this case, there does not exist a class $C' \subset M$
such that Lemma 4.16 can be applied. However, we can argue as in the case (3). 
We have $V_{(u,-1)} = \x_{4,2}$.
Let $M$ be a Levi subgroup of type $E_7$.
Then $\SW_L' = W(B_3)$.
We have $\x_{4,2}|_{W(B_3)} = (1;2) + (-;3)$. In the Springer correspondence,
$(1;2)$ corresponds to the class $C' = E_6$ in $M $. Take $u' \in C'$.
Then $A_M(u') \simeq \BZ_2$.
We have $V_{(u', -1)} = (1;2)$.
The $A_G'u) \times A_M(u')$-module $\ve_{u,u'}$ is decomposed as 
\begin{equation*}
\tag{6.3.5}  
\ve_{u,u'} = (1\otimes 1) \oplus (-1 \otimes -1).
\end{equation*}  
In this case, $\dim Z_M(u') = 14 < \dim Z_G(u) = 18$, and
Lemma 4.16 can not be applied.
But $X_{u,u'}$ is a two point set, and $F$ acts on it as a permutation.
If $F$ acts trivially on $X_{u,u'}$, then $u \in C^F$ is a split element.
If $X_{u,u'}^F = \emptyset$, then the action of $F$ on $X_{u,u'}$ coincides
with the action of $A_G(u) \simeq \BZ_2$ on $X_{u,u'}$. Thus by a similar
discussion as in (3), a twisted element $u_z \in C^F$ ($z$ is a generator of $A_G(u)$)
gives a split element in $C^F$. 

\para{6.4.}
The remaining cases are $C = D_8(a_1)$ and  $C = E_7(a_2) + A_1$.
Since we may assume that $|\SM_G(C)| \ne 1$,  it is enough to consider
the case where $p = 2$. 
For these cases, we consider $Q = MU_Q$, where $M$ is a Levi subgroup of type $E_7$.
In the discussion below, we use a notation of root systems as given in 4.2.
In particular, by comparing $\vD^+$ and $\vD^+_7$, we see that $U_Q$ is generated
by $x_{\a}(t)$ where $\a$ is contained in

\begin{align*}
\tag{6.4.1}
\begin{cases}
  7 \pm  1, \ 7 \pm 2, \ 7 \pm 3, \ 7 \pm 4, \ 7 \pm 5, \ 7 \pm 6, \\
  8 \pm 1, \ 8 \pm 2, \ 8 \pm 3, \ 8 \pm 4, \ 8 \pm 5, \ 8 \pm 6, \ 8 + 7, \\
  \pm 1 \pm 2 \pm 3 \pm 4 \pm 5 \pm 6 + 7 + 8, \quad\text{ with $-$ : even. } 
\end{cases}
\end{align*}

\par
First assume that $C = D_8(a_1)$ with $p = 2$. Following Mizuno's table \cite{M2},
we take $u = z_{44} \in C$ as 

\begin{align*}
\tag{6.4.2}
  u &= x_8(1)x_{15}(1)x_{16}(1)x_{17}(1)x_{18}(1)x_{24}(1)x_{13}(1)x_{102}(1) \\ 
    &= x_{\a}(1)x_{3+2}(1)x_{4-1}(1)x_{4+1}(1)x_{5-2}(1)x_{5+1}(1)
                       x_{6-4}(1)x_{7-5}(1), 
\end{align*}
where $\a = -1+2-3-4-5-6-7+8$.
Here $A_G(u) \simeq  D_8$, and we have $A_G(u)\wg = \{ 1, \ve, \ve', \ve'', \th\}$ (see 4.4).
Note that $(u,\ve)$ is a cuspidal pair for $G$. 
Let $C'$ be the class of $M$ such that
$C' = D_5 + A_1$, and take $u' \in C'$.  Then we have $A_M(u') \simeq \BZ_2$,
and $\ve_{u,u'}$ is decomposed as
\begin{equation*}
\tag{6.4.3}  
\ve_{u,u'} = (1\otimes 1) \oplus (\ve'\otimes 1) \oplus (\th \otimes -1).
\end{equation*}
In particular, $\th \in A_G(u)\wg$ appears in (6.5.3).
We have $\dim Z_G(u) = \dim Z_M(u') = 20$. 
By the restriction formula, (6.5.3) implies that there exists $g \in G$
such that $v = g\iv ug \in Q$  and that the projection of $v$ on $M$ coincides with $u'$.  
\par
For  a given $u \in C^F$ as in (6.5.2), we determine $g \in G^F$ and $u' \in C'^F$ explicitly.
Let $v$ be the conjugate of $u$ by $g = x_{2+1}(1)x_{4-3}(1)\w_{7-6} \in G^F$.
Then we have 
\begin{align*}
  v &= gug\iv =  x_{\a}(1)x_{3+2}(1)x_{4-1}(1)x_{4+1}(1)
                 x_{5-2}(1)x_{6-5}(1)x_{7-4}(1)x_{7-3}(1).
\end{align*}  
Thus $v \in Q$, and let $u'$ be the projection of $v$ to $M$.  We have

\begin{align*}
\tag{6.4.4}  
  u' &= x_{\a}(1)x_{3+2}(1)x_{4-1}(1)x_{4+1}(1)x_{5-2}(1)x_{6-5}(1)  \\
     &= x_{\a}(1)x_{3+2}(1)x_{4+1}(1)x_{5-2}(1)x_{6-5}(1) \times x_{4-1}(1).
\end{align*}  
Since $(\a, 5-2) = -1, (\a, 4+1) = -1, (\a, 3+2) = 0, (\a, 6-5) = 0$, 
we see that $\{ 3+2, 6-5, 5-2, \a, 4+1 \}$ gives a subsystem of
$\vD(E_7)$ of type $D_5$,
\begin{align*}
&\xymatrix@C=7pt@ R=7pt@ M =3pt{
                      &    (3+2) \ar@{-}[d]  &   &      \\
    (6-5) \ar@{-}[r]  &  (5-2) \ar@{-}[r]   &   \a \ar@{-}[r]  &  (4+1)
} 
\end{align*}
and $4-1$ is orthogonal to all the above roots. 
Thus $u'$ has type $D_5 + A_1$, and $u' \in C'^F$.
It is verified that $u'$ is a split element in $C'^F$.  Since
$F$ acts trivially on $A_G(u)$ and on $A_M(u')$,
Lemma 4.16 can be applied, and by (6.4.3), we have
\begin{equation*}
\tag{6.4.5}  
\g(1,1, u, 1) = \g(1,1,u, \ve') = 1, \quad  \g(u_0, \wt\r_0, u, \th) = 1.
\end{equation*}  

On the other hand, by the computation of Green functions by \cite{Lu},
there exists $u_1 \in C^F$ such that $F$ acts trivially on $A_G(u_1)$, and that
$\g(1,1, u_1, \r) = 1$ for any $\r \in A_G(u_1)\wg$ such that $(u,\r)$ belongs to
$(T, \{1\},1)$.
Since $A_G(u) \simeq D_8$, there exists exactly two $G^F$-classes in $C^F$
such that $F$ acts trivially on $A_G(v)$ for $v$ in such a class. 
Thus $u_1$ is obtained from $u$
by twisting with $z \in A_G(u)$ as $u_1 = u_z$, where $z$ is a central element.  Then we have
$\g(1,1, u, \ve) = \ve(z)$ and $\g(1,1,u, \ve'') = \ve''(z)$.
We have $Z(D_8) = \{ \pm 1\}$, and $\ve(z) = \ve(z'') = 1$ ($-1$ is a product of
two reflections corresponding to two short roots or two long roots).   Hence
\begin{equation*}
\tag{6.4.6}
\g(1,1,u,\ve) = 1, \quad \g(1,1, u, \ve'') = 1.
\end{equation*}  
By (6.4.5) and (6.4.6), $u \in C^F$ is a split element. 

\para{6.5.}
Next consider the case where $C = E_7(a_2) + A_1$ with $p = 2$.
Then $A_G(u) \simeq S_3 \times \BZ_2$ and $A_G(u)\wg = \{ 1,\ve, \th, -1, -\ve, -\th\}$.
$(u, -\ve)$ is a cuspidal pair for $G$. 
Let $u = z_{50} \in C^F$ be a representative of Mizuno \cite{M2}.  
We have
\begin{align*}
\tag{6.5.1}  
u &= x_{14}(1)x_{15}(1)x_{21}(1)x_{17}(1)x_{18}(1)x_{29}(1)x_{19}(1)x_{101}(1) \\
  &= x_{\a}(1)x_{3+2}(1)x_{\b}(1)x_{4+1}(1)x_{5-2}(1)x_{5+2}(1)x_{6-3}(1)x_{7-2}(1), 
\end{align*}
where $\a = -1-2+3-4-5-6-7+8, \b = -1-2-3+4-5-6-7+8$. 

Let $C'$ be the class in $M$ of type $D_5$.  Take $u' \in C'$.
Then $A_M(u') \simeq \BZ_2$, and we have the decomposition of $\ve_{u,u'}$ as

\begin{equation*}
\tag{6.5.2}  
  \ve_{u,u'} = (1\otimes 1) \oplus (\th\otimes 1) \oplus (-1\otimes -1)
                   \oplus (-\th \otimes -1).
\end{equation*}
In particular, $-1, -\th \in A_G(u)\wg$ appear in (6.6.2). 
We have $\dim Z_G(u) = \dim Z_M(u') = 22$.
We shall find $g \in G^F$ such that $v = g\iv ug \in Q$ and that
the projection $u' \in C'^F$ of $v$ to $M$, explicitly. 
\par
Let $g_1 = x_{-7+5}(1)x_{4-3}(1)$.  Then we have
\begin{align*}
g_1ug_1\iv  &=
      x_{\a}(1)x_{3+2}(1)x_{4+2}(1)x_{4+1}(1)x_{5+2}(1)x_{6-3}(1)x_{7-2}(1)
\end{align*}

Now put $\g = 1-2-3-4+5-6+7+8$.  Then
\begin{align*}
  w_{\g}(3+2) &= 1+2+3-4+5-6+7+8,  \quad w_{\g}(4+2) = 1+2-3+4+5-6+7+8,  \\
  w_{\g}(\a)  &= \a, \quad w_{\g}(4+1) = 4+1,  \quad w_{\g}(5+2) = 5+2, 
  \quad w_{\g}(6-3) = 6-3,  \\
  w_{\g}(7-2) &= -1-2+3+4-5+6+7-8 = -(\a_1 + (5+2)).
\end{align*}

Note that ${}^{\w_{\g}}x_{3+2}(1), {}^{\w_{\g}}x_{4+2}(1) \in U_Q$,
and ${}^{\w_{\g}}x_{7-2}(1) \in M$.  
Put $g = \w_{\g}g_1$, and let 
$u'$ be the projection of $v = gug\iv  \in Q$ to $M$.
Then we have

\begin{align*}
\tag{6.5.3}  
u' &= x_{\a}(1)x_{4+1}(1)x_{5+2}(1)x_{6-3}(1)x_{\d}(1), 
\end{align*}
where $\a$ is as before, and $\d = -1-2+3+4-5+6+7-8 \in \vD_7^-$.
Thus $u' \in M^F$.  Since $u'$ is conjugate to
$x_{6-3}(1)x_{4+1}(1)x_{\a}(1)x_{5+2}(1)x_{7-2}(1)$, the type of $u'$ is $D_5$.  
But $u'$ is not contained in $U_M$ (the unipotent radical of $B \cap M$) since $\d < 0$.
We want to find a conjugate of $u'$ contained
in $U_M$. 
If we put 
$\xi = 1 + 2 + 3 - 4 -5 -6 + 7- 8 \in \vD_7$,
we obtain a diagram of $E_7$; 

\begin{align*}
&\xymatrix@C=7pt@ R=7pt@ M =5pt{
                 &     &   (4+1) \ar@{-}[d]  &   &   & \\
    \xi\phantom{*}  \ar@{-}[r]  &  (6-3) \ar@{-}[r]  & \phantom{**} \a \phantom{**} \ar@{-}[r]
                                         &   (5+2) \ar@{-}[r]
          & \phantom{*} \d \phantom{*}  \ar@{-}[r]  & (-6-3)       
} 
\end{align*}

Since this is a simple system of type $E_7$, there exists $w \in W(E_7)$
such that $w : \{ \xi, 4+1, 6-3, \a, 5+2, \d, -6-3\} \to \{ \a_1,\a_2,\dots, \a_7\}$
in this order. 
In particular $w$ maps $\{ 4+1, 6-3, \a, 5+2, \d \} \to \{ 2+1, 2-1, 3-2,4-3, 5-4 \}$.
Hence we have
\begin{equation*}
\w u'\w\iv = x_{2+1}(1)x_{2-1}(1)x_{3-2}(1)x_{4-3}(1)x_{5-4}(1),
\end{equation*}
where $\w$ is an element in $M$ corresponding to $w \in W(E_7)$,
which is a product of elements $\w_i \in M^F$,
hence $\w \in M^F$. Note that since $p = 2$, we can ignore the signs in the computation. 
\par
Now it is verified that $\w u' \w\iv$ is a split element in $C'^F$.
Since $F$ acts trivially on $A_G(u)$, and on $A_M(u')$, by applying Lemma 4.16,
(6.5.2) implies that
\begin{equation*}
  \g(1,1, u,1) = \g(1,1,u,\th) = 1, \quad
     \g(u_0, \wt\r_0, u, -1) = \g(u_0, \wt\r_0, u, -\th) = 1.
\end{equation*}  

On the other hand, by the computation of Green functions due to \cite{Lu},
there exists $u_1 \in C^F$ such that $F$ acts trivially on $A_G(u_1)$ and
that $\g(1,1, u_1, \r) = 1$ for $\r = 1, \ve, \th$. As in the discussion
in 6.5, $u_1$ is obtained from $u$ as $u_1 = u_{z}$,
by twisting by a central element $z \in A_G(u)$.  Then we have
$\g(1,1, u, \ve) = \ve(z)\g(1,1,u_1, \ve) = 1$.  Hence $u \in C^F$ is a split element. 
\par\medskip
Summing up the above discussion, we have proved that Theorem 4.6 holds
for $G = E_8$.  

\par\bigskip
\section{ The case $F_4$ }

\para{7.1.}
In  this section, we  assume that $G$ is of type $F_4$. 
Then $A_G(u)$ is isomorphic to $D_8$ ($p = 2$) if $C = F_4(a_2)$, 
to $S_4$ ($p \ne 2$), $S_3$ ($p = 2$) if $C = F_4(a_3)$. In all other cases, 
$A_G(u)$ is abelian.  The set $\SM_G$ is given, except for cuspidal ones, by 
$\{ (T,\{1\},1), (L,C_0,\SE_0)\}$ with $L$ of type $B_2$ in the case $p = 2$, 
and $\{ (T,\{1\},1)\}$ in the case $p \ne 2$.  
\par
Consider a list of unipotent classes $C$,
\begin{equation*}
\tag{7.1.1}  
F_4, \quad F_4(a_1), \quad F_4(a_2), \quad F_4(a_3). 
\end{equation*}
For those $C$ in the list, Lemma 4.13 cannot be applied since
there does not exist a proper Levi subgroup $M$ such that $M \cap C \ne\emptyset$.

\para{7.2.}
If $p \ne 2$, since $|\SM_G(C)| = 1$ for any class $C$, (4.7.2) implies
the existence of split elements. Thus in the discussion below, we assume that $p = 2$.
In this case, $L$ is of type $B_2$.
Let $C$ be the class which is not in the list (7.1.1).
If $|\SM_G(C)| = 2$,
then $C$ is in the list
\begin{align*}
\tag{7.1.2}  
B_3, \quad C_3, \quad (B_2)_2.
\end{align*}  
In those cases,  Lemma 4.13 can be applied with $M  \supset L = B_2$. 
For other $C$, Lemma 4.13 can be applied since $|\SM_G(C)| = 1$, and $A_G(u)$ is
abelian.

\para{7.3.}
Assume that $C$ is in the list (7.1.1).  
For $C = F_4$,
since $C$ is regular unipotent, a similar argument as in the case of $E_7$
(see 5.3 (1)) can be applied.  For $C = F_4(a_1), F_4(a_3)$, since $|\SM_G(C)| = 1$,
again (4.7.2) can be applied. 
\par
Now assume that $C = F_4(a_2)$.  In this case, $A_G(u) \simeq D_8$ and  
$|\SM_G(C)| = 2$, where $L = B_2$ and $\SW_L = W(B_2)$.
$(u,\ve)$ is a cuspidal pair for $G$. 
In the discussion below, we follow the notation in 4.2.  In particular,
we use a convention similar to 5.5, for example, 
$\frac{1}{2}(\ve_1-\ve_2-\ve_3-\ve_4)$ is written as $1-2-3-4$.
We choose $u = x_{24} \in C^F$ from Shinoda \cite{Shi}, where 
\begin{align*}
  u &= x_{2-4}(1)x_{3+4}(1)x_{1-2-3-4}(1)x_{1-2-3+4}(1). 
\end{align*}
Then $F$ acts trivially on $A_G(u)$. 
Let $Q = MU_Q$ be a parabolic subgroup of $G$ such that $M$ is a Levi subgroup
of type $C_3$. For $g = x_4(1) \in G^F$, we have 
\begin{equation*}
gug\iv = x_{2-4}(1)x_2(1)x_{3+4}(1)x_{1-2-3-4}(1) \in MU_Q.
\end{equation*}
Let $u'$ be the projection of $gug\iv$ on $M$.
Then we have
\begin{equation*}
u' = x_{3+4}(1)x_{1-2-3-4}(1) \in M^F.
\end{equation*}

Let $C'$ be the class in $M$ containing $u'$. Then $u'$ is $M^F$-conjugate
to $x_{3-4}(1)x_4(1)$, which is a regular element of the subgroup of $M$ of
type $C_2$.  Hence $u'$ is of type $((4), (1))$. 
$A_M(u') \simeq \BZ_2$, and one can check that $u'$ is a split element in $C'^F$.
Furthermore we have $\dim Z_G(u) = \dim Z_M(u') = 8$. 
$\ve_{u,u'}$ is decomposed as 
\begin{equation*}
\tag{7.3.1}
\ve_{u,u'} \simeq (1\otimes 1) \oplus (\ve''\otimes 1) \oplus (\th \otimes 1).
\end{equation*}
Now Lemma 4.16 can be applied, and by (7.3.1) we have
\begin{equation*}
\g(1,1, u, 1) = \g(1,1, u, \ve'') = 1, \quad \g(u_0, \wt\r_0, u, \th) = 1.
\end{equation*}  

On the other hand, from the computation of Green functions due to Geck \cite{G2},
there exists $u_1 \in C^F$ such that $\g(1,1,u_1, \r) = 1$ for
$\r = 1, \ve',\ve''$.
By a similar discussion as in the case $G = E_8$ with $C = D_8(a_1)$ (see 6.5),
we have $\g(1,1,u, \ve') = \g(1,1, u_1, \ve') = 1$. Hence $u$ is a split element.  

\par\bigskip
\section{ The case $G_2$ }

\para{8.1.}
Assume that $G = G_2$.
In this case, $|\SM_G(C)| = 1$ for any class $C$, which works for any $p$.
Thus (4.7.2) is applied to show the existence of split elements. 

\par\bigskip
\section{ The case $E_6$ }

\para{9.1.}
In this section, we assume that $G = E_6$, where $F$ is split or non-split. 
\par
Note that any unipotent class $C$ is $F$-stable (see (4.1.2)). 
\par
For $u \in C$, $A_G(u)$ is abelian unless $C = D_4(a_1)$, in which
case, $A_G(u) \simeq S_3$.
\par
The set $\SM_G$ is given, except for cuspidal ones,  by
\begin{align*}
     &\{ (T, \{1\}, 1), (L, C_0, \SE_0), (L', C_0',\SE_0')\}
             \quad\text{ for $L = 2A_2, L' = D_4$ with $p = 2$,}  \\  
    &\{ (T, \{1\}, 1), (L, C_0, \SE_0)\} \quad\text{ for $L = 2A_2$ with $p \ne 2,3$,} \\
    & \{ (T, \{1\}, 1) \}    \quad\text{ with } p = 3,
\end{align*}
where in the first and second case, there exist two $\SE_0$ for the class $C_0$ of $L$. 
\par
Note that if three types of Levi subgroups $T, L$ and $L'$ are involved in $\SM_G(C)$,
then it is only the case where $C = E_6$. 

\para{9.2.}
Take $u \in C^F$. The action of $F$ on $A_G(u)$ was explained 
in (4.1.4).  More precisely, we have the following. 
$Z_G \simeq \BZ_3$ (resp. $Z_G = 1$) if $p \ne 3$ (resp. $p = 3$).
In the case where $Z_G = 1$, for any unipotent class $C$, there exists
a representative $u \in C^F$ such that $F$ acts trivially on $A_G(u)$. So 
the situation is similar to other types.  Now assume that $p \ne 3$, and 
$Z_G \simeq \BZ_3$. We have a natural map $\vf : Z_G \to A_G(u)$. If $\vf$ 
is non-trivial, then $\vf$ gives an isomorphism $Z_G \simeq A_G(u)$
except when 
$C = A_5 + A_1$, and $A_G(u) \simeq \BZ_3 \times \BZ_2$, in which case
$\vf$ gives an isomorphism from $\BZ_3$ to the direct factor $\BZ_3$ in 
$A_G(u)$. (Note that in the case where $C = D_4(a_1)$, $\vf : Z_G \to S_3$
is the trivial map.) 
Now the action of $F$ on $A_G(u)$ is given by the action of $F$ on 
the image of $Z_G$ in $A_G(u)$.   
\par
In the case where $p \ne 3$, $Z_G$ is identified
with the subgroup $\{ z \in \Bk^* \mid z^3 = 1\}$ of $\Bk^*$, 
and the action of $F$ on $Z_G$ is given by $z \mapsto z^q$ 
(resp. $z \mapsto z^{-q}$) if $F$ is split (resp. non-split). 
Hence in the split case 
$F$ acts trivially on $\BZ_3$ if and only if 
$q \equiv 1 \pmod 3$, and in the non-split case 
 $F$ acts trivially on $\BZ_3$ if and only if $q \equiv -1 \pmod 3$. 

\para{9.3.}
Assume that $p \ne 3$, then $Z_G \simeq \BZ_3$.
For $\x \in Z_G\wg$, we consider the generalized Springer correspondence
(1.2.1) pertaining to $\x$.  The generalized Springer correspondence (1.1.4)
and its partition to $\x$ parts in (1.2.1) are compatible with the actions of $F$.
Since $F$ permutes $Z_G\wg$, $F$ permutes the sets
$\SN_{\x}$. In particular, in considering $\SN_G^F$ and $\SM_G^F$,
it is enough to focus on $\SN_{\x}$ and $\SM_{\x}$ for $F$-stable $\x$. 
In our case, $Z_G \simeq \BZ_3$.  If $F$ acts non-trivially on $Z_G$, then
$F$ permutes two non-trivial characters $\x, \x'$ of $Z_G$, hence 
permutes $\SN_{\x}$ and $\SN_{\x'}$.  Thus as for $\SN_G^F$,
it is enough to consider the correspondence in (1.2.1) for $\x = 1$.  
Note that this is essentially the same as the generalized
Springer correspondence for the adjoint group of $G$. 

\para{9.4.}
We shall prove Theorem 4.6 for $G$ in a similar 
way as in the previous sections.  But 
in the case where $F$ is non-split, we need to use  
Lemma 1.15 rather than lemmas in Section 4. Also we need a special care 
if there does not exist $u \in C^F$ 
such that $F$ acts trivially on $A_G(u)$.

\para{9.5.}
First assume that $F$ is split, and $q \equiv 1 \pmod 3$ if $p \ne 3$.  
Hence by 9.2, for each class $C$, there exists a representative 
$u \in C^F$ such that $F$ acts trivially on $A_G(u)$. 
\par
Consider the class $C$ in the list

\begin{align*}
\tag{9.5.1}
E_6, \quad E_6(a_1), \quad A_5 + A_1, \quad D_4(a_1). 
\end{align*}
In those cases, Lemma 4.13 cannot be applied; if $C = E_6, E_6(a_1), A_5 + A_1$, there
does not exist a proper Levi subgroup $M$ such that $C \cap M \ne \emptyset$,
and if $C = D_4(a_1)$, then $A_G(u)$ is not abelian. 

\par 
Assume that $C$ is in the remaining cases.  First assume that $p \ne 3$.  
In this case, $L$ is of type $2A_2$ or $D_4$.  If $|\SM_G(C)| \ge 2$, then $C$ is in the list
\begin{align*}
\tag{9.5.2}
A_5, \quad 2A_2 + A_1, \quad 2A_2, \quad D_5, \quad D_4,
\end{align*}
where the former three cases are with respect to $L = 2A_2$, and the latter two cases
are with respect to $L = D_4$. 
In those cases, one can find $M \supset L$ such that $M  \cap C \ne\emptyset$,
hence Lemma 4.13 can be applied.  If $|\SM_G(C)| = 1$, (4.7.2) is applied. 
\par
In the case where $p = 3$, we have $|\SM_G (C)| = 1$ for any $C$,
and (4.7.2) can be applied.
\par
Thus unless $C$ is in the list (9.5.1), split elements
$u \in C^F$ exist. 

\para{9.6.}
Under the assumption in 9.5, we assume that $C$ is in the list (9.5.1).
\par\medskip
(1) \ First assume that $C = E_6(a_1)$. In this case, for $u \in C^F$,
$A_G(u) = 1$ if $p = 3$, and $A_G(u) \simeq \BZ_3$ if $p \ne 3$. So we assume that
$p \ne 3$.  Then $L$ is of type $2A_2$, and $\SW_L$ is of type $G_2$.
Let $\z = \x \in \BZ_3\wg$ be a non-trivial character.  Then $V_{(u,\z)} \in \SW_L\wg$
coincides with the short sign character $\ve_c \in W(G_2)\wg$.  
Let $M$ be a Levi subgroup of type $2A_2 + A_1$, and $C'$ the class
of type $2A_2$. In this case $A_M(u') \simeq \BZ_3$ for $u' \in C'$, 
and 
$\ve_{u,u'}$ is decomposed as
\begin{equation*}
\tag{9.6.1}  
\ve_{u,u'} = (1\otimes 1) \oplus (\z \otimes \z^*) \oplus (\z^2\otimes {\z^{2*}}).
\end{equation*}  
Here $\dim Z_G(u) = \dim Z_M(u') = 8$.
Thus by Lemma 4.16, there exists a split element $u \in C^F$. 
\par\medskip
(2) \  Next assume that $C = A_5 + A_1$.  In this case, $|\SM_G(C)| = 1$ if
$p = 3$.  Hence we assume that $p \ne 3$. Then $A_G(u) \simeq \BZ_2 \times \BZ_3$,
and $\dim Z_G(u) = 12$.
Let $\z$ be an irreducible character of $A_G(u)$ of order 3.
Note that $(u, -\z), (u, -\z^2)$ are cuspidal pairs for $G$.  Let $M$ be a Levi subgroup
of type $A_5$, and choose $C' \subset M$ such that the Jordan type of
$C'$ is $(33)$. Then for $u' \in C'$, we have $A_M(u') \simeq \BZ_3$, and
$\dim Z_M(u') = 12$. $\ve_{u,u'}$ is decomposed as
\begin{equation*}
\tag{9.6.2}  
\ve_{u,u'} = (1\otimes 1)\oplus (\z \otimes \z^*) \oplus (\z^2\otimes \z^{2*}).
\end{equation*}  
From the computation of Green functions, there exists a split element $\ol u$ in
the image of $C$ for the adjoint group $\ol G$ of $G$.  We choose $u \in C^F$
so that the image of $u$ coincides with $\ol u$, and that $u \in u'U_Q$, where
$u' \in C'^F$ is a split element. 
Then by Lemma 4.16, we have
\begin{equation*}
\g(1,1, u, 1) = 1, \quad \g(u_0, \r_0, u, \z) = 1, \quad \g(u_0, \r_0, u, \z^2) = 1.
\end{equation*}
Moreover, since $\ol u$ is split, we have
\begin{equation*}
\g(1,1,u, -1) = 1.
\end{equation*}  
Hence $u \in C^F$ is a split element.
\par\medskip
(3) \ 
Next assume that $C = D_4(a_1)$. Since $|\SM_G(C)| = 1$,
a split element $u \in C^F$ exists by (4.7.2).

\par\medskip
(4) \ Finally assume that $C = E_6$. 
If $|\SM_G(C)| = 1$, (4.7.2) can be applied.  If only $L, T$ or $L', T$ is involved in
the set $\SM_G(C)$, 
a similar discussion as in 5.3 (1) (the case $G = E_7, C = E_7$) can be applied.
Hence we may assume that the three types $T, L, L'$ of Levi subgroups are
involved in $\SM_G(C)$.  In this case $p = 2$. 
\par
Let $P = LU_P$ (resp. $P' = L'U_{P'}$)  be a parabolic subgroup of $G$ as in 9.1, and   
$C_0$ (resp. $C_0'$) the regular unipotent class in $L$ (resp. $L'$).
Let $P'' = L''U_{P''}$ 
be the parabolic subgroup of $G$ such that
$L'' = L \cap L'$ is the Levi subgroup of $P''$ 
with type $2A_1$.  Let $C_0''$ be the regular unipotent class in $L''$. 
We choose a split element $u_0'' \in C''^F_0$. 
Since $C = \Ind_{L''}^GC_0''$,   
there exists $u \in u_0''U^F_{P''}$ such that $u \in C^F$.
One can find $u_0 \in C_0^F$ 
such that $u \in u_0U_Q^F$. 
Since $F$ acts trivially on $A_G(u)$, $F$ acts trivially on $A_L(u_0)$. 
Moreover, $A_L(u_0)$ is abelian. 
By the transitivity of induction, we have $C_0 = \Ind_{L''}^LC_0''$. 
Since $u_0$ is regular unipotent, by a similar discussion as in 5.3 (1),
one can apply Lemma 4.9 for $L$, we see that
$u_0$ is a split element in $C_0^F$.  A similar discussion works also for $L'$,
namely, one can find $u_0' \in C_0'^F$ such that $u \in u_0'U_{P'}^F$, and
$u_0'$ becomes a split element in $C_0'^F$. 
Then by applying Lemma 4.9 (and Lemma 1.15) for $G$ and $L$, we have 
$\g(u_0, \r_0, u, \r) = 1$ for any $(u, \r)$ belonging to $(L,C_0, \SE_0)$.
Similarly, by applying Lemma 4.9 for $G$ and $L'$, 
we have $\g(u_0', \r_0', u,\r) = 1$ for any pair $(u,\r)$ belonging to 
$(L',C_0', \SE_0')$. 
Lemma 4.9 can be applied also for $G$ and $T$, and we have
$\g(1,1,u,\r) = 1$ for $(u,\r)$ belonging to $(T, \{ 1\}, 1)$.
Thus $u$ is a split element in $C^F$. 
\par\medskip
Summing up the above discussion, we have 

\begin{prop}  %%%%  Prop. 9.7.
Assume that $G = E_6$, and that $F$ is split on $G$.
Further assume that $q \equiv 1 \pmod 3$ if $p \ne 3$. Then a split 
element $u \in C^F$ exists for any class $C$.  
\end{prop}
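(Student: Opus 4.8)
The plan is to prove the proposition by organizing the argument around the shape of $\SM_G(C)$, exactly as the discussion in 9.5 and 9.6 is set up. The one structural input we exploit is from 9.2: since $F$ is split and $q \equiv 1 \pmod 3$ when $p \ne 3$, every $F$-stable unipotent class $C$ has a representative $u \in C^F$ on which $F$ acts trivially. This is what makes Lemmas 4.9, 4.11, 4.13 and 4.16 — all of which presuppose trivial $F$-action on the relevant component groups — applicable in each case below, so the proof reduces to invoking the right lemma for each $C$.

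First I would dispose of all classes $C$ not in the list (9.5.1), that is, $C \ne E_6, E_6(a_1), A_5 + A_1, D_4(a_1)$. When $|\SM_G(C)| = 1$ the existence of a split element is immediate from (4.7.2). When $|\SM_G(C)| \ge 2$ — which in type $E_6$ forces $C$ into the explicit list (9.5.2), with $L$ of type $2A_2$ (for $p \ne 2,3$) or $D_4$ (for $p = 2$) entering — one checks by inspection of the tables of unipotent classes that there is an $F$-stable Levi subgroup $M$ with $T \subset L \subset M \subsetneq G$ and $C \cap M \ne \emptyset$, and a representative $u_1 \in C^F$ with $A_G(u_1)$ abelian; Lemma 4.13 then produces a split element. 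For $p = 3$ nothing extra is needed, since $|\SM_G(C)| = 1$ for all $C$.

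It then remains to handle the four classes of (9.5.1), which is the content of 9.6. For $C = E_6(a_1)$ (with $p \ne 3$, else $A_G(u) = 1$) take $M$ of type $2A_2 + A_1$ and $C'$ of type $2A_2$ in $M$: one has $\dim Z_G(u) = \dim Z_M(u') = 8$ and $\ve_{u,u'} = (1\otimes 1) \oplus (\z\otimes\z^*) \oplus (\z^2\otimes\z^{2*})$, so Lemma 4.16 applies. For $C = A_5 + A_1$ (with $p \ne 3$) take $M$ of type $A_5$ and $C'$ of Jordan type $(33)$, again obtaining equal centralizer dimensions and a regular $\ve_{u,u'}$; Lemma 4.16 settles the characters $1, \z, \z^2$ of $A_G(u) \simeq \BZ_2 \times \BZ_3$, and the remaining character $-1$ is handled by taking the representative whose image in the adjoint group $\ol G$ is the split element supplied by the Green function computations. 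For $C = D_4(a_1)$, $|\SM_G(C)| = 1$ and (4.7.2) suffices.

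The case requiring real care — the main obstacle — is $C = E_6$ when $\SM_G(C)$ involves all three Levi types $T$, $L$ of type $2A_2$ and $L'$ of type $D_4$ (which occurs only for $p = 2$): here $C$ is regular unipotent but no single proper Levi containing $C$ sees all three series, so Lemma 4.13 does not apply directly. The plan is to set $L'' = L \cap L'$ of type $2A_1$ with regular class $C_0''$, choose a split element $u_0'' \in (C_0'')^F$, and use $C = \Ind_{L''}^G C_0''$ to pick $u \in u_0'' U_{P''}^F \cap C^F$. By transitivity of induction $C_0 = \Ind_{L''}^L C_0''$ and $C_0' = \Ind_{L''}^{L'} C_0''$, so one finds $u_0 \in C_0^F$ and $u_0' \in (C_0')^F$ with $u \in u_0 U_P^F \cap u_0' U_{P'}^F$, and the argument of 5.3 (1) (an application of Lemma 4.9 inside $L$, resp. $L'$, using regularity) shows $u_0, u_0'$ are themselves split. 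Then Lemma 4.9 applied simultaneously to the pairs $(G,L)$, $(G,L')$ and $(G,T)$, together with Lemma 1.15 (where $\d = 1$, since the preferred extensions in play are all trivial), gives $\g(u_0, \r_0, u, \r) = \g(u_0', \r_0', u, \r) = \g(1,1,u,\r) = 1$ for every relevant $(u,\r)$, so $u$ is split. Assembling these cases proves the proposition.
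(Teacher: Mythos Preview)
Your proposal is correct and follows essentially the same approach as the paper: the case split into classes outside (9.5.1) handled by Lemma 4.13 or (4.7.2), and the four classes of (9.5.1) treated exactly as in 9.6, including the delicate $p=2$ case $C=E_6$ via the auxiliary Levi $L'' = L\cap L'$ of type $2A_1$ and transitivity of induction. The only point you leave implicit is the easy subcase $C=E_6$ with $p\ne 2,3$ (only $T$ and $L=2A_2$ involved), which the paper dispatches by the pattern of 5.3~(1); this is a routine omission, not a gap in the argument.
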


\para{9.8.}
Next consider the case where $F$ is split on $G$, 
and $q \equiv -1 \pmod 3$. 
In this case, there exists
a class $C$ such that $F$ acts non-trivially on $A_G(u)$ for any 
choice of $u \in C^F$. 
If $C$ has a representative $u \in C^F$ such that $F$ acts trivially on 
$A_G(u)$, then the discussion in the proof of Proposition 9.7 can be applied, 
and we obtain a split element in $C^F$.
\par
We now consider $C$ such that $F$ acts non-trivially on 
$A_G(u)$ for any choice of $u \in C^F$.  Those $C$ are given as follows. 

\begin{align*}
\tag{9.8.1}
C : \quad E_6, \quad E_6(a_1), \quad A_5 + A_1, \quad A_5, \quad 2A_2 + A_1, \quad 2A_2.
\end{align*}
The corresponding $A_G(u)$ are given as follows
\begin{align*}
\tag{9.8.2}
A_G(u) : \quad  \BZ_{3(2,p)}, \quad \BZ_3, \quad \BZ_3 \times \BZ_2, \quad \BZ_3, 
             \quad \BZ_3, \quad \BZ_3,  
\end{align*}
where the action of $F$ on $A_G(u)$ is given by the non-trivial action
of $F$ on $\BZ_3$ (see 9.2).  By 9.2, this action corresponds to the
action of $F$ on $Z_G$.
Hence if $(u,\r) \in \SN_G^F$, then $\r \in A_G(u)\wg_{\x}$ for the trivial
character $\x \in Z_G\wg$ (see 9.3). 
It follows that $\r$ must be
$1 \in \BZ_3\wg$ or $\pm 1 \in (\BZ_3 \times \BZ_2)\wg$.
\par\medskip
(1) \
If $C \ne E_6, E_6(a_1), A_5 + A_1$, there exists $M \ne G$ such that $M \cap C \ne\emptyset$.
Let $u = u' \in C'^F \subset M^F$ for $u \in C^F$, and 
consider the set $X_{u,u'}$.  Note that the statement of Lemma 4.11 holds
if we ignore the action of $F$, even if $F$ does not act trivially on $A_G(u)$.
Hence $A_M(u')$ is regarded as a subgroup of $A_G(u)$, compatible with $F$ action,
and $X_{u,u'}$ is isomorphic to the set $A_G(u)$ on which $A_G(u) \times A_M(u')$
acts from left and right.  This isomorphism is compatible with $F$.  
\par
Now take a split element   
$u'$ in $C'^F$ and let $\wt \r'$ be the split extension of
$\r' \in A_M(u')\wg$ to $\wt A_M(u')$. In this case, $A_M(u')$ is abelian,
and $\wt\r'$ is the trivial
extension of $\r'$.   
Then by the description of $X_{u,u'}$ given above, we have the following.
Let $\r = 1$ (for $A_G(u) \simeq \BZ_3$), or $\r = \pm 1$  (for
$A_G(u) \simeq \BZ_3 \times \BZ_2$), then there exists a unique extension
$\wt\r$ to $\wt A_G(u)$ such that $F$ acts trivially on the
isotypic subspace $\ve_{u,u'}(\r,\r')$ in $\ve_{u,u'}$.
In fact, $\wt\r$ is given by the trivial extension of $\r$ to $\wt A_G(u)$. 
Since $F$ is split on $M$, we have $\g(u_0,\wt\r_0, u', \wt\r') = 1$. 
Since $\d = 1$, by applying Lemma 1.15, we obtain 
$\g(u_0, \wt\r_0, u, \wt\r) = 1$.   Hence $u$ is a split element in $C^F$. 
\par\medskip
(2) \ Next assume that $C = E_6(a_1)$ or $A_5 + A_1$.
The discussion in 9.6 (1), (2)  can be applied
similarly, except the action of $F$.
We follow the notation in 9.6.  Then $X_{u,u'} \simeq A_G(u)$, and
$F$ acts non-trivially on it.  As in the previous discussion, for a split
element $u' \in C'^F$ and the split extension $\wt\r'$ of $\r'$,
$F$ acts trivially on $\ve_{u,u'}(\r,\r')$ for the trivial extension $\wt\r$
of $\r = 1$ (resp. $\r = \pm 1$) to $\wt A_G(u)$ if $C = E_6(a_1)$ (resp.
if $C = A_5 + A_1$). 
Thus $u$ is a split element in $C^F$ by Lemma 1.15. 
\par\medskip
(3) \ Finally assume that $C = E_6$.  Assume that $p = 2$.
Then $A_G(u) \simeq \BZ_3 \times \BZ_2$.
Following the notation in 9.7, we consider $C = \Ind_{D_5}^{E_6}C'$.
In this case, we only consider $\r = \pm 1 \in A_G(u)\wg$, thus no 
need to consider the part belonging to $(L, C_0, \SE_0)$ for $L = 2A_2$.
Hence the discussion in 9.7 is unnecessary,
and just by a similar discussion as in the case $C = E_6(a_1)$,
we see that there exists a split element in $C^F$. 
The case $p \ne 2$ is dealt similarly. 
\par\medskip
Summing up the above discussion, we have

\begin{prop} %%%%   Prop. 9.9
Assume that $G = E_6$, and that $F$ is split on $G$. 
Further assume that $q \equiv -1 \pmod 3$. Then for any class $C$, there exists
a split element $u \in C^F$. For each $\r \in A_G(u)\wg$, the split extension
$\wt\r$ is given by the trivial extension. 
\end{prop}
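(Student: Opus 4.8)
The plan is to separate the unipotent classes $C$ into two groups according to whether some representative $u \in C^F$ has $F$ acting trivially on $A_G(u)$. For such classes the whole argument of Proposition 9.7 goes through unchanged --- that argument never used the congruence $q \equiv 1 \pmod 3$ beyond the existence of such a representative, only Lemma 4.9 together with Lemma 1.15 (or (4.7.2) when $|\SM_G(C)| = 1$, which in particular covers $C = D_4(a_1)$) --- so one obtains a split element $u \in C^F$; and since $\t$ then acts trivially on $A_G(u)$, the extension forced by the restriction formula is precisely the trivial extension, which is therefore the split extension. By (4.1.4) and (9.8.1) this disposes of every class except the six listed in (9.8.1), so it remains to treat $C \in \{E_6,\ E_6(a_1),\ A_5+A_1,\ A_5,\ 2A_2+A_1,\ 2A_2\}$.

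For these classes $A_G(u) \simeq \BZ_3$ or $\BZ_3 \times \BZ_2$, with $\t$ inverting the $\BZ_3$-factor, and as observed in 9.8 the only non-cuspidal $\r$ with $(u,\r) \in \SN_G^F$ are $\r = 1$ (resp. $\r = \pm 1$), all one-dimensional. The plan is to run the three sub-cases of 9.8 in turn: for $C \in \{A_5,\ 2A_2+A_1,\ 2A_2\}$ I would take a proper $F$-stable Levi $M$ with a representative $u = u' \in C'^F \subset M^F$ and apply Lemma 4.11 with the Frobenius action suppressed; for $C \in \{E_6(a_1),\ A_5+A_1\}$ I would use the decompositions of $\ve_{u,u'}$ already computed in 9.6 for Levi subgroups of type $2A_2+A_1$ resp. $A_5$; for $C = E_6$ I would use $C = \Ind_{D_5}^{E_6}C'$ and reduce to the mechanism of the second sub-case. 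In each sub-case the key point is that, even though $F$ no longer acts trivially on $A_G(u)$, Lemma 4.11 (read without its $F$-statement) still identifies $X_{u,u'}$ with $A_G(u)$ as an $A_G(u)\times A_M(u')$-set, and this identification is $F$-equivariant because $Y_{u,u'} = Z_G(u)U_Q$ is a union of $F$-stable cosets of $Z_G^0(u)U_Q$; so, choosing by induction a split $u' \in C'^F$ with its (trivial) split extension $\wt\r'$, there is exactly one extension $\wt\r$ of $\r$ --- the trivial one --- making $F$ act trivially on $\ve_{u,u'}(\r,\r')$. Then I would feed this into Lemma 1.15: since $F$ is split on $M$ one has $\g(u_0,\wt\r_0,u',\wt\r') = 1$, and the constant $\d$ there equals $1$ because every $\SW_L$ occurring in $\SM_G(C)$ is of type $W(E_6)$, $W(G_2)$, or $S_3$, and $F$ being split on $G$ the preferred extensions are trivial; hence $\g(u_0,\wt\r_0,u,\wt\r) = \d\iv = 1$. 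Running this simultaneously over all series $(T,\{1\},1)$ and $(L,C_0,\SE_0)$ in $\SM_G(C)$, and noting that the cuspidal pairs $(u,-\z),(u,-\z^2)$ impose no condition, shows that $u$ is split in the sense of 1.7 with trivial split extensions.

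The step I expect to be the main obstacle is verifying cleanly that the identification $X_{u,u'} \simeq A_G(u)$ is $F$-equivariant in the situation where $F$ does not act trivially on $A_G(u)$, and then reading off correctly that the $F$-equivariant bimodule structure selects the trivial extension of $\r$ rather than its sign twist; this is exactly the point at which the argument diverges from the $q \equiv 1 \pmod 3$ case of Proposition 9.7, and it rests entirely on the concrete description $Y_{u,u'} = Z_G(u)U_Q$ coming from Lemma 4.11. Everything else --- the combinatorial decompositions of $\ve_{u,u'}$ imported from 9.6, the vanishing of $\d$, and the final application of Lemma 1.15 --- is routine given the earlier results.
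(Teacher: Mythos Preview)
Your proposal is correct and matches the paper's argument in 9.8 essentially step for step: reduce to the six classes in (9.8.1), then use the $F$-equivariant identification $X_{u,u'}\simeq A_G(u)$ coming from $Y_{u,u'}=Z_G(u)U_Q$ (Lemma 4.11 for sub-case (1), the Lemma 4.16 mechanism for sub-case (2), and induction via Lemma 4.9 for $C=E_6$) to pin down the trivial extension of $\r$ and conclude by Lemma 1.15 with $\d=1$. One small slip in your justification: the cosets $\a Z_G^0(u)U_Q$ are \emph{not} individually $F$-stable here---$F$ permutes them according to its nontrivial action on $A_G(u)$---and it is precisely this matching of the permutation with the $\tau$-action on $A_G(u)$ that makes the bijection $X_{u,u'}\simeq A_G(u)$ $F$-equivariant and singles out the trivial extension.
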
     

\para{9.10.}
We now assume that $G = E_6$ and $F$ is non-split. Assume that $p$ is good. 
In \cite{BS}, they have proved the existence of $u \in C^F$ such that
$F$ acts on $H^{2d_u}(\SB_u)$ as a scalar multiplication by $(-q)^{d_u}$. 
This means that $\g(1,1, u,\r) = (-1)^{\d_E}$ for $E = V_{(u,\r)} \in W(E_6)\wg$,
where $\d_E = a_E - d_u$ as in (1.7.4), whenever $(u,\r)$ belongs to $(T, \{1\},1)$. 
By using the table of $a$-functions given in \cite[4.11]{L1}, $a_E$ coincides with $d_u$
for $E = V_{(u,\r)}$ unless
\begin{equation*}
\tag{9.10.1}  
E : \quad 15_4\ (C = A_5), \quad 10_9\ (C = 2A_2 + A_1), \quad 15_{16}\ (C = 3A_1).
\end{equation*}
In those cases, we have
$(d_u, a_E) = (4,3), (9,7), (16,15)$, accordingly.
It follows that

\begin{equation*}
\tag{9.10.2}  
\g(1,1, u, \r) = \begin{cases}
                        -1   &\quad\text{ if $V_{(u,\r)} = 15_4$ or $15_{16}$, } \\
                         1   &\quad\text{ otherwise.}
                   \end{cases}
\end{equation*}  

This formula makes sense even if $p$ is bad, and the existence of $u \in C^F$
satisfying (9.10.2) was verified through the computation of Green functions due to
Geck \cite{G2}. 
\par
In the case of generalized Springer correspondence, $L = 2A_2\ (p \ne 3)$ or
$D_4 \ (p = 2)$ appears. If $L = 2A_2$, then $W_L \simeq W(G_2)$ and $\d_E = 1$
for any $E \in \SW_L\wg$. While if $L = D_4$, then $\SW_L \simeq A_2$.  In this case,
it is checked that $\d_E = 1$ for any $E \in \SW_L\wg$. 

\para{9.11.}
In the case where $|\SM_G(C)| = 1$, the results in 9.10 fit
to (4.7.2), and split elements exist. Thus we may assume that
$|\SM_G(C)| \ge 2$, and $C$ is in the following list.

\begin{align*}
\tag{9.11.1}  
E_6, \ E_6(a_1), \ D_5, \ A_5 + A_1, \ A_5, \ D_4, \ 2A_2 + A_1, \ 2A_2. 
\end{align*}  

Since $F$ is non-split, if $q \equiv -1 \pmod 3$, for any class $C$ there 
exists a representative $u$ such that $F$ acts trivially on $A_G(u)$.  While
if $q \equiv 1\pmod 3$, there exists a class $C$ such that $F$ acts non-trivially 
on $A_G(u)$ for any choice of $u \in C^F$ (see 9.2). 
The discussion in the proof of Proposition 9.7 and Proposition 9.9 
basically works for almost all $C$ by switching the condition 
$q \equiv \pm 1 \pmod 3$.

\par
First assume that $C$ is not in the list (9.5.1).  If $p \ne 3$, then $C$ is
in the list (9.5.2), and  
following the discussion in 9.5, we find an $F$-stable $M$ such that $M  \cap C \ne\emptyset$.
However, in the list (9.5.2),  we have to
exclude the case  $C = D_5$ 
since $F$-stable $M$ does not exist in this case.
We discuss the case $C = D_5$, together with $C = A_5 + A_1$, later in 9.13. 

\par 
For the remaining cases, the discussion in 9.5 can be applied also
for the case $F$ non-split as follows. Assume that  $(u,\r)$ belongs to
$(L, C_0, \SE_0)$ with $L \ne T$, where $L = 2A_2$ or $D_4$. First assume that
$L = 2A_2$. We can choose a Levi subgroup $M$ of type $A_5$ or $2A_2 + A_1$.
By applying the discussion in Section 2 for $M$, one can find
a split element $u' \in C'^F$, and the split extension
$\wt\r'$ for $\r' \in A_M(u')\wg\ex$
such that $\g(u_0,\wt\r_0, u', \wt\r') = \nu'$, where $\nu'$ is given by
$\nu' = \nu_{\la, \eta}(-1)^{\d_i}$ as in Theorem 2.19, (i) for a certain
root of unity $\nu_{\la, \eta}$, and $(-1)^{\d_i} = \pm 1$.
In particular, if $M = 2A_2 + A_1$, we have $\nu_{\la,\eta} = 1$. 
By modifying the discussion in the proof of Proposition 4.12, together with
Lemma 1.15, we see that there exists $u \in C^F$ and an extension $\wt\r$
for each $\r \in A_G(u)\wg\ex$ such that $\g(u_0, \wt\r_0, u, \wt\r) = \nu'$.  
Here $\wt\r$ coincides with the trivial extension of
$\r$ (note that $A_G(u)$ is abelian). 
Thus one can write as $\g(u_0, \wt\r_0, u, \wt\r) = \nu_{u, \eta}(-1)^{\d_E}$,
where $\nu_{u,\eta}$ coincides with $\nu_{\la, \eta}$ for $M$, and $u$ is a split element
as given in Theorem 4.6 (ii).
Next assume that $L = D_4$. Since $C = D_4$, we can choose $M$ of type $D_4$.
By applying the discussion in Section 3 for $u = u' = u_0$, we see
that $\g(u_0, \wt\r_0, u, \wt\r) = 1$. Thus $u$ is a split element. 

\para{9.12.} \ Assume that $C$ is in the list (9.5.1), namely,
$C = E_6, E_6(a_1)$ or $D_4(a_1)$ (the case $C = A_5 + A_1$ will be discussed
in 9.13.)  For $C = E_6, E_6(a_1)$, the discussion
in 9.6 (1) and (3) can be applied also for the case $F$ is non-split. Note that
in 9.6 (1), we have chosen $M = 2A_2 + A_1$ so that $M$ is $F$-stable.
The case where $C = D_4(a_1)$ is already done since $|\SM_G(C)| = 1$.
\par
Thus one can find a split element $u \in C^F$ for $C$ in the list (9.5.1).

\para{9.13.}
We consider the case where $C = D_5$ or $A_5 + A_1$.
These cases are dealt with by applying
(a variant of) Lemma 4.9.
\par
For example, assume that $C = A_5 + A_1$ with $q \equiv -1 \pmod 3$.
Then $A_G(u) \simeq \BZ_3 \times \BZ_2$.
We have $A_G(u)\wg = \{ 1,\z, \z^2, -1, -\z, -\z^2\}$, where $\z$ is a non-trivial
character of $\BZ_3$. 
We have $C = \Ind_{3A_1}^{E_6}1$.  Let $Q = MU_Q$ be such that 
$M$ is of type $A_5$, and put $C' = \Ind_{3A_1}^{A_5}1$.
Then $C = \Ind_{A_5}^{E_6}C'$. The Jordan type of $u' \in C'$ is given
by $(33)$ as elements in $SL_6$, namely $C'$ is of type $2A_2$.
Then $Z_M(u') \simeq \BZ_3$.
We have $N = A_G(u)$, and so $H  = \BZ_2$. Hence
$X_{u,u'} \isom A_G(u)/H \simeq \BZ_3$ with $A_G(u) \times A_M(u')$-actions. 
We choose a split element $u' \in C'^F$ as in Theorem 2.19.
By applying the discussion in Section 2 for $M $ of type $A_5$, one can write as
$\g(u_0, \wt\r_0, u', \wt\r') = \nu'$, where $\nu' = \nu_{\la,\eta}(-1)^{\d_i}$
as in Theorem 2.19. 
By applying Lemma 1.15,
there exists an element $u \in C^F$ such that $\g(u_0, \wt\r_0, u, \wt\r) = \nu'$
for $\r = \z$ or $\z^2$. 
(Note that $(u,-\z), (u, -\z^2)$ are cuspidal pairs for $G$.) 
In particular, $\g(u_0, \wt\r_0, u, \wt\r)$ is expressed as in the form (4.6.1). 
\par
On the other hand, by 9.10, there exists $u_1 \in C^F$ such that
$\g(1,1, u_1, \r_1) = 1$ for $\r_1 = \pm 1$.
Here $u_1$ is obtained from $u$ as $u_1 = u_z$ by twisting by $z \in \BZ_2$.
It follows that $\g(1,1, u, \pm 1) = \g(1,1,u_1, \r_1) = 1$.
Hence $u$ is a split element.
\par
For $C = A_5 + A_1$ with $p = 3$ or $C = D_5$, we have $A_G(u) \simeq \BZ_2$,
and a similar discussion as above can be applied (in fact, it is simpler). 
\par
The case where $q \equiv 1 \pmod 3$ is discussed in a similar way 
as in 9.8.
\par\medskip
Summing up the above discussion, we have the following.

\begin{prop}
Assume that $G = E_6$, and that $F$ is non-split on $G$.
Then for any class $C$, there exists a split element $u \in C^F$.
For each $\r \in A_G(u)\wg$, the split extension $\wt\r$ is given by
the trivial extension of $\r$. 
\end{prop}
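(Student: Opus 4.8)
The plan is to obtain Proposition 9.14 by assembling the case-by-case analysis developed in 9.10--9.13, organized first by the size of $\SM_G(C)$ and then by the residue of $q$ modulo $3$. First I would dispose of every class $C$ with $|\SM_G(C)| = 1$: for such $C$ the generalized Green functions attached to $(L,C_0,\SE_0)$ with $L \ne G$ are ordinary Green functions, so (4.7.2) --- which rests on the Green function computations of \cite{BS}, \cite{G2}, \cite{Lu} --- immediately supplies a split element and forces the split extension (uniqueness via Lemma 1.9). This leaves only the eight classes in (9.11.1), namely $E_6, E_6(a_1), D_5, A_5+A_1, A_5, D_4, 2A_2+A_1, 2A_2$.

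Next I would split the classes in (9.11.1) according to whether $q \equiv 1$ or $q \equiv -1 \pmod 3$ (when $p \ne 3$; for $p = 3$ one has $Z_G = 1$ and the situation reduces to the split--trivial-action case already treated). By 9.2, if $q \equiv -1 \pmod 3$ every class carries a representative on which $F$ acts trivially, while if $q \equiv 1 \pmod 3$ the exceptions are exactly the classes in (9.8.1). For those classes in (9.11.1) lying in (9.5.2) other than $D_5$ there is an $F$-stable Levi $M \subsetneq G$ with $C \cap M \ne \emptyset$; I would take $u = u'$ a split element of the relevant class $C'$ in $M$ and invoke Lemma 4.11 and Proposition 4.12 (whose conclusions about the $F$-set $X_{u,u'}$ remain valid even when $F$ does not act trivially on $A_G(u)$), feeding in $\g(u_0,\wt\r_0,u',\wt\r') = \nu'$ from Theorem 2.19 when $M$ is of type $A_5$ or $2A_2+A_1$ (so $\SW_L \simeq W(G_2)$) or from the results of Section 3 when $M$ is of type $D_4$ (so $\SW_L \simeq A_2$), and then applying Lemma 1.15 with $\d = 1$ to get $\g(u_0,\wt\r_0,u,\wt\r) = \nu_{u,\eta}(-1)^{\d_E}$ as in (4.6.1). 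The three classes $E_6, E_6(a_1), D_4(a_1)$ from (9.5.1) I would handle as in 9.6 and 9.12: $D_4(a_1)$ has $|\SM_G(C)| = 1$; for $E_6(a_1)$ one uses the $F$-stable Levi $M = 2A_2+A_1$ as in 9.6 (1); and $E_6 = \Ind_{L''}^G C_0''$ is handled by Lemma 4.9 applied to $G$ relative to each of $T$, $2A_2$, $D_4$, exactly as in the proof of Proposition 9.7, none of which breaks when $F$ becomes non-split.

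The main obstacle is the pair of classes $C = D_5$ and $C = A_5 + A_1$, for which no $F$-stable Levi subgroup meets $C$, so Lemma 4.9 must be invoked in its full induced-class form as in 9.13. Here I would write $C = \Ind_{A_5}^{E_6} C'$ with $Q = MU_Q$ and $M$ of type $A_5$, check that the description $X_{u,u'} \simeq A_G(u)/H$ with $H \simeq \BZ_2$ (resp. trivial) is $F$-equivariant, choose a split $u' \in C'^F$ with $\g(u_0,\wt\r_0,u',\wt\r') = \nu' = \nu_{\la,\eta}(-1)^{\d_i}$ from Theorem 2.19, and apply Lemma 1.15 to pin down $\g(u_0,\wt\r_0,u,\wt\r)$ on the characters $\z,\z^2$ (the characters $-\z,-\z^2$ being cuspidal for $G$). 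The values on the remaining characters $\pm 1$ are then forced by (9.10): the element $u_1 = u_z$ obtained by twisting $u$ by the $\BZ_2$-factor satisfies $\g(1,1,u_1,\pm 1) = 1$, whence $\g(1,1,u,\pm 1) = 1$ and $u$ is split. For $q \equiv 1 \pmod 3$, i.e. the classes in (9.8.1), the roles of the $q \bmod 3$ conditions are interchanged exactly as in 9.8 and the same arguments carry over.

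Finally, since $A_G(u)$ is abelian for every class treated here (the non-abelian class $D_4(a_1)$ having $|\SM_G(C)| = 1$), in each case the extension $\wt\r$ produced is the trivial extension, which is also the unique one satisfying the defining conditions by Lemma 1.9; this gives the last assertion of the proposition. Concretely, the hard part will be the bookkeeping in the $D_5$ and $A_5+A_1$ cases: verifying $F$-equivariance of the induced-class description of $X_{u,u'}$, confirming that the chosen $u'$ is genuinely a split element of $C'$, and matching the twist by the $\BZ_2$-factor against the sign predicted by (9.10.2).
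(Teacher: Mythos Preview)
Your proposal is correct and follows essentially the same route as the paper: the proof of Proposition 9.14 in the paper is literally the phrase ``Summing up the above discussion,'' referring to the case analysis in 9.10--9.13, and your outline reproduces that analysis faithfully --- disposing of $|\SM_G(C)|=1$ via (4.7.2), treating the classes in (9.11.1) by the non-split analogues of 9.5--9.6 using Lemmas 4.11/4.12/1.15 with input from Theorem 2.19 and Section 3, and handling the exceptional pair $D_5$, $A_5+A_1$ via the induced-class version of Lemma 4.9 exactly as in 9.13.
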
  

Combining Proposition 9.7, Proposition 9.9 and Proposition 9.14,
we have proved that Theorem 4.6 holds for $G = E_6$.

\para{9.15.}
In the case where $G = E_6$, we shall prove a similar result as
Theorem 2.21.  Recall that $Z_G \simeq \BZ_3$ (resp. $Z_G = 1$) if $p \ne 3$
(resp. if $p = 3$). We define $F$ and $F_0$ as
in 2.20 by applying $n' = 3$ as follows. Assume that $p \not\equiv 1 \pmod 3$,
and that $F$ is a non-split
Frobenius with respect to the $\BF_q$-structure.
We put $q_0 = qp$ if $p \equiv -1 \pmod 3$, and $q_0 = q$ if $p = 3$.
(Note that if $p \equiv 1 \pmod 3$, the following discussion does not hold.)
Let $F_0$ be the split Frobenius map with respect to the $\BF_{q_0}$-structure.
Then $q \equiv \pm 1\pmod 3$ if and only if $q_0 \equiv \mp 1\pmod 3$.
\par
Here $L$ is of type $2A_2$ or $D_4$, and $\SW_L \simeq W(G_2)$ or $S_3$,
accordingly. It is easy to see that the formulas in (1.17.1) hold for
$F$ and $F_0$. Hence Lemma 1.18 holds for $G = E_6$.
\par
For each class $C$ in $G$, choose a split element $u \in C^F$, and $v \in C^{F_0}$.
From the construction of split elements for $C^F$ and for $C^{F_0}$, it is easy
to see that the action of $F$ on $A_G(u)$ corresponds to the action of $F_0$
on $A_G(v)$. Hence we have $\wt A_G(u) \simeq \wt A_G(v)$.
Thus one can define a bijection $\xi$ from the set of $G^F$-conjugacy classes in
$G\uni^F$ to the set of $G^{F_0}$-conjugacy classes in $G\uni^{F_0}$, by
attaching a split element $v = \xi(u) \in C^{F_0}$ to a split element $u \in C^F$.
\par
Note that $\SM_G^F$ and $\SM_G^{F_0}$ are naturally identified
(the action of $F$ on $Z_G$ coincides with that of $F_0$).
The following result is a generalization of \cite[4, Thm.]{BS} in the case
where $p$ is good.  

\begin{thm}  %%%%  Theorem 9.16.
Assume that $G = E_6$.  Let $F$ and $F_0$ be as in 9.15.
Then similar properties as in Theorem 2.21 hold also for $G$.  
\end{thm}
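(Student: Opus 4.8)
The plan is to follow the pattern of the proof of Theorem 2.21, transporting each ingredient to the $E_6$ setting. The statement asserts the existence of a bijection $\xi$ from $G^F$-conjugacy classes in $G\uni^F$ to $G^{F_0}$-conjugacy classes in $G\uni^{F_0}$ satisfying
\begin{equation*}
Q^G_{L^w, C_0^w, \SE_0^w, \vf_0^w}(g) = \nu_{g,\eta}\, Q^0_{L^{w_0 w}, C_0^{w_0 w}, \SE_0^{w_0 w}, \vf_0'^{w_0 w}}(\xi(g))(-q),
\end{equation*}
together with the characterizing properties (a) on centralizer orders and (b) on the Frobenius action on the relevant cohomology. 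First I would take $\xi$ to be the bijection already constructed in 9.15: attach to a split element $u \in C^F$ the split element $v = \xi(u) \in C^{F_0}$, extending to all $G^F$-classes via the identification $\wt A_G(u) \simeq \wt A_G(v)$ noted there. The key point is that this identification is compatible with $F$-actions by construction of split elements, so the combinatorics of twisting by $A_G(u)$ match on the two sides.

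The core of the argument is the functional-equation input. By 9.15, Lemma 1.18 holds for $G = E_6$, so for $i, j \in I^F_\eta$ we have $\w_{ij} = (-1)^{\d_i+\d_j}\w^0_{ij}(-q)$, $p_{ij} = (-1)^{\d_i+\d_j} p^0_{ij}(-q)$, and $\la_{ij} = (-1)^{\d_i+\d_j}\la^0_{ij}(-q)$. Combining this with Theorem 4.6 (the $E_6$ part), which gives $\g(u_0, \wt\r_0, u, \wt\r) = \nu_{u,\eta}(-1)^{\d_E}$ at a split element, one obtains $Y_{i,F}(g) = \nu_{g,\eta}(-1)^{\d_i} Y_{i,F_0}(\xi(g))$ for $g$ split, hence for all $g \in C^F$ by the compatibility of the $\wt A_G$-identification. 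Then
\begin{align*}
X_{i,F}(g) &= \sum_{j \in I^F} p_{ji} Y_{j,F}(g) = \sum_{j \in I^F} (-1)^{\d_i+\d_j} p^0_{ji}(-q)\, \nu_{g,\eta}(-1)^{\d_j} Y_{j,F_0}(\xi(g)) \\
&= \nu_{g,\eta}(-1)^{\d_i} X_{i,F_0}(\xi(g))(-q),
\end{align*}
exactly as in the proof of Theorem 2.21. Feeding this into the expansion (1.4.4) of the generalized Green functions, and using that $\Tr(w\s_{E_i}, \wt E_i)$ transforms correctly under $w \mapsto w_0 w$ (the preferred extension on $\SW_L \simeq W(G_2)$ is trivial, and on $S_3$ of type $A_2$ one uses case (b) of 1.6 as in Section 2, so $\s_{E_i} w_0$ acts by $(-1)^{a_{E_i}}$), together with $d_g = (a_0+r)_i/2$, yields the claimed identity relating $Q^G$ and $Q^0$. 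The uniqueness of $\xi$ follows from (a), (b), and Lemma 1.9, exactly as in Theorem 2.21 (ii): if $\xi_1$ satisfies (a) and (b), comparing $\g$-values at a preimage $u_1$ of a split $v$ and invoking Lemma 1.9 forces $u_1$ to be $G^F$-conjugate to the split $u$.

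The main obstacle I anticipate is the case where $L$ is of type $D_4$ (i.e. $p = 2$), where $\SW_L \simeq S_3$ is the Weyl group of type $A_2$ and $c = 2$, so the preferred extension is genuinely nontrivial — one must check that Lemma 1.18 was correctly set up with the $(-1)^{a_E}w_0$ normalization in this case, and that the substitution $w \mapsto w_0 w$ in $\Tr(w \s_{E_i}, \wt E_i)$ produces exactly the sign $(-1)^{a_{E_i}}$ that matches the split-case formula for $\g$; this parallels the $SL_n$ computation in Section 2 but requires that $(I_0)_\eta^{F_0} \simeq I_\eta^F$ and the numerical identities $(a_0+r)_i/2 = d_{u_1}$, $(-1)^{(a_0+r)_i/2}(-1)^{a_{E_i}} = (-1)^{\d_i}$ hold for the $D_4$-series just as for the $2A_2$-series. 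A secondary point is bookkeeping of the root-of-unity factors $\nu_{g,\eta}$ for the $L$ of type $2A_2$ series: one must confirm that the $\nu_{\la,\eta}$ arising from the embedded $A_5$ (resp.\ $2A_2 + A_1$) Levi subgroup behaves under the $(G,F)$-to-$(G_0,F_0)$ comparison exactly as in Remark 2.22 and Theorem 2.21, so that the factor $\nu_{g,\eta}$ in the statement is the same constant on both sides. These are all routine once the dictionary with Section 2 is in place, so I would phrase the proof as "the same argument as Theorem 2.21, using 9.15 and Theorem 4.6," with the $D_4$-series sign check spelled out.
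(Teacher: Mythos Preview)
Your argument for part (i) --- the identity between generalized Green functions --- is essentially the paper's: invoke Lemma 1.18 (valid for $E_6$ by 9.15), combine with the $\gamma$-values from Theorem 4.6 to get $Y_{j,F}(g) = \nu_{g,\eta}(-1)^{\d_j}Y_{j,F_0}(\xi(g))$, and then run the computation from the proof of Theorem 2.21 (i) verbatim. Your worries about the $D_4$-series sign bookkeeping are covered by the setup in 9.15, which asserts that all hypotheses of Lemma 1.18 hold in this situation; the paper does not spell this out further either.

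There is, however, a genuine gap in your treatment of part (ii). You use properties (a) and (b) to prove \emph{uniqueness} of $\xi$, but you never verify that the bijection $\xi$ you constructed (split $u \mapsto$ split $v$) actually \emph{satisfies} (a), the centralizer-order relation
\[
|Z_G(u)^F| = (-1)^{\dim Z_G(u)}\,|Z_G(\xi(u))^{F_0}|(-q).
\]
Property (b) is equivalent to the $Y$-function relation you already have, so it comes for free; but (a) does not. In the $SL_n$ case this was (2.20.2), checked directly from the explicit Jordan-form description of centralizers. For $E_6$ no such uniform formula is available, and the paper verifies (a) by a case analysis: when all $G^F$-classes in $C^F$ share the same centralizer order (most cases), or when $A_G(u) \simeq S_3$ (the class $D_4(a_1)$), the check is immediate; but there is exactly one class, $C = A_2$ with $A_G(u) \simeq \BZ_2$, where the two $G^F$-classes in $C^F$ have \emph{different} centralizer orders, and one must appeal to the Green-function computation to identify which of the two is split and then match it with the correct $G^{F_0}$-class. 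Without this step your $\xi$ is not shown to satisfy (a), and the characterization in (ii) is incomplete.
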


\begin{proof}
We prove the statements (i) and (ii) in Theorem 2.21.  
We follow the notation in Theorem 2.19 and Theorem 2.21.  As in (2.19.3),
we have
\begin{equation*}
\tag{9.16.1}  
  Y_{j,F} = \nu_{u,\eta}(-1)^{\d_j}Y_{j, F_0}
\end{equation*}
for any $j \in I^F = I^{F_0}$.
Since Lemma 1.18 holds for $G$, the discussion of the proof of Theorem 2.21 (i)
also holds for $G$.  Thus (i) is proved. 
\par
Next we show  (ii).  We prove that the bijection $\xi$ satisfies the properties (a) and (b).
Since $(a_0 + r)/2 = d_u$, the relation (b) in (ii)
is nothing but the relation (9.16.1). Thus (b) holds. 
For (a), since $\wt A_G(u) = \wt A_G(\xi(u))$ for a split $u$, it is enough to show
that
\begin{equation*}
\tag{9.16.2}
  |Z_G(u)^F| = (-1)^{\dim Z_G(u)}|Z_G(\xi(u))^{F_0}|(-q)
\end{equation*}
for a split $u \in C^F$.
\par
In the case where $C = D_4(a_1)$, then $Z_G(u_1) \simeq S_3$ for $u_1 \in C$,
and a split element $u \in C^F$ is determined by the condition that
$F$ acts trivially on $A_G(u)$.  Similarly, a split element $v \in C^{F_0}$
is determined by the condition that $F_0$ acts trivially on $A_G(v)$.
Then (9.16.2) is verified directly.  If $C \ne D_4(a_1)$, then $A_G(u_1)$ is
abelian.
\par
If $|Z_G(u_1)^F|$ have a common value for any choice of $u_1 \in C^F$, 
then $|Z_G(v_1)^{F_0}|$ have a common value for any choice of $v_1 \in C^{F_0}$.
Thus in this case, (9.16.2) is also checked directly. 
\par
We now consider the case where  $C \ne D_4(a_1)$, and $|Z_G(u_1)^F|$
(resp. $|Z_G(v_1)^{F_0}|$) have different values for $u_1 \in C^F$
(resp. for $v_1 \in C^{F_0}$).
By using the tables in \cite{M1}, one can check that 
such a class $C$ is uniquely determined, namely $C = A_2$.
Since $A_G(u) \simeq \BZ_2$, there exists two elements 
$v_1, v_2 \in C^{F_0}$ such that
\begin{equation*}
  |Z_G(v_1)^{F_0}| = 2q_0^{26}(q_0^2-1)^2(q_0^3-1)^2, \quad
  |Z_G(v_2)^{F_0}| = 2q_0^{26}(q_0^4-1)(q_0^6-1).
\end{equation*}
Similarly, there exists two elements $u_1, u_2 \in G^F$ such that
$|Z_G(u_1)^F|$ and $|Z_G(u_2)^F|$ are obtained from them (or by switching from them)
by substituting $-q$ to $q_0$.  
Since $|\SM_G(C)| = 1$, by using the computation of Green functions,
one can show that $v_1 \in C^{F_0}$ is a split element, 
and the split element $u_1 \in C^F$ is determined by the condition that
$|Z_G(u_1)^F| = 2q^{26}(q^2-1)(q^3+1)^2$. 
Hence (9.16.2) holds for $C$. 

\par
We have verified that our bijection $\xi$ satisfies the condition (a), (b) in Theorem 2.21.
The uniqueness of $\xi$ is proved in a similar way as in Theorem 2.21.
\end{proof}  

\par\bigskip
\section{ The case ${}^3D_4$ } 

\para{10.1.}  
Assume that $G$ is a simply connected, almost simple group of
type $D_4$, with non-split Frobenius map $F$ such that $F^3$ is split.
In this case, if $(L, C_0, \SE_0) \in \SM_G$ with $L \ne T$,
then $L$ is of type $2A_1$ if $p \ne 2$, and is of type $D_4$ (namely,
$(C_0, \SE_0)$ is cuspidal in $G$) if $p = 2$.  In the former case, $L$ is not
$F$-stable, hence $\SM_G^F = \{ (T, \{1\}, 1) \}$. Thus we have only to
consider the Green functions on $G_{\ad}$.
\par
Green functions of $G^F$ (for any $p$) was first computed by Spaltenstein \cite{Sp3}
with respect to 
Deligne-Lusztig's Green functions, in the course of the computation of unipotent characters
of $G^F$. In the case where $p = 2$, it is also easy to compute them as in the case of
exceptional groups. From those results, it can be checked, for an $F$-stable class $C$ in $G$,
that there exists $u \in C^F$ such that $\g(1,1, u,\r) = 1$ for any $\r \in A_G(u)\wg$
such that $(u,\r)$ belongs to $(T,\{1\},1)$. It follows that split elements exist for $G$. 

\par\bigskip

\par\bigskip\noindent
Frank L\"ubeck \\
LfAZ - Lehrstuhl f\"ur Algebra und Zahlentheorie  \\
RWTH Aachen University  \\
Pontdriesch 14/16, 52062 Aachen, Germany  \\
frank.luebeck@math.rwth-aachen.de 

\par\bigskip\noindent
Toshiaki Shoji  \\
School of Mathematical Sciences,  Tongji University  \\
1239 Siping Road, Shanghai 200092, P.R of China \\
shoji@tongji.edu.cn

\end{document}